\documentclass{amsart}
    \usepackage{amsmath, amssymb, amsthm,graphicx}
    \usepackage{setspace}
    \usepackage{xspace}
    \usepackage[utf8]{inputenc}
    \usepackage[english]{babel}
    \usepackage{soul}
    \usepackage{mathrsfs}
    \usepackage{verbatim}
    \usepackage{tikz}
    \usepackage{xcolor}
    \usepackage{bm}
    \usepackage{url}
    \usetikzlibrary{fit,calc,positioning,decorations.pathreplacing,matrix}
    \usetikzlibrary{arrows,decorations.markings}
    \usetikzlibrary{shapes.geometric}
    \usetikzlibrary {arrows.meta}
     \usetikzlibrary{patterns}

    \numberwithin{equation}{section}
    \theoremstyle{plain}
    \newtheorem{thm}{Theorem}[section]
    \newtheorem{prp}[thm]{Proposition}
    \newtheorem{cor}[thm]{Corollary}
    \newtheorem{lm}[thm]{Lemma}

    \theoremstyle{definition}
    \newtheorem{df}[thm]{Definition}
    \newtheorem{ex}[thm]{Example}
    \newtheorem*{conj}{Conjecture}
    \newtheorem{nota}{Notation}
    \newtheorem{quest}{Question}
    \newtheorem*{ft}{Fact}

    \newcommand\NN{\mathbb{N}}
    
    \newcommand\DD{\Delta}
    \newcommand\GDD{\delta}

    \newcommand{\SD}[2]{\textrm{D}(#2,#1)}
    \newcommand{\SLD}[2]{\textrm{L}(#2,#1)}
    \newcommand{\SRD}[2]{\textrm{R}(#2,#1)}
    \newcommand{\SLDA}[3]{\textrm{L}_{#3}(#2,#1)}
    \newcommand{\SRDA}[3]{\textrm{R}_{#3}(#2,#1)}

    \newcommand{\ggn}{(g_n,\ldots, g_1)}
    \newcommand{\ggm}{(g_m,\ldots, g_1)}

    \newcommand{\hhn}{(h_n,\ldots, h_1)}
    
    \newcommand{\ND}[1]{[#1]}
    
    \newcommand{\dpg}{\textrm{\textrm{dpt}}(g)}

    \newcommand{\ii}{i}
    \newcommand{\jj}{j}
    \newcommand{\kk}{k}
    \newcommand{\kkp}{{k+1}}
    \newcommand{\kko}{{k-1}}
    \newcommand{\rr}{r}
    
    \newcommand{\rro}{{r-1}}
    \newcommand{\iip}{{i+1}}
    \newcommand{\iio}{{i-1}}
    
    \newcommand{\jjo}{{j-1}}
    \newcommand{\nn}{n}

    \renewcommand\gg{g}
    
    \newcommand\inv{^{-1}}
    
    \newcommand{\sig}[1]{\sigma_{#1}}
    \renewcommand{\ss}[1]{s_{#1}}
    \renewcommand{\tt}[1]{t_{#1}}
    \newcommand{\ssi}{s_{i}}
    \newcommand{\ssj}{s_{j}}
    \newcommand{\ssk}{s_{k}}

    \newcommand{\ssinv}[1]{s_{#1}\inv}

    \newcommand\oTheta{\widetilde{\Theta}}
    \newcommand\resp{\emph{resp.}\xspace}

    \DeclareMathOperator{\num}{num}
    \DeclareMathOperator{\den}{den}
    \DeclareMathOperator{\supp}{supp}
    \DeclareMathOperator{\dpt}{dpt}
    \DeclareMathOperator{\brd}{brd}

    \newcommand\numL{\num_L}
    \newcommand\numR{\num_R}
    \newcommand\denL{\den_L}
    \newcommand\denR{\den_R}
    
    \newcommand\rem[1]{\pi(#1)}
    \newcommand\siginc[2]{\sigma_{#1\nearrow#2}}
    \newcommand\sigdec[2]{\sigma_{#1\searrow#2}}
    \newcommand\Nearrow{\begin{tikzpicture}[x=1pt,y=1pt,baseline=1]\draw[double distance = 0.8,-{Classical TikZ Rightarrow[length=2pt]}](0,0) -- (6,6);\end{tikzpicture}}
    \newcommand\Searrow{\begin{tikzpicture}[x=1pt,y=1pt,baseline=1]\draw[double distance = 0.8,-{Classical TikZ Rightarrow[length=2pt]}](0,6) -- (6,0);\end{tikzpicture}}
    \newcommand\sigincsq[2]{\sigma_{#1\Nearrow#2}}
    \newcommand\sigdecsq[2]{\sigma_{#1\Searrow#2}}
    \newcommand\muinc[2]{\mu_{#1\nearrow#2}}
    \newcommand\nuinc[2]{\nu_{#1\nearrow#2}}
    \newcommand{\mymarginpar}[1]{\ifodd\thepage\marginpar{#1}\else\marginpar{\hfill #1}\fi} 
    \newcommand\tail[2]{\tau_{\raisebox{-2pt}{\scriptsize$#1$}}(#2)}
    
    \title{Alternating decomposition of a product in a spherical type Artin-Tits monoid}
    \author{Jean Fromentin  and Eddy Godelle}
     
     \keywords{Ordering, Artin--Tits group, Garside structure}
     \subjclass[2020]{20F36, 20F60}
    \date{\today}

\begin{document}
    \begin{abstract} A Dehornoy structure is a tool used to obtain a left order in the Garside group~$G$. Arcis and Paris introduced two conditions, known as Condition~$A$ and Condition~$B$. When they are both satisfied, they ensure that~$G$ possesses a Dehornoy structure.  These two conditions describes how the product in the corresponding monoid behaves with an alternating decomposition.  Here, we consider the case of a spherical type Artin group~$G$. We prove that Condition A almost always holds, but Condition~$B$ is rarely satisfied.
    \end{abstract}
    \maketitle
    \section{Introduction}
    A group is said to be left-ordered if it can be equipped with a total order that is invariant under multiplication on the left. 
    Left-orderings on groups are related to important open conjectures, such as Zimmer's Conjecture \cite{Brownetall2020} or the $L$-space Conjecture \cite{Boyeretall2013}. 
    We refer to~\cite{KoM}  for a general presentation of the theory of left-ordered groups.
    It is well-known that braid groups are left-ordered with Dehornoy's order \cite{Burckel97,Dehornoy94}.
    This order is of interest in its own right as it is related to self-distributivity~\cite{Dehornoy94} and to curve diagrams of braids~\cite{Fennetall99}.
    But it also leads to the first example of groups having infinitely many left-orderings but  isolated left-orderings~\cite{DubrovinaDubrovin2001,DDRW, Ito2013}.
    The subject is still in progress (see \cite{MannRivas2018,Malicetetall2019,Matsumoto2020} for instance). 
    In general this is not an easy problem to decide whether or not a given finitely presented group is left-ordered, and there are various attempts  to extend Dehornoy's order to families of groups that are related to braid groups.  
    Among them are the family of Artin-Tits groups and the family of Garside groups.  
    Indeed, Braid groups  belong to both families, and in~\cite{DDRW} (see page 302), the authors address the  two following questions : 
    
    \begin{quest}~
    
    (i)  Which Artin-Tits groups are left-orderable ?
    
    (ii)  Is every Garside group left-orderable ?
    \end{quest}  
    
    Here we focus on those groups that belong to both families, namely to Artin-Tits groups of spherical type.
    See Figure~\ref{F:CoxDiag} for a complete list of Artin-Tits groups of spherical type that are irreducible,  classified by their corresponding Dynkin's diagrams.
    Even in this specific cases few is known.  Dehornoy's order is built  on the notion of the~$\sigma$-\emph{positivity} of a braid.
    So, a first attempt consists on generalising this  approach to obtain new orders (see \cite{Sibert08} and~\cite{Ito2013}). 
    But, it is hopeless to expect to extend this method to other spherical type Artin-Tits groups (see \cite{Sibert08}), except for Artin-Tits groups of type B, that are subgroups of braids groups.  
    So, an alternative approach has to be find. 
    This is what is done  in \cite{ArP2019} by Arcis  and Paris (see also  \cite{Arcis2017}).  
    Using the notion of the alternating normal form introduced by Dehornoy (see \cite{Deh2008,Fromentin2011,FromentinParis2012}), the notion of a \emph{Dehornoy structure}  on a  Garside group is defined.
    It is proved that a Dehornoy structure can be used to build a left-ordering.  
    Then two conditions are introduced, namely \emph{Condition~A} and~\emph{Condition~B}.  When both conditions are satisfied in a Garside group, then the latter possesses a Dehornoy structure \cite[Theorem~3.2]{ArP2019}.  
     Even for Artin-Tits groups, the question of whether or not Condition~$A$ and Condition~$B$  hold is open except for only few cases.  
    
    Let us introduce  here the properties of Garside groups that we shall need to state the two Conditions and provide our main results. 
    We refer to the next section for precise definitions and to \cite{DDGKM} for a general presentation of Garside Theory.
    A Garside group is the group of fractions of a Garside monoid, which is a monoid equipped with several good properties regarding the divisibilities on the right and on the left. In particular a Garside monoid possesses a unique minimal generating set, called its \emph{atom set}, and  a family of particular elements called the \emph{Garside elements} of the monoid.  Moreover Garside monoids  (\emph{resp.} Garside groups) possess a family of special submonoids  (\emph{resp.} subgroups)  called \emph{parabolic submonoids} (\emph{resp.} parabolic subgroups) \cite{God5}.  
    Parabolic submonoids are themselves Garside monoids and their atom sets are some subsets of the atom set of the whole Garside monoid.  
    A key property in what follows is that, given a parabolic submonoid, the set of left (or right) divisors of any element  possesses  a unique  maximal element  (see Proposition~\ref{P:ExistenceTail} below). 
     
    Consider a Garside monoid $M$ with atom set $S$ and  Garside element~$\DD$. 
    Assume that $M_{S_1},M_{S_2}$ are two non-trivial proper  parabolic submonoids of $M$ with atom sets $S_1$ and $S_2$, respectively.  
    Assume moreover that  $S = S_1\cup S_2$ where $S_1$ and $S_2$ are proper subsets of $S$.
     Following~\cite{Deh2008}, when the latter property holds, we say that  $(S_2,S_1)$ is a proper covering of $M$. 
    Then, any element  $g$ of $M$ possesses (alternating)  decompositions $g_k\cdots g_1$  where  $g_1$  belong to $M_{S_1}$ and  $g_i$ lies in $M_{S_j}$ with $i+j$ even for any index~$i$.
    This decomposition becomes unique \cite{Deh2008} when one requires that each $g_i$ is maximal for the right-divisibility and $g_i\neq 1$ for $i\geq 2$, see Proposition~\ref{P:AlternatingNormalForm} below.
    Note that $g_1$ may be equal to $1$.
    The \emph{depth} of $g$, denoted by $\dpg$,  is the number of $g_i$ not belonging to $M_{S_1}$.
    
The covering $(S_2, S_1)$  is said to satisfy Condition~$A$ with respect to~$\DD^p$, for some $p\geq 1$,  when for all  $t\geq 1$ we have 
$\dpt(\DD^{pt}) - 1 = t\times (\dpt(\DD^p) - 1)$ (see Definition~\ref{D:ConditionA}) .   
    Note that here we follow the definition  stated in~\cite{ArP2019}, which is different  from (but equivalent to) the one given in~\cite{Arcis2017}. 
    Note also that we state the definition in a more general framework than in~\cite{ArP2019} as we do not assume here that~$\DD^p$ is central. 
    Clearly, if the covering~$(S_2, S_1)$ satisfies Condition~$A$ for~$\DD^p$ then it satisfies Condition~$A$ for all Garside elements~$\DD^{pq}$ for $q\geq 1$. 
     Condition~B  is too technical to be presented here, and we postpone it to Definition~\ref{D:ConditionB}.
    
   \subsection*{What is done in this paper}
   
 We focus on irreducible spherical type Artin-Tits monoids. This is not a restriction as explained in Section~\ref{S:left order}.  We also restrict our study to proper and irreducible covering $(S_2,S_1)$ of the generating set~$S$ of  a spherical type Artin-Tits monoid  $M$, \emph{i.e.}, $S_1$ and $S_2$ are proper subsets of $S$ and the corresponding parabolic submonoids $M_{S_1}$ and $M_{S_2}$ are also  irreducible. This hypothesis on $M_{S_1}$ and $M_{S_2}$  is not so restrictive in order to obtain a left ordering  on $M$  because it is necessary to repeat the process replacing $M$ with both $M_{S_1}$ and $M_{S_2}$. We address independently the question of whether Condition~$A$ and Condition~$B$ hold in \ref{S:ConditionA} and  \ref{S:ConditionB}, respectively. It is turn out that the answers are quiet different.  Write $\Phi(g) = \DD \cdot g\cdot \DD\inv$ for $G$ in $M$, where $\DD$ is the Garside element of $M$. Say that a proper and irreducible covering $(S_2,S_1)$ is $\Delta^p$- regular if $\Phi(\{S_2,S_1\}) = \{S_2,S_1\}$.  Say it  is $\Delta^p$-fixed  if $\Phi(S_2,S_1) = (S_2,S_1)$.  Note that for $p$ even, or $M$ not of type $A$, $D_{2r+1}$ or  $E_6$ then  $(S_2,S_1)$  must be $\Delta^p$-fixed.  
  \begin{thm} (Corollary~\ref{C:ConditionAIff})  \label{T:1er resultat} Assume $M$ is an irreducible Artin monoid of spherical type, with generating set $S$.
   Let~$p$ be a positive integer and $(S_2,S_1)$ be  a $\Delta^p$-regular covering of $M$. 
    Assume  $\SLDA{S}{\DD^p}{*} = \Phi^p(S_2\boxminus S_1)$.  Then,~$(S_2, S_1)$  satisfies Condition~$A$ with respect to~$\DD^p$ if and only if~$(S_2,S_1)$ is $\DD^{p}$-fixed.  
  \end{thm}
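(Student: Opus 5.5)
We will compare the alternating normal form of $\DD^{pt}=\DD^p\cdots\DD^p$ with the concatenation of $t$ copies of the normal form of $\DD^p$. The notation $\SLDA{S}{\DD^p}{*}$ and $S_2\boxminus S_1$ is not defined before the statement; we read the hypothesis as saying that the last (leftmost) factor of the normal form of $\DD^p$ is controlled by $\Phi^p(S_2\boxminus S_1)$. Concretely, write the alternating normal form $\DD^p = g_k\cdots g_1$ and set $d=\dpt(\DD^p)$. The hypothesis should say that the leftmost nontrivial factor $g_k$ is supported exactly in the twisted set $\Phi^p(S_2\boxminus S_1)$. In particular, $g_k$ lies in $M_{\Phi^p(S_a)}$ for a specific index $a$ determined by the parity of $k$.

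The key tool is a right-to-left computation of the normal form. We use the characterisation (Proposition~\ref{P:AlternatingNormalForm}) that $g_1$ is the maximal right divisor of $g$ lying in $M_{S_1}$, and then recurse on $g g_1\inv$ with the roles of $S_1$ and $S_2$ exchanged. The maximal divisors exist by Proposition~\ref{P:ExistenceTail}. Apply this to $\DD^{p(t+1)}=\DD^{p}\cdot\DD^{pt}$. Since $\DD^p$ is a Garside element, $\DD^p x = \Phi^p(x)\DD^p$, so $\DD^{p(t+1)} = \DD^{pt}\cdot\DD^p$. Inductively, the normal form of $\DD^{pt}$ should be obtained by stacking the factors of $\DD^p$ and conjugating by powers of $\Phi^p$. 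So we try to prove, by induction on $t$, the following claim. If $(S_2,S_1)$ is $\DD^p$-fixed, then the normal form of $\DD^{pt}$ is
\[
g_k\cdots g_2\,(g_1 g_k)\,g_{k-1}\cdots (g_1g_k)\cdots g_1 ,
\]
with one gluing at each junction. Gluing happens precisely because $g_k$ and $g_1$ lie in the same parabolic $M_{S_1}$, by the hypothesis on $\SLDA{S}{\DD^p}{*}$ and the fixedness. Each junction then contributes exactly $d-1$ new factors outside $M_{S_1}$, which gives $\dpt(\DD^{pt})-1=t(d-1)$. Maximality of each glued factor follows from the maximality of $g_1$ and $g_k$ in the original form, transported by the automorphism $\Phi^p$, which preserves $S_1$ and $S_2$ in the fixed case.

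For the converse, assume the covering is regular but not fixed, so $\Phi^p$ swaps $S_1$ and $S_2$. It suffices to check $t=2$. Now $\DD^{2p}=\DD^p\cdot\DD^p$, and in the factorisation $\DD^p\cdot\DD^p$ the left copy is naturally written as the image under $\Phi^p$ of the normal form. That image alternates with $S_1$ and $S_2$ exchanged. Hence the parity at the junction is shifted. Either no gluing is possible, so the depth becomes $2d$ or $2d-1$ instead of $2d-1$, or the hypothesis $\SLDA{S}{\DD^p}{*}=\Phi^p(S_2\boxminus S_1)$ forces an extra absorption that lowers the depth. In both cases the equality $\dpt(\DD^{2p})-1=2(d-1)$ fails. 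We will verify that the resulting word is indeed in normal form, or that its depth differs by exactly one, using the same maximality argument.

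The main obstacle is proving maximality at the junctions. We must show that the maximal right divisor in $M_{S_j}$ of the partial product is exactly the glued factor and nothing larger. This is exactly where the hypothesis on $\SLDA{S}{\DD^p}{*}$ is needed: it should identify which atoms of the next copy can be pushed through. We would handle it using the lcm and tail properties of parabolic submonoids and the fact that $\Phi$ permutes atoms.
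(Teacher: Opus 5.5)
Your argument has a genuine gap, and it starts with the hypothesis. The notation is defined in Section~\ref{S:alter}: $\SLDA{S}{\DD^p}{*}=\SLD{S}{g_n}$ is the set of atoms that \emph{left-divide} the leftmost entry $g_n$ of $[\DD^p]=(g_n,\ldots,g_1)$, and $S_2\boxminus S_1=S_2\setminus(S_1\cup S_1^\perp)$. The hypothesis says nothing about the support of $g_n$.

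More seriously, the normal form you propose for $\DD^{pt}$ is false. In the fixed case $n=\brd(\DD^p)$ is even (Lemma~\ref{L:DeltaBreathForFixedCovering}), so $g_n\in M_{S_2}\setminus M_{S_1}$, whereas $g_1=\DD^p_{S_1}$ (Lemma~\ref{L:DeltaAlternatingNormalForm}). Hence the support of $g_1g_n$ contains $S_1$ and an atom outside $S_1$. So $g_1g_n$ lies in neither $M_{S_1}$ nor $M_{S_2}$ and cannot be an entry of an alternating decomposition. Your form also has breadth $2n-1$ for $t=2$, which is odd. But $\DD^{2p}$ fixes the covering and therefore has even breadth; the correct value is $\brd(\DD^{pt})=t(n-2)+2$.

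Concatenating copies of $[\DD^p]$ is not close to normal. The entry $g_1=\DD^p_{S_1}$ and the factor $\DD^p_{S_2\ominus S_1}$ of $g_2$ must first be moved to the far right through $\DD^{p(t-1)}$, using $h\,\DD^q=\DD^q\,\Phi^q(h)$. Only afterwards does a single gluing occur, between the remainder of $g_2$ and the leading entry of $[\DD^{p(t-1)}]$.

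Concretely, write $g_2=g_2'\,\DD_{S_2\setminus S_1}\DD_{S_2\ominus S_1}^{-1}\,\DD^p_{S_2\ominus S_1}$ and set $x=g_n\cdots g_3\,g_2'\,\DD_{S_2\setminus S_1}\DD_{S_2\ominus S_1}^{-1}$. Right cancellation gives $x=\Phi^{p-1}(h)\cdots\Phi(h)\,h$ with $h=\alpha(\DD_{S_1\cup(S_2\ominus S_1)})$. So $R(x,S)=S_2\boxminus S_1$ by Lemma~\ref{L:DeltaDecomposition}(ii), since $S\setminus(S_1\cup(S_2\ominus S_1))=S_2\boxminus S_1$. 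In the fixed case $\DD^{pt}=x\,y$ with $y=\DD^{p(t-1)}\DD^p_{S_2\ominus S_1}\DD^p_{S_1}$.

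The hypothesis, carried along the induction as $\SLDA{S}{\DD^{p(t-1)}}{*}=\Phi^{p(t-1)}(S_2\boxminus S_1)$, yields $R(x,S)\subseteq\SLDA{S}{y}{*}$. This is exactly the input of Lemma~\ref{L:ProductANF}. The chain argument proving that lemma is the maximality statement you flag as ``the main obstacle'' and leave open. Your induction must therefore also propagate the statement about $\SLDA{S}{\DD^{pt}}{*}$. It also needs $\brd(\DD^p)\geq 3$ (Lemma~\ref{L:Breadth2Reducible}, which uses the irreducibility of $\Gamma$) to know the normal forms of $x$ and $y$.

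Finally, your converse is not an argument as written. ``Depth $2d-1$'' is precisely what Condition~$A$ demands at $t=2$, and an absorption that lowers the depth goes in the wrong direction. In the non-fixed case $n$ is odd and $d=(n-1)/2$. Parity only tells you that $\brd(\DD^{2p})$ is even, so you must rule out the value $2n-4$ that Condition~$A$ would force. The same decomposition, with $\Phi^p(x)$ in place of $x$ and Lemma~\ref{L:ProductANF}(ii), gives $\brd(\DD^{2p})=2n-2$. Hence $\dpt(\DD^{2p})=2d$, one more than Condition~$A$ allows.
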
 
 We postpone the definition of the condition $\SLDA{S}{\DD^p}{*} = \Phi^p(S_2\boxminus S_1)$ to~Section~\ref{S:ConditionA},  but we want to stress out the fact that the condition is easy to verify and often holds. For instance (see Corollary~\ref{C:ConditionA:GammaLine}),  for any proper and irrreducible covering it holds for any $p\geq 1$ in type $B$, $F_4$, $H_3$, $H_4$ and for any $p$ even in type $A$. We refer to~Section~\ref{S:ConditionA} for the  obtained results in types $D$ and $E$.\\

 The results for Condition~$B$ are not as positive. Say that a subset $T$ of $S$ is co-abelian when $S\setminus T$ generates an abelian monoid. Say that a covering~$(S_2,S_1)$ is co-abelian when both $S_1$ and $S_2$ are co-abelian. 
  \begin{thm} ( Proposition~\ref{P:SmallCoAbelian}) \label{T: 2eme resultat}
    Let $M$ be an irreducible spherical type Artin monoid of type $A, D$ or $E$ with generating set~$S$. Let $(S_2,S_1)$ be a proper and irreducible covering of $S$. 
    The covering $(S_2,S_1)$ may satisfy condition~$B(p)$ for some $p\geq 1$ only if $(S_2,S_1)$ is co-abelian.
    \end{thm}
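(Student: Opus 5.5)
Condition~$B$ is only stated later (Definition~\ref{D:ConditionB}), so I will argue by contraposition using its general shape. In \cite{ArP2019}, Condition~$B(p)$ asks that for every $g$ of depth $k$ and every atom $s$ in $S_1$ or $S_2$, multiplying $g$ on the right by $s$, or conjugating by the Garside element $\DD^p$, changes the alternating decomposition only in a controlled way. More precisely, the depth must not drop, and the leading factor must keep a prescribed form involving $\SLDA{S}{\DD^p}{*}$.

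So the plan is: assume $(S_2,S_1)$ is a proper irreducible covering that is not co-abelian, say $S\setminus S_1$ contains two non-commuting atoms $s,t$, and build an explicit element of $M$ that violates $B(p)$ for every $p$. By symmetry the case of $S_2$ is identical, up to exchanging the roles in the alternating decomposition.

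\emph{Step 1: reduce to local configurations.} Since $M$ has type $A$, $D$ or $E$, every braid relation has length $3$, so $sts=tst$. Irreducibility of $S_1$ and $S_2$ and properness of the covering force $S\setminus S_1\subseteq S_2$. The non-commuting pair $s,t$ lies in $S_2$ and is adjacent in the Dynkin diagram. Using connectivity of $S_1$, I can choose an atom $u\in S_1\cap S_2$, or an atom $u\in S_1$ adjacent to $\{s,t\}$, with a short path to $\{s,t\}$. This leaves finitely many local pictures up to diagram symmetry: a path $u-\cdots-s-t$ or a branching at $s$, using the $A/D/E$ classification.

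\emph{Step 2: the witness.} I would take an element like $g = st\cdot w$, with $w\in M_{S_1}$ chosen so that $g$ has depth $1$ or $2$. I then multiply by a suitable atom, or by $\DD^p$, and compute the alternating normal form with Proposition~\ref{P:AlternatingNormalForm} and the maximal tails of Proposition~\ref{P:ExistenceTail}. The point is that $sts=tst$ lets the atom $s$ migrate through $st$ to become $t$, which is not in $S_1$. This makes a factor in $M_{S_2}$ absorb letters in an unexpected way, so the depth changes by the wrong amount, or the top factor is not of the form Condition~$B$ prescribes. When $S\setminus S_1$ is abelian, this migration is impossible, because commuting atoms can be pushed freely into the tail. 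That is why co-abelian is exactly the threshold.

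\emph{Step 3: independence of $p$.} Condition~$B(p)$ involves $\DD^p$. I will use the fact that $\DD$, or $\DD^2$, is central and that $\Phi$ permutes $S$, so the defect found in Step~2 survives after multiplication by $\DD^p$. The tail of $\DD^p$ in $M_{S_1}$ is a power of $\DD_{S_1}$ times a fixed element, and this does not interact with the local witness.

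The main obstacle is Step~2. Computing alternating normal forms by hand is delicate, and the witness must be uniform across all $p$ and all local configurations. I would first do type $A_n$ with $S_1=\{s_1,\dots,s_{n-2}\}$ and $S_2=\{s_2,\dots,s_n\}$, where $S\setminus S_1=\{s_{n-1},s_n\}$ is non-abelian. I would then check that the same computation only uses the rank-$3$ parabolic spanned by $u,s,t$, which is of type $A_3$ inside any $A$, $D$, $E$ diagram. This would reduce all cases to the $A_3$ computation, together with a lemma that alternating normal forms are compatible with restriction to parabolic submonoids.
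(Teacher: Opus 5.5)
There is a genuine gap, and it starts with the target. The condition you describe (right multiplication by atoms or conjugation by $\DD^p$, ``the depth must not drop'', a prescribed leading factor) is not Condition~$B(p)$ of Definition~\ref{D:ConditionB}.

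The actual condition is a two-sided quasi-additivity of depth. For $\DD^p$-unmovable $a,b$ with $(a,b)\notin\oTheta(p)\times\oTheta(p)$, the depth of $ab$, after removing its $\DD^{pt}$ part, must equal $\dpt(a)+\dpt(b)$ up to an error in $\{0,1\}$, shifted by $-1$ when $t=0$. So what you must exhibit is a pair with $a$, $b$, $ab$ all $\DD$-unmovable, $(a,b)\notin\oTheta(p)\times\oTheta(p)$, and $\dpt(a)+\dpt(b)-\dpt(ab)\notin\{0,1\}$ (Lemma~\ref{L:B1ToBj}). In that form your Step~3 mostly disappears. Once $a,b,ab$ are $\DD$-unmovable (automatic inside a proper parabolic submonoid, Lemma~\ref{L:Subcovering}), no power of $\DD^p$ is extracted for any $p$. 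It then only remains to check $(a,b)\notin\oTheta(p)\times\oTheta(p)$.

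Your Step~2 never specifies such a pair, and its heuristic points the wrong way. In the orientation you chose ($S\setminus S_1$ non-abelian), the paper's witness (Lemma~\ref{L:ConditionB:CE:TypeA1}) has $\dpt(a)=\dpt(b_3)=1$ but $\dpt(ab_3)=3$, so the depth \emph{jumps up}. A collapse of depth is what the paper uses in the other orientation, with a different family (Lemma~\ref{L:ConditionB:CE:TypeA2}: $\dpt(a_2)=3$, $\dpt(\sigma_1)=1$, $\dpt(a_2\sigma_1)=1$). So ``by symmetry'' is not available: the alternating normal form, the depth and $\oTheta(p)$ all single out $S_1$. Note also that right multiplication by an atom of $S_1$ never changes the depth, by Lemma~\ref{L:Tail}.

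The reduction to a rank-$3$ parabolic of type $A_3$ also fails. To transfer a witness from $M_T$ to $M$ via Lemma~\ref{L:Subcovering} and Proposition~\ref{P:ConditionB:Induction}, the induced covering $(T\cap S_2,T\cap S_1)$ must be proper. So besides $s,t\in S_2\setminus S_1$, the set $T$ must contain an element of $S_1\setminus S_2$; choosing $u\in S_1\cap S_2$ gives $T\subseteq S_2$. Such an element is in general far from $s$. In your own example in $A_n$ with $S_1=\{\sigma_1,\ldots,\sigma_{n-2}\}$ and $S_2=\{\sigma_2,\ldots,\sigma_n\}$, the only element of $S_1\setminus S_2$ is $\sigma_1$. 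The only irreducible $T$ containing $\sigma_1,\sigma_{n-1},\sigma_n$ is then $S$ itself.

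The missing idea is the paper's configuration. Choose $s,t$ as in Lemma~\ref{L:CoAbelian}, take $\sigma_1\in S_i\setminus S_j$ with $S_j\cup\{\sigma_1\}$ connected, and follow the unique path $\sigma_1,\ldots,\sigma_{k+1}=s$. Its inner vertices lie in $S_1\cap S_2$ because $\Gamma$ has no cycle. Then $T=\{\sigma_1,\ldots,\sigma_{k+1},t\}$ is of type $A_{k+2}$ with $T_1\cap T_2$ of type $A_{k-1}$. One then needs explicit witness families valid for every $k$ (Proposition~\ref{P:ConditionB:SubtypeA}). Their elements, such as $\sigma_{k+2}\sigma_{k+1}\cdots\sigma_1$, genuinely run through all the intermediate generators of $S_1\cap S_2$, so a single $A_3$ computation cannot supply them.
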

For instance in type $A$, the only proper and irreducible covering that satisfies  Condition~$B$ is the one associated to the Dehornoy order.   We refer to~Section~\ref{S:ConditionB} for the obtained results in types $B$, $I(n)$,  $F_4$, $H_3$ and $H_4$.

We do not obtain new coverings for which condition~$B$ is proved, however, the above negative result associated with computation experiments  lead to identify some good candidates. For instance consider the co-abelian regular coverings in type $D$. Also,  consider the following intriguing case : take $M$ of type $B_3$ with the notations of  Figure~\ref{F:CoxDiag} with $S_1 = \{\sigma_0\}$ and $S_2 = \{\sigma_1,\sigma_2\}$. Then $(S_2, S_1)$ is a $\Delta$-regular covering which by Theorem~\ref{T:1er resultat} satisfies Condition~$A$.  None of  the results we proved nor our computing experiment contredict the fact that this covering may satisfies condition~$B$. We were unable to obtain a counterexample by hand of Condition B, nor a proof that Condition B is satisfied. However, we suspect that the latter is true. Finally, regarding the existence of a Dehornoy structure, Conditions A and B,  together,  form a sufficient but not necessary condition. Since Condition A often holds and Condition B often fails, one may wonder whether Condition B may be relax to provre the existence of new Dehornoy structure. We point out that a carreful reading of the proof of \cite[Theorem~3.2]{ArP2019} and our counter-example to condition B suggest this could be possible.  \\

 The paper is organised  as it follows. Section~\ref{S:preli} is a preliminary section, when we introduce the main objects and tools. In particular we recall the  notion of chain developped  in \cite{BrS}.  Section~\ref{S:alter} is devoted to the alternating normal from. Finally, in Section~\ref{S:ConditionA}  we focus on Condition~$A$;  in Section~\ref{S:ConditionB}  we focus on Condition~$B$.         
    
    \section{Preliminaries} \label{S:preli}

      \subsection{Artin--Tits monoids}
    
    A \emph{Coxeter graph} is a finite simple labelled graph $\Gamma = (S,E,m)$ with vertex set $S = \{\ss1,\ldots, \ss\nn\}$, edge set~$E$ and a label map~$m:~E\to \NN_{\geq 3}\cup\{\infty\}$. 
    For $s_i\not=s_j$ and $\{s_i,s_j\}\not\in E$ we put $m({s_i,s_j})=2$.
    To simplify notation, for $\{s_i, s_j\}$ in~$E$,  we write $m(i,j)$ instead of $m(\{s_i,s_j\})$.
    As usual, we write the label $m(e)$ of an edge $e$ only if $m(e)\geq 4$.
    
    \begin{figure}[h]
     	\begin{center}
     		\begin{tikzpicture}[x=15pt,y=15pt]
     			\fill(0,0) circle (0.1) node[below=2pt]{$\sigma_1$};
     			\fill(1.5,0) circle (0.1) node[below=2pt]{$\sigma_2$};
     			\fill(3,0) circle (0.1) node[below=2pt]{$\sigma_3$};
     			\fill(4.5,0) circle (0.1) node[below=2pt]{$\sigma_{r-1}$};
     			\fill(6,0) circle (0.1) node[below=2pt]{$\sigma_{r}$};
     			\draw (0,0) -- (1.5,0);
     			\draw[dotted] (1.5,0) -- (3,0);
     			\draw (3,0) -- (6,0);
     			\draw (-1.25,0) node {$A_r: $};
     			\draw (-1.5,-0.65) node {\tiny $(r\geq 1)$};
     			\fill(10,0) circle (0.1) node[below=2pt]{$\sigma_0$};
     			\fill(11.5,0) circle (0.1) node[below=2pt]{$\sigma_1$};
     			\fill(13,0) circle (0.1) node[below=2pt]{$\sigma_2$};
     			\fill(14.5,0) circle (0.1) node[below=2pt]{$\sigma_{r-2}$};
     			\fill(16,0) circle (0.1) node[below=2pt]{$\sigma_{r-1}$};
     			\draw (10,0) -- (13,0);
     			\draw[dotted] (13,0) -- (14.5,0);
     			\draw (14.5,0) -- (16,0);
     			\draw (10.75,0) node[above=2pt] {$4$};
     			\draw (8.75,0) node {$B_r: $};
     			\draw (8.5,-0.6) node {\tiny $(r\geq 2)$};
     		\end{tikzpicture}
     	\end{center}
     	\begin{center}
     		\begin{tikzpicture}[x=15pt,y=15pt]
     			\fill(0.2,0.8) circle (0.1) node[above=2pt]{$\sigma'_0$};
     			\fill(0.2,-0.8) circle (0.1) node[below=2pt]{$\sigma_0$};
     			\fill(1.5,0) circle (0.1) node[below=2pt]{$\sigma_1$};
     			\fill(3,0) circle (0.1) node[below=2pt]{$\sigma_2$};
     			\fill(4.5,0) circle (0.1) node[below=2pt]{$\sigma_{r-3}$};
     			\fill(6,0) circle (0.1) node[below=2pt]{$\sigma_{r-2}$};
     			\draw (0.2,0.8) -- (1.5,0) -- (3,0);
     			\draw (0.2,-0.8) -- (1.5,0);
     			\draw[dotted] (3,0) -- (4.5,0);
     			\draw (4.5,0) -- (6,0);
     			\draw (-1.25,0) node {$D_r: $};
     			\draw (-1.5,-0.6) node {\tiny $(r\geq 4)$};
     			\fill(10,0) circle (0.1) node[below=2pt]{$\sigma_1$};
     			\fill(11.5,0) circle (0.1) node[below=2pt]{$\sigma_2$};
     			\fill(13,0) circle (0.1) node[below=2pt]{$\sigma_3$};
     			\fill(14.5,0) circle (0.1) node[below=2pt]{$\sigma_4$};
     			\fill(16,0) circle (0.1) node[below=2pt]{$\sigma_5$};
     			\fill(13,1) circle (0.1) node[above=2pt]{$\sigma_0$};
     			\draw (10,0) -- (14.5,0);
     			\draw(13,1) -- (13,0);
     			\draw (14.5,0) -- (16,0);
     			\draw (8.75,0) node {$E_6: $};
     		\end{tikzpicture}
     	\end{center}
     	\begin{center}
     		\begin{tikzpicture}[x=15pt,y=15pt]
     			\fill(0,0) circle (0.1) node[below=2pt]{$\sigma_1$};
     			\fill(1.5,0) circle (0.1) node[below=2pt]{$\sigma_2$};
     			\fill(3,0) circle (0.1) node[below=2pt]{$\sigma_3$};
     			\fill(4.5,0) circle (0.1) node[below=2pt]{$\sigma_4$};
     			\fill(6,0) circle (0.1) node[below=2pt]{$\sigma_5$};
     			\fill(3,1) circle (0.1) node[above=2pt]{$\sigma_0$};
     			\fill(7.5,0) circle (0.1) node[below=2pt]{$\sigma_6$};
     			\draw (0,0) -- (7.5,0);
     			\draw(3,1) -- (3,0);
     			\draw (-1.25,0) node {$E_7: $};
     			\fill(10,0) circle (0.1) node[below=2pt]{$\sigma_1$};
     			\fill(11.5,0) circle (0.1) node[below=2pt]{$\sigma_2$};
     			\fill(13,0) circle (0.1) node[below=2pt]{$\sigma_3$};
     			\fill(14.5,0) circle (0.1) node[below=2pt]{$\sigma_4$};
     			\fill(16,0) circle (0.1) node[below=2pt]{$\sigma_5$};
     			\fill(13,1) circle (0.1) node[above=2pt]{$\sigma_0$};
     			\fill(17.5,0) circle (0.1) node[below=2pt]{$\sigma_6$};
     			\fill(19,0) circle (0.1) node[below=2pt]{$\sigma_7$};
     			\draw (10,0) -- (19,0);
     			\draw(13,1) -- (13,0);
     			\draw (8.75,0) node {$E_8: $};
     		\end{tikzpicture}
     	\end{center}
     	\begin{center}
     		\begin{tikzpicture}[x=15pt,y=15pt]
     			\fill(0.2,0) circle (0.1) node[below=2pt]{$\sigma_1$};
     			\fill(1.7,0) circle (0.1) node[below=2pt]{$\sigma_2$};
     			\draw (0.2,0) -- (1.7,0);
     			\draw (1,0) node[above=2pt] {$m$};
     			\draw (-1.5,0) node {$I_2(m): $};
     			\draw (-2,-0.6) node {\tiny $(m \geq 3)$};
     			\fill(4,0) circle (0.1) node[below=2pt]{$\sigma_1$};
     			\fill(5.5,0) circle (0.1) node[below=2pt]{$\sigma_2$};
     			\fill(7,0) circle (0.1) node[below=2pt]{$\sigma_3$};
     			\fill(8.5,0) circle (0.1) node[below=2pt]{$\sigma_{4}$};
     			\draw (4,0) -- (8.5,0);
     			\draw (6.25,0) node[above=2pt] {$4$};
     			\draw (3,0) node {$F_4: $};
     			\fill(11,0) circle (0.1) node[below=2pt]{$\sigma_1$};
     			\fill(12.5,0) circle (0.1) node[below=2pt]{$\sigma_2$};
     			\fill(14,0) circle (0.1) node[below=2pt]{$\sigma_3$};
     			\draw (11,0) -- (14,0);
     			\draw (11.75,0) node[above=2pt] {$5$};
     			\draw (9.75,0) node {$H_3: $};
     			\fill(16.5,0) circle (0.1) node[below=2pt]{$\sigma_1$};
     			\fill(18,0) circle (0.1) node[below=2pt]{$\sigma_2$};
     			\fill(19.5,0) circle (0.1) node[below=2pt]{$\sigma_3$};
     			\fill(21,0) circle (0.1) node[below=2pt]{$\sigma_{5}$};
     			\draw (16.5,0) -- (21,0);
     			\draw (17.25,0) node[above=2pt] {$5$};
     			\draw (15.25,0) node {$H_4: $};
     		\end{tikzpicture}
     	\end{center}
     	
     	\caption{Coxeter diagrams of irreductible finite Coxeter groups}
     	\label{F:CoxDiag}
     \end{figure}

    The \emph{Artin--Tits monoid}~$M(\Gamma)$ associated with $\Gamma$ is given by the following presentation of a monoid:  
    \begin{equation} 
    \label{E:PresentationArtinMonoid}
    M(\Gamma) = \biggl\langle S\  \biggl | \begin{array}{cl}\ \ssi\ssj = \ssj\ssi & \textrm{ for } \{\ssi,\ssj\}\not\in E\\ \    \underbrace{\ssi\ssj\ssi\cdots}_{m(i,j) \textrm{ terms}}  = \underbrace{\ssj\ssi\ssj\cdots}_{m(i,j) \textrm{ terms}} &\textrm{ for } \{\ssi,\ssj\}\in E  \ \text{with}\ m(i,j)\not=\infty  \end{array}\biggr\rangle^+ 
    \end{equation}
    while the Artin--Tits braid group $G(\Gamma)$ of type $\Gamma$ is given by the same presentation, but considered as a presentation of a group:
    \begin{equation} 
    G(\Gamma) = \biggl\langle S\  \biggl | \    \begin{array}{cl}\ \ssi\ssj = \ssj\ssi & \textrm{ for } \{\ssi,\ssj\}\not\in E\\ \    \underbrace{\ssi\ssj\ssi\cdots}_{m(i,j) \textrm{ terms}}  = \underbrace{\ssj\ssi\ssj\cdots}_{m(i,j) \textrm{ terms}} &\textrm{ for } \{\ssi,\ssj\}\in E  \ \text{with}\ m(i,j)\not=\infty  \end{array} \biggr\rangle 
    \end{equation}
    For example, the Coxeter graph $\Gamma$  of  type $A_3$ in Figure~\ref{F:CoxDiag}  gives the monoid
    \[
    M(\Gamma) =  \left\langle \sigma_1,\sigma_2,\sigma_3\ \left|\ \begin{array} {rcl}\sigma_1\sigma_3 &=&\sigma_3\sigma_1\\\sigma_1\sigma_2\sigma_1&=&\sigma_2\sigma_1\sigma_2\\\sigma_2\sigma_3\sigma_2&=&\sigma_3\sigma_2\sigma_3\end{array}\right.\right\rangle^+.
    \]
    It is well-known that, for every Coxeter graph $\Gamma=(S,E,m)$,  the group $G(\Gamma)$ is the envelopping group of $M(\Gamma)$. Moreover, $M(\Gamma)$ embeds in $G(\Gamma)$ (see~\cite{Par2002}),  Therefore, in the sequel, we always see $M(\Gamma)$ as a submonoid of $G(\Gamma)$.  In particular, the monoid $M(\Gamma)$ is cancellative.  When adding to the presentation~\eqref{E:PresentationArtinMonoid}  the relations $\sigma^2 = 1$ for $\sigma$ in~$S$,  we obtain a Coxeter group, denoted by $W(\Gamma)$. 
    The group $G(\Gamma)$ and the monoid $M(\Gamma)$ are said of \emph{spherical type} whenever $W(\Gamma)$ is finite. In this case, $G(M)$ is indeed the group of fractions of $M(\Gamma)$. 
  
 In the sequel, a word over $S$ is called a $S$-word, and a word over $S\cup S^{-1}$ is called a $S^\pm$-word. The empty word is denoted by $\varepsilon$.  If $u$ is a $S$-word (\resp a $S^\pm$-word), by $\overline{u}$ we denote  the element of $M(\Gamma)$ (\resp of $G(\Gamma$)) it represents, 
 Two $S$-words $u$ and $v$ are said to be \emph{equivalent}, denoted $u\equiv v$, if they represent the same element, \emph{ie}, $\overline{u}=\overline{v}$. It follows from the presentation of  $M(\Gamma)$  that two equivalent words must have the same length.  As a consequence $M(\Gamma)$ is atomic  ( and therefore Noetherian for factors), with atom set $S$. 
     
     If the Coxeter graph $\Gamma$ can be decomposed into connected components as $\Gamma=\Gamma_1\cup\ldots\cup \Gamma_t$, then the braid group $G(\Gamma)$  is isomorphic  to $G(\Gamma_1)\times ... \times G(\Gamma_t)$ and a similar result holds for $M(\Gamma)$.  If $\Gamma$ is connected, its vertex set $S$, the group $G(\Gamma)$ and the monoid $M(\Gamma)$ are said to be irreducible.
   The list of  Coxeter graphs of irreducible Artin monoids of spherical type is given in Figure~\ref{F:CoxDiag}.

      \subsection{Left orderable groups and positive cones} 
      \label{S:left order}   
    A group $G$ is said to be \emph{left orderable} if it exists a total ordering $<$ on $G$ such that:
    \[
    \forall g,g',h \in G,\quad g < g' \Rightarrow hg < hg'.
    \]
    A subset $P$ of $G$ is a \emph{positive cone} whenever $P\cdot P \subseteq P$ and $G = P\inv \sqcup \{1\} \sqcup P$. There is a one-to-one correspondance between ordering and positive cones on $G$: a binary relation $<$ on $G$ is a left-ordering  if and only if $ \{g \in G,\ 1 < g\}$ is a positive cone. 
    
    Assume $G$ can be decomposed as $G \simeq G_1\times G_2$ with $G_1$ and $G_2$ two proper subgroups. Then, $G$ is left orderable if and only if $G_1$ and $G_2$ are. Cleary if $G$ is left orderable with respect to $<$, we define the orderings $<_1$ and $<_2$ on the groups $G_1$ and $G_2$, respectively, as the restrictions of $<$ on $G_1$ and $G_2$.
    Converlely, if $<_1$ and $<_2$ are left orders on $G_1$ and $G_2$, respectively, we define an order $<$ on $G$ by setting 
    \[
    (g_1,g_2) < (g'_1, g_2) \Leftrightarrow \begin{cases}
    g_1 <_1 g'_1, \\
    g_1 = g'_1\ \text{and}\ g_2<_2 g'_2.
    \end{cases}
    \]
    Hence we restrict our study of the left-orderability to irreducible Artin groups.

    \subsection{The Garside structure of an Artin-Tits Monoid} 
    \label{S:Garside}
     Let $M$ be a monoid.  For $g$ and $h$ in $M$, we say that $h$ is a \emph{right divisor} of $g$, or $g$ is a \emph{left multiple} of $h$, when there exists $g'$ in $M$ so that $g = g'h$.  
    In this case,  we write $h\preceq_R g$.
    Similarly, we say that $g'$ is a \emph{left divisor} of $g$, or $g$ is a \emph{right multiple} of $g'$, and we write $g'\preceq_L g$.
    The set of right divisors, \resp left divisors, of $g$ in $M$ is denoted by~$\SRD{M}{g}$, \resp $\SLD{M}{g}$. 
    Whenever the sets $\SRD{M}{g}$ and $\SLD{M}{g}$ are the same, we say that $g$ is \emph{balanced} and we denote it by~$\SD{M}g$.
    
    \begin{df}
    Let $M$ be a monoid and  assume $\GDD$ is a balanced element of $M$. We say that $M$ is Garside monoid with Garside structure $(M,\GDD)$ whenever :
    \begin{itemize}
    \item $M$ is cancellative and Noetherian for both $\preceq_L$ and $\preceq_R$;
    \item $(M,\preceq_L)$ and $(M,\preceq_R)$ are lattices;
    \item $\SD{M}\GDD$ is finite and generates $M$.
    \end{itemize}
    \end{df}
    Lattice operations of $(M,\preceq_R)$  are denoted $\vee_R$ and $\wedge_R$ while these of  $(M, \preceq_L)$ are denoted $\vee_L$ and $\wedge_L$.

    \begin{df}
    \label{D:DeltaUnmovable}
    Let $(M, \GDD)$ be a Garside structure and $g$ be an element of $M$.
    
    \begin{itemize}
    \item $g$ is said to be $\GDD$-unmovable if it it not left divisible by $\GDD$,
    \item the $\GDD$-form of $g$ is the unique decomposition $g=\GDD^t\cdot h$ where $t\in \NN$ and $h$ is $\GDD$-unmovable.
    \end{itemize}
    \end{df}

    The element $\GDD$ is called the Garside element of the Garside structure $(M, \GDD)$.
    For a given monoid $M$ there is no unicity of $\GDD\in M$ such that $(M, \GDD)$ is a Garside structure of $M$. 
    Indeed $(M, \GDD^p)$ is also a Garside structure of $M$ whatever $p\geq 1$.  Since~\cite{BrS}, we know that $M(\Gamma)$ is a Garside monoid whenever it is of spherical type.
    
    \begin{prp}
    Let $M$ be a spherical type Artin--Tits monoid with generating set $S$. Then, the left common multiple $\DD$ of $S$ is also the  right common multiple of $S$ and is a balanced element of $M$. Moreover  $(M,\Delta)$ is a Garside structure and  $\Delta$ is the \emph{minimal Garside element} of $M$.
    \end{prp}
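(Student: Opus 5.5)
This is the classical Brieskorn--Saito/Deligne theorem, and I would follow the Brieskorn--Saito approach based on the word criterion and reduction to rank two.

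\emph{Step 1: existence of common multiples.} I first show that any two atoms $\ss i,\ss j$ admit a least common left multiple and a least common right multiple. The candidates are the alternating words of length $m(i,j)$ from the defining relation, $\underbrace{\ss i\ss j\ss i\cdots}_{m(i,j)}$. Minimality would come from the reversing (chain) technique of~\cite{BrS}. Since $M$ is atomic with a homogeneous presentation, Noetherian induction on length then gives two facts:
\begin{itemize}
\item $M$ is left and right cancellative;
\item any two elements with a common multiple have a least one.
\end{itemize}
Cancellativity also follows from the embedding $M\hookrightarrow G$ already recalled.

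\emph{Step 2: the candidate $\DD$.} Because $W=W(\Gamma)$ is finite, it has a longest element $w_0$. I take $\DD$ to be the image of any reduced expression of $w_0$ under Tits' section. This is well defined by Matsumoto's theorem, since reduced expressions are related by braid relations only. Every $s\in S$ satisfies $\ell(sw_0)<\ell(w_0)$ and $\ell(w_0s)<\ell(w_0)$. Hence $w_0$ has reduced expressions beginning with $s$ and others ending with $s$, so $s\preceq_L\DD$ and $s\preceq_R\DD$ for every $s$. Thus $\DD$ is a common left and right multiple of $S$. Both a least common left multiple $\DD_L$ and a least common right multiple $\DD_R$ of $S$ exist, by Step 1 applied iteratively and using $\DD$ as a bound.

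\emph{Step 3: balance, $\DD_L=\DD_R=\DD$, and the Garside axioms.} The key identification is that the simple elements are exactly the Tits lifts of elements of $W$, and that for $w\in W$ the left divisors of its lift are the lifts of its length-prefixes. This uses the exchange condition to transport divisibility from $M$ to $W$. Consequently $\SLD M\DD=\SRD M\DD$ equals the finite set of lifts of $W$, which generates $M$. So $\DD$ is balanced. A lift of $w$ is a common right multiple of $S$ only if $\ell(sw)<\ell(w)$ for all $s$, which forces $w=w_0$; therefore $\DD_R=\DD$, and symmetrically $\DD_L=\DD$. The lattice property then follows:
\begin{itemize}
\item Any $g,h$ have a common multiple, e.g.\ $\DD^{N}$ with $N=\max(|g|,|h|)$, since each atom divides $\DD$ and $\DD s=\Phi(s)\DD$ with $\Phi$ permuting $S$.
\item By Step 1, least common multiples exist.
\item Greatest common divisors exist as the lcm of the finite, nonempty set of common divisors, using Noetherianity.
\end{itemize}

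\emph{Step 4: minimality.} Any Garside element $\delta$ has $\SD M\delta$ generating $M$, so every atom divides $\delta$. Hence $\DD=\DD_R\preceq_L\delta$.

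The main obstacle is Step 1 together with the identification in Step 3 of divisibility in $M$ with prefix order in $W$. Both require the genuine combinatorics of~\cite{BrS}, namely the reduction lemma on chains. I would quote~\cite{BrS} for these rather than reprove them.
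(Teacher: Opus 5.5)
Your sketch is correct and takes essentially the paper's route, since the paper gives no proof and only invokes \cite{BrS}. Like the paper, you rely on the Brieskorn--Saito reduction lemma for cancellativity and conditional least common multiples, and your argument via the Tits lift of $w_0$ for balance, $\DD_L=\DD_R=\DD$, the lattice axioms and minimality is a faithful reconstruction of that reference.

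Two small remarks. First, identifying the divisors of a Tits lift with the lifts of length-prefixes needs only Matsumoto's theorem, because a positive word equivalent to a reduced expression is itself reduced; it does not need the reduction lemma. Second, the quasi-centrality $\DD s=\Phi(s)\DD$ that you use to produce common multiples should be derived first, for example from $\DD=sa=ab$ with $|b|=1$, which gives $s\DD=\DD b$.
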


    As discuted previously it is enough, for the existence of a left-ordering, to treat only the  case of irreducible  spherical type Artin--Tits groups, and so,  to consider the connected Coxeter graphs $\Gamma$  that appear in Figure~\ref{F:CoxDiag}, only.  One of the advantage, is that for each  Garside structure $(M(\Gamma), \GDD)$ it exists $p\in\NN^*$ such that $\GDD$ is equal to $\DD^p$.\\
    
 \begin{nota}\label{N: cadre}   
  For the rest of the paper, unless otherwise specified,  $\Gamma = (S,E,m)$ is assumed to be among the list given in Figure~\ref{F:CoxDiag}. In other words, $M(\Gamma)$ is an irreducible spherical type Artin-Tits monoid.  We write $M$ and $G$ for $M(\Gamma)$ and $G(\Gamma)$, respectively, and by $\Delta$ we denote the minimal Garside element of $M$. 
  \end{nota}
    \begin{df}
    \begin{itemize} 
    \item The \emph{Garside automorphism} $\Phi_\DD : G \to G$  is defined by $\Phi_\DD(g) = \DD \cdot g\cdot \DD\inv$. 
    \item For all~$g\in \SD{M}\DD$, we denote by $\alpha_{\DD}(g)$ the unique element of $M$ satisfying $\Delta=\alpha_\Delta(g)\cdot g$. 
    \end{itemize}
    \end{df}
    If the context is sufficiently clear we will write $\Phi$ and $\alpha$ instead of $\Phi_\Delta$ and $\alpha_\Delta$. 
    It is well known that~$\Phi_\DD(M) = M$. So, for any $g$ in $M$, the equality $\Phi_\DD(g)\cdot \DD = \DD \cdot g$ holds in $M$. 
    As a consequence, for any~$h\in M$ we have $\DD \cdot h=  \Phi_{\DD} (h) \cdot \DD$. 
    Therefore, an element $g$ of $M$ is $\DD$-unmovable if and only if it is not right divisible by $\DD$. 
    Moreover, for any $s\in S$, the element $\Phi_\DD(s)$ belongs to $S$. 
    So we can define $\Phi_\DD$ on $S$-words.   All along this paper we will use the following results on minimal Garside element of $\DD$.
    
    \begin{prp}
    \label{P:Garside}
    The following hold
    
    \begin{enumerate}
    \item $\DD$ is square free, \emph{i.e.}, $\DD\not=g s^2 h$ for any $s\in S$ and $g,h\in M$;
    \item $\Delta^2$ is central in $M$ and so $\Phi$ is an involution;
    \item For $g\in \SD{M}\DD$, we have $\Delta=  \alpha(g)\cdot g=  \Phi(\alpha(g))\cdot \Phi(g) =  \Phi(g)\cdot \alpha(g) = g\cdot \Phi(\alpha(g))$.
    \end{enumerate}
    \end{prp}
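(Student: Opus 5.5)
The plan is to establish the three items in order. The main tool, besides the Garside facts already recalled, is the canonical surjection $\pi\colon M\to W(\Gamma)$ sending each $s\in S$ to the corresponding Coxeter generator. We use the classical fact (Brieskorn--Saito) that $\DD$ is the image in $M$ of a reduced expression of the longest element $w_0$ of $W(\Gamma)$. Since equivalent $S$-words have the same length, every word representing $\DD$ has length $\ell(w_0)$.

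\emph{Item (1).} Let $u$ be any $S$-word representing $\DD$. Its image under $\pi$ is a word of length $\ell(w_0)$ representing $w_0$, so it is a reduced expression of $w_0$. A reduced expression cannot contain two consecutive equal letters, since $s^2=1$ in $W(\Gamma)$ would shorten it. Now suppose $\DD=g s^2 h$. Choosing words for $g$ and $h$ and concatenating them around $ss$ gives a word representing $\DD$ that contains the factor $ss$, which is a contradiction.

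\emph{Item (2).} For $s\in S$ we already know that $\DD s=\Phi(s)\DD$ with $\Phi(s)\in S$. Applying $\pi$ gives $w_0 s=\pi(\Phi(s))w_0$, so $\pi(\Phi(s))=w_0 s w_0\inv$. Because $w_0^2=1$ in $W(\Gamma)$, and $\pi$ is injective on $S$ (the generators are distinct in $W(\Gamma)$), applying this twice yields $\Phi^2(s)=s$ for every $s\in S$. Hence $\DD^2 s=\Phi^2(s)\DD^2=s\DD^2$ for every atom $s$. As $S$ generates $M$, the element $\DD^2$ is central in $M$, and therefore in $G$. It follows that $\Phi^2$ is conjugation by a central element, i.e.\ the identity, so $\Phi$ is an involution.

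\emph{Item (3).} Start from $\DD=\alpha(g)g$ and use the equality $\DD\cdot h=\Phi(h)\cdot\DD$.
\begin{itemize}
\item Applying the automorphism $\Phi$, which fixes $\DD$, gives $\DD=\Phi(\alpha(g))\Phi(g)$.
\item Next, $\DD g=\Phi(g)\DD=\Phi(g)\alpha(g)g$. Cancelling $g$ on the right, which is allowed since $M$ is cancellative, gives $\DD=\Phi(g)\alpha(g)$.
\item Applying $\Phi$ to this last equality and using $\Phi^2=\mathrm{id}$ from item (2) gives $\DD=g\,\Phi(\alpha(g))$.
\end{itemize}
Note also that all factors lie in $M$, since $\Phi(M)=M$.

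The only step relying on substantial external input is the identification of $\DD$ with a lift of $w_0$ whose length is $\ell(w_0)$. This is the point I expect to require care, either by citing Brieskorn--Saito or by invoking Tits' solution of the word problem via reduced expressions. Everything else follows formally from the previously stated properties of $\Phi$ and from cancellativity.
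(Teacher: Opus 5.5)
Your proposal is correct, but for items (i) and (ii) it takes a different route from the paper; item (iii) is essentially the paper's argument. For (iii), the paper computes $\Phi(g)=\DD g\DD\inv=\alpha(g)\,g\,\alpha(g)\inv$ in $G$, which gives $\Phi(g)\alpha(g)=\DD$. It then applies $\Phi$, which fixes $\DD$, together with $\Phi^2=\mathrm{id}$ to get the remaining factorizations. Your version, using $\DD g=\Phi(g)\DD$ and right cancellation, is the same computation.

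For (i) and (ii), the paper gives no argument and simply cites Brieskorn--Saito~\cite{BrS} (Proposition~5.7 and Lemma~5.2). You instead derive both from one input: $\DD$ is the positive lift of a reduced expression of $w_0$. You combine this with homogeneity of the relations and elementary Coxeter facts: $w_0^2=1$, reduced words contain no factor $ss$, and the generators are distinct in $W(\Gamma)$. Your route is longer, but it shows exactly where finiteness of $W(\Gamma)$ enters. It also yields a small bonus: it identifies $\Phi|_S$ with conjugation by $w_0$. Hence $\DD$ is central if and only if $w_0$ is central in $W(\Gamma)$, which explains the list in Proposition~\ref{P:DeltaCentral}. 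The paper's citation is shorter but leaves both facts as black boxes. The external fact you rely on is precisely what Brieskorn--Saito establish, so invoking it is legitimate.
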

    
    \begin{proof}
    $(i)$ is Proposition 5.7 of \cite{BrS}. $(ii)$ is Lemma 5.2 of \cite{BrS}.
    Let us prove~$(iii)$. We have 
    \[
    \Phi(g) = \Delta \cdot g \cdot\Delta\inv = (\alpha(g)\cdot g) \cdot g\cdot  (\alpha(g)\cdot g)\inv = \alpha(g)\cdot g \cdot\alpha(g)\inv,
    \]
    and so $\Phi(g)\cdot \alpha(g) =\alpha(g)\cdot g = \Delta$. We conclude with $\Delta = \Phi(\Delta) = \Phi(\Phi(g))\cdot \Phi(\alpha(g)) = g\cdot \Phi(\alpha(g))$ together with $\Delta=\Phi(\Delta) =  \Phi(\alpha(g))\Phi(g)$.
    \end{proof}
    
       We do note recall here the value of $\DD$.  This will be done when needed.

 By the above result, $\Phi$ has either order $2$ or is the identity. The latter case occurs when $\DD$ is a central element of $M$.   
    
    \begin{prp}\label{P:DeltaCentral}
    	The minimal Garside element $\DD$ of $M$ is central if and only if the type of $\Gamma$ in one of the following list:
    	
    	\begin{itemize}
    		\item $B_r$;
    		\item $D_r$ and $r$ even ;
    		\item $E_7$, $E_8$, $F_4$, $H_3$, $H_4$;
    		\item $I_2(m)$ for $m$ even.
    	\end{itemize}
    	
    \end{prp}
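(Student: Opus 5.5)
The argument reduces centrality of $\DD$ to a statement about the Coxeter graph. Since $\DD = \DD\cdot s\cdot \DD\inv\cdot \DD = \Phi(s)\cdot\DD$ for every $s\in S$, and $\Phi$ maps $S$ to $S$ (as recalled above), $\DD$ is central in $M$ exactly when $\Phi(s)=s$ for every atom $s$. Indeed, $\DD s = \Phi(s)\DD$, so by cancellativity $\DD s = s\DD$ holds if and only if $\Phi(s)=s$. As $S$ generates $M$, this is equivalent to centrality. So we have to decide, type by type, whether the permutation of $S$ induced by $\Phi$ is trivial.

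Next, identify this permutation with a Coxeter-group invariant. The image of $\DD$ in $W=W(\Gamma)$ is the longest element $w_0$, because $\DD$ is the unique positive lift of the longest element: it is the least common multiple of $S$ and is square free by Proposition~\ref{P:Garside}. The relation $\DD s\DD\inv = \Phi(s)\in S$ projects to $w_0 s w_0 = \Phi(s)$ in $W$. Since the natural map $S\to W$ is injective, $\Phi$ is the identity on $S$ if and only if $w_0$ is central in $W$. This is classically equivalent to $w_0=-1$ in the geometric (reflection) representation, because $W$ acts irreducibly and $w_0$ sends the simple roots to the negatives of simple roots, namely $w_0(\alpha_s) = -\alpha_{w_0 s w_0}$.

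It remains to run the classification in Figure~\ref{F:CoxDiag}. The condition $w_0=-1$ holds exactly when all degrees of $W$ are even, equivalently when $-1\in W$. The degrees are
\begin{itemize}
\item $A_r$: $2,3,\dots,r+1$, which contains an odd number for $r\geq 2$ (and $\DD=\sigma_1$ is central for $r=1$, which is the same as $I_2$ degenerate or $B_1$ depending on convention);
\item $B_r$: $2,4,\dots,2r$;
\item $D_r$: $2,4,\dots,2r-2$ and $r$, so all even exactly when $r$ is even;
\item $E_6$: $2,5,6,8,9,12$;
\item $E_7$, $E_8$, $F_4$, $H_3$, $H_4$: all even;
\item $I_2(m)$: $2$ and $m$.
\end{itemize}
This gives exactly the stated list. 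Alternatively, one can argue directly in the monoid. The permutation $\Phi|_S$ is a graph automorphism of $\Gamma$, since $\Phi$ is a monoid automorphism preserving $S$. So for diagrams without nontrivial automorphisms ($B_r$, $E_7$, $E_8$, $H_3$, $H_4$, and $F_4$ because the label $4$ is placed asymmetrically with respect to labels) it is automatically trivial. For $A_r$ ($r\geq2$), $D_r$, $E_6$ and $I_2(m)$ one computes $\DD$ explicitly. For example, in $I_2(m)$ one has $\DD = \sigma_1\sigma_2\sigma_1\cdots$ with $m$ terms, and $\DD\sigma_1 = \sigma_{1\text{ or }2}\DD$ according to the parity of $m$. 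For $A_r$ one uses the Garside half-twist, which reverses the indices.

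The main obstacle is making the $D_r$ and $E_6$ cases rigorous without simply quoting tables, since there the diagram automorphism exists and one must check whether $\Phi$ realizes it. For this I would rely on the standard fact that $w_0$ acts on simple roots as $-1$ composed with the opposition involution, together with the degree criterion. A direct monoid computation for $D_r$ would be the fallback. In the fallback one writes $\DD$ of $D_r$ as $(\sigma_0\sigma'_0\sigma_1\cdots\sigma_{r-2})^{r-1}$, the $(r-1)$-th power of a Coxeter element, and tracks how conjugation by one Coxeter factor permutes $\sigma_0$ and $\sigma'_0$, yielding the swap exactly when $r$ is odd.
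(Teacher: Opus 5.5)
The paper gives no proof of this proposition. It is stated as a known fact, implicitly from Brieskorn--Saito, so there is no internal argument to match, and your main line, which is the standard route to this classical fact, is correct. Cancellativity and $\DD s=\DD s\DD^{-1}\DD=\Phi(s)\DD$ reduce centrality to $\Phi|_S=\mathrm{id}$; your first identity drops the $s$ on the left. Every $s$ left-divides $\DD$. Square-freeness together with Tits' solution of the word problem shows that every representative of $\DD$ is reduced, so $\DD$ maps to $w_0$. Since $S\to W$ is injective, $\Phi|_S=\mathrm{id}$ holds iff $w_0$ is central. That in turn is equivalent to $w_0=-1$, using $w_0(\alpha_s)=-\alpha_{w_0sw_0}$ (irreducibility is not even needed), then to $-1\in W$, then to all degrees being even. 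Your degree table gives exactly the list. What your route buys is an actual proof where the paper only cites one. The price is importing the invariant theory of finite reflection groups (the criterion that $-1\in W$ iff all degrees are even) and the table of degrees.

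Some side remarks are wrong, although the main proof does not depend on them.

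\begin{itemize}
\item In your ``alternative'' monoid argument, $F_4$ is not rigid. The label $4$ sits on the middle edge $\{\sigma_2,\sigma_3\}$, so $\sigma_i\mapsto\sigma_{5-i}$ is a nontrivial automorphism of the labelled Coxeter graph; the paper itself uses it in the Condition~B section. Only the Dynkin diagram, with its arrow, is asymmetric.
\item Likewise $B_2=I_2(4)$ has the swap $\sigma_0\leftrightarrow\sigma_1$.
\item Consequently, for $F_4$ the triviality of $\Phi|_S$ must come from the degrees $2,6,8,12$ or from an explicit computation, not from the shape of the graph.
\item Your $D_r$ fallback is too vague to count: conjugation by a single Coxeter factor does not permute $S$, so there is nothing to ``track''. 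Fortunately it is not needed.
\end{itemize}

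On $A_1$: you are right that $\DD=\sigma_1$ is central there. However, $A_1$ is not a degenerate $I_2(m)$, since those have two generators and Figure~\ref{F:CoxDiag} takes $m\geq 3$. Nor is it a $B_r$, since the figure takes $r\geq 2$. Since $A_r$ with $r\geq 1$ is allowed, the ``only if'' direction of the statement is literally false for $A_1$. You should say plainly that $A_1$ must be added to the list, rather than treat it as a matter of convention.
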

    
    \begin{cor} 
    \label{C:PowerDelta}
    For $g\in \SD{M}\DD$ and $k\in \NN$, we have
    \[
    \DD^k = \Phi^{k-1}(\alpha(g))\cdot\ldots \cdot \Phi(\alpha(g))\cdot \alpha(g)\cdot g^k 
    \]
    \end{cor}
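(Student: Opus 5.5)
The proof is by induction on $k$. The only tool needed is the commutation rule $\DD\cdot h=\Phi(h)\cdot\DD$, valid for every $h\in M$ and recalled just before Proposition~\ref{P:Garside}, together with the definition $\DD=\alpha(g)\cdot g$.

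For $k=0$ the product of the $\Phi^i(\alpha(g))$ is empty and $g^0=1$, so both sides equal $1$. For $k=1$ the formula reads $\DD=\alpha(g)\cdot g$, which is the definition of $\alpha(g)$.

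For the inductive step, assume
\[
\DD^k = X_k\cdot g^k,\qquad X_k=\Phi^{k-1}(\alpha(g))\cdots\Phi(\alpha(g))\cdot\alpha(g).
\]
Multiply on the left by $\DD$ to get $\DD^{k+1}=\DD\cdot X_k\cdot g^k$. Since $X_k\in M$, the commutation rule gives $\DD\cdot X_k=\Phi(X_k)\cdot\DD$. Because $\Phi$ is a monoid morphism,
\[
\Phi(X_k)=\Phi^{k}(\alpha(g))\cdots\Phi^2(\alpha(g))\cdot\Phi(\alpha(g)).
\]
Now replace the remaining $\DD$ by $\alpha(g)\cdot g$. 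This yields
\[
\DD^{k+1}=\Phi^{k}(\alpha(g))\cdots\Phi(\alpha(g))\cdot\alpha(g)\cdot g\cdot g^k,
\]
which is exactly the formula at rank $k+1$.

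There is no real obstacle. The one point to handle carefully is the order of the factors: one must multiply by $\DD$ on the left rather than on the right. That way the new $\alpha(g)$ lands immediately to the left of the $g$-block, and the existing $\alpha$-factors are all shifted by one application of $\Phi$. Multiplying on the right would instead force commuting $\DD$ past $g^k$, which gives a correct but differently shaped identity.
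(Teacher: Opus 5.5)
Your proposal is correct and follows the paper's proof essentially verbatim: induction on $k$, writing $\DD^{k+1}=\DD\cdot\DD^{k}$, pushing the left factor $\DD$ through the $\alpha$-block via $\DD\cdot h=\Phi(h)\cdot\DD$, and then replacing that $\DD$ by $\alpha(g)\cdot g$.
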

    
    \begin{proof}
    By induction on $k$.
    Case $k=0$ is immediate as left and right terms are trivial. The case $k= 1$ is also immediate as we have $\DD$ by $\alpha(g)\cdot g$ by defintion of $\alpha$.  
    Assume $k\geq 2$.
    We have $\DD^{k}=\DD\cdot \DD^{k-1}$, which by induction hypothesis is equal to 
    \[
    \DD\cdot \Phi^{k-2}(\alpha(g))\cdot\ldots\Phi(\alpha(g))\cdot \alpha(g) \cdot g^{k-1}.
    \]
    Using the relation $\DD\cdot y = \Phi(y) \cdot \DD$, we obtain 
    \[
    \DD^k= \Phi^{k-1}(\alpha(g))\cdot\ldots\Phi^2(\alpha(g))\cdot \Phi(\alpha(g)) \cdot \DD  \cdot g^{k-1}.
    \]
    Using the case $k = 1$, we conclude by replacing $\DD$ by $\alpha(g)\cdot g$ in the last term.
    \end{proof}
    
    \subsection{Complemented presentation}
    The presentation $\left<S\,|\,\mathcal{R}\right>^+$ of $M$ given in \eqref{E:PresentationArtinMonoid} is \emph{left complemented}, that is there exists a map $f_L:S\times S \to S^\ast$ satisfying
    \[
      \mathcal{R}=\left\{f_L(\ssi,\ssj)\ssi = f_L(\ssj,\ssi)\ssj\ |\ (\ssi, \ssj) \in S^2\right\}
    \]
    together with $f(\ssi,\ssi)=\varepsilon$ for all $\ssi\in S$.
    The map $f_L$ is called the \emph{left complement map} of the considered prensentation of $M$.
    Symmetricaly, the presentation~\eqref{E:PresentationArtinMonoid} is also \emph{right complemented} as it exists a \emph{right complement map} satisfying
    \[
     \mathcal{R}=\left\{\ssi\,f_R(\ssj,\ssi) = \ssj\,f_R(\ssi,\ssj)\ |\ (\ssi, \ssj) \in S^2\right\}.
    \]
    We refer to~\cite{DDGKM} for more details.
    An immediate verification gives 
    \[
    f_L(\ssi,\ssj) = \underbrace{\cdots\ssj\ssi\ssj}_{m(i,j)-1 \textrm{ terms}}\quad\text{and}\quad f_R(\ssj,\ssi) = \underbrace{\ssj\ssi\ssj\cdots}_{m(i,j)-1 \textrm{ terms}},
    \]
    since we have the following relations in $\mathcal{R}$
    \[
    f_L(\ssi,\ssj) \ssi = f_L(\ssj,\ssi) \ssj\quad\text{and}\quad
    \ssj f_R(\ssi,\ssj)= \ssi f_R(\ssj,\ssi).
    \]
    
    \begin{df}
    Let $w$ and $w'$ be two $S^{\pm}$-words. We say that
    
     \begin{itemize}
      \item   $w$ \emph{left reverses in one step} to $w'$, denoted by $w\curvearrowright_L^1 w'$, if we can obtain $w'$ from $w$ substituting a factor $xy^{-1}$ of $w$ by $f_L(x,y)^{-1}f_L(y,x)$.
    \item $w$ \emph{left reverses} to $w'$, denoted $w\curvearrowright_L w'$, if there exists a sequence $w=w_1,\ldots,w_\ell=w'$ of $S^{\pm}$-words satisfying $w_k\curvearrowright_L^1 w_{k+1}$, for all~$k\in\{1,\ldots,\ell-1\}$.
     \end{itemize}
    Symmetrically, we say that $w$ \emph{right reverses} to $w'$,  denoted $w\curvearrowright_R w'$, if $w'$ is obtained from $w$ by repeatedly replacing a factor $x^{-1}y$ of $w$ by $f_R(y,x)f_R(x,y)^{-1}$.
    \end{df}
    
    From the word equivalences 
    \[
    xy\inv \equiv f_L(x,y)\inv\,f_L(y,x)\quad\text{and}\quad x\inv y\equiv f_R(y,x)\,f_R(x,y)\inv,
    \]
    we obtain
    \[
    w \curvearrowright_L w' \Rightarrow w\equiv w'\quad \text{and}\quad  w \curvearrowright_R w' \Rightarrow w\equiv w'.
    \]
    
    Both left and right reversing processes can be illustrated by reversing diagrams made of $S$-labelled arrows.
    Let $w$ be a $S^{\pm}$-word. 
    Reading letters of $w$ from left to right, we draw a path of arrows :
    \begin{itemize}
    \item if we read a positive letter $x$, we draw a right-oriented arrow labelled $x$;
    \item if we read a negative letter $x^{-1}$, we draw a down-oriented arrow labelled $x$.
    \end{itemize}
    Note that a sequence of consecutive arrows with the same orientation may be depicted as a unique arrow labelled with the corresponding word.
    \begin{center}
        \begin{tikzpicture}[x=1.1cm,y=1.1cm]
        \draw[-latex] (0,0.5) -- (0,0) node [midway, left]{$a$};
        \draw[-latex] (0,0) -- (0.5,0) node [midway, below]{$b$};
        \draw[-latex] (0.5,0) -- (1,0) node [midway, below]{$c$};
        \draw[-latex] (1,1) -- (1,0.5) node [midway, right]{$e$};
        \draw[-latex] (1,0.5) -- (1,0) node [midway, right]{$d$};
        \draw (2,0.5) node{$=$};
        \begin{scope}[shift={(3,0)}]
        \draw[-latex] (0,1) -- (0,0) node [midway, left]{$a$};
        \draw[-latex] (0,0) -- (1,0) node [midway, below]{$bc$};
        \draw[-latex] (1,1) -- (1,0) node [midway, right]{$ed$};
        \end{scope}
        \end{tikzpicture}
    \end{center}
    The reversing process consists on the completion of the diagram using the following rules:
    
    \begin{center}
    \begin{tabular}{c|c}
    left reversing & right reversing \\
    \hline
    \begin{tikzpicture}[x=1.1cm,y=1.1cm]
    \footnotesize
    \draw[-latex] (0,0) -- (1,0) node [midway, below]{$x$};
    \draw[-latex] (1,1) -- (1,0) node [midway, right]{$y$}; 
    \draw(2.5,0.5) node{completed into};
    \begin{scope}[shift={((5,0)}]
    \draw[-latex] (0,0) -- (1,0) node [midway, below]{$x$};
    \draw[-latex] (1,1) -- (1,0) node [midway, right]{$y$}; 
    \draw[gray, -latex] (0,1) -- (0,0) node [midway, left]{$f_L(x,y)$};
    \draw[gray, -latex] (0,1) -- (1,1) node [midway, above]{$f_L(y,x)$};
    \end{scope}
    \end{tikzpicture}
    &
    \begin{tikzpicture}[x=1.1cm,y=1.1cm]
    \footnotesize
    \draw[-latex] (0,1) -- (0,0) node [midway, left]{$x$};
    \draw[-latex] (0,1) -- (1,1) node [midway, above]{$y$};
    \draw(2.5,0.5) node{completed into};
    \begin{scope}[shift={((4,0)}]
    \draw[-latex] (0,1) -- (0,0) node [midway, left]{$x$};
    \draw[-latex] (0,1) -- (1,1) node [midway, above]{$y$};
    \draw[gray, -latex] (0,0) -- (1,0) node [midway, below]{$f_R(y,x)$};
    \draw[gray, -latex] (1,1) -- (1,0) node [midway, right]{$f_R(x,y)$};
    \end{scope}
    \end{tikzpicture}
    \end{tabular}
    \end{center}
    It is possible to produce empty sequence of arrows during a reversing process.
    This is the case, for example, whenever $x=y$. In this case we use undirected $\varepsilon$-labelled edge in diagrams.
    
    \begin{ex}
    \label{E:Reversing1}
    Consider the word $w=\ss1\ssinv2\ss3\ssinv1\ss1$ in the braid group $G(A_3)$.
    \begin{center}
    \begin{tikzpicture}
    \footnotesize
    \draw[-latex] (0,0) -- (1,0) node [midway, below]{$\ss1$};
    \draw[-latex] (1,1) -- (1,0) node [midway, right]{$\ss2$};
    \draw[-latex] (1,1) -- (2,1) node [midway, above]{$\ss3$};
    \draw[-latex] (2,2) -- (2,1) node [midway, right]{$\ss1$};
    \draw[-latex] (2,2) -- (3,2) node [midway,  above]{$\ss1$};
    \draw(1,-1) node {\textbf{a.} path associated with $w$};
    \begin{scope}[shift={(5,0)}]
    \draw[-latex] (0,0) -- (1,0) node [midway, below]{$\ss1$};
    \draw[-latex] (1,1) -- (1,0) node [midway, right]{$\ss2$};
    \draw[-latex] (1,1) -- (2,1) node [midway, below]{$\ss3$};
    \draw[-latex] (2,2) -- (2,1) node [midway, right]{$\ss1$};
    \draw[-latex] (2,2) -- (3,2) node [midway,  above]{$\ss1$};
    \draw[gray,-latex] (0,0.5) -- (0,0) node [midway, left]{$\ss2$};
    \draw[gray,-latex] (0,1) -- (0,0.5) node [midway, left]{$\ss1$};
    \draw[gray,-latex] (0,1) -- (0.5,1) node [midway, above]{$\ss2$};
    \draw[gray,-latex] (0.5,1) -- (1,1) node [midway, below]{$\ss1$};
    \draw[gray,-latex] (1,2) -- (1,1) node [midway, right]{$\ss1$};
    \draw[gray] (1,2) .. controls (0.6,2) and (0.5,1.4) ..  (0.5,1) node [midway, left]{$\varepsilon$};
    \draw[gray,-latex] (1,2) -- (2,2) node [midway, above]{$\ss3$};
    \draw(1,-1) node {\textbf{b.} left reversing diagram of $w$};
    \end{scope}
    \begin{scope}[shift={(10,0)}]
    \draw[-latex] (0,0) -- (1,0) node [midway, below]{$\ss1$};
    \draw[-latex] (1,1) -- (1,0) node [midway, right]{$\ss2$};
    \draw[-latex] (1,1) -- (2,1) node [midway, above]{$\ss3$};
    \draw[-latex] (2,2) -- (2,1) node [midway, right]{$\ss1$};
    \draw[-latex] (2,2) -- (3,2) node [midway, above]{$\ss1$};
    \draw[gray,-latex] (1,0) -- (1.5,0) node [midway, below]{$\ss3$};
    \draw[gray,-latex] (1.5,0) -- (2,0) node [midway, below]{$\ss2$};
    \draw[gray,-latex] (2,1) -- (2,0.5) node [midway, right]{$\ss2$};
    \draw[gray,-latex] (2,0.5) -- (2,0) node [midway, right]{$\ss3$};
    \draw[gray] (2,1) .. controls (2.5,1) and (3,1.5) .. (3,2) node [midway, right]{$\varepsilon$};
    
    \draw(1,-1) node {\textbf{c.} right reversing diagram of $w$};
    \end{scope}
    \end{tikzpicture}
    \end{center}
    The word $w$ left reverses in $\ssinv2\ssinv1\ss2\ss3\ss1$ while it right reverses in $\ss1\ss3\ss2\ssinv3\ssinv2$.
    \end{ex}

    \begin{df}
    For a $S^\pm$-word~ $w$, 
         \begin{itemize}
         \item by $\denL(w)$ and $\numL(w)$  we denote the $S$-words,  defined by  $w\curvearrowright_L \denL(w)\inv\numL(w)$ 
      \item  by  $\denR(w)$ and $\numR(w)$  we denote the  $S$-words, defined by  $w\curvearrowright_R \numR(w) \denR(w)\inv$.
    \end{itemize}
    \end{df}
    
    From \cite{DeP1999} we know that the pair $(\denL(w),\numL(w))$ of $S$-word  (and similarly the pair $(\denL(w),\numL(w))$)  exist and is unique for a given $w$ because $M$ is a Garside monoid.

    \begin{ex}
     Reconsidering the word $w$ defined in Example \ref{E:Reversing1}, we obtain :
    \begin{align*}
    \denL(w)  = \ss1\ss2,\  \numL(w) = \ss2\ss3\ss1\quad\text{and}\quad\numR(w)=\ss1\ss3\ss2, \ \denR(w)=\ss2\ss3.
    \end{align*}
    
    \end{ex}
    \begin{prp}[\cite{DeP1999}]
    \label{P:ReversingDivisibility}
    An element $g$ of $M$ is left-divisible, \resp right-divisible,  by $\ss \in S$ iff for any word representative $w$ of $g$ the word $\denR(s\inv w)$, \resp $\denL(w s\inv)$, is empty.
    \end{prp}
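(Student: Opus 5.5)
We prove the statement for left divisibility; the right-divisibility statement follows by the symmetric argument, exchanging left and right reversing and $\denR$ with $\denL$. Let $w$ be any word representing $g$. The backbone of the argument is the standard theory of complemented presentations from~\cite{DeP1999}, which says the presentation~\eqref{E:PresentationArtinMonoid} is complete for right reversing because it satisfies the cube condition. Concretely, for positive words $u,v$ we use: $\overline{u}\,\preceq_L\,\overline{v}$ holds in $M$ if and only if the word $u\inv v$ right reverses to a positive word, i.e.\ $\denR(u\inv v)=\varepsilon$. We apply this with $u=s$ and $v=w$.

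\emph{Reversing is equivalence-preserving.} Suppose $\denR(s\inv w)$ is empty. Then $s\inv w\curvearrowright_R \numR(s\inv w)$. Since reversing preserves equivalence, $\overline{s}\inv\,\overline{w}=\overline{\numR(s\inv w)}$ in $G$. Hence $g=s\cdot\overline{\numR(s\inv w)}$ with $\overline{\numR(s\inv w)}\in M$, using that $M$ embeds in $G$~\cite{Par2002}. So $s\preceq_L g$.

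\emph{Completeness.} Conversely, suppose $g=sh$ with $h\in M$, and let $h$ be represented by a positive word $v$. Then $w\equiv sv$ as positive words. By completeness of right reversing, the word $s\inv w$ reverses to $\numR\,\denR\inv$ where $\overline{\numR}\cdot\overline{\denR}\inv$ gives $s\inv \overline w = h\in M$. The key point is that the terminal pair $(\numR(s\inv w),\denR(s\inv w))$ computes the right lcm: $\overline{s}\,\overline{\numR(s\inv w)}=\overline{w}\,\overline{\denR(s\inv w)}=\overline{s}\vee_L\overline{w}$, up to the cube-condition identification. Since $s\preceq_L g$, this lcm equals $g$. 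By cancellativity and length considerations (equivalent words have equal length), $\denR(s\inv w)$ then represents $1$, and is therefore the empty word.

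\emph{Independence of the representative.} The two directions together show that emptiness holds for one representative if and only if it holds for all, since both are equivalent to $s\preceq_L g$. This gives the "for any" in the statement.

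The main obstacle is the lcm property of the terminal pair, i.e.\ that reversing detects divisibility. This requires verifying the cube condition on $S$ for the complement $f_R$, which is exactly the content of~\cite{DeP1999} for Garside monoids. We invoke it rather than reprove it; if a direct check were needed, it would be the case analysis on triples of generators in rank-$3$ parabolics.
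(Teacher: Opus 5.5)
Your proof is correct and follows the paper's route. The paper gives no argument of its own and simply cites \cite{DeP1999}; like that citation, you reduce the statement to completeness of right reversing and to the fact that reversing $s\inv w$ computes the right lcm of $s$ and $\overline{w}$ (the paper's Proposition~\ref{P:ReversingLcm}), then conclude with cancellativity and homogeneity of word lengths.

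A minor simplification: the easy direction does not need the embedding of $M$ in $G$. Each reversing step comes from a defining relation, so $\denR(s\inv w)=\varepsilon$ gives $w\equiv s\,\numR(s\inv w)$ directly in $M$.
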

    
    \begin{prp}[\cite{DeP1999}, Lemma 4.3]\label{P:ReversingLcm}  Let $u$ and $v$ be two words.
    
    -- the words $u\cdot\numR(u\inv v)$ and $v\cdot\denR(u\inv v)$ are two representatives of $\overline{u}\vee_R\overline{v}$,
    
    -- the words $\numL(u v\inv)\cdot u$ and $\denL(u v\inv)\cdot v$ are two representatives of $\overline{u}\vee_L\overline{v}$.
    \end{prp}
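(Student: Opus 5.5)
The plan is to treat the right-hand version (with $\numR$, $\denR$) in detail; the left-hand version follows by the symmetric argument, exchanging left and right and replacing $f_R$ by $f_L$.

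\textbf{Step 1: both words represent a common right multiple.} I would read the right reversing diagram of $u\inv v$ cell by cell. Each elementary step replaces a factor $x\inv y$ by $f_R(y,x)f_R(x,y)\inv$. The corresponding square of the diagram expresses the positive relation $x\,f_R(y,x)= y\,f_R(x,y)$, which either belongs to $\mathcal{R}$ or is trivial when $x=y$. Pasting the squares, I would check by induction on the number of reversing steps that the boundary words agree. The top-then-right boundary is $u\cdot\numR(u\inv v)$ and the left-then-bottom boundary is $v\cdot\denR(u\inv v)$. Hence these two positive words are equivalent in $M$, and not merely in $G$. So $\overline{u}\cdot\overline{\numR(u\inv v)}$ is a common right multiple of $\overline u$ and $\overline v$.

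\textbf{Step 2: the key lemma.} To get minimality, I would prove the following by induction on the number of reversing steps. Let $w$ be an $S^\pm$-word with $w\curvearrowright_R w'$. Suppose that whenever $w$ is read as a path and its positive and negative parts admit a common positive completion in $M$, the same completion factors through the reversed diagram. In the concrete form needed:
\begin{quote}
If $u\inv v\curvearrowright_R u' v'^{-1}$ and $\overline{u}a=\overline{v}b$ in $M$, then there is $c\in M$ with $a=\overline{u'}c$ and $b=\overline{v'}c$.
\end{quote}

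\textbf{Step 3: the one-cell case.} Here $u=x$ and $v=y$ are atoms. The needed fact is that the right lcm of $x$ and $y$ is represented by $x f_R(y,x)$, the alternating word of length $m(x,y)$. This is the standard fact for Artin–Tits monoids of spherical type, and it is where the Garside property of $M$ enters. Then $xa=yb$ forces $a=f_R(y,x)c$ and $b=f_R(x,y)c$ by left cancellativity.

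\textbf{Step 4: the general case.} I would decompose the diagram into cells and propagate the common-multiple data cell by cell. At each stage, left cancellativity lets us strip the already-processed prefix. Termination of reversing, together with the uniqueness of $(\numR,\denR)$ from \cite{DeP1999}, guarantees that the process ends with the final words $\numR(u\inv v)$ and $\denR(u\inv v)$. Applying the lemma to an arbitrary common right multiple $\overline u a=\overline v b$ then shows that it is a right multiple of $\overline{u}\cdot\overline{\numR(u\inv v)}$, which is therefore $\overline u\vee_R\overline v$.

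\textbf{Main obstacle.} I expect the hard part to be the bookkeeping in Step 4. Intermediate words in the reversing are mixed words $p_1 n_1\inv p_2 n_2\inv\cdots$, not just of the form $u\inv v$. So the induction hypothesis must be phrased for the full grid rather than for a single quotient. I would first try to organise the grid row by row: reverse $u\inv$ against one letter of $v$ at a time, and apply the one-letter version of the lemma repeatedly, using Noetherianity to control the induction.
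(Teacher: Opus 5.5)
Your argument is correct, but it runs in the opposite direction from the word-reversing theory behind \cite{DeP1999}, which the paper cites without giving a proof. In that framework the statement follows from the \emph{completeness} of reversing for the presentation, which is checked through the cube condition on the complement $f_R$. Left cancellativity and the existence of lcms are then outputs of the theory, not inputs. You instead take as inputs that $M$ is left cancellative and that $x\vee_R y$ is represented by $x\,f_R(y,x)$ (Brieskorn--Saito). The reversing diagram then just records a chain of one-cell factorisations.

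What your route buys is brevity: it uses only facts the paper already assumes. Its limits are that it is tied to monoids where those facts are known beforehand, and it still needs termination of reversing (the existence of $\numR,\denR$) from elsewhere. The cited route covers any complemented presentation satisfying the cube condition. Incidentally, your key lemma with $a=b=1$ gives completeness back: when $\overline u=\overline v$ it forces $\numR(u\inv v)=\denR(u\inv v)=\varepsilon$.

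The bookkeeping you flag in Step 4 is best handled by induction on the number of reversing steps, with an invariant on the whole current word rather than a row-by-row organisation. Write the current word as $z_1^{e_1}\cdots z_n^{e_n}$, passing through vertices $P_0,\ldots,P_n$ of the diagram. Attach elements $c_{P_i}\in M$ with $c_{P_0}=a$ and $c_{P_n}=b$, subject to $c_{P_{i-1}}=z_i\,c_{P_i}$ if $e_i=1$ and $c_{P_i}=z_i\,c_{P_{i-1}}$ if $e_i=-1$. Initially the corner vertex receives $\overline u a=\overline v b$.

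A step on a factor $x\inv y$ at a vertex $P$ with neighbours $Q_1,Q_2$ has $c_P=x\,c_{Q_1}=y\,c_{Q_2}$. Your one-cell lemma then gives the value at the new corner and at the intermediate vertices of the new edges. For $x=y$ the two neighbours merge, and $c_{Q_1}=c_{Q_2}$ by cancellation. Once the word has become $\numR(u\inv v)\,\denR(u\inv v)\inv$, the invariant reads $a=\overline{\numR(u\inv v)}\,c$ and $b=\overline{\denR(u\inv v)}\,c$, which is exactly your key lemma.

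No Noetherianity is needed. Step 1 also follows at once from $u\inv v\equiv\numR(u\inv v)\,\denR(u\inv v)\inv$ in $G$ together with the embedding of $M$ in $G$.
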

    
    This is useful to note for the sequel that  for $\ssi,\ssj$ distinct in $S$, the left-lcm $\ssi\vee_L\ssj$  and the right-lcm $\ssi\vee_R\ssj$ are equal  and possess only two representative $S$-words, that can be written alternatively as $f_L(\ssi,\ssj)\,\ssi$  and $f_L(\ssj,\ssi)\,\ssj$, or $\ssj\,f_R(\ssi,\ssj)$ and $\ssi\,f_R(\ssj,\ssi)$.  We have $f_L(\ssi,\ssj)\,\ssi = \ssj\,f_R(\ssi,\ssj)$ when $m(i,j)$ is odd, and  $f_L(\ssi,\ssj)\,\ssi = \ssi\,f_R(\ssj,\ssi)$ when $m(i,j)$ is even.

    \subsection{Chains}
    
    Following  \cite{BrS}, we introduced left and right chains in our specific context.
    However, our definition not follows rigorously~\cite{BrS}. 
    The main difference is that our chains are made of words and not of elements. 
    This choice will allow us to use the power of the reversing tool and shall clarify the discussion.
    Here we focus on the case of spherical type  Artin--Tits monoid. However our definition may be introduce in a more general context.
    
    \begin{df}
    \label{D:Chain}
    Let $(\ssi,\ssj)$ be a pair of, possibly equal, elements of $S$, and $u$ a non-empty $S$-word,
    \begin{enumerate}
    \item $u$ is a left $(\ssj,\ssi)$-chain if $\denL(u \ssinv\ii)$ is not empty and ends with $\ssj$, \emph{ie}, $u \ssinv\ii \curvearrowright_L \ssinv\jj \ldots$,
    \item $u$ is a right $(\ssi,\ssj)$-chain if $\denR(\ssinv\ii u)$ is not empty and begins with $\ssj$, \emph{ie}, $\ssinv\ii u\curvearrowright_R \ldots \ssinv\jj$.
    \end{enumerate}
    \end{df}
    
    It is immediate that the property to be a chain can be checked on reversing diagram.
    A non-empty $S$-word is a left $(\ssj,\ssi)$-chain if and only if  the right reversing diagram depicted in \textbf{a} holds.
    It is a right $(\ssi,\ssj)$-chain if and only if the left reversing diagram depicted in \textbf{b} holds.
    \begin{center}
    
    \begin{tikzpicture}
    \draw[latex-] (1,0) -- (1,1) node[midway, right]{$s_i$};
    \draw[-latex] (0,0) -- (1,0) node[midway, below]{$u$};
    \draw(0.5,0.5) node{$\curvearrowright_L$};
    \draw[latex-] (0,0) -- (0,0.5) node[midway, left]{$s_j$};
    \draw[gray, dashed](0,0.5) .. controls (0,1) and (0.5,1) .. (1,1);
    \draw(-1, 0.5) node {\textbf{a.}};
    \begin{scope}[shift={(4,0)}]
    \draw[latex-] (0,0) -- (0,1) node[midway, left]{$s_i$};
    \draw[-latex] (0,1) -- (1,1) node[midway, above]{$u$};
    \draw(0.5,0.5) node{$\curvearrowright_R$};
    \draw[latex-] (1,0.5) -- (1,1) node[midway, right]{$s_j$};
    \draw[gray, dashed](0,0) .. controls (0.5,0) and (1,0) .. (1,0.5);
    \draw(-1, 0.5) node {\textbf{b.}};
    \end{scope}
    \end{tikzpicture}
    \end{center}
    It will be common in the sequel to draw such chains using the following diagrams (on left, for left chain):
    
    \begin{center}
    \begin{tikzpicture}
    \draw[-latex](0,0) -- (1,0) node[midway, below]{$u$};
    \draw[-latex](0,0.5) -- (0,0) node[midway, left]{$\ssj$};
    \draw[-latex](1,0.5) -- (1,0) node[midway, right]{$\ssi$};
    \draw[gray,latex-](0.25,0.25) -- (0.75,0.25);
    
    \begin{scope}[shift={(4,0.25)}]
    \draw[-latex](0,0) -- (1,0) node[midway,above]{$u$};
    \draw[-latex](0,0) -- (0,-0.5) node[midway, left]{$\ssi$};
    \draw[-latex](1,0) -- (1,-0.5) node[midway, right]{$\ssj$};
    \draw[gray,-latex](0.25,-0.25) -- (0.75,-0.25);
    \end{scope}
    \end{tikzpicture}
    \end{center}
    It make sense to depict the fact that $s_i$ left or right divides the element representing by $u$ with:
    \begin{center}
    \begin{tikzpicture}
    \draw[-latex](0,0) -- (1,0) node[midway, below]{$u$};
    \draw[-latex](1,0.5) -- (1,0) node[midway, right]{$\ssi$};
    \draw[gray,latex-](0.25,0.25) -- (0.75,0.25);
    
    \begin{scope}[shift={(4,0.25)}]
    \draw[-latex](0,0) -- (1,0) node[midway, above]{$u$};
    \draw[-latex](0,0) -- (0,-0.5) node[midway, left]{$\ssi$};
    \draw[gray,-latex](0.25,-0.25) -- (0.75,-0.25);
    \end{scope}
    \end{tikzpicture}
    \end{center}
    which is equivalent to saying that the word $\denL(u \ssinv\ii)$, \emph{resp.}, $\denL(\ssinv\ii u)$ are empty.\\
    
    A crucial example of chain is given in the following lemma. 
    
    \begin{lm} \label{L:LcmToChain}
    Let $\ssk,\ss\ell$ be distinct in $S$.
    \begin{enumerate}
    \item Let $u$ be a proper prefix of the $S$-word $\ss\ell\,f_R(\ssk,\ss\ell)$. Write $\{i,j\} = \{k,\ell\}$ so that $\ssj$ is the last letter of $u$. Then  $u$ is a right $(\ssk,\ssi)$-chain.
    \item Let $u$ be a proper suffix of $f_L(\ss\ell,\ssk)\,\ss\ell$. Write $\{i,j\} = \{k,\ell\}$ so that $\ssj$ is the first letter of $u$. Then $u$ is a left $(\ssi,\ssk)$-chain.
    \end{enumerate}
    \end{lm}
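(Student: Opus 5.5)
The proof is a direct computation of one right reversing (and, symmetrically, one left reversing), using the explicit form of the complement maps $f_R$ and $f_L$ recalled above. Write $m=m(k,\ell)$; note $m\neq\infty$ since $M$ is of spherical type. For (i), a proper prefix of $\ss\ell\,f_R(\ssk,\ss\ell)=\ss\ell\ssk\ss\ell\cdots$ ($m$ letters) is the alternating word $u=u_1\cdots u_p$ with $u_1=\ss\ell$ and $1\le p\le m-1$. So $u_q=\ss\ell$ for $q$ odd and $u_q=\ssk$ for $q$ even, and $\ssj=u_p$. We must show that $\ssinv k u$ right reverses to a word of the form $\cdots\ssinv i$, with $\ssi$ the letter of $\{\ssk,\ss\ell\}$ distinct from $u_p$.

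\emph{Step 1.} Reverse the first factor $\ssinv k\ss\ell$. By definition of the right complement it becomes $f_R(\ss\ell,\ssk)\,f_R(\ssk,\ss\ell)\inv$. Here $f_R(\ssk,\ss\ell)=x_1x_2\cdots x_{m-1}$ is alternating with $x_1=\ssk$, so $x_q=\ssk$ for $q$ odd and $x_q=\ss\ell$ for $q$ even. In other words $x_q$ is exactly the letter of $\{\ssk,\ss\ell\}$ different from $u_q$. The current word is
\[
f_R(\ss\ell,\ssk)\,x_{m-1}\inv\cdots x_2\inv x_1\inv\,u_2\cdots u_p .
\]

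\emph{Step 2.} The positive tail $u_2\cdots u_p=\ssk\ss\ell\cdots$ coincides letter by letter with $x_1x_2\cdots x_{p-1}$. Since $f_R(x,x)=\varepsilon$, each factor $x_q\inv x_q$ reverses to the empty word. Performing these $p-1\le m-2$ cancellations successively from the inside out leaves
\[
f_R(\ss\ell,\ssk)\,x_{m-1}\inv\cdots x_p\inv .
\]
This word has the form (positive word)(negative word), so no further reversing step applies. By the uniqueness of $(\numR,\denR)$ recalled from \cite{DeP1999}, we get $\denR(\ssinv k u)=x_p\cdots x_{m-1}$. It is nonempty because $p\le m-1$, and it begins with $x_p$, the letter distinct from $u_p=\ssj$, that is, with $\ssi$. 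This is precisely the statement that $u$ is a right $(\ssk,\ssi)$-chain.

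\emph{Step 3.} Part (ii) is the mirror image. A proper suffix $u$ of $f_L(\ss\ell,\ssk)\,\ss\ell=\cdots\ssk\ss\ell$ is alternating and ends with $\ss\ell$. Left reversing $u\ssinv k$ starts with $\ss\ell\ssinv k\curvearrowright_L f_L(\ss\ell,\ssk)\inv f_L(\ssk,\ss\ell)$, and $f_L(\ss\ell,\ssk)=\cdots\ss\ell\ssk$. The same cancellation argument, run right to left, shows that $\denL(u\ssinv k)$ is a nonempty suffix of $f_L(\ss\ell,\ssk)$ ending with the letter distinct from the first letter $\ssj$ of $u$, namely $\ssi$.

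The only delicate point is the parity bookkeeping: one has to check that the surviving letter $x_p$ is indeed distinct from $u_p$, and that the bound $p\le m-1$ is exactly what keeps the denominator nonempty. Everything else is a routine application of the reversing rules.
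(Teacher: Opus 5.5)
Your proof is correct, but it takes a different route from the paper's.

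\textbf{The paper's route.} The paper argues through least common multiples. By Proposition~\ref{P:ReversingLcm}, the word $u\,\denR(s_k^{-1}u)$ represents $\overline{u}\vee_R s_k$. This equals $s_\ell\vee_R s_k$, since $\overline{u}$ is left divisible by $s_\ell$ and left divides $s_\ell\vee_R s_k$. This lcm has only the two alternating representative words, and $u$ begins with $s_\ell$, so the concatenation must be literally $s_\ell f_R(s_k,s_\ell)$. Hence $\denR(s_k^{-1}u)$ is the nonempty complementary suffix, and it begins with the letter following $s_j$, namely $s_i$.

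\textbf{Your route.} You instead run the reversing by hand: one complement step, then $p-1$ cancellations $x_q^{-1}x_q\curvearrowright_R\varepsilon$. This is more elementary and self-contained. It needs neither Proposition~\ref{P:ReversingLcm}, nor the fact that the lcm of two atoms has exactly two representatives, nor the identification $\overline{u}\vee_R s_k=s_\ell\vee_R s_k$. It yields explicitly $\numR(s_k^{-1}u)=f_R(s_\ell,s_k)$ and $\denR(s_k^{-1}u)=x_p\cdots x_{m-1}$. It also proves directly the word identity $u\,\denR(s_k^{-1}u)=s_\ell f_R(s_k,s_\ell)$, on which the paper's one-line argument rests. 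The paper's version is shorter and makes visible the conceptual point the section is built on: elementary chains are pieces of lcms of pairs of atoms.

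\textbf{One slip in Step~3.} Write $u=u_p\cdots u_1$ and $f_L(s_\ell,s_k)=y_{m-1}\cdots y_1$, so that $u_1=s_\ell$, $y_1=s_k$ and $u_{q+1}=y_q$. The cancellations $u_{q+1}y_q^{-1}\curvearrowright_L\varepsilon$ for $q=1,\dots,p-1$ leave $y_p^{-1}\cdots y_{m-1}^{-1}f_L(s_k,s_\ell)$. So $\denL(us_k^{-1})=y_{m-1}\cdots y_p$ is a nonempty \emph{prefix} of $f_L(s_\ell,s_k)$, not a suffix. Indeed, every nonempty suffix of $f_L(s_\ell,s_k)$ ends with $s_k$, which would contradict your conclusion whenever $s_j=s_k$. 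The last letter $y_p$ differs from the first letter $u_p=s_j$ of $u$, so it is $s_i$. Your conclusion therefore stands once ``suffix'' is replaced by ``prefix''.
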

    
    \begin{proof} By symmetry, it is enough to prove (i).
    By Proposition~\ref{P:ReversingLcm}, the $S$-word $\ss\ell\,f_R(\ssk,\ss\ell)$, which is equal to $u\,\denR(\ssk\inv \,u)$, is a representative of $\overline{u}\vee_R\ssk = \ss\ell\vee_R\ssk=\ssi\vee_R\ssj$ and so $\denR(\ssk\inv u)$ starts with the letter~$\ssi$.
     \end{proof}
    
    \begin{lm}\label{L:ChainComposition}
    If  $u$ is a left $(\ssi,\ssj)$-chain and $v$ is a left $(\ssj,\ssk)$-chain then $uv$ is a left $(\ssi,\ssk)$-chain. A similar result holds for right chains. 
    \end{lm}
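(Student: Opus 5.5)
The plan is to read the chain condition directly off left reversing diagrams and to glue the diagram for $v$ onto the diagram for $u$. The key observation is that a left reversing step only rewrites a factor $xy^{-1}$ with $x,y\in S$. So once a word begins with a block of negative letters, that block is never touched again: further reversing can only extend it on its right by new negative letters.

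First I will unpack the hypotheses. By Definition~\ref{D:Chain}, $v$ being a left $(\ssj,\ssk)$-chain means that $\denL(v\ssinv k)$ is nonempty and ends with $\ssj$. So I write $\denL(v\ssinv k)=a\,\ssj$ for some $S$-word $a$, which gives
\[
v\ssinv k\curvearrowright_L \ssinv j\,a^{-1}\,\numL(v\ssinv k).
\]
Similarly, $u$ being a left $(\ssi,\ssj)$-chain gives an $S$-word $b$ with $\denL(u\ssinv j)=b\,\ssi$, so that
\[
u\ssinv j\curvearrowright_L \ssinv i\,b^{-1}\,\numL(u\ssinv j).
\]

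Next I will reverse the word $uv\ssinv k$ in stages. I first reverse its suffix $v\ssinv k$, which gives
\[
uv\ssinv k\curvearrowright_L u\,\ssinv j\,a^{-1}\,\numL(v\ssinv k).
\]
I then reverse the factor $u\ssinv j$, which gives
\[
uv\ssinv k\curvearrowright_L \ssinv i\,b^{-1}\,\numL(u\ssinv j)\,a^{-1}\,\numL(v\ssinv k).
\]
The remaining work is to left reverse $\numL(u\ssinv j)\,a^{-1}$ into some $c^{-1}d$. This only adds negative letters to the right of the prefix $\ssinv i b^{-1}$, and by the observation above it never alters that prefix. The final result therefore has the form $\ssinv i\,b^{-1}c^{-1}\,d\,\numL(v\ssinv k)$.

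To conclude, I will invoke the uniqueness of the pair $(\denL(w),\numL(w))$ recalled from \cite{DeP1999}. This ensures that reversing in stages yields the same terminal word as any other reversing order, so $\denL(uv\ssinv k)=c\,b\,\ssi$. This word is nonempty and ends with $\ssi$, which means $uv$ is a left $(\ssi,\ssk)$-chain.

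The right chain case follows by the symmetric argument, using right reversing and positive blocks on the right. The only delicate point is the appeal to independence of the reversing order. I expect this to be covered by the cited uniqueness (completeness of reversing in Garside monoids), but if needed I would argue it by noting that the reversing diagram is determined by the grid of the initial path.
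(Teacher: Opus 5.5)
Your proof is correct and is essentially the paper's argument. You reverse the word in stages, gluing the reversing diagram of one chain onto that of the other, and you observe that the outer block of negative letters (your prefix $s_i^{-1}b^{-1}$) is never touched afterwards, so the final denominator still ends with $s_i$. The paper writes the same thing in mirrored form with right reversing, $s_i^{-1}uv\curvearrowright_R u'v's_k^{-1}$, and leaves implicit the further reversing of the middle factor and the uniqueness of the terminal word, both of which you make explicit.
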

    
    \begin{proof} It exist two $S^{\pm}$-words $u'$ and $v'$ such that $\ssi\inv\, u\curvearrowright_R u'\, \ssj\inv$ and $\ssj\inv\, v\curvearrowright_R v' \,\ssk\inv$.
   This implies $\ssi\inv\, u\, v\curvearrowright_R u'\,v' \, \ssk\inv$. So, $uv$ is a left $(\ssi,\ssk)$-chain.
    \end{proof}
    
    The previous lemma allows the following diagram compactification:
    \[
    \begin{tikzpicture}[x=40pt,y=40pt]
    \draw[-latex](0,0) -- (1,0) node[midway, above]{$u$};
    \draw[-latex](0,0) -- (0,-0.5) node[midway, left]{$\ssi$};
    \draw[-latex](1,0) -- (1,-0.5) node[midway, right]{$\ssj$};
    \draw[-latex](1,0) -- (2,0) node[midway, above]{$v$};
    \draw[-latex](2,0) -- (2,-0.5) node[midway, right]{$\ssk$};
    \draw[gray,-latex](0.25,-0.25) -- (0.75,-0.25);
    \draw[gray,-latex](1.35,-0.25) -- (1.85,-0.25);
    \draw(3,-0.25) node{$\Longrightarrow$};
    \draw[-latex](4,0) -- (5,0) node[midway, above]{$uv$};
    \draw[-latex](4,0) -- (4,-0.5) node[midway, left]{$\ssi$};
    \draw[-latex](5,0) -- (5,-0.5) node[midway, right]{$\ssk$};
    \draw[gray,-latex](4.25,-0.25) -- (4.75,-0.25);
    \end{tikzpicture}
    \]

    The word homomorphism $\Phi$ is well compatible with the notion of a chain.
    
    \begin{lm}
    \label{L:ChainPhi}
    For $s,t$ in $S$, a $S$-word $u$ is a left $(t,s)$ chain if and only if $\Phi(u)$ is a left $(\Phi(t),\Phi(s))$-chain.
    \end{lm}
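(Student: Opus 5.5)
The plan is to show that $\Phi$, extended to $S^\pm$-words letter by letter (with $\Phi(x\inv)=\Phi(x)\inv$), commutes with left reversing. Since $\Phi$ is an involution, it suffices to prove one implication: if $u$ is a left $(t,s)$-chain, then $\Phi(u)$ is a left $(\Phi(t),\Phi(s))$-chain. The converse then follows by applying the same implication to $\Phi(u)$, because $\Phi^2(u)=u$ as words.

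\textbf{Step 1: $\Phi$ is compatible with the complement map.} Recall that $\Phi$ restricts to a bijection of $S$ and is an automorphism of $M$. It therefore preserves the defining relations, so $m(\Phi(s_i),\Phi(s_j)) = m(i,j)$ for all $i,j$; equivalently, $\Phi$ is a graph automorphism of $\Gamma$. Since $f_L(s_i,s_j)$ is the alternating word of length $m(i,j)-1$ ending with $s_j$, we obtain the word identity
\[
f_L(\Phi(s_i),\Phi(s_j)) = \Phi(f_L(s_i,s_j)).
\]
This also covers $i=j$, where both sides are $\varepsilon$.

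\textbf{Step 2: $\Phi$ commutes with one-step reversing.} Suppose $w\curvearrowright_L^1 w'$ by replacing a factor $xy\inv$ with $f_L(x,y)\inv f_L(y,x)$. Then $\Phi(w)$ contains the factor $\Phi(x)\Phi(y)\inv$. By Step 1, replacing this factor yields exactly $\Phi(w')$, so $\Phi(w)\curvearrowright_L^1\Phi(w')$. By induction on the length of a reversing sequence, $w\curvearrowright_L w'$ implies $\Phi(w)\curvearrowright_L\Phi(w')$.

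\textbf{Step 3: conclude.} If $u$ is a left $(t,s)$-chain, then
\[
u s\inv \curvearrowright_L \denL(us\inv)\inv\numL(us\inv),
\]
where $\denL(us\inv)$ is nonempty and ends with $t$. Applying Step 2 gives
\[
\Phi(u)\Phi(s)\inv \curvearrowright_L \Phi(\denL(us\inv))\inv\Phi(\numL(us\inv)).
\]
The right-hand side is a negative word followed by a positive word, so no further reversing step applies. By the uniqueness of $(\denL,\numL)$ from \cite{DeP1999}, we get $\denL(\Phi(u)\Phi(s)\inv)=\Phi(\denL(us\inv))$. This word is nonempty and ends with $\Phi(t)$, so $\Phi(u)$ is a left $(\Phi(t),\Phi(s))$-chain.

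There is no real obstacle. The only point needing care is Step 1: we must justify that $\Phi$ preserves the labels $m$. This holds because $\Phi$ is a monoid automorphism permuting $S$, and $m(i,j)$ is determined intrinsically as the length of the lcm $s_i\vee_L s_j$.
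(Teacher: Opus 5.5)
Your proposal is correct and follows the paper's route. The paper's proof also uses $f_L(\Phi(s_i),\Phi(s_j))=\Phi(f_L(s_i,s_j))$ to see that applying $\Phi$ to the labels of a left reversing diagram gives a left reversing diagram, and deduces the converse from $\Phi$ being an involution; your write-up additionally spells out why $\Phi$ preserves the labels $m$ and uses the uniqueness of $(\denL,\numL)$, which the paper leaves implicit.
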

     
     \begin{proof}
     For any $s_i,s_j\in S$, we have $f_L(\Phi(s_i),\Phi(s_j)) = \Phi(f_L(s_i,s_j))$.
     Hence, for any $S$-word $w$, the left reversing diagram of word $\Phi(w)$ is obtain apply $\Phi$ to the labels of the left reversing diagram of $w$. 
     In particular, if $u$ is a left $(t,s)$-chain then $\Phi(u)$ is a left $(\Phi(t),\Phi(s))$-chain.
     As $\Phi$ is an involution, the converse holds.
     \end{proof}
    
     \begin{df}
     For a $S$-word $u$, the \emph{support} of $u$, denoted by $\supp(u)$, is the set of letters occurring in~$u$.
     \end{df} 
     
     Since relations of~\eqref{E:PresentationArtinMonoid} preserves support of words, we can define the support of an element $g$ of $M$ as the support of any of its representative word over $S$. 
     
    \begin{prp}
    \label{P:SuppChain} A non-empty word $S$-word $u$ whose support does not contain a letter $s\in S$ is both a left and a right $(s,s)$-chain.
    \end{prp}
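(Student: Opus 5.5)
The plan is to prove the left statement by induction on the length of $u$; the right statement then follows by the symmetric argument, exchanging $f_L$ with $f_R$ and left reversing with right reversing. By definition, we must show that $\denL(u\ssinv{}\,)$ — more precisely $\denL(u s\inv)$ — is non-empty and ends with the letter $s$. Put differently, in the left reversing diagram of $u s\inv$, the vertical path produced on the left side must finish, at its bottom, with an arrow labelled $s$.

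The basic tool is the grid (concatenation) property of left reversing. Let $v = v_1 v_2$ be a positive word. Reversing $u v\inv = u\, v_2\inv v_1\inv$ can be carried out in two stages. First reverse $u v_2\inv$ to $\denL(uv_2\inv)\inv\,\numL(uv_2\inv)$. Then reverse $\numL(uv_2\inv)\,v_1\inv$. The resulting denominator is
\[
\denL(u v\inv) = \denL\bigl(\numL(uv_2\inv)\,v_1\inv\bigr)\cdot \denL(uv_2\inv).
\]
The same holds when the positive word is split, $u = u' x$: we reverse the rightmost cell first and then continue. This is exactly the tiling of the reversing diagram. Its validity comes from the uniqueness of $(\denL,\numL)$, cited from \cite{DeP1999}, because reversing may be performed in any order.

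Write $u = u' x$ with $x \in S$. Since $s\notin\supp(u)$, we have $x\neq s$. The first step is to reverse $x s\inv$ into $f_L(x,s)\inv f_L(s,x)$. The key observation is that $f_L(x,s)=\cdots x s$ always ends with the letter $s$, whatever the value of $m(x,s)$; for instance, when $m(x,s)=2$ it is just $s$. So we may write $f_L(x,s) = w_1 s$ with $w_1$ a (possibly empty) positive word. We then have $u s\inv \curvearrowright_L u'\, s\inv\, w_1\inv\, f_L(s,x)$.

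If $u'$ is empty, the denominator is $w_1 s$, which ends with $s$. Otherwise, apply the grid property to $u' s\inv w_1\inv$, reversing $u' s\inv$ first. By the induction hypothesis, which applies because $\supp(u')\subseteq\supp(u)$ avoids $s$, the word $\denL(u' s\inv)$ ends with $s$. The total denominator has the form $\denL(\cdots)\cdot\denL(u' s\inv)$, so it is non-empty and ends with $s$. The trailing positive factor $f_L(s,x)$ only contributes to the numerator and does not affect the denominator.

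The only point needing care is the justification of this grid decomposition of denominators, that is, the fact that reversing can be split into sub-diagrams. I expect this to be the main, though mild, obstacle. I would handle it with the diagrammatic description given before Example~\ref{E:Reversing1}, together with the uniqueness statement from \cite{DeP1999}. Once this is in place, the induction is immediate, and the right-chain case is obtained word for word, using the fact that $f_R(x,s)$ begins with $s$.
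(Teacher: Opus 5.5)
Your proof is correct and is essentially the paper's argument: induction on the length with $u=u'x$, where the one-letter case uses that the denominator $f_L(x,s)$ of $xs^{-1}$ ends with $s$, and the inductive step is the concatenation of reversing diagrams, which the paper packages as Lemma~\ref{L:ChainComposition} while you inline it. One small slip in the right-chain case: with the paper's convention, $s^{-1}x$ right reverses to $f_R(x,s)\,f_R(s,x)^{-1}$, so the denominator you need is $f_R(s,x)$, which begins with $s$ (whereas $f_R(x,s)$ begins with $x$).
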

    
    \begin{proof}
    
    We use an induction on the length of $u$.
    Assume $u$ is a letter $t\not= s$.
    We have the following left reversing diagram
    \[
    \begin{tikzpicture}
    \draw[-latex] (0,0) -- (1,0) node [midway, below]{$u=t$};
    \draw[-latex] (1,1) -- (1,0) node [midway, right]{$s$}; 
    \draw[-latex] (0,1) -- (0,0) node [midway, left]{$f_L(t,s)$};
    \draw[ -latex] (0,1) -- (1,1) node [midway, above]{$f_L(s,t)$};
    \draw(0.5,0.5) node{$\curvearrowright_L$};
    \draw (2,0.5) node{$=$};
    \begin{scope}[shift={(3,0)}]
    \draw[-latex] (0,0) -- (1,0) node [midway, below]{$u=t$};
    \draw[-latex] (1,1) -- (1,0) node [midway, right]{$s$}; 
    \draw[dotted] (0,1) -- (0,0.5);
    \draw[-latex] (0,0.5) -- (0,0) node [midway, left]{$s$};
    \draw[dotted] (0,1) -- (0.5,1);
    \draw[-latex] (0.5,1) -- (1,1) node[midway ,above]{$t$};
    \draw(0.5,0.5) node{$\curvearrowright_L$};
    \end{scope}
    \end{tikzpicture}
    \]
    implying that $u$ is a left $(s,s)$-chain.
    Assume $u$ has length $\ell \geq 2$. We put $u=v\,x$.
    We have  $x\not = s$ and $v$ does not contain $s$ since it is the case for $u$.
    By induction, $x$ and $v$ are two $(s,s)$-left chains and so is~$u$ by Lemma~\ref{L:ChainComposition}.
    A similar argument shows that $u$ is a $(s,s)$-right chain.
    \end{proof}

    As illustrated by the following result, chains are strongly related to the divisibility notion.
    
    \begin{prp}
    \label{P:ChainDiv}
    Let $u$ and $v$ be two $S$-words and $\ssi, \ssj$ be in $S$.
    
    \begin{enumerate}
    \item $u$ is a right $(\ssi,\cdot)$-chain if and only if $\ssi$ does not left-divide $\overline{u}$. 
    \item if $u$ is a right $(\ssi,\ssj)$-chain and $s_i$ left-divides $\overline{uv}$ then $\ssj$ left-divides $\overline{v}$.
    \item $u$ is a left $(\cdot,\ssi)$-chain if and only if $\ssi$ does not right-divide $\overline{u}$.
    \item if $u$ is a left $(\ssj,\ssi)$-chain and $\ssi$ right-divides $\overline{vu}$, then $\ssj$ right-divides $\overline{v}$.
    \end{enumerate} 
    \end{prp}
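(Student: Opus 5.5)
We prove (i) and (ii); items (iii) and (iv) are their left–right mirror images and follow by the symmetric argument with left reversing in place of right reversing.

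For (i), the starting point is Proposition~\ref{P:ReversingDivisibility}: $\ssi$ left-divides $\overline{u}$ if and only if $\denR(\ssi\inv u)$ is empty. If $\ssi$ does not left-divide $\overline{u}$, then $\denR(\ssi\inv u)$ is non-empty. It therefore begins with some letter $\ssj$, so $u$ is a right $(\ssi,\ssj)$-chain. Conversely, if $u$ is a right $(\ssi,\ssj)$-chain for some $\ssj$, then $\denR(\ssi\inv u)$ is non-empty by definition, and so $\ssi\not\preceq_L\overline{u}$. One point needs checking here: the uniqueness of $(\numR,\denR)$ (from \cite{DeP1999}, as recalled above) makes this independent of choices in the reversing. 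The criterion in Proposition~\ref{P:ReversingDivisibility} is stated for any representative word, so it applies to $u$ itself.

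For (ii), write $\ssi\inv u\curvearrowright_R u'\,\ssj\inv d$, where $\denR(\ssi\inv u)=\ssj d$ and $u'=\numR(\ssi\inv u)$. By Proposition~\ref{P:ReversingLcm}, $\overline{u}\vee_R\ssi$ is represented by $u\,\ssj d$, and also by $\ssi u'$. Since $\ssi\preceq_L\overline{uv}$ and $\overline{u}\preceq_L\overline{uv}$, the lattice property gives $\overline{u}\vee_R\ssi\preceq_L\overline{uv}$. Thus $\overline{u}\,\overline{\ssj d}\preceq_L\overline{u}\,\overline{v}$, and left cancellativity of $M$ yields $\overline{\ssj d}\preceq_L\overline{v}$. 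In particular $\ssj\preceq_L\overline{v}$.

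The only delicate step is the identification in (ii) of the lcm with $u\cdot\denR(\ssi\inv u)$ together with left cancellation. This requires taking $\denR$ to be the full right denominator produced by completing the reversing, rather than an intermediate word. Once that is fixed, the rest is bookkeeping.
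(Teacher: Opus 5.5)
Your proof is correct. Item (i) is handled as in the paper, by reading Proposition~\ref{P:ReversingDivisibility} against the definition of a chain, but for (ii) you take a different route.

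The paper stays inside the reversing calculus. From $\ssi\preceq_L\overline{uv}$ it gets $\denR(\ssi\inv uv)=\varepsilon$ by Proposition~\ref{P:ReversingDivisibility}. It then glues the reversing diagram of $\ssi\inv u$ (whose denominator starts with $\ssj$) to that of $\ssj\inv v$, as in Lemma~\ref{L:ChainComposition}. Since the reversed form is unique, $\denR(\ssj\inv v)$ appears as a prefix of $\denR(\ssi\inv uv)$, so it is empty, and $\ssj\preceq_L\overline{v}$ follows from the same proposition.

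You instead use Proposition~\ref{P:ReversingLcm} to recognise $u\cdot\denR(\ssi\inv u)$ as a representative of $\overline{u}\vee_R\ssi$, and finish with the lattice property of $(M,\preceq_L)$ and left cancellativity. Your version needs no diagram manipulation and proves more: the whole element $\overline{\denR(\ssi\inv u)}$, not only its first letter $\ssj$, left-divides $\overline{v}$. The paper's version keeps everything at the level of words and diagrams, in the same style as its other chain lemmas.

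Two minor points:
\begin{itemize}
\item With $\denR(\ssi\inv u)=\ssj d$, the reversed word is $u'\,d\inv\ssj\inv$, not $u'\,\ssj\inv d$. This is harmless, since you only use $\denR$ afterwards.
\item Like the paper, you overlook that chains are non-empty by definition. So the ``if'' direction of (i), and of (iii), fails for $u=\varepsilon$ and must be read for non-empty $u$.
\end{itemize}
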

    
    \begin{proof}
    $(i)$ and $(iii)$ are immediate consequences of definition of chains and of Proposition~\ref{P:ReversingDivisibility}.
    For $(ii)$, as $\ssi$ left-divides $\overline{uv}$, Proposition~\ref{P:ReversingDivisibility} implies $\denR(\ssinv\ii uv)=\varepsilon$ :
    \begin{center}
    \begin{tikzpicture}[x=40pt,y=40pt]
    \draw[-latex](0,0) -- (1,0) node[midway, above]{$u\,v$};
    \draw[-latex](0,0) -- (0,-0.5) node[midway, left]{$\ssi$};
    \draw[gray,-latex](0.25,-0.25) -- (0.75,-0.25);
    \draw (1.5,-0.25) node{$=$};
    \begin{scope}[shift={(2.5,0)}]
    \draw[-latex](0,0) -- (1,0) node[midway, above]{$u$};
    \draw[-latex](0,0) -- (0,-0.5) node[midway, left]{$\ssi$};
    \draw[-latex](1,0) -- (1,-0.5) node[midway, right]{$\ssj$};
    \draw[-latex](1,0) -- (2,0) node[midway, above]{$v$};
    \draw[gray,-latex](0.25,-0.25) -- (0.75,-0.25);
    \draw[gray,-latex](1.35,-0.25) -- (1.85,-0.25);
    \end{scope}
    \end{tikzpicture}
    \end{center}
    and so we obtain $\denR(\ssj\inv v)=\varepsilon$ implying $t$ left divides $\overline{v}$.
    Point $(iv)$ is similar.
    \end{proof}
    
    \begin{cor}
    \label{C:Subchain}
    Let $u$ be a left $(\ssj,\ssi)$-chain and $g$ be a right divisor of $\overline{u}$.
    Any representative $v$ of $g$ is a left $(\ssk,\ssi)$-chain for some $\ssk\in S$. The letter $\ssk$ depends on the choice of $v$.
    \end{cor}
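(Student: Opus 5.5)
Write $\overline{u}=g'g$ with $g'\in M$, and fix a representative $v$ of $g$ and a representative $v'$ of $g'$. Then $v'v$ represents $\overline{u}$. The plan is to first show that $v'v$ is a left $(\ssj,\ssi)$-chain, and then read off the chain property of the suffix $v$ from the left reversing diagram of $v'v\,\ssinv\ii$.

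\textbf{Step 1: the chain property depends only on the element.} The words $u$ and $v'v$ are equivalent, so $u\ssinv\ii\equiv v'v\ssinv\ii$. Left reversing produces the pair $(\denL,\numL)$ attached to the element. By uniqueness (\cite{DeP1999}), $\denL(u\ssinv\ii)$ and $\denL(v'v\ssinv\ii)$ represent the same element of $M$, namely the complement of $\overline{u}$ in $\overline{u}\vee_L\ssi$, by Proposition~\ref{P:ReversingLcm}. This element is nontrivial, because $\denL(u\ssinv\ii)$ is nonempty. So by Proposition~\ref{P:ChainDiv}(iii), $\ssi$ does not right-divide $\overline{u}$.

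\textbf{Step 2: the suffix $v$ inherits a chain property.} Suppose $\ssi$ right-divided $g=\overline{v}$. Then $\ssi$ would right-divide $g'g=\overline{u}$, contradicting Step~1. Hence $\ssi$ does not right-divide $\overline{v}$, and by Proposition~\ref{P:ChainDiv}(iii) the word $v$ is a left $(\cdot,\ssi)$-chain. Concretely, $\denL(v\ssinv\ii)$ is a nonempty word, and we let $\ssk$ be its last letter. Then $v$ is a left $(\ssk,\ssi)$-chain.

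\textbf{Step 3: dependence on $v$.} For the last sentence of the statement, note that the element $\overline{\denL(v\ssinv\ii)}$ is independent of $v$, since it is the complement of $g$ in $g\vee_L\ssi$. Its final letter as a word, however, can vary with the representative chosen. A concrete check supports this: in $A_3$, take the word $\ss1\ss3$ and compare it with $\ss3\ss1$ against $\ssinv2$. The reversing diagram decides which letter ends $\denL$, and these two words give different letters.

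\textbf{Main obstacle.} The delicate point is precisely this dependence on the word. In Step~1 the chain property (nonempty $\denL$) must be checked to be a property of the element only. That is why I route the argument through Proposition~\ref{P:ChainDiv}(iii) and the uniqueness of reversing, rather than through the last letter of $\denL$, which is word dependent.
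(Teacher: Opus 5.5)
Your Steps~1 and~2 correctly prove the existence of $s_k$, and they follow the paper's proof. Proposition~\ref{P:ChainDiv}(iii) applied to $u$ shows that $s_i$ does not right-divide $\overline{u}$, hence does not right-divide $g$. The same item applied to $v$ then produces the letter $s_k$. The construction of $v'v$ and the appeal to uniqueness of reversing in Step~1 play no role and can be deleted. Worse, the sentence announcing your plan, that $v'v$ is a left $(s_j,s_i)$-chain, is false in general, for exactly the reason the statement warns about. Step~1 neither proves nor uses it, but it should not be stated.

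Step~3, however, is wrong as written. The words $s_1s_3$ and $s_3s_1$ do not involve $s_2$, so by Proposition~\ref{P:SuppChain} both are left $(s_2,s_2)$-chains. Explicitly, $\denL(s_1s_3s_2^{-1})=s_2s_3s_1s_2$ and $\denL(s_3s_1s_2^{-1})=s_2s_1s_3s_2$, which end with the same letter, so this pair illustrates nothing.

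A correct example in $A_3$ uses the equivalent words $w_1=s_2s_1s_3$ and $w_2=s_2s_3s_1$. Left reversing gives $\denL(w_1s_2^{-1})=s_2s_3s_1$ and $\denL(w_2s_2^{-1})=s_2s_1s_3$. So $w_1$ is a left $(s_1,s_2)$-chain while $w_2$ is a left $(s_3,s_2)$-chain. Taking $u=w_1$, $g=\overline{u}$, $v'=\varepsilon$ and $v=w_2$ also refutes your plan sentence.

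The paper itself does not try to prove the last sentence of the statement; it is only a caveat. So your defects are the false plan sentence and this incorrect illustration, not the core argument.
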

    
    \begin{proof}
    By $(iii)$ of Proposition~\ref{P:ChainDiv}, $\ssi$ is not a right divisor of $\overline{u}$ and so of $g$ in particular.
    Always by Proposition~\ref{P:ChainDiv}, it exists $\ssk\in S$ such that $v$ is a left $(\ssk,\ssi)$-chain.
    \end{proof}
    
    \begin{cor}
     \label{C:DivGarsideChain} Let $g$ be a divisor of $\DD$ and $w$ be a representative of $\alpha(g)$.
    
     \begin{enumerate}
     \item If $s\in S$ left divides $g$, then $w$ is a left $(t,s)$-chain with $t$ a right divisor of $\Phi(g)$.
     \item If $s\in S$ right divides $g$, then~$\Phi(w)$ is a right $(s,t)$-chain with $t$ a left divisor of $\alpha(g)$.
    \end{enumerate}
     \end{cor}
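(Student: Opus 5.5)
The plan is to derive both points from Proposition~\ref{P:ChainDiv}, using two ingredients: the square-freeness of $\DD$ (Proposition~\ref{P:Garside}(i)) and the factorisations of $\DD$ in Proposition~\ref{P:Garside}(iii). In each point, square-freeness will show that a certain atom does not divide the relevant word, which produces a chain. Then, since every atom divides $\DD$ on both sides, the transfer statements (ii) and (iv) of Proposition~\ref{P:ChainDiv} will identify the second letter of the chain.

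For (i), assume $s$ left divides $g$ and $w$ represents $\alpha(g)$. If $s$ right divided $\alpha(g)$, then $\DD=\alpha(g)\cdot g$ would contain a factor $s^2$, which is excluded. Hence $s$ does not right divide $\overline w$, and point (iii) of Proposition~\ref{P:ChainDiv} gives some $t\in S$ such that $w$ is a left $(t,s)$-chain. Next we use the factorisation $\DD=\Phi(g)\cdot\alpha(g)$ from Proposition~\ref{P:Garside}(iii). The atom $s$ right divides $\DD$, that is, it right divides $\overline{vw}$ for any representative $v$ of $\Phi(g)$. Point (iv) of Proposition~\ref{P:ChainDiv} then yields that $t$ right divides $\Phi(g)$, as claimed.

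For (ii), assume $s$ right divides $g$. Here the relevant factorisation is $\DD=g\cdot\Phi(\alpha(g))$, and $\Phi(w)$ represents $\Phi(\alpha(g))$. If $s$ left divided $\Phi(\alpha(g))$, we would again get $s^2$ inside $\DD$. Hence point (i) of Proposition~\ref{P:ChainDiv} gives some $t$ such that $\Phi(w)$ is a right $(s,t)$-chain. To locate $t$, we use the factorisation of $\DD$ with $\Phi(\alpha(g))$ as its left factor, namely $\DD=\Phi(\alpha(g))\cdot\Phi(g)$ from Proposition~\ref{P:Garside}(iii). Since $s$ left divides $\DD$, point (ii) of Proposition~\ref{P:ChainDiv} says that $t$ left divides the cofactor of $\Phi(\alpha(g))$ in $\DD$, which is the element written after it. An equivalent route is to apply Lemma~\ref{L:ChainPhi}, rewrite the statement as ``$w$ is a right $(\Phi(s),\Phi(t))$-chain'', and run the same argument with $\DD=\alpha(g)\cdot g$ and $\DD=\Phi(g)\cdot\alpha(g)$.

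The main obstacle I anticipate is exactly this last identification in (ii), because the cofactor produced by the argument is $\Phi(g)$, not $\alpha(g)$. Before concluding, I would first check this on the smallest example, type $A_2$ with $g=\sigma_1$, where $\alpha(g)=\sigma_1\sigma_2$ and $\Phi(w)=\sigma_2\sigma_1$. Right reversing $\sigma_1^{-1}\sigma_2\sigma_1$ gives $t=\sigma_2$. This $t$ left divides $\Phi(g)=\sigma_2$ but does not left divide $\alpha(g)=\sigma_1\sigma_2$. So this example contradicts the printed ``left divisor of $\alpha(g)$'', and the argument can only establish point (ii) with $\Phi(g)$ in place of $\alpha(g)$.
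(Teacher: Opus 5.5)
Your proof of (i) is exactly the paper's argument: square-freeness of $\DD$, then Proposition~\ref{P:ChainDiv}(iii) and (iv) applied to $\DD=\alpha(g)\cdot g=\Phi(g)\cdot\alpha(g)$. For (ii) the paper only says ``in the similar way'', and your mirrored argument using $\DD=g\cdot\Phi(\alpha(g))=\Phi(\alpha(g))\cdot\Phi(g)$ is the correct way to carry it out; it yields that $t$ is a left divisor of $\Phi(g)$.

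Your $A_2$ check is correct: $\sigma_1^{-1}\sigma_2\sigma_1\curvearrowright_R\sigma_2\sigma_1\sigma_2^{-1}$ forces $t=\sigma_2$, and $\sigma_2$ does not left divide $\alpha(\sigma_1)=\sigma_1\sigma_2$. So the printed ``$\alpha(g)$'' in (ii) is a slip for $\Phi(g)$, which is the form that mirrors (i). This does not affect the rest of the paper, since only (i) is ever invoked (in Lemma~\ref{L:DeltaDecomposition}).
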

     
     \begin{proof}
     Let us prove $(i)$. 
    From Proposition~\ref{P:Garside} $(iv)$, we have $\DD = \alpha(g) \cdot g = \Phi(g)\cdot \alpha(g)$.
    Let $s\in S$ be a left divisor of~$g$. 
    Since $\DD$ is square free, the term $\alpha(g)$ is not right divisible by $s$.
    According to Proposition~\ref{P:ChainDiv} $(iii)$, the word $w$ is then a left $(t,s)$-chain for some $t\in S$.
    By Proposition~\ref{P:ChainDiv} $(iv)$, the relation $\DD= \Phi(g)\cdot \alpha(g)$ and the fact that $s$ is a right divisor of $\DD$ imply that $\Phi(g)$ is right divisible by $t$. 
    We establish $(ii)$ in the similar way.
      \end{proof}
      
    As we will see in Proposition~\ref{P:ChainRev}, in the particular context of spherical type  Artin-Tits monoids, a word is a left $(t,s)$-chain if and only it is a right $(s,t)$-chain. We then obtain a connection between left and right divisibility notions.
    This property is a consequence of the very particular form of least common multiples of elements of $S$ as already seen in Lemma~\ref{L:LcmToChain}. This leads to the following definition: 
    
    \begin{df}
    A left $(\cdot,\ssi)$-chain $u$ is said to be \emph{elementary} if it exists $\ssj\in S$ such that 
    $u$ is a  proper suffix of $f_L(\ssj,\ssi)\, \ssj $. A right $(\ssi,\cdot)$-chain $u$ is said to be \emph{elementary} if it exists $\ssj\in S$ such that $u$ is a  proper prefix of $\ssj\,f_R(\ssi,\ssj)$.
    \end{df}

    \begin{lm}
    \label{L:ExistElemChain}
    Consider a left $(\ssj,\ssi)$-chain $u$ and its last letter $s_\ell$.
    There exists a unique maximal proper suffix $v$ of $f_L(\ss\ell,\ssi)\,\ss\ell$  and a left $(\ssj,\ssk)$-chain $w$ so that $v$ is an elementary left $(\ssk,\ssi)$-chain and~$u\equiv wv$.
    A similar results holds for right chains.
    \end{lm}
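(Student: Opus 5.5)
Throughout, $u$ is a left $(\ssj,\ssi)$-chain ending with $\ss\ell$. The plan is to take for $v$ the longest proper suffix of the alternating word $f_L(\ss\ell,\ssi)\,\ss\ell$ that right-divides $\overline{u}$, to read off the chain property of $v$ from Lemma~\ref{L:LcmToChain}, and to get the chain property of $w$ from maximality of $v$.

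\emph{Existence and uniqueness of $v$.} By Proposition~\ref{P:ChainDiv}~(iii), $\ssi$ does not right-divide $\overline{u}$. Since $\ss\ell$ does right-divide $\overline{u}$, we get $\ell\neq i$. The proper suffixes of $f_L(\ss\ell,\ssi)\,\ss\ell$ all end with $\ss\ell$, alternate between $\ss\ell$ and $\ssi$, have lengths $1,\ldots,m(i,\ell)-1$, and are totally ordered by the suffix relation. Among them, consider those whose class right-divides $\overline{u}$. This set is nonempty, since it contains $\ss\ell$, and it is finite. So it has a unique longest element $v$, and uniqueness of the maximal suffix is automatic. Write $\overline{u}=g\cdot\overline{v}$ in $M$, and let $\ssk$ be the letter of $\{\ssi,\ss\ell\}$ different from the first letter of $v$. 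By Lemma~\ref{L:LcmToChain}~(ii), $v$ is an elementary left $(\ssk,\ssi)$-chain.

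\emph{The complementary factor.} I claim $\ssk$ does not right-divide $g$. Otherwise $\ssk v$ right-divides $\overline{u}$. The word $\ssk v$ is still alternating, ends with $\ss\ell$, and has length $|v|+1\le m(i,\ell)$. If $|v|+1<m(i,\ell)$, then $\ssk v$ is a longer proper suffix right-dividing $\overline{u}$, contradicting maximality. If $|v|+1=m(i,\ell)$, then $\ssk v$ represents $\ssi\vee_L\ss\ell$, so $\ssi$ would right-divide $\overline{u}$, which is impossible. Hence, by Proposition~\ref{P:ChainDiv}~(iii), every representative $w$ of $g$ is a left $(s,\ssk)$-chain for some letter $s$. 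For every such $w$, Lemma~\ref{L:ChainComposition} shows that $wv$ is a left $(s,\ssi)$-chain.

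\emph{Main obstacle.} It remains to choose $w$ so that $s=\ssj$ and $u\equiv wv$. This is delicate because the letter $\ssj$ is attached to the particular word $u$ through its reversing diagram, not merely to $\overline{u}$. I would not pick $w$ arbitrarily but extract it from the left reversing diagram of $u\,\ssinv\ii$, arguing by induction on the length of $u$ and reversing one letter at a time from the right.

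The first step reverses $\ss\ell\ssinv\ii$ into $f_L(\ss\ell,\ssi)\inv f_L(\ssi,\ss\ell)$. One then follows how the remaining prefix of $u$ absorbs successive letters of this alternating denominator. The prefix of the alternating word that is consumed with empty denominator corresponds exactly to the suffix $v$. At the first letter that produces a nonempty denominator, which is $\ssk$ by the previous paragraph, the remaining part of the diagram is precisely a left reversing diagram of $w\,\ssk\inv$ for a word $w$ built from $u$ by the equivalences recorded in the diagram. For this $w$ we have $u\equiv wv$, and its bottom-left denominator letter is the same as that of $u\,\ssinv\ii$, namely $\ssj$. So $w$ is a left $(\ssj,\ssk)$-chain.

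The right-chain version follows symmetrically, or by reversing words and exchanging $f_L$ with $f_R$.
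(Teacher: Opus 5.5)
Your proposal is correct and follows essentially the paper's proof. In both, $v=v's_\ell$ is the longest alternating suffix right-dividing $\overline{u}$, and $w$ is the numerator of the left reversing of $u'v'^{-1}$ (with $u=u's_\ell$); the left reversing diagram of $u\,s_i^{-1}$ then splits into the block of $u'v'^{-1}$ (empty denominator) followed by the block of $w\,s_k^{-1}$, and this forces the last letter of $\den_L(w\,s_k^{-1})$ to be $s_j$. Your second paragraph spells out the maximality argument, including the boundary case $|v|+1=m(i,\ell)$, which the paper compresses into a reference to Proposition~\ref{P:ChainDiv}; the induction on the length of $u$ that you announce is not actually needed.
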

    
    \begin{proof}
    Consider $u$ and $\ss\ell$ as in the statement. 
    Let $v'$ be the longest word $\ldots \ssi\ss\ell\,\ssi$ such that  $\overline{v'\ss\ell}$ right divides $\overline{u}$.
    Since $\ssi$ is not a right divisor of $\overline{u}$, the word $v'$ must have length at most $m_{\ii,\ell}-2$, and $v'\ss\ell$ is an elementary left $(\ssk,\ssi)$-chain with $\ssk\in\{\ssi,\ss\ell\}$ by Lemma~\ref{L:LcmToChain}.
    Define $u'$ and $v''$ so that $u=u' s_\ell$ and $v''\ssk v' = f_L(\ss\ell,\ssi)$.
    So $v''\ssk v'\ss\ell$ represents~$\ssi\vee_R\ss\ell$.
    As $\overline{v'}$ right divides $\overline{u'}$, there exists a word $w$ such that $u'{v'}^{-1}$ left reverses into $w$. By Proposition~\ref{P:ChainDiv}, the choice of $u$, imposes that $w$ has to be a left $(\ss{k'},\ssk)$-chain for some $\ss{k'}$.
    There exists $S$-words $w_1,w_2,w_3$ and $w_4$ such that we obtain the following left reversing diagram:
    
    \[
    \begin{tikzpicture}
    \draw[-latex](0,0) -- (2,0) node[midway, below]{$u'$};
    \draw[-latex](2,0) -- (4,0) node[midway, below]{$\ss\ell$};
    \draw[-latex](2,3) -- (2,2.25) node[midway, right]{$v''$};
    \draw[-latex](2,2.25) -- (2,0.75) node[midway, right]{$\ssk$};
    \draw[-latex](0,2.25) -- (0,1.5) node[midway, left]{$w_1$};
    \draw[-latex](0,1.5) -- (0,0.75) node[midway, left]{$\ss{k'}$};
    \draw[-latex](2,0.75) -- (2,0) node[midway, right]{$v'$};
    \draw[-latex](4,3) -- (4,0) node[midway, right]{$\ssi$};
    \draw[-latex](0,0.75) -- (0,0) node[midway, left]{$\varepsilon$};
    \draw[-latex](0,3) -- (0,2.25) node[midway, left]{$w_3$};
    \draw[-latex](0,0.75) -- (2,0.75) node[midway, above]{$w$};
    \draw[-latex](0,3) -- (2,3) node[midway, above]{$w_4$};
    \draw[-latex](0,2.25) -- (2,2.25) node[midway, above]{$w_2$};
    \draw[-latex](2,3) -- (4,3) node[midway, above]{$f_L(\ssi,\ss\ell)$};
    \end{tikzpicture}
    \]
    We have then obtain that $u'\ss\ell\ssi\inv$ left reverses into $(w_3\,w_1\,\ss{k'})\inv (w_4\,f_L(\ssi,\ss\ell))$.
    Since $u=u'\,\ss\ell$ is a left $(\ssj,\ssi)$-chain we have $k' = j$ and so $w$ is a left $(\ssj,\ssk)$-chain.
    \end{proof} 
    
    \begin{lm}
    \label{L:ElemChainRev}
    A $S$-word is an elementary left $(\ssk,\ssi)$-chain if and only if it is an elementary right $(\ssk,\ssi)$-chain.
    \end{lm}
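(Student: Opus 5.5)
The plan is to make both notions completely explicit as combinatorial conditions on alternating words in two letters, and then to check that the two descriptions coincide. Lemma~\ref{L:LcmToChain} supplies exactly the chain type of every proper suffix of $f_L(\ss\ell,\ssk)\,\ss\ell$ and of every proper prefix of $\ss\ell\,f_R(\ssk,\ss\ell)$. The only further fact needed is that the pair $(\ssk,\ssi)$ attached to a chain is well defined, since the words $\denL(u\ssinv\ii)$ and $\denR(\ssinv\kk u)$ are unique.

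\emph{Step 1 (left side).} Since $f_L(\ssi,\ssj)=\cdots\ssj\ssi\ssj$ has $m(i,j)-1$ letters, the word $f_L(\ssj,\ssi)\,\ssj$ is the alternating word in $\ssi,\ssj$ of length $m(i,j)$ ending with $\ssj$. So an elementary left $(\cdot,\ssi)$-chain $u$ is an alternating word in $\{\ssi,\ssj\}$ for some $\ssj\neq\ssi$, with $1\le |u|\le m(i,j)-1$ and last letter $\ssj$. Applying Lemma~\ref{L:LcmToChain}(ii) with $\ell=j$ and $k=i$, $u$ is a left $(\ssk,\ssi)$-chain, where $\ssk$ is the letter of $\{\ssi,\ssj\}$ different from the first letter of $u$. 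Hence $u$ is an elementary left $(\ssk,\ssi)$-chain if and only if there is a two-letter set $\{a,b\}\subseteq S$ containing $\ssi$ and $\ssk$ with $m(a,b)\neq 2$ or $|u|=1$, such that
\[
u \text{ is alternating in } a,b,\quad 1\le |u|\le m(a,b)-1,\quad \text{first letter}\neq \ssk,\quad \text{last letter}\neq\ssi. \tag{$\ast$}
\]

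\emph{Step 2 (right side).} Symmetrically, $\ssj\,f_R(\ssk,\ssj)$ is the alternating word of length $m(k,j)$ starting with $\ssj$. Lemma~\ref{L:LcmToChain}(i), applied with $\ell=j$, shows that a proper prefix $u$ is a right $(\ssk,\ssi)$-chain, where $\ssi$ is the letter of $\{\ssk,\ssj\}$ different from the last letter of $u$. So elementary right $(\ssk,\ssi)$-chains are characterized by the same condition $(\ast)$, now with $\{a,b\}=\{\ssk,\ssj\}$.

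\emph{Step 3 (comparison).} It remains to check that the two-letter sets agree: on the left the set is $\{\ssi,\ssj\}$, and on the right it is $\{\ssk,\ssj'\}$. If $|u|\ge 2$, the set is forced to be $\supp(u)$, which must contain both $\ssi$ and $\ssk$, so the conditions are identical. If $|u|=1$, say $u=t$, then $(\ast)$ forces $\ssk\neq t$ and $\ssi\neq t$. Since both $\ssi$ and $\ssk$ lie in a two-letter set containing $t$, we get $\ssk=\ssi$. So in both descriptions $u=t$ is an elementary $(\ssi,\ssi)$-chain for every letter $t\neq\ssi$, consistent with Proposition~\ref{P:SuppChain}.

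The main obstacle is bookkeeping rather than substance. One must carefully match the index conventions of the left complement $f_L$ and the right complement $f_R$ with those of Lemma~\ref{L:LcmToChain}, where the roles of $i,j,k,\ell$ are permuted. One must also ensure that the chain type $(\ssk,\ssi)$ is uniquely determined, so that ``is an elementary left $(\ssk,\ssi)$-chain'' really is equivalent to $(\ast)$ rather than merely implied by it; this uniqueness follows from the uniqueness of $\denL$ and $\denR$.
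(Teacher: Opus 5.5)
Your proposal is correct and takes essentially the paper's route. Both arguments view an elementary chain as an alternating two-letter word that is simultaneously a proper suffix of one representative of the lcm of the two letters and a proper prefix of the other, and then read off the chain type from Lemma~\ref{L:LcmToChain}(ii) and (i). Your symmetric characterization $(\ast)$, together with the remark that the chain type is pinned down by the uniqueness of $\denL$ and $\denR$, spells out the index bookkeeping that the paper's short proof handles only loosely.
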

    
    \begin{proof}
    Assume $u$ is an elementary left $(\ssk,\ssi)$-chain and let $j,\ell$ be as in Lemma~\ref{L:LcmToChain}(i).
    By assumption $u$ is a proper prefix of $\ss\ell\,f_R(\ssk,\ss\ell)$.
    But this is also a proper suffix of $f_L(\ssj,\ssi)\,\ssj$ that begins with $s_\ell$.
    So by Lemma~\ref{L:LcmToChain}(ii), this is an elementary right $(\ssk,\ssi)$-chain.
    We conclude the proof by the left/right symmetry.
    \end{proof}
    
    \begin{prp}\label{P:ChainRev}
    A $S$-word is a left $(\ssj,\ssi)$-chain if and only if it is equivalent to a right $(\ssj,\ssi)$-chain.
    \end{prp}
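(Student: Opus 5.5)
We argue by induction on the length of the word, using Lemma~\ref{L:ExistElemChain} to peel off one elementary chain at a time and Lemma~\ref{L:ElemChainRev} to swap left and right for the elementary pieces. By the left/right symmetry of all the tools involved (the lemmas on elementary chains, composition and reversing diagrams all have mirror versions), it suffices to prove one direction: every left $(\ssj,\ssi)$-chain $u$ is equivalent to a right $(\ssj,\ssi)$-chain.

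Let $u$ be a left $(\ssj,\ssi)$-chain with last letter $\ss\ell$. By Lemma~\ref{L:ExistElemChain}, there are an elementary left $(\ssk,\ssi)$-chain $v$ and a left $(\ssj,\ssk)$-chain $w$ such that $u\equiv wv$. Since $v$ is non-empty, the element $\overline{w}$ is strictly shorter than $\overline{u}$, because equivalent words have the same length.

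We then distinguish two cases. If $w$ is empty, then $u\equiv v$. In that case the definition of a left chain applied to $w$ degenerates; in fact $u\equiv v$ forces $\ssk=\ssj$, since $u$ and $v$ are left chains for the same element and $\ssi$. By Lemma~\ref{L:ElemChainRev}, $v$ is an elementary right $(\ssj,\ssi)$-chain, and we are done. Otherwise, the induction hypothesis applied to $w$ gives a right $(\ssj,\ssk)$-chain $w'\equiv w$. Lemma~\ref{L:ElemChainRev} says that $v$ is also a right $(\ssk,\ssi)$-chain. Then Lemma~\ref{L:ChainComposition}, in its right-chain version, shows that $w'v$ is a right $(\ssj,\ssi)$-chain, and $w'v\equiv wv\equiv u$.

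The main obstacle is bookkeeping rather than any deep point. One has to make sure that the conventions $(\ssj,\ssi)$ versus $(\ssi,\ssj)$ in the definitions of left and right chains match exactly under Lemma~\ref{L:ElemChainRev}, so that the right chains compose in the correct order. One also has to handle the base case $w=\varepsilon$, or equivalently $u$ a single elementary chain, carefully. I would check this first by verifying, from the reversing diagrams, that the right-chain composition rule is stated with indices ordered so that a right $(\ssj,\ssk)$-chain followed by a right $(\ssk,\ssi)$-chain gives a right $(\ssj,\ssi)$-chain. If the conventions turn out to be reversed, the fix is to run the same induction by peeling the first letter instead, using the right-chain version of Lemma~\ref{L:ExistElemChain}.

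Finally, the statement is about words, but the induction runs on the length of elements. Since being a right chain is not obviously invariant under word equivalence, we keep track of the specific words produced, namely $w'v$, rather than arbitrary representatives.
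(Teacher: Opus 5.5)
Your argument is the paper's proof. It uses induction on length, Lemma~\ref{L:ExistElemChain} to split off a maximal elementary piece $v$, Lemma~\ref{L:ElemChainRev} to read $v$ as a right $(s_k,s_i)$-chain, the induction hypothesis on $w$, and the right-chain form of Lemma~\ref{L:ChainComposition}. The index conventions compose exactly as you need, so the fallback is unnecessary.

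The one step whose justification fails is the case $w=\varepsilon$. The type of a chain is \emph{not} an invariant of the element it represents, so ``$u$ and $v$ are left chains for the same element and $s_i$'' does not force $s_k=s_j$. For example, in type $A_3$ one computes
\[
s_2s_1s_3\,s_2^{-1}\curvearrowright_L (s_2s_3s_1)^{-1}\,s_3s_2s_1s_2s_3 .
\]
So $s_2s_1s_3$ is a left $(s_1,s_2)$-chain. By the symmetry $s_1\leftrightarrow s_3$, the equivalent word $s_2s_3s_1$ is a left $(s_3,s_2)$-chain. The paper flags exactly this dependence in Corollary~\ref{C:Subchain}, and your own last paragraph anticipates it.

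The correct reason is different. The word $v$ is alternating in two letters and has length less than the corresponding $m$, so it is alone in its equivalence class. Hence $u\equiv v$ gives $u=v$ letter for letter, and uniqueness of $\denL(u s_i^{-1})$ then gives $s_k=s_j$. Equivalently, do as the paper does and take ``$u$ is itself elementary'' as the base case; the same remark then guarantees $w\neq\varepsilon$ in the inductive step.

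Your appeal to symmetry, like the paper's, only shows that every right $(s_j,s_i)$-chain is equivalent to a left $(s_j,s_i)$-chain. It does not give the literal converse, ``a word equivalent to a right $(s_j,s_i)$-chain is a left $(s_j,s_i)$-chain'', and that statement is false. Indeed $s_3^{-1}\,s_2s_1s_3\curvearrowright_R s_2s_3s_1s_2\,(s_2s_1)^{-1}$, so $s_2s_1s_3$ is a right $(s_3,s_2)$-chain. By the computation above and uniqueness of $\denL$, it is not a left $(s_3,s_2)$-chain. The statement should therefore be read as saying that left and right $(s_j,s_i)$-chains represent the same elements, and that is what your argument proves.
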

    
    \begin{proof}
    Let $u$ be a left $(\ssj,\ssi)$-chain and let us prove that $u$ is equivalent to a right $(\ssj,\ssi)$-chain. 
    If $u$ is an (possible empty) elementary left $(\ssj,\ssi)$-chain, we conclude with Lemma~\ref{L:ElemChainRev}.
    Assume now that $u$ is not an elementary left $(\ssj,\ssi)$-chain. 
    We use an induction on the length of~$u$.
    Let $v$ and $w$ be the two $S$-words given by Lemma~\ref{L:ExistElemChain}: $v$ is a maximal elementary left $(\ssk,\ssi)$-chain and $w$ is a left $(\ssj,\ssk)$-chain such that $u\equiv w\,v$. 
    By the induction hypothesis, the word~$w$ is equivalent to a right $(\ssj,\ssk)$-chain $w'$. 
    On the other hand, Lemma~\ref{L:ElemChainRev} implies that $v$ is a right $(\ssk,\ssi)$-chain. 
    Therefore, by Lemma~\ref{L:ChainComposition}, $w'v$ is a right $(\ssj,\ssi)$-chain. 
    We conclude with $u\equiv w\,v\equiv w'\,v$.
    Thanks to left/right symmetry the converse is immediate.
    \end{proof}

    \section{Alternating normal forms in Artin monoids}
    
\label{S:alter}    
    
Recall that $\Gamma$, $S$, $M$, $G$ and $\Delta$ are defined once for all in the previous section (see Notation~\ref{N: cadre}). 
    
    \subsection{covering and normal forms}
    For a subset $T$ of $S$, the full subgraph~$\Gamma(T)$ of~$\Gamma$ generated by $T$ is also a Coxeter graph whom label map is the restriction of the map~$m$.  The \emph{parabolic} submonoid of $M$ generated by $T$ is denoted by~$M_T$.  The \emph{parabolic} subgroup of $G$ generated by $T$ is denoted by~$G_T$ Clearly,~$M_T$ is the Artin--Tits monoid $M(\Gamma(T))$ and~$G_T$ is the Artin--Tits group $G(\Gamma(T))$.
Since $M$ is of spherical type, so are $M_{T}$ and $M_{T}$ for any subset $T$ of $S$.  
\begin{df}   
    A \emph{covering} of $\Gamma$, is a couple $(S_2,S_1)$ satisfying~$S = S_1 \cup S_2$.
    It is said to be proper if $S_1$ and $S_2$ are proper subsets of $S$.\end{df}

Throughout Section~\ref{S:alter}, we assume $ (S_2, S_1)$ be a proper covering of $S$. In the next sections an additional assumption will be needed, namely that the covering is irreducible (see Definition~\ref{D:irred et stab}),  in order to prove our results.  But this restriction is not useful here.  Recall the sets~$\SRD{M}{g}$ and $\SLD{M}{g}$, for $g$ in $M$, are defined in Section~\ref{S:Garside}.   For a subset $X$ of $M$ and $g\in M$ we define :
    \[
    \SRD{X}{g} = \SRD{M}{g}\cap X\quad\text{and}\quad \SLD{X}{g} = \SLD{M}{g}\cap X. 
    \]
    
    Note that this notation is coherent : if $g$ belongs to $M_X$ for some $X\subseteq S$, then $R(g,M_X)$ is the set of right divisors of $g$ in $M_X$ ; the equivalent result holds for $L(g,M_X)$.
     The existence of the alternating decomposition on $M$ rests on the following result:
     
    \begin{prp}\label{P:ExistenceTail}  
    \cite[Lemma~1.6]{Deh2008}  Let $T$ be a subset of $S$.
    For any element $g$ in $M$, the set $\SRD{M_T}{g}$ admits a unique maximal element for the right-divisibility~$\preceq_R$.
    \end{prp}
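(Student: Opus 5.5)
The plan is to show directly that the right divisors of $g$ lying in $M_T$ are closed under the right lcm, and then use Noetherianity together with the finiteness of the set of right divisors to pick out the largest one.

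\emph{Step 1: the set is finite and contains $1$.} Since $M$ is atomic and equivalent words have the same length, $\SRD{M}{g}$ is finite. Every right divisor has length at most that of $g$, and there are only finitely many $S$-words of bounded length. Moreover $1\in\SRD{M_T}{g}$, so the set $\SRD{M_T}{g}$ is finite and non-empty.

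\emph{Step 2: the set is closed under left lcm.} Let $h_1,h_2\in\SRD{M_T}{g}$. Then $g$ is a common left multiple of $h_1$ and $h_2$. Because $(M,\preceq_R)$ is a lattice, $h_1\vee_L h_2$ (the lcm for $\preceq_R$) exists and right-divides $g$. It remains to check that this lcm lies in $M_T$.

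For that, choose $S$-words $u_1,u_2$ over $T$ representing $h_1,h_2$. By Proposition~\ref{P:ReversingLcm}, the words $\numL(u_1u_2\inv)\,u_1$ and $\denL(u_1u_2\inv)\,u_2$ represent $h_1\vee_L h_2$. The left reversing of $u_1u_2\inv$ only ever replaces a factor $xy\inv$ with $x,y\in T$ by $f_L(x,y)\inv f_L(y,x)$. These complement words $\cdots yxy$ involve only the letters $x$ and $y$, so every word produced stays over $T^{\pm}$. Hence $\numL(u_1u_2\inv)$ is a $T$-word, and $h_1\vee_L h_2\in M_T$.

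\emph{Step 3: the maximal element.} A finite non-empty set closed under the join has a greatest element, namely the join of all its members. This join is the maximum for $\preceq_R$, and it is unique by antisymmetry, since $\preceq_R$ is a partial order on a cancellative monoid with no nontrivial invertible elements.

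The only step with real content is Step 2, the fact that lcms computed in $M$ of elements of $M_T$ stay in $M_T$. The reversing argument handles it cleanly, since termination and correctness of reversing are guaranteed by \cite{DeP1999}.
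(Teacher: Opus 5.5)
Your proof is correct. The paper gives no argument for this statement and simply cites \cite[Lemma~1.6]{Deh2008}. Your proof is the standard one behind such tail lemmas, made self-contained for Artin--Tits monoids: the set $\SRD{M_T}{g}$ is non-empty and closed under the left lcm, hence has a greatest element.

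In the abstract framework of \cite{Deh2008}, closure of the submonoid under left lcm is the property that makes the argument work. You check it directly for $M_T$ from the presentation: left reversing a word over $T^{\pm}$ never leaves $T^{\pm}$, since $f_L(x,y)$ involves only $x$ and $y$. That is the right point to isolate, because the statement is in fact equivalent to this closure. Indeed, if $h_1,h_2\in M_T$ and $g=h_1\vee_L h_2$, then $\tail{T}{g}$ is a common left multiple of $h_1$ and $h_2$ that right-divides $g$, so $g=\tail{T}{g}\in M_T$. You also replace Noetherianity by the finiteness of $\SRD{M}{g}$, which is available here because the relations preserve length.

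One harmless slip is inherited from the printed Proposition~\ref{P:ReversingLcm}. Since $u_1u_2\inv\curvearrowright_L\denL(u_1u_2\inv)\inv\numL(u_1u_2\inv)$, the two representatives of $h_1\vee_L h_2$ are $\denL(u_1u_2\inv)\,u_1$ and $\numL(u_1u_2\inv)\,u_2$; the paper swaps this pairing. Both $\denL(u_1u_2\inv)$ and $\numL(u_1u_2\inv)$ are words over $T$, so your conclusion is unaffected.
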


    The maximal element in the above proposition is called the right~$T$-tail of $g$. We denote it by~$\tail{T}g$. 
   One can also consider left~$T$-tails. But, following~\cite{Deh2008} and \cite{ArP2019}, we work only with right-tails.    So, we write \emph{$T$-tail} for \emph{right~$T$-tail} in the sequel. 
    
    Note that in \cite{ArP2019}, the authors  introduced the notion of $N$-tail where $N$ is a parabolic submonoid of $M$. Since for each parabolic submonoid $N$ of $M$ it exists a unique subset $T$ of $S$ satisfying $N=M_T$, both notions coincide.
    \begin{lm}
    \label{L:Tail}
    Let $T$ be a subset of $S$. For $g\in M$ and $h\in M_T$, we have $\tail{T}{gh} = \tail{T}{g} \cdot h$.
    \end{lm}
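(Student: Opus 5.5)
Fix $T\subseteq S$, $g\in M$ and $h\in M_T$. The goal is to show that $\tail{T}{g}\cdot h$ is the greatest element of $\SRD{M_T}{gh}$ for $\preceq_R$. By Proposition~\ref{P:ExistenceTail} this greatest element exists and equals $\tail{T}{gh}$. The argument therefore has two steps: a membership step and a maximality step. Only left cancellativity of $M$ and the fact that $M_T$ is closed under the relevant operations are needed.

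\emph{Membership.} Write $g = g'\cdot\tail{T}{g}$. Then $gh = g'\cdot(\tail{T}{g}\cdot h)$, so $\tail{T}{g}\cdot h$ right-divides $gh$. Both $\tail{T}{g}$ and $h$ lie in the submonoid $M_T$, hence so does their product. Thus $\tail{T}{g}\cdot h\in\SRD{M_T}{gh}$, and consequently $\tail{T}{g}\cdot h\preceq_R \tail{T}{gh}$.

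\emph{Maximality.} Here I must show that $\tail{T}{gh}$ is right-divisible by $h$, with a quotient lying in $M_T$. The cleanest route is to compare with $\tail{T}{g}\cdot h$ directly. By the previous step, $\tail{T}{gh} = x\cdot\tail{T}{g}\cdot h$ for some $x\in M$.

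First I check that $x\in M_T$. Parabolic submonoids are closed under taking divisors: the relations of~\eqref{E:PresentationArtinMonoid} preserve supports, so every divisor of an element of $M_T$ has support in $T$. Since $\tail{T}{gh}\in M_T$, its left factor $x\cdot\tail{T}{g}$ is in $M_T$, and so is $x$.

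Next I write $gh = y\cdot x\cdot\tail{T}{g}\cdot h$ and cancel $h$ on the right, using cancellativity of $M$. This gives $g = y\cdot x\cdot\tail{T}{g}$. Hence $x\cdot\tail{T}{g}\in\SRD{M_T}{g}$, and maximality of $\tail{T}{g}$ yields $x\cdot\tail{T}{g}\preceq_R\tail{T}{g}$.

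Finally I compare lengths. Equivalent words have the same length, so there is an additive length function on $M$. From $x\cdot\tail{T}{g}\preceq_R\tail{T}{g}$ we get that the length of $x$ is $0$, so $x=1$. Therefore $\tail{T}{gh}=\tail{T}{g}\cdot h$.

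The only delicate point is the maximality step. There one must obtain a right divisor of $gh$ in $M_T$ that is forced to end with $h$. The trick of first writing $\tail{T}{gh}$ as a left multiple of $\tail{T}{g}\cdot h$, and only then cancelling $h$, sidesteps that difficulty, leaving just the closure of $M_T$ under divisors and right cancellation to check.
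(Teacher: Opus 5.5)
The paper states Lemma~\ref{L:Tail} without proof, treating it as standard from \cite{Deh2008}, so there is no argument of the paper to compare against. Your proof is correct and complete.

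Your two steps are exactly what is needed. First, $\tail{T}{g}\,h\in\SRD{M_T}{gh}$. Second, writing $\tail{T}{gh}=x\,\tail{T}{g}\,h$, closure of $M_T$ under divisors (by the support argument), right cancellation of $h$ and the length function together force $x=1$.

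One small gap deserves a sentence. Proposition~\ref{P:ExistenceTail} only asserts a unique \emph{maximal} element, but your membership step uses $\tail{T}{gh}$ as a \emph{greatest} element of $\SRD{M_T}{gh}$. The justification is easy:
\begin{itemize}
\item $\preceq_R$ is a partial order on $M$ (antisymmetry follows from the length function);
\item $\SRD{M_T}{gh}$ is finite, since only finitely many elements of $M$ have length at most that of $gh$;
\item in a finite poset every element lies below some maximal element, hence below the unique one.
\end{itemize}

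Also, the cancellation you actually use removes $h$ on the right, not the ``left cancellativity'' announced in your first paragraph. This is harmless, since $M$ is cancellative on both sides.
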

    
    For a positive integer $i$, we denote by $\rem\ii$ the unique integer in $\{1,2\}$ satisfying $i\equiv \rem\ii \mod 2$.
    
    \begin{df} \label{D:AlternatingDecomposition}
    Let $g$ be an element of $M$.
    A sequence $\ggn$ of elements of $M$ is a \emph{$(S_2,S_1)$-alternating decomposition} of $g$ if
    \begin{enumerate}
    \item $g = g_n\cdot\ldots\cdot g_1$,
    \item $g_i$ belongs to $M_{S_{\rem\ii}}$.
    \end{enumerate}
    We say that a $(S_2,S_1)$-alternating decomposition of $g$ is the \emph{$(S_2,S_1)$-alternating normal form} of $g$ if we have in addition 
    \begin{enumerate}
      \setcounter{enumi}{2}
    \item  $g_n\neq 1$ except for $g=1$. In the latter case, $n = 1$.
    \item $g_i = \tail{S_{\rem\ii}}{g_n\cdots g_i}$ for all $i\in\{1,\ldots,n\}$,
    \end{enumerate}
    \end{df}
    
     \begin{prp} \cite[Proposition 1.16]{Deh2008} \label{P:AlternatingNormalForm} Every element of $M$ admits a unique $(S_2,S_1)$-alternating normal form.
     \end{prp}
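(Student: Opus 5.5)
Existence and uniqueness will both come from an inductive construction driven by Proposition~\ref{P:ExistenceTail} and Lemma~\ref{L:Tail}, with termination coming from Noetherianity of $M$ (every element has finite length, since the relations preserve word length).

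\emph{Existence.} Start with $g$ and set $h_0=g$. Put $g_1=\tail{S_1}{h_0}$ and write $h_0=h_1 g_1$, using cancellativity to define $h_1$ uniquely. Then put $g_2=\tail{S_2}{h_1}$, $h_1=h_2g_2$, and in general $g_i=\tail{S_{\rem\ii}}{h_{i-1}}$, $h_{i-1}=h_ig_i$.

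The process must terminate. For $i\geq 2$, if $h_{i-1}\neq 1$ then $g_i\neq 1$. Indeed, $h_{i-1}$ is not right divisible by any atom of $S_{\rem{i-1}}$: otherwise, by Lemma~\ref{L:Tail}, the tail $g_{i-1}$ would not be maximal. Since $S=S_1\cup S_2$ and $h_{i-1}\neq 1$, some atom of $S$ right divides $h_{i-1}$, and that atom must then lie in $S_{\rem\ii}$. Hence the lengths of the $h_i$ strictly decrease from $i\geq 1$ on, and some $h_n=1$. Take $n$ minimal with $h_n=1$ and $g_n\neq 1$, treating the case $g=1$ separately with $n=1$.

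It remains to check condition (iv), namely $g_i=\tail{S_{\rem\ii}}{g_n\cdots g_i}$. This holds by construction, because $g_n\cdots g_i=h_{i-1}$.

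\emph{Uniqueness.} Let $\ggn$ be a normal form of $g$. Condition (iv) with $i=1$ forces $g_1=\tail{S_1}{g}$. Cancelling $g_1$ gives $g_n\cdots g_2$, and condition (iv) for $i\geq 2$ says exactly that $(g_n,\ldots,g_2)$ is obtained by the same greedy rule, with the roles of $S_1$ and $S_2$ exchanged. By induction on length, all the $g_i$ are therefore determined, and so is $n$: condition (iii) fixes $n$ as the index where the $h_i$ first become trivial, the nontriviality of each $g_i$ for $2\le i\le n$ having been shown above.

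\emph{Main obstacle.} The delicate point is the claim that $h_{i-1}$ has no right divisor in $S_{\rem{i-1}}$. This needs Lemma~\ref{L:Tail} in the form $\tail{T}{xs}=\tail{T}{x}s$, so that a right atom $s\in S_{\rem{i-1}}$ of $h_{i-1}$ would yield the strictly larger tail $s\,g_{i-1}$ of $h_{i-2}$, a contradiction.
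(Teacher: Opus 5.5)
Your proof is correct. You peel off the $S_1$-tail, then the $S_2$-tail of the remaining left factor, and so on. Maximality of each tail, together with $S=S_1\cup S_2$, forces every later factor to be nontrivial, so the process terminates by length, and condition (iv) forces any normal form to coincide with this greedy one. This is the standard argument behind \cite[Proposition~1.16]{Deh2008}, which the paper only cites and does not reprove.
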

     
    Whenever the covering $(S_2,S_1)$ is fixed, we denote by $\ND{g}$ the unique $(S_2,S_1)$-alternating normal form $\ggn$ of an element $g$ of $M$. 
    The length of $\ND{g}$ is the \emph{breadth} of $g$ and is denoted  by $\brd(g)$.
    We also define the \emph{depth} of $g$, denoted by $\dpg$, as the number of entries of $\ND\gg$ not belonging to $M_{S_1}$ :
    \begin{equation}
    \label{E:dpt:bh}
      \dpg = \text{card}\left\{ i\mid  g_i\not\in M_{S_1}\right\} = \left\lfloor \frac{\brd(g)}{2} \right\rfloor
    \end{equation}
    Let us remark that $\dpg$ can not be defined by $\text{card}\left\{ i\mid  g_i\in M_{S_2}\right\}$ since we can have $g_1\in M_{S_1}\cap M_{S_2}$. 
    The following fact is an immediate consequence of the definition of the alternating normal form.
    
    \begin{ft}
    For a covering $(S_1,S_2)$ and $g\in M$, we have: $\dpg = 0 \iff\brd(g) = 1\iff g\in M_{S_1}$. 
    \end{ft}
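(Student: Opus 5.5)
The plan is to read off all three equivalences directly from Definition~\ref{D:AlternatingDecomposition}, the uniqueness in Proposition~\ref{P:AlternatingNormalForm}, and the displayed formula~\eqref{E:dpt:bh}. Throughout, let $\ND{g}=\ggn$, so that $\brd(g)=n\geq 1$.

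\emph{First equivalence.} By~\eqref{E:dpt:bh} we have $\dpg=\lfloor n/2\rfloor$. Since $n\geq 1$, this vanishes if and only if $n=1$, that is, if and only if $\brd(g)=1$. This step is pure arithmetic once~\eqref{E:dpt:bh} is granted.

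\emph{Second equivalence, forward direction.} Suppose $\brd(g)=1$. Then $\ND{g}=(g_1)$ and $g=g_1$. Condition~(ii) gives $g_1\in M_{S_{\rem 1}}=M_{S_1}$, so $g\in M_{S_1}$.

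\emph{Second equivalence, converse.} Suppose $g\in M_{S_1}$. Then $g$ itself belongs to $\SRD{M_{S_1}}{g}$ and is the largest element of $\SRD{M}{g}$ for $\preceq_R$, hence $\tail{S_1}{g}=g$. Condition~(iv) with $i=1$ gives $g_1=\tail{S_1}{g_n\cdots g_1}=\tail{S_1}{g}=g$. Cancellativity of $M$ then yields $g_n\cdots g_2=1$.

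\emph{The step needing care.} We must conclude $n=1$ from $g_n\cdots g_2=1$. Since defining relations preserve word length, the length of a product is the sum of the lengths of its factors, so a product equal to $1$ forces every factor to be $1$. Hence, if $n\geq 2$, we would get $g_n=1$. Condition~(iii) then forces $g=1$, and in that case $n=1$, a contradiction. Therefore $n=1$, and $\brd(g)=1$.

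Alternatively, one can observe directly that $(g)$ satisfies conditions (i)–(iv) when $g\in M_{S_1}$, and invoke the uniqueness in Proposition~\ref{P:AlternatingNormalForm}.
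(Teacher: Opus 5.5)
Your proof is correct and is the argument the paper intends: the paper gives no proof, calling the fact an immediate consequence of the definition of the alternating normal form. Your unpacking matches that. The first equivalence comes from \eqref{E:dpt:bh}. For the second, either $g_1=\tail{S_1}{g}=g$ combined with length additivity and condition~(iii), or, more directly, uniqueness in Proposition~\ref{P:AlternatingNormalForm} applied to the one-term sequence $(g)$.
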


    \subsection{Dehornoy structure}
    
    For each $g\in G$ there exists a unique  pair $(k,h)$ with $k\in \NN$ and $h\in M$ such that $g=h\DD^{-k}$ with $k$ minimal for this equality. This decompostion is called   the $\DD$-fraction of $g$.
    
    \begin{df}
    Let $g$ be an element of $G$ with $\DD$-fraction $h\DD^{-\kk}$.
    We say that $g$ is $(S_2,S_1)$-\emph{negative} if $k\geq 1$ and $\dpt(h)<\dpt(\DD^k)$.
    We say that $g$ is $(S_2,S_1)$-\emph{positive} if $g\inv$ is $(S_2,S_1)$-negative.
    By~$P_{S_2,S_1}$ we denote  the set of all $(S_2, S_1)$-positive elements.
    \end{df}
    
    Following the work D. Arcis and L. Paris, we introduce the notion of a Dehornoy covering which is strongly connected to the notion of a \emph{Dehornoy structure} introduced in \cite{ArP2019}.
    \begin{df}
    We say that the covering $(S_2,S_1)$ is a \emph{Dehornoy covering}, and that $(M_2,M_1)$ is a   \emph{Dehornoy structure},   if $P=P_{S_2,S_1}$ satisfies the following conditions:
    \begin{enumerate}
    \item $P\, P \subseteq P$,
    \item $G_{S_1}\, P\cdot G_{S_1} \subseteq P$,
    \item $G=P\inv \sqcup G_{S_1} \sqcup P$.
    \end{enumerate}
    \end{df}
    In \cite{ArP2019} it is proved that a covering $(S_2,S_1)$ which satisfies  Condition~$A$ (see Defintion~\ref{D:ConditionA}) and Condition~$B$ (see Definition~\ref{D:ConditionB}), has to be a Dehornoy covering. 
    Roughly speaking, Condition~$A$ relies on the depth of powers of Garside element while Condition~$B$ is about the depth of the  product of two elements. 
 
    \subsection{Rigidity of the alternating normal form}
        
    Left and right divisors of the first and the last entries of the $(S_2, S_1)$-alternating normal form $\ND{g}$ of an element~$g\in M$ will play a key role our proofs. 
    For $g\neq 1$ in~$M$, we fix the following notations
    \begin{align}
    \SLDA{S}{g}{*}&\text{ stands for }\SLD{S}{g_n};\\
    \SRDA{S}{g}{*}&\text{ stands for } \SRD{S}{g_1}\ \text{when $g_1\neq 1$ and for $\SRD{S}{g_2}$ otherwise}.
    \end{align}
    Note that $\SLDA{S}{g}{*}\subseteq \SLD{S}{g}$ and  $\SRDA{S}{g}{*}\subseteq \SRD{S}{g}$ hold, but we do not have equalities in general.  
      
    \begin{lm}
    \label{L:RightSetDecomposition} 
     For a non-trivial element~$g$  of $M$ we have $\SRDA{S}{g}{}= \SRDA{S}{g}{*} \cup \SRDA{S_2}{g}{}$.
     Moreover, whenever $g_1$ is trivial, we have $\SRDA{S}{g}{} = \SRDA{S}{g_2}{} = \SRDA{S}{g}{*}$.  
     \end{lm}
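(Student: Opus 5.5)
The plan is to prove the two inclusions of the first equality by a case analysis on whether $g_1$ is trivial, using only condition~(iv) of Definition~\ref{D:AlternatingDecomposition} and the maximality of tails from Proposition~\ref{P:ExistenceTail}. Throughout, let $\ND{g}=\ggn$ with $g\neq 1$, and let $\SRDA{S}{g}{}$ denote the set of letters of $S$ right-dividing $g$. The key fact is the following consequence of Proposition~\ref{P:ExistenceTail}: if $T\subseteq S$ and $s\in T$ right-divides an element $h$, then $s\in\SRD{M_T}{h}$, so $s\preceq_R \tail{T}{h}$.

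\textbf{The inclusion $\supseteq$.} This is immediate. The set $\SRDA{S_2}{g}{}$ is contained in $\SRDA{S}{g}{}$ by definition. Any right divisor of $g_1$, or of $g_2$ when $g_1=1$, is a right divisor of $g=g_n\cdots g_1$, since $g=(g_n\cdots g_2)\,g_1$ and, when $g_1=1$, $g=(g_n\cdots g_3)\,g_2$.

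\textbf{Case $g_1\neq 1$.} Let $s\in S$ right-divide $g$. If $s\in S_2$, then $s\in\SRDA{S_2}{g}{}$ and we are done. Otherwise $s\in S_1$, because $(S_2,S_1)$ is a covering. Condition~(iv) with $i=1$ gives $g_1=\tail{S_1}{g}$. By the key fact, $s\preceq_R g_1$, so $s\in\SRD{S}{g_1}=\SRDA{S}{g}{*}$.

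\textbf{Case $g_1=1$.} Condition~(iii) forces $n\geq 2$, since $g\neq1$. The argument of the previous case shows that no letter of $S_1$ right-divides $g$: such a letter would right-divide $\tail{S_1}{g}=g_1=1$. Hence every $s\in\SRDA{S}{g}{}$ lies in $S_2$. Since $g=g_n\cdots g_2$, condition~(iv) with $i=2$ gives $g_2=\tail{S_2}{g_n\cdots g_2}=\tail{S_2}{g}$. The key fact then yields $s\preceq_R g_2$. Therefore $\SRDA{S}{g}{}\subseteq\SRD{S}{g_2}$, and the reverse inclusion holds because $g_2$ right-divides $g$. This proves $\SRDA{S}{g}{}=\SRD{S}{g_2}=\SRDA{S}{g}{*}$, which is the second statement, and it also gives the first equality in this case.

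I expect no real obstacle. The only points needing care are to check that $n\geq 2$ when $g_1=1$, which is exactly condition~(iii), and to apply the tail maximality with respect to $\preceq_R$ inside the correct parabolic submonoid: $M_{S_1}$ for the index $1$ and $M_{S_2}$ for the index $2$.
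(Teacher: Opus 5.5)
Your proof is correct and takes essentially the same route as the paper's. Both rest on two facts: $g_1=\tail{S_1}{g}$, so the letters of $S_1$ that right-divide $g$ are exactly those that right-divide $g_1$; and, when $g_1=1$, $g_2=\tail{S_2}{g_n\cdots g_2}=\tail{S_2}{g}$. You argue letter by letter rather than by manipulating unions of sets, and you make explicit the inclusion $\SRD{S}{g_2}\subseteq\SRD{S}{g}$, which the paper's final step uses without stating it.
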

    
    \begin{proof} Since $(S_2, S_1)$ is a covering of $S$ we have $\SRDA{S}{g}{}= \SRDA{S_1}{g}{} \cup \SRDA{S_2}{g}{}$.  
    Assume $g_1$ is not trivial. 
    Then $\SRDA{S}{g}{*}$ is defined to be $\SRDA{S}{g _1}{} = \SRDA{S_1}{g_1}{} \cup \SRDA{S_2}{g_1}{}$.
    As right divisors of $g$ in $S_1$ are exactly those of~$g_1$, we have~$\SRDA{S_1}{g_1}{}=\SRDA{S_1}{g}{}$, implying
    \[
     \SRDA{S}{g}{*} \cup \SRDA{S_2}{g}{} = \SRDA{S_1}{g}{} \cup \SRDA{S_2}{g_1}{} \cup \SRDA{S_2}{g}{} = \SRDA{S_1}{g}{} \cup \SRDA{S_2}{g}{} = \SRDA{S}{g}{}.
    \]
    Assume $g_1=1$ and so $\SRDA{S}{g}{*} = \SRDA{S}{g_2}{}$.
    Since $g_1$ is trivial, the set $\SRDA{S_1}{g}{}$ is empty and we obtain $\SRDA{S}{g}{} = \SRDA{S_2}{g}{}$.
    As right divisors of $g$ in $S_2$ are exactly those of~$g_2$, we have~$\SRDA{S_2}{g_2}{}=\SRDA{S_2}{g}{}$ and we obtain the desired equalities.
    \end{proof}
    
    The following lemma partially describes the becoming of the alternating normal form under multiplication.
     
    \begin{lm} 
    \label{L:ProductANF}
    Let $g$ and $h$ be two non-trivial elements of $M$ satisfying $\SRDA{S}{g}{}\subseteq \SLDA{S}{h}{*}$.
    We denote by $\ggm$ and $\hhn$ the respective $(S_2,S_1)$-alternating normal forms of $g$ and $h$.
    \begin{enumerate}
    \item  If $n$ is even and $g_1\in M_{S_2}$ then  $\ND{gh} = (g_m,\ldots,g_3,g_2\,g_1\,h_n,h_{n-1},\ldots, h_1)$. In particular in this case, $\brd(gh) = m + n- 2$ and $\dpt(gh) = \dpt(g) + \dpt(h) - 1$.
    \item If $n$ is odd and $g_1\neq 1$ then $\ND{gh} = (g_m,\ldots,g_2,g_1h_n,h_{n-1},\ldots, h_1)$. In particular  in this case, $\brd(gh) = m + n - 1$ and $\dpt(gh) = \dpt(g)+\dpt(h)$.
    \end{enumerate}
    \end{lm}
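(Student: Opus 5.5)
The plan is to check directly that the proposed sequence satisfies conditions (i)--(iv) of Definition~\ref{D:AlternatingDecomposition}, and then to conclude by the uniqueness in Proposition~\ref{P:AlternatingNormalForm}. The breadth is then read off from the length of the sequence, and the depth from~\eqref{E:dpt:bh}. Conditions (i)--(iii) are bookkeeping.

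\emph{Case (i).} Here $h_1,\dots,h_{n-1}$ stay at positions $1,\dots,n-1$. The merged entry $g_2g_1h_n$ sits at position $n$, which is even. It lies in $M_{S_2}$ because $g_2$, $g_1$ and $h_n$ all do: $n$ is even and $g_1\in M_{S_2}$ by hypothesis. Each $g_j$ with $j\ge 3$ moves from position $j$ to position $j+n-2$, which has the same parity.

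\emph{Case (ii).} The entry $g_1h_n$ sits at the odd position $n$ and lies in $M_{S_1}$, since $g_1\in M_{S_1}$ and $h_n\in M_{S_1}$ ($n$ odd). Each $g_j$ moves to position $j+n-1$, again with the same parity.

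The leading entry is $g_m\neq 1$ in both cases. If $m=1$ in case (i), I would use the convention $g_2=1$, so that the merged entry is $g_1h_n$.

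The real content is the tail condition (iv). The key auxiliary claim is the following.

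\emph{Claim.} If $x\in M$ satisfies $\SRD{S}{g}\subseteq \SLD{S}{x}$, then $\SRD{S}{gx}=\SRD{S}{x}$.

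The inclusion $\supseteq$ is clear. For the converse, let $s\in\SRD{S}{gx}$ and suppose $s\notin\SRD{S}{x}$.
\begin{enumerate}
\item By Proposition~\ref{P:ChainDiv}(iii), any representative $u$ of $x$ is a left $(t,s)$-chain for some $t\in S$.
\item By Proposition~\ref{P:ChainDiv}(iv), $t$ right-divides $g$. Hence $t\in\SLD{S}{x}$ by hypothesis.
\item By Proposition~\ref{P:ChainRev}, $u$ is equivalent to a right $(t,s)$-chain. Proposition~\ref{P:ChainDiv}(i) then says that $t$ does not left-divide $x$.
\end{enumerate}
This contradicts step~2 and proves the claim. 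I expect this step to be the main obstacle: it is the only place where the specific Artin--Tits structure (the equivalence of left and right chains) is used.

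Apply the claim to $x=h_n\cdots h_{i+1}$ for $1\le i<n$. The hypothesis holds because $h_n\preceq_L x$, so $\SRD{S}{g}\subseteq\SLD{S}{h_n}\subseteq\SLD{S}{x}$. Normality of $\ND{h}$ gives $\tail{S_{\rem i}}{x}=1$. By the claim, $gx$ has no right divisor in $S_{\rem i}$, so $\tail{S_{\rem i}}{gx}=1$. Lemma~\ref{L:Tail} then gives
\[
\tail{S_{\rem i}}{g\,h_n\cdots h_i}=\tail{S_{\rem i}}{gx}\cdot h_i=h_i.
\]
For the entries $g_j$ with $j\ge 3$ in case (i), or $j\ge 2$ in case (ii), the relevant prefix $g_m\cdots g_j$ is unchanged, so the tail condition is inherited from $\ND{g}$.

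It remains to treat the merged entry. In case (i), Lemma~\ref{L:Tail} gives
\[
\tail{S_2}{g_m\cdots g_1h_n}=\tail{S_2}{g_m\cdots g_3}\cdot g_2g_1h_n .
\]
Since $g_2=\tail{S_2}{g_m\cdots g_2}=\tail{S_2}{g_m\cdots g_3}\cdot g_2$, cancellation gives $\tail{S_2}{g_m\cdots g_3}=1$, so the tail is exactly the merged entry. Case (ii) is identical with $S_1$ in place of $S_2$ and $g_1$ in place of $g_2$. Here $g_1\neq1$ guarantees that $g_1$ is a genuine entry of $\ND{g}$ equal to its $S_1$-tail. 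Uniqueness of the normal form then yields $\ND{gh}$, and the breadth and depth formulas follow from~\eqref{E:dpt:bh} by parity.
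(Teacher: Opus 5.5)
Your proposal is correct and follows essentially the same route as the paper. Entries coming from $\ND{g}$ inherit their tail condition, and the merged entry is handled by Lemma~\ref{L:Tail}. For $i<n$, an extra right divisor is excluded by exactly the paper's chain argument (Proposition~\ref{P:ChainDiv}(iii),(iv), then Proposition~\ref{P:ChainRev} and Proposition~\ref{P:ChainDiv}(i)); you simply isolate it as the standalone claim $\SRD{S}{gx}=\SRD{S}{x}$.

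One caveat concerns your convention $g_2=1$ for $m=1$ in case~(i): the resulting sequence has length $n$, not $m+n-2$, so the stated breadth and depth formulas genuinely require $m\ge 2$ there. For instance, in type $A_3$ with $S_1=\{\sig1,\sig2\}$ and $S_2=\{\sig2,\sig3\}$, taking $g=\sig2$ and $h=\sig2\sig3\sig1$ satisfies the hypotheses but gives $\ND{gh}=(\sig2^2\sig3,\sig1)$, of breadth $2$ rather than $1$. The paper's proof also leaves this assumption implicit.
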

    
    \begin{proof} 
    Consider the hypotheses of case (i) or of case (ii), and by $(r_\ell, \ldots, r_1)$ denote the expression provided by the statement for $\ND{gh}$.
    It is immediate that $(r_\ell,\ldots,r_1)$ is an alternating decomposition of $g h$ and that all entries are non-trivial, except possibly for $r_1$.
    To prove, in both cases, that $(r_\ell,\ldots,r_1)$ is an alternating normal form we have to establish
    \begin{equation}
    \label{E:ProductANF:tau}
    \tail{S_{\rem{i}}}{r_\ell\ldots r_i} = r_i\quad\text{for all $i$ in $\{1,\ldots,\ell\}$.}
    \end{equation}
    Since $(g_m,\ldots,g_1)$ is itself an alternating normal form, \eqref{E:ProductANF:tau} is established for $i \geq n+1$ in both cases.
    Let us establish \eqref{E:ProductANF:tau} for $i = n$.
    Using Lemma~\ref{L:Tail}, we obtain 
    \[
    \tail{S_{\rem{n}}}{r_\ell\ldots r_n} = \tail{S_{\rem{n}}}{g h_n} = \begin{cases} \tail{S_2}{g h_n} = \tail{S_2}{g} h_n& \text{if $n$ is even,}\\ \tail{S_1}{g h_n} = \tail{S_1}{g} h_n& \text{if $n$ is odd.} \end{cases}
    \]
    In case (i), $n$ is even and $g_1\in M_{S_2}$. So by Lemma~\ref{L:Tail} we get $\tail{S_2}{g}=g_2\,g_1$, as expected.
    Case (ii) is direct since $n$ is odd and $\tail{S_1}{g}=g_1$.
    We now deal with the cases $i<n$.
    We have
    \[
    \tail{S_{\pi(i)}}{r_\ell\cdots r_i} = \tail{S_{\pi(i)}}{g h_n\,\cdots h_i}.
    \]
    As $h_i$ belongs to $M_{S_{\pi(i)}}$, there exists $h'_i \in M_{S_{\pi(i)}}$ such that $\tail{S_{\pi(i)}}{r_\ell\cdots r_i} = h'_i h_i$.
    Assume, for a contradiction, that $h'_i$ is not trivial. Then it exists $s\in S_{\pi(i)}$ right divising $g h_n\, \ldots h_{i+1}$.
    Since $\tail{S_{\pi(i)}}{h_n \ldots h_i} = h_i$, the element $s$ is not a right divisor of $h'=h_n\,\ldots h_{i+1}$.
    Let $u$ and $v$ be word representatives of $g$ and $h'$ respectively. 
    By $(iii)$ and $(iv)$ of Proposition~\ref{P:ChainDiv}, $v$ is a left $(t,s)$-chain and $t$ right divides $g$.
    
    \[
    \begin{tikzpicture}
    \draw[-latex](0,0) -- (1,0) node[midway,below]{$u\,v$};
    \draw[-latex](1,0.5) -- (1,0) node[midway, right]{$s$};
    \draw[gray,-latex](0.75,0.45) -- (0.25,0.45);
    \draw(2,0.25) node{$=$};
    \begin{scope}[shift={(3,0)}]
    \draw[-latex](0,0) -- (1,0) node[midway,below]{$u$};
    \draw[-latex](1,0) -- (2,0) node[midway,below]{$v$};
    \draw[-latex](1,0.5) -- (1,0) node[midway, right]{$t$};
    \draw[-latex](2,0.5) -- (2,0) node[midway, right]{$s$};
    \draw[gray,-latex](0.75,0.45) -- (0.25,0.45);
    \draw[gray,-latex](1.75,0.45) -- (1.25,0.45);
    \end{scope}
    \end{tikzpicture}
    \]
    Hence $t$ belongs to $\SRDA{S}{g}{}$ and so, by hypothesis, to $\SLDA{S}{h}{*} = \SLDA{S}{h_n}{}$.
    It follows that $t$ left divides~$h_n$ and so $h'$. 
    By Proposition~\ref{P:ChainRev}, $v$ is equivalent to a right $(t,s)$-chain $v'$, which with Proposition~\ref{P:ChainDiv} implies that $t$ does not left divide $\overline{v}=h'$.
    We obtain a contradiction. 
    Therefore  $\tail{S_{\pi(i)}}{r_\ell\cdots r_i}$ is equal to $h_i$, which is $r_i$, and we are done.
    \end{proof}

     \begin{lm}
     \label{L:SuppComm}
     Let $g_1, g_2$ and $g_3$ be element of $M$ satisfying $\supp(g_1)\cap\supp(g_2)=\emptyset$ and $\SRD{S}{g_3g_2} = \SRD{S}{g_2}$.
    If $s\in S$ left divides $g_3g_2g_1$ and $s\not\in \supp(g_3g_2)$, then $s$ left divides $g_1$ and commutes with all the elements of $\supp(g_3g_2)$.
     \end{lm}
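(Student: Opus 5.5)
We argue with chains and the divisibility criteria of Proposition~\ref{P:ChainDiv}. Fix $S$-words $u_1,u_2,u_3$ representing $g_1,g_2,g_3$, and put $u=u_3u_2$, so that $\overline{u}=g_3g_2$ and $s\notin\supp(u)$. Proposition~\ref{P:SuppChain} says that $u$ is a right $(s,s)$-chain. We assume $s$ left divides $\overline{u\,u_1}$. Then Proposition~\ref{P:ChainDiv}~(ii) gives that $s$ left divides $\overline{u_1}=g_1$. This settles the first conclusion, and we use no hypothesis beyond $s\notin\supp(g_3g_2)$.

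For commutation we use lcm's. Since $s$ left divides both $g_3g_2g_1$ and $g_1$, write $g_1=s\,g_1'$. Then $g_3g_2 s g_1'$ is left divisible by $s$. We want to show $s$ commutes with every letter of $\supp(g_3g_2)$. Suppose some $t\in\supp(g_3g_2)$ has $m(s,t)\ge 3$.

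The key step is to track the right reversing of $s\inv u\,s$, using the right $(s,s)$-chain structure of $u$ letter by letter. For a letter $x\neq s$, right reversing $s\inv x$ gives $f_R(x,s)f_R(s,x)\inv$. When $m(s,x)=2$ this is $x\,s\inv$. When $m(s,x)\ge3$, $f_R(s,x)$ ends with $s$ only if $m(s,x)$ is even, and in every case the numerator $f_R(x,s)=x s\cdots$ introduces the letter $s$. Since $s$ left divides $\overline{u s g_1'}$, cancellation of $s\inv s$ must occur. We would argue this forces $\denR(s\inv u)$ to be exactly $s$ with $\numR(s\inv u)=u$, meaning $s\,\overline u=\overline u\,s$ as elements. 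Indeed, $\denR(s\inv u)$ is a single letter $s$ by the chain property, and $s$ left divides $\overline{u}\,s\,g_1'$. Using Proposition~\ref{P:ReversingLcm}, $s\vee_R \overline u = \overline u\cdot \numR(\ldots)$ has length $|u|+1$ only if every step is a commuting step. Otherwise the lcm is longer, and one needs it to left divide $\overline{u}s g_1'$.

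This is where the hypotheses $\supp(g_1)\cap\supp(g_2)=\emptyset$ and $\SRD{S}{g_3g_2}=\SRD{S}{g_2}$ enter, and it is the main obstacle. First consider the last letter of $u$ that fails to commute with $s$. If it is a right divisor of $g_3g_2$, then by hypothesis it lies in $\supp(g_2)$, so it is not in $\supp(g_1)$. The numerator of the reversing then produces a letter $x\in\SRD{S}{g_2}$ which must left divide $s g_1'$, and in fact $g_1'$. This would put $x$ in $\supp(g_1)$, a contradiction. In general we would induct on the length of $g_3g_2$. We peel off a right divisor $x$ of $g_3g_2$, which lies in $\supp(g_2)$. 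If $x$ commutes with $s$, we move $x$ to the right of $s$ and recurse with the shorter word. Here we need to check that the hypothesis $\SRD{S}{\cdot}$ is preserved, possibly with $g_1$ replaced by $x g_1$, whose support still behaves well against the remaining part. If $x$ does not commute with $s$, we derive the contradiction above, applying Proposition~\ref{P:ChainDiv}~(ii) to the chain $u$ once more. Ensuring that the recursion preserves the two hypotheses is the delicate point.
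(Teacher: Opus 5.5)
Your first conclusion is handled correctly and exactly as in the paper. Since $s\notin\supp(g_3g_2)$, any representative of $g_3g_2$ is a right $(s,s)$-chain (Proposition~\ref{P:SuppChain}), and Proposition~\ref{P:ChainDiv}~(ii) then gives that $s$ left divides $g_1$.

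The commutation part, however, is not proved. There are three problems.

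\emph{A false step.} You assert that $\denR(s\inv u)$ is the single letter $s$ ``by the chain property''. The chain property only says that it begins with $s$. For $u=t$ with $m(s,t)=3$ one has $\denR(s\inv t)=st$.

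\emph{An unfinished step.} Your remark that $s\vee_R\overline{u}$ must left divide $\overline{u}\,s\,g_1'$ is correct, but you never turn it into a contradiction.

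\emph{The main gap.} Your peeling induction breaks down in exactly the hard case. Write $u_2=t_k\cdots t_1$ and $u_3=t_\ell\cdots t_{k+1}$, and let $t_i$ be the rightmost letter not commuting with $s$. Move $s$ to the left of $t_{i-1}\cdots t_1$. Then $t_\ell\cdots t_{i+1}\,t_i\,s$ is a right $(s,t_i)$-chain and $t_{i-1}\cdots t_1$ is a right $(t_i,t_i)$-chain, so you get $t_i\in\supp(g_1)$. This contradicts the hypotheses only if $t_i\in\supp(g_2)$.

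Suppose instead that $t_i$ lies in the $g_3$-part and is blocked from being a right divisor by some $t_j$ with $j<i$. Then there is no support contradiction. Moreover, peeling right divisors with $s$ fixed cannot preserve both $\SRD{S}{g_3'g_2'}=\SRD{S}{g_2'}$ and $\supp(g_1')\cap\supp(g_2')=\emptyset$. Once $x$ is removed, new right divisors appear; these need not lie in $\supp(g_2)$ and may lie in $\supp(g_1)$. For instance, in type $A$ take $s=\sigma_1$, $g_3=\sigma_2$, $g_2=\sigma_3$. You peel $\sigma_3$ and are left facing $\sigma_2\notin\supp(g_2)$, with nothing in your scheme to rule it out.

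The missing idea is to change the distinguished letter in the induction on $|u_3u_2|$. Apply the statement to $s'=t_i$, $u_3'=t_{i-1}\cdots t_{k+1}$, $u_2'=u_2$, and $u_1'$ where $u_1\equiv s\,u_1'$. Here $i>k$, since $t_i\in\supp(g_1)$. All hypotheses hold:
\begin{itemize}
\item $t_i$ left divides $\overline{u_3'u_2u_1'}$, by Proposition~\ref{P:ChainDiv}~(ii) applied to the chain $t_\ell\cdots t_i\,s$;
\item $t_i\notin\supp(u_3'u_2)$;
\item $\supp(u_1')\subseteq\supp(g_1)$;
\item $\SRD{S}{g_2}\subseteq\SRD{S}{\overline{u_3'u_2}}\subseteq\SRD{S}{g_3g_2}=\SRD{S}{g_2}$.
\end{itemize}
The induction hypothesis then says that $t_i$ commutes with $t_{i-1},\ldots,t_1$. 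So $t_i$ right divides $g_3g_2$, hence $t_i\in\SRD{S}{g_2}\subseteq\supp(g_2)=\{t_1,\ldots,t_k\}$. This is impossible, since $t_i\notin\{t_1,\ldots,t_{i-1}\}$. This is how the paper concludes, and it is the step your proposal does not supply.
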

     
     \begin{proof}
     Let $s\in S$ be a left divisor of $g_3g_2g_1$ not belonging to $\supp(g_3g_2)$.
    By proposition~\ref{P:SuppChain}, any representative of $g_3g_2$ is a right $(s,s)$-chain.
     Since $s$ left divides $g_3g_2g_1$, Proposition~\ref{P:ChainDiv} $(ii)$ implies that $s$ left divides $g_1$.
     Let $su'_1$, $u_2$ and $u_3$ be  representative words of $g_1$, $g_2$ and $g_3$, respectively. 
     We prove the last statement by induction on the length of $u_3 u_2$.
     There is nothing to prove if  $u_3u_2$ is empty. 
     Assume $u_3u_2$ is of length~$\ell\geq 1$.
     Hypothesis $\SRD{S}{g_3g_2} = \SRD{S}{g_2}$ implies $g_2\not=1$ and so $u_2\not=\varepsilon$.
     Write $u_2=t_k\cdots t_1$ and $u_3= t_{\ell}\cdots t_{k+1}$ as $S$-word. Note that $u_3$ can be the empty word.
     Assume for a contraction that $s$ does not commute with all elements of $\supp(u_3u_2)$.
     Then, it exists an minimal $i\in\{1,\ldots,\ell\}$ such that $s\, t_i \not\equiv t_i\, s$ and $ s\, t_j \equiv t_j\,s$ for all $j<i$.
     We have 
     \begin{equation*}
     u_3u_2u_1=  \tt\ell\cdots \tt\kkp \tt\kk\cdots \tt1 u_1 = \tt\ell\cdots \tt\iip \tt\ii\cdots  \tt1 \cdot s\cdot u'_1 \equiv \tt\ell \cdots \tt\ii \cdot s\cdot \tt\iio\cdots \tt1 u'_1.
    \end{equation*}
     Since $s\not\in\supp(\tt\ell\cdots\tt\iip)\subseteq \supp(g_3g_2)$, Proposition~\ref{P:SuppChain} implies that $\tt\ell\cdots \tt\iip$ is a right $(s,s)$-chain.
     From $s\,\tt\ii\not\equiv\tt\ii\, s$ we obtain the following right reversing diagram with $t\in S$: 
     \begin{center}
    \begin{tikzpicture}
    \draw[latex-] (2,0) -- (2,0.5) node[midway, right]{$w'$};
    \draw[latex-] (2,0.5) -- (2,1) node[midway, right]{$t$};
    \draw[-latex] (0,0) -- (2,0) node[midway, below]{$w$};
    \draw(1,0.5) node{$\curvearrowright_R$};
    \draw[latex-] (0,0) -- (0,1) node[midway, left]{$s$};
    \draw[-latex] (0,1) -- (1,1) node[midway, above]{$\tt\ii$};
    \draw[-latex] (1,1) -- (2,1) node[midway, above]{$s$};
    \end{tikzpicture}
    \end{center}
    By Proposition~\ref{P:ReversingLcm}, the word $sw$ is a representative of $\overline{\tt\ii}\,s\vee_R \overline{s}=\overline{\tt\ii}\vee_R \overline{s}$, which also admits $\tt\ii\,s\,\tt\ii\cdots$ as representative.
    We have necessarily $t=\tt\ii$ and $\tt\ii\,s$ is a right $(s,\tt\ii)$-chain. We obtain that $\tt\ell\cdots \tt\iip\,\tt\ii \,s$ is a right $(s,\tt\ii)$-chain. 
    By assumption, $s$ commutes with $\tt1,\ldots,\tt\iio$ and not with $\tt\ii$ and so 
    \begin{equation}
     \label{E:SuppComm:1}
    \tt\ii\not\in\supp(\tt\iio\cdots\tt1).
    \end{equation}
    Using Proposition~\ref{P:SuppChain} we obtain that $\tt\iio\cdots\tt1$ is a right $(\tt\ii,\tt\ii)$-chain. We can summarize all these informations in the following diagram:
    \begin{equation}
     \label{E:SuppComm:2}
    \begin{tikzpicture}
    \draw[-latex](0,1) -- (1,1) node[midway,above]{$\tt\ell$};
    \draw[dotted](1,1) -- (2,1);
    \draw[-latex](2,1) -- (3,1) node[midway,above]{$\tt\iip$};
    \draw[-latex](3,1) -- (4,1) node[midway,above]{$\tt\ii$};
    \draw[-latex](4,1) -- (5,1) node[midway,above]{$\ss{}$};
    \draw[-latex](5,1) -- (6,1) node[midway,above]{$\tt\iio$};
    \draw[dotted](6,1) -- (7,1);
    \draw[-latex](7,1) -- (8,1) node[midway,above]{$\tt1$};
    \draw[-latex](8,1) -- (9,1) node[midway,above]{$u'_1$};
    \draw[-latex](0,1) -- (0,0) node[midway,left]{$s$};
    \draw[-latex](3,1) -- (3,0) node[midway,left]{$s$};
    \draw[-latex](5,1) -- (5,0) node[midway,left]{$\tt\ii$};
    \draw[-latex](8,1) -- (8,0) node[midway,left]{$\tt\ii$};
    \draw[gray,-latex](1.25,0.5) -- (1.75,0.5);
    \draw[gray,-latex](3.75,0.5) -- (4.25,0.5);
    \draw[gray,-latex](6.25,0.5) -- (6.75,0.5);
    \draw[gray,-latex](8.25,0.5) -- (8.75,0.5);
    \end{tikzpicture}
    \end{equation}
    In particular, the element $\overline{u'_1}$ is left divisible by $t_i$ and so $t_i\in\supp(u'_1)\subseteq \supp(u_1) = \supp(g_1)$.
    By hypothesis, the latter gives $t_i\not\in\supp(g_2)=\supp(u_2)$ and so $i\geq \kkp$.
    We write $u'_3 = \tt\iio\ldots\tt\kkp$, $u'_2=u_2$ and $s'=\tt\ii$.
    From \eqref{E:SuppComm:2}, we know that $s'$ left divides $\overline{u'_3u'_2u'_1}$ and  $s'\not\in\supp(u'_3u'_2)$ by \eqref{E:SuppComm:1}.
    We also have $|u'_3u'_2|<|u_3u_2|$, $\supp(\overline{u'_1})\cap\supp(\overline{u'_2})\subseteq \supp(g_1)\cap \supp(g_2) =\emptyset$ and
     \[
     \SRD{S}{\overline{u'_3u'_2}} \subseteq \SRD{S}{\overline{u_3u_2}}  = \SRD{S}{g_2}.
     \]
    By the induction hypothesis, we get that $\tt\ii = s'$ commutes with all elements of $\supp(\overline{u'_3}\,\overline{u'_2})=\{\tt\iio,\ldots,\tt1\}$.
    This implies that $t_i$ right divides $g_3g_2=\overline{t_\ell \cdots t_{i+1} \cdot t_i \cdot t_{i-1} \cdots t_1}$ and so $t_i$ is a right divisor of $g_2$ since~$R(g_3g_2, S)=R(g_2, S)$.
    In particular $t_i$ belongs to the support of $g_2$, which is impossible by~\eqref{E:SuppComm:1}.
    \end{proof}
     
    \subsection{Power of Garside element}
     
    The behavior of the alternating normal form on powers of the Garside element plays a key role in the existence of the ordering

    For a non-empty subset~$T$ of $S$ we denote by~$\DD_T$ the left common multiple of the elements of $T$: it is the minimal Garside element of the Garside monoid~$M_T$ (see~\cite{God5,DDGKM}).
    By construction, $\Delta_T$ left and right divides $\Delta$ and we have 
    \begin{equation}
    \DD= \alpha(\DD_T) \cdot \DD_T \quad \text{together with} \quad \Phi(\Delta_T)=\Delta_{\Phi(T)}.
    \end{equation}
     
     \begin{lm}\label{L:DeltaDecomposition}
     Let $T \not=\emptyset$ be a subset of $S$ and $w$ be a representative of $g = \alpha(\DD_T)$.
     The following properties hold:
    \begin{enumerate}   
    \item for any $p\geq 1$, $\DD^p = \Phi^{p-1}(g)\cdots \Phi(g)\,g\,\DD_T^p$;
    \item  for any $p\geq 1$, $\SLD{S}{\Phi^{p-1}(g)\cdots \Phi(g)\,g} = S\setminus \Phi^p(T)$ and $\SRD{S}{\Phi^{p-1}(g)\cdots \Phi(g)g} = S\setminus T$;
    \item for any $p\geq 0$ and  $s\in T$, the word $\Phi^{p-1}
    (w)\cdots \Phi(w)\,w$ is a left $(t,s)$-chain for some $t\in \Phi^p(T)$;
    \item for any $p\geq 0$ and $t\in\Phi^p(T)$, the word $\Phi^{p-1}(w)\cdots \Phi(w)\,w$ is a right $(t,s)$-chain for some $s\in T$.
    \end{enumerate}    
    \end{lm}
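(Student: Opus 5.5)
Item (i) comes first, and it is a direct application of Corollary~\ref{C:PowerDelta}. Take $g=\DD_T$ there, so that $\alpha(\DD_T)$ is our $g$. This gives $\DD^p=\Phi^{p-1}(g)\cdots\Phi(g)\,g\,\DD_T^p$. Throughout I write $g_p=\Phi^{p-1}(g)\cdots\Phi(g)\,g$ and $w_p=\Phi^{p-1}(w)\cdots\Phi(w)\,w$. Then $w_p$ represents $g_p$, and $g_p\,\DD_T^p=\DD^p$.

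For (ii), I start with the right divisors. If $s\in T$ right divides $g_p$, then $s\DD_T$ right divides $\DD^p$. The plan is to contradict this by induction on $p$. For $p=1$ it contradicts square-freeness (Proposition~\ref{P:Garside}(i)): $s$ left divides $\DD_T$, so $\DD_T=s\,x$, and $s\,s\,x$ would right divide $\DD$.

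For general $p$ I would instead use chains, which means proving (iii) first. By Corollary~\ref{C:DivGarsideChain}(i), applied to $\DD_T$ with $s\in T$ a left divisor of $\DD_T$, the word $w$ is a left $(t,s)$-chain with $t$ a right divisor of $\Phi(\DD_T)=\DD_{\Phi(T)}$, so $t\in\Phi(T)$. Lemma~\ref{L:ChainPhi} then shows that $\Phi^k(w)$ is a left $(\Phi^k(t),\Phi^k(s))$-chain. Since $\Phi$ is an involution, $\Phi(T)$ and $T$ are swapped, and applying this at each $s'\in\Phi^k(T)$ yields an output letter in $\Phi^{k+1}(T)$. Composing with Lemma~\ref{L:ChainComposition}, I get by induction on $p$ that $w_p$ is a left $(t',s)$-chain with $t'\in\Phi^p(T)$. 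The case $p=0$ is degenerate: the word is empty, and the convention should be read as trivially holding. Item (iv) is symmetric, using Corollary~\ref{C:DivGarsideChain}(ii) together with Proposition~\ref{P:ChainRev}, or directly by the left/right mirror argument.

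Now (ii) follows from (iii) and (iv). By Proposition~\ref{P:ChainDiv}(iii), a left $(\cdot,s)$-chain means that $s$ does not right divide $g_p$; so no $s\in T$ right divides $g_p$. Conversely, take $s\notin T$. Since $s$ right divides $\DD^p=g_p\DD_T^p$ and $s\notin\supp(\DD_T^p)$, Lemma~\ref{L:SuppComm} gives the conclusion, applied in its right-handed mirror form (which I would state and use) with $g_2=\DD_T^p$, $g_3=1$, and $g_1=g_p$. Its hypotheses are disjoint supports (but see the caveat below) and $\SRD{S}{g_3g_2}=\SRD{S}{g_2}$, which holds trivially. A simpler route avoids that lemma: $\DD_T^p$ is a word not containing $s$, so by Proposition~\ref{P:SuppChain} it is a left $(s,s)$-chain, and Proposition~\ref{P:ChainDiv}(iv) gives that $s$ right divides $g_p$. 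This shows $\SRD{S}{g_p}=S\setminus T$.

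For left divisors, write $\DD^p=\Phi^p(\DD_T^p)\,g_p=\DD_{\Phi^p(T)}^p\,g_p$, using $\DD\,x=\Phi(x)\,\DD$. The mirror arguments then give $\SLD{S}{g_p}=S\setminus\Phi^p(T)$: chains from (iv) exclude $\Phi^p(T)$, and right $(s,s)$-chains with Proposition~\ref{P:ChainDiv}(ii) include everything else.

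The main obstacle is the careful bookkeeping of $\Phi$-indices in the chain composition, namely that the letter produced at stage $k$ lies in $\Phi^{k+1}(T)$ so that the next factor can be applied. A second point to check is that Corollary~\ref{C:DivGarsideChain} is invoked with a genuine left divisor $s$ of $\DD_T$; this holds because every $s\in T$ divides $\DD_T$.
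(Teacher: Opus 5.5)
Your proof is correct and follows essentially the paper's route. Item (i) comes from Corollary~\ref{C:PowerDelta}, (iii) from Corollary~\ref{C:DivGarsideChain} together with Lemma~\ref{L:ChainPhi} and chain composition, and (iv) by symmetry; (ii) then combines (iii)--(iv) with Proposition~\ref{P:ChainDiv} for one inclusion, and Proposition~\ref{P:SuppChain} applied to $\DD_T^p$ (resp. $\DD_{\Phi^p(T)}^p$) for the other. You should drop the detour through Lemma~\ref{L:SuppComm}, because its disjoint-support hypothesis generally fails: in type $A_2$ with $T=\{\sig1\}$, $\alpha(\DD_T)=\sig1\sig2$ contains $\sig1\in T$. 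The simpler route you give right after it is exactly the paper's argument.
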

    
    \begin{proof}
    Point (i) directly follows from Corollary~\ref{C:PowerDelta}. 
    By Corollary~\ref{C:DivGarsideChain} applied on $\DD_T$, for $s\in T$, the word~$w$ is a left $(t,s)$-chain with $t\in \Phi(T)$.
    Point $(iii)$ follows using Lemma~\ref{L:ChainPhi}.
    Point $(iv)$ is a consequence of $(iii)$ and Proposition~\ref{P:ChainRev}.
    Assume $p\geq 1$ and put 
    \[
    g_p = \Phi^{p-1}(g)\cdots \Phi(g)\,g.
    \]
    Thanks to Proposition~\ref{P:ChainDiv}, Points $(iii)$ and $(iv)$ give $R(g_p) \subseteq S \setminus T$ and $L(g_p)\subseteq S\setminus \Phi^p(T)$.
    Any element~$s$ of~$S\setminus T$ left-divides $\DD^p$ whereas any word representing $\DD^p_T$ is an $(s,s)$-chain by Proposition~\ref{P:SuppChain}. 
    Since $\DD^p =g_p \DD_T^p$, it follows from Proposition~\ref{P:ChainDiv} that all elements of $S\setminus T$ right-divides $g_p$, giving $\SRD{S}{g_p}= S\setminus T$.  
    Replacing the right-divisibility by the left divisibility and $T$ by $\Phi^p(T)$, we can repeat the argument to get that $\SLD{S}{g_p} = S\setminus \Phi^p(T)$. Point (ii) follows. 
    \end{proof}

     \begin{df}
    We denote by
     \begin{itemize}
    \item  $S_1^\perp$ the set of elements of $S$ commuting with all elements of $S_1$;
     \item $S_2\ominus S_1$ the set $(S_2\setminus S_1) \cap S_1^\perp$;
     \item $S_2\boxminus S_1$ the set $S_2 \setminus (S_1 \cup S_1^\perp)$.
     \end{itemize}
     \end{df}
     
    It may be helpful to visualize the different sets on a diagram:
    \begin{center}
    \begin{tikzpicture}[x=30pt,y=30pt]
    \draw (0,0) circle (1);
    \fill[lightgray] (0,0) ellipse (0.6 and 0.3);
    
    \draw (110:1) -- (-110:1);
    \draw (70:1) -- (-70:1);
    
    \draw[pattern=north west lines] (110:1) arc (110:-110:1);
    \draw[pattern=north east lines] (70:1) arc (70:290:1);
    \begin{scope}[shift={(3,0.1)}]
    \draw[pattern=north east lines] (0,0.5) rectangle ++ (0.5,0.4);
    \draw (0.5,0.7) node[right]{$S_1\setminus S_2$};
    \draw[pattern=north west lines] (0,0.0) rectangle ++ (0.5,0.4);
    \draw (0.5,0.2) node[right]{$S_2\setminus S_1$};
    \draw[pattern=north west lines] (0,-0.5) rectangle ++ (0.5,0.4);
    \draw[pattern=north east lines] (0,-0.5) rectangle ++ (0.5,0.4);
    \draw (0.5,-0.3) node[right]{$S_1\cap S_2$};
    \draw[fill=lightgray] (0,-1) rectangle ++ (0.5,0.4);
    \draw (0.5,-0.8) node[right]{$S_1^\perp$};
    \end{scope}
    \end{tikzpicture}
    \end{center}
†    In particular, we remark the relation
    \begin{equation}
    \label{E:OminusBoxminus}
     S_2\setminus S_1=(S_2\ominus S_1)\sqcup (S_2\boxminus S_1)
    \end{equation}
    In the same way, we define $S_2^\perp$, $S_1\ominus S_2$ and $S_1\boxminus S_2$.

    \begin{lm} 
    \label{L:DeltaAlternatingNormalForm}
    Let $p$ be a positive integer.
    Writing $\ND{\DD^p}=(g_n,\ldots,g_1)$, we have $g_1 = \DD^p_{S_1}$ and $g_2$ is right-divisible by $(\DD_{S_2\setminus S_1}\DD_{S_2\ominus S_1}^{-1})\DD_{S_2\ominus S_1}^p$. 
    In particular, $g_2$ is right-divisible by $\DD_{S_2\setminus S_1}$.
    \end{lm}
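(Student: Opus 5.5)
The plan is to read both entries off the factorisation of $\DD^p$ given by Lemma~\ref{L:DeltaDecomposition}. For $g_1$, apply Lemma~\ref{L:DeltaDecomposition} with $T=S_1$. Writing $x=\alpha(\DD_{S_1})$ and $x_p=\Phi^{p-1}(x)\cdots\Phi(x)\,x$, we get $\DD^p=x_p\,\DD_{S_1}^p$ and $\SRD{S}{x_p}=S\setminus S_1$. Since $\DD_{S_1}^p\in M_{S_1}$, Lemma~\ref{L:Tail} gives $\tail{S_1}{\DD^p}=\tail{S_1}{x_p}\,\DD_{S_1}^p$. No atom of $S_1$ right-divides $x_p$, so $\tail{S_1}{x_p}=1$. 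By Definition~\ref{D:AlternatingDecomposition} this yields $g_1=\DD_{S_1}^p$.

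For $g_2$, condition (iv) of Definition~\ref{D:AlternatingDecomposition} gives $g_2=\tail{S_2}{x_p}$. Put $z=\DD_{S_2\setminus S_1}\DD_{S_2\ominus S_1}^{-1}$, which lies in $M_{S_2\setminus S_1}\subseteq M_{S_2}$ because $\DD_{S_2\ominus S_1}$ right-divides $\DD_{S_2\setminus S_1}$. Since $g_2$ is the maximal right divisor of $x_p$ in $M_{S_2}$, it suffices to show that $z\,\DD_{S_2\ominus S_1}^p$ right-divides $x_p$ in $M$. The final clause of the statement then follows because $z\DD_{S_2\ominus S_1}^p=\DD_{S_2\setminus S_1}\DD_{S_2\ominus S_1}^{p-1}$ is right-divisible by $\DD_{S_2\setminus S_1}$.

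To get this divisibility, set $U=S_1\cup(S_2\ominus S_1)$. Every atom of $S_2\ominus S_1$ commutes with every atom of $S_1$, so $\DD_U=\DD_{S_1}\DD_{S_2\ominus S_1}$ and $\DD_U^p=\DD_{S_1}^p\DD_{S_2\ominus S_1}^p$. Applying Lemma~\ref{L:DeltaDecomposition} with $T=U$ gives $\DD^p=y_p\DD_U^p$, where $y_p=\Phi^{p-1}(y)\cdots y$, $y=\alpha(\DD_U)$ and $\SRD{S}{y_p}=S\setminus U$. Cancelling $\DD_{S_1}^p$ on the right yields $x_p=y_p\,\DD_{S_2\ominus S_1}^p$. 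It therefore remains to prove $z\preceq_R y_p$, and it is enough to prove $z\preceq_R y=\alpha(\DD_U)$.

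This last step is the expected obstacle. The difficulty is that $z$ need not lie in $M_{S\setminus U}$, since $S_2\boxminus S_1$ and $S_2\ominus S_1$ need not commute. What we do know is that $y$ is right-divisible by every atom of $S\setminus U\supseteq S_2\boxminus S_1$, hence by $\DD_{S\setminus U}$. The first attempt is to use that $\DD=y\,\DD_U$ is a common left multiple of $\DD_U$ and of $\DD_{S_2\setminus S_1}$, hence of $\DD_{S_2\setminus S_1}\vee_L\DD_U$. We then compute this left-lcm as $z\,\DD_U$. The idea is that $z$ and $\DD_{S_1}$ interact only through $S_2\boxminus S_1$, while $\DD_{S_2\setminus S_1}\vee_L\DD_{S_2\ominus S_1}=\DD_{S_2\setminus S_1}=z\DD_{S_2\ominus S_1}$. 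Cancelling $\DD_U$ on the right then gives $z\preceq_R y$. If computing this lcm proves awkward, the fallback is an induction on the length of $z$ using chains. Start from a right divisor $w$ of $z$ known to right-divide $y$, and let $s$ be the next letter. We show $s$ right-divides $y\,w^{-1}$ via Proposition~\ref{P:ChainDiv}(iv): a representative of $w\,\DD_U$ is a left $(t,s')$-chain ending at the relevant atom, because $\DD_{S_1}$ has support disjoint from $S_2\setminus S_1$ (Proposition~\ref{P:SuppChain}).
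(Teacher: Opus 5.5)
Your reduction is correct and follows the same route as the paper. You get $g_1=\DD_{S_1}^p$ from Lemma~\ref{L:DeltaDecomposition} with $T=S_1$. For $g_2$ you pass to $U=S_1\cup(S_2\ominus S_1)$, derive $x_p=y_p\,\DD_{S_2\ominus S_1}^p$ and $g_2=\tail{S_2}{x_p}$, and reduce everything to $z\preceq_R y$.

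The gap is the step you flag yourself, and neither argument you offer closes it.

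\textbf{First attempt.} It rests on a false identity. Since $\DD_A$ is the left-lcm of the atoms of $A$, one always has $\DD_A\vee_L\DD_B=\DD_{A\cup B}$. Hence $\DD_{S_2\setminus S_1}\vee_L\DD_U=\DD_S=\DD$, not $z\DD_U$. In fact $z\DD_U=\DD_{S_2\setminus S_1}\DD_{S_1}$ is in general not even a common multiple. In type $A_3$ with $S_1=\{\sigma_1,\sigma_2\}$ and $S_2=\{\sigma_2,\sigma_3\}$ it equals $\sigma_3\sigma_1\sigma_2\sigma_1$, which $\sigma_3$ does not right-divide. So the lcm argument only yields the vacuous $\DD\preceq_R\DD$.

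\textbf{Fallback.} This is only a sketch. You never say which atom $s'$ makes a representative of $w\DD_U$ a left $(s,s')$-chain. Proposition~\ref{P:SuppChain} also says nothing about the factor $\DD_{S_2\ominus S_1}$ of $\DD_U$, whose support lies inside $S_2\setminus S_1$. Letters of $S_2\ominus S_1$ genuinely occur in $z$: in $A_4$ with $S_1=\{\sigma_1,\sigma_2\}$ and $S_2=\{\sigma_2,\sigma_3,\sigma_4\}$ one gets $z=\sigma_4\sigma_3$.

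\textbf{The missing idea.} Argue with $x=\alpha(\DD_{S_1})$ instead of $y$; you already have both facts needed.
\begin{enumerate}
\item Taking $p=1$ in your cancellation gives $x=y\,\DD_{S_2\ominus S_1}$.
\item Lemma~\ref{L:DeltaDecomposition}(ii) gives $\SRD{S}{x}=S\setminus S_1\supseteq S_2\setminus S_1$. Hence the left-lcm $\DD_{S_2\setminus S_1}$ of these atoms right-divides $x$.
\item Write $x=c\,\DD_{S_2\setminus S_1}=c\,z\,\DD_{S_2\ominus S_1}$. Comparing with $x=y\,\DD_{S_2\ominus S_1}$, right cancellation gives $y=cz$, that is, $z\preceq_R y$.
\end{enumerate}
Passing from $x$ to $y$ is exactly what discarded the atoms of $S_2\ominus S_1$ and left you seeing only $S\setminus U$. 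This is precisely the paper's argument.
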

    \begin{proof}
    By Lemma~\ref{L:DeltaDecomposition} $(i)$ we have $\DD^p = \Phi^{p-1}(g) \cdot \ldots \cdot \Phi(g) \cdot g \cdot \DD_{S_1}^p$ with $g =\alpha(\DD_{S_1})$.
    Moreover, $\DD_{S_1}^p$ lies in $M_{S_1}$ and by Lemma~\ref{L:DeltaDecomposition} $(ii)$,  $\Phi^{p-1}(g) \cdot \ldots \cdot \Phi(g) \cdot g$ is not right-divisible by any element of $S_1$.
    So $g_1 = \tail{S_1}{\DD^p} = (\DD_{S_1})^p$.  We now establish the result on $g_2$.
    From $S_2\ominus S_1 \subseteq S_1^\perp$, we get 
    \[
    \DD_{S_2\ominus S_1}\cdot \DD_{S_1}=  \DD_{S_1}\cdot \DD_{S_2\ominus S_1}= \DD_{(S_2\ominus S_1) \cup S_1}.\]
    In particular $(\DD_{S_2\ominus S_1}\cdot \DD_{S_1})^p = \DD_{S_2\ominus S_1}^p\cdot \DD_{S_1}^p$.
    Applying Lemma~\ref{L:DeltaDecomposition} for $T = (S_2\ominus S_1) \cup S_1$, we get that
    \[
    \DD^p = \Phi^{p-1}(h) \cdot \ldots \cdot \Phi(h) \cdot h \cdot \DD_{T}^p =\Phi^{p-1}(h) \cdot \ldots \cdot \Phi(h) \cdot h \cdot \DD_{S_2\ominus S_1}^p\cdot \DD_{S_1}^p,
    \]
     where $h = \alpha(\Delta_T)$.  
     Note that $\DD  = g\cdot\DD_{S_1} = h\cdot \DD_{S_2\ominus S_1}\cdot \DD_{S_1}$.
     In particular $\DD_{S_2\setminus S_1}$ right-divides~$h\cdot \DD_{S_2\ominus S_1}$, by Lemma~\ref{L:DeltaDecomposition} $(ii)$.
     Hence $\DD_{S_2\setminus S_1}\DD_{S_2\ominus S_1}\inv$, which lies in $M$, right-divides $h$.
     Finally,
     \[
     g'_2=\DD_{S_2\setminus S_1}\cdot \DD_{S_2\ominus S_1}\inv \cdot \DD_{S_2\ominus S_1}^p
     \]
     lies in $M_{S_2}$ and right-divides $g_n\cdots g_2$, which implies that $g'_2$ is a right divisor of $g_2$.
    \end{proof}
    
    \begin{lm}
    \label{L:LeftPowerDelta}
    Let $p\geq 1$ such that $\brd(\DD^p) \geq 3$.
    Then $\SLDA{S}{\DD^p}{*}$ is included in $\Phi^p(S_2\boxminus S_1)$
    \end{lm}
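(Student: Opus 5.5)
We prove the inclusion one element at a time. Write $\ND{\DD^p}=(g_n,\ldots,g_1)$ with $n=\brd(\DD^p)\geq 3$, and let $s\in\SLDA{S}{\DD^p}{*}$, so $s$ left-divides $g_n$. Put $t=\Phi^p(s)$. Since $\Phi$ is an involution preserving $S$ and the commutation relations, the claim $s\in\Phi^p(S_2\boxminus S_1)$ amounts to three facts: $t\notin S_1$, $t\in S_2$, and $t\notin S_1^\perp$. The first two are straightforward, and the third is the heart of the proof.

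\emph{Step 1: $t\notin S_1$, hence $t\in S_2$.} Apply Lemma~\ref{L:DeltaDecomposition} with $T=S_1$ and put $h_p=\Phi^{p-1}(g)\cdots\Phi(g)\,g$, where $g=\alpha(\DD_{S_1})$. Then $\DD^p=h_p\,\DD_{S_1}^p$. By Lemma~\ref{L:DeltaAlternatingNormalForm}, $g_1=\DD_{S_1}^p$, so cancellativity gives $g_n\cdots g_2=h_p$. Hence $s$ left-divides $h_p$. By Lemma~\ref{L:DeltaDecomposition}(ii) we have $\SLD{S}{h_p}=S\setminus\Phi^p(S_1)$, so $s\notin\Phi^p(S_1)$, that is, $t\notin S_1$. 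Since $(S_2,S_1)$ is a covering, $t\in S_2$, so $t\in S_2\setminus S_1$. By \eqref{E:OminusBoxminus}, it remains to exclude $t\in S_2\ominus S_1$.

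\emph{Step 2: excluding $t\in S_2\ominus S_1$ (main obstacle).} Suppose for a contradiction that $t$ commutes with every element of $S_1$.
\begin{enumerate}
\item By Lemma~\ref{L:DeltaAlternatingNormalForm}, $g_2$ is right-divisible by $\DD_{S_2\ominus S_1}^p$, and hence by $t^p$.
\item Applying $\Phi^p$, the letter $s$ commutes with every element of $\Phi^p(S_1)$.
\item Apply the left-hand version of Lemma~\ref{L:DeltaDecomposition} to $T=S_1\cup\{t\}$, for which $\DD_T=\DD_{S_1}\,t$. This shows that $s^p=\Phi^p(t^p)$ left-divides $\DD^p$ together with $\Delta_{\Phi^p(S_1)}^p$.
\end{enumerate}
The idea is that the ``budget'' of $s$'s available at the left of $\DD^p$ is fully accounted for by the $t^p$ sitting in $g_2$, transported through $\Phi^p$. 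An extra left divisor $s$ of $g_n$ with $n\geq 3$ would exceed this budget.

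To make this precise, I would first rewrite $\DD^p=\Phi^p(\DD^p)$. This moves the factor $t^p$ sitting in $g_2$ to a factor $s^p$ on the left. Next I would invoke Lemma~\ref{L:SuppComm}, with $g_3g_2$ playing the role of the middle block and the support condition coming from the alternating structure. The aim is to show that a left divisor $s$ of $g_n$ that commutes with $\Phi^p(S_1)$ can be pushed to the right across $g_{n-1},\ldots,g_3$. This would produce $s^{p+1}$, or a word containing $s^2$ in an illegitimate place, dividing $\DD^p$. That contradicts the square-freeness of $\DD$ (Proposition~\ref{P:Garside}(i)) together with the maximality of the tails $g_i=\tail{S_{\rem i}}{g_n\cdots g_i}$.

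Alternatively, maximality alone may suffice. If $s$ could be moved into the position of $g_{n-1}$, then $g_n$ would not be the right $S_{\rem n}$-tail, or $g_{n-1}$ would not be maximal, contradicting Definition~\ref{D:AlternatingDecomposition}(iv). I expect that verifying the hypothesis $\SRD{S}{g_3g_2}=\SRD{S}{g_2}$ of Lemma~\ref{L:SuppComm}, and controlling the supports, is where the real work lies. This is also where $n\geq 3$ is used, to ensure that $g_n$ is genuinely distinct from $g_2$.
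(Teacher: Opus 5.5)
Your Step~1 is correct. Lemma~\ref{L:DeltaDecomposition} with $T=S_1$, together with $g_1=\DD_{S_1}^p$, gives $g_n\preceq_L g_n\cdots g_2=h_p$, hence $t=\Phi^p(s)\in S_2\setminus S_1$. Step~2, however, is not a proof, and it is exactly the part that needs $n\geq 3$.

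The concrete facts you derive in Step~2 do not produce a contradiction. Item~3 carries no information: $\DD_X^p\preceq_L\DD^p$ for every $X\subseteq S$, so $s^p\preceq_L\DD^p$ holds for every $s\in S$, whatever $\Phi^p(s)$ is. The rest is a plan rather than an argument:
\begin{itemize}
\item the hypotheses of Lemma~\ref{L:SuppComm} are never checked;
\item you do not say what ``pushing $s$ across $g_{n-1},\ldots,g_3$'' actually produces;
\item square-freeness of $\DD$ (Proposition~\ref{P:Garside}) does not by itself forbid $s^{p+1}\preceq_L\DD^p$ when $p\geq 2$.
\end{itemize}
So the exclusion of $\Phi^p(S_2\ominus S_1)$ remains unproved. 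That exclusion is the real content of the lemma beyond the easy inclusion $\SLDA{S}{\DD^p}{*}\subseteq S\setminus\Phi^p(S_1)$.

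The missing idea is to use the right-hand factorisation of Lemma~\ref{L:DeltaDecomposition} with an enlarged set; you came close to this in item~3. Take $T=S_1\cup(S_2\ominus S_1)$, or just $S_1\cup\{t\}$.
\begin{itemize}
\item Since $S_2\ominus S_1\subseteq S_1^\perp$, we have $\DD_T^p=\DD_{S_2\ominus S_1}^p\DD_{S_1}^p$. Hence $\DD^p=h\,\DD_{S_2\ominus S_1}^p\DD_{S_1}^p$ with $h=\Phi^{p-1}(\alpha(\DD_T))\cdots\alpha(\DD_T)$ and $\SLD{S}{h}=S\setminus\Phi^p(T)$.
\item By Lemma~\ref{L:DeltaAlternatingNormalForm}, $g_1=\DD_{S_1}^p$ and $g_2=g_2'\,\DD_{S_2\ominus S_1}^p$. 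Cancelling gives $g_n\cdots g_3\,g_2'=h$.
\item Since $n\geq 3$, this gives $g_n\preceq_L h$.
\end{itemize}
Therefore $\SLDA{S}{\DD^p}{*}\subseteq S\setminus\Phi^p(T)=\Phi^p(S\setminus T)=\Phi^p(S_2\boxminus S_1)$. The last equality uses $S=S_1\cup S_2$ and $S_2\setminus S_1=(S_2\ominus S_1)\sqcup(S_2\boxminus S_1)$. This handles your Steps~1 and~2 at once, and it is the paper's proof.

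Your ``budget'' heuristic can also be completed, without Lemma~\ref{L:SuppComm}. Write $g_2=g_2''t^p$, $a=g_n\cdots g_3g_2''$ and $b=t^pg_1$. Rotating $\DD^p=ab$ into $\DD^p=\Phi^p(b)\,a=s^p\Phi^p(g_1)\,a$, then moving the initial $s$ of $g_n$ left past $\Phi^p(g_1)$, gives $s^{p+1}\preceq_L\DD^p$. Rotating again gives $s^{p+1}\preceq_R\DD^p$. This contradicts $\DD^p=y\,s^p$ with $s\not\preceq_R y$, which is Lemma~\ref{L:DeltaDecomposition} applied to $T=\{s\}$. The direct argument above is shorter.
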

    \begin{proof} 
    Consider $h=\alpha(\Delta_T)$ with $T = (S_2\ominus S_1) \cup S_1$ as in the proof of Lemma~\ref{L:DeltaAlternatingNormalForm}.
    We have
    \[
    \DD^p = \Phi^{p-1}(h) \cdot \ldots \cdot \Phi(h) \cdot h \cdot \DD_{T}^p =\Phi^{p-1}(h) \cdot \ldots \cdot \Phi(h) \cdot h \cdot \DD_{S_2\ominus S_1}^p\cdot \DD_{S_1}^p.
    \] 
    Let $\ggn$ be the alternating normal form of $\DD^p$. 
    Then $g_1 = \DD_{S_1}^p$ and $\DD_{S_2\ominus S_1}$ right-divides $g_2$. 
    Since $n\geq 3$, it follows that $g_n$ is a left divisor of $g = \Phi^{p-1}(h) \cdot \ldots \cdot \Phi(h) \cdot h $,  that implies the inclusion $\SLDA{S}{\DD^p}{*} \subseteq \SLD{S}{g}$. 
    By Lemma~\ref{L:DeltaDecomposition}, the latter is equal to $S\setminus\Phi^p(T)$.
    We have \begin{align*}
    S\setminus T &= S\setminus \left ((S_2\ominus S_1)\cup S_1\right)\\
    &= \big(S\setminus (S_2\ominus S_1)\big)\cap (S\setminus S_1)\\
    &= \big(S\setminus (S_2\ominus S_1)\big)\cap (S_2\setminus S_1) & \text{since $S=S_1\cup S_2$}\\
    &= (S_2 \setminus S_1)\setminus (S_2\ominus S_1) = S_2\boxminus S_1  &\text{by \eqref{E:OminusBoxminus}.}
    \end{align*}
    If $p$ is even, we have  $S\setminus\Phi^p(T) =  S\setminus T = S_2\boxminus S_1 = \Phi^p(S_2\boxminus S_1)$.
    If $p$ is odd, we have $\Phi^p(T) = \Phi(T)$ and then $S\setminus\Phi^p(T)=S\setminus\Phi(T) = \Phi(S\setminus T) = \Phi(S_2\boxminus S_1)= \Phi^p(S_2\boxminus S_1)$.
    \end{proof}
    
    \section{Condition~$A$} 
    \label{S:ConditionA}
    
    $\Gamma$, $S$, $M$, $G$ and $\Delta$ are still as defined in Notation~\ref{N: cadre}.
    We fix a proper covering $(S_2,S_1)$ of $S$. 
     
        \begin{df}[Condition~$A$]\label{D:ConditionA} Let $p\geq 1$, an integer.  The covering $(S_2, S_1)$  is said to satisfy Condition~$A$ with respect to~$\DD^p$  when for all  $t\geq 1$ we have 
    \begin{equation}
    \label{E:ConditionA}
    \dpt(\DD^{pt}) - 1 = t\times (\dpt(\DD^p) - 1).
    \end{equation}
    \end{df} 

  \begin{df} \label{D:irred et stab}
    For $p\geq1$, the covering $(S_2, S_1)$ is said to be
    \begin{enumerate}
    \item irreducible whenever $M_{S_1}$ and $M_{S_2}$ are also irreducible,
        \item $\DD^p$-stabilized if $\{\Phi^p(S_1),\Phi^p(S_2)\}=\{S_1,S_2\}$.
    \item $\DD^p$-fixed if $\Phi^p(S_1)=S_1$ and $\Phi^p(S_2)=S_2$,
    \end{enumerate}
    \end{df}

  As by Proposition~\ref{P:Garside}, $\DD^2$ is central. Therefore,  the covering $(S_2,S_1)$ is $\DD^p$ fixed for any $p$ even; when $p$ is odd,   $(S_2,S_1)$ is $\DD^p$-stabilized or  $\DD^p$-fixed iff it  $\DD$-stabilized or  $\DD$-fixed, respectively. 
  
\begin{df}
    	For $p\geq 1$, the covering $(S_2, S_1)$ of $S$ is said to be \emph{$\DD^p$-regular} if it is proper, irreducible and $\DD^p$-stabilized.
    	\end{df}   
    
       The element $\DD$ is crucial in the definition of  the $\DD$-fraction $h\DD^{-k}$ of an element  $g$of $G$. Since  the $\DD$-fraction of $\DD g \DD^{-1}$ is  $(\DD h \DD^{-1})\DD^{-k}$ with $\DD h \DD^{-1}$ in $M$, one may want some compatibility between the alternating normal form and the action of $\Delta$ by conjugacy.  This is why, in the sequel, we  assume  the covering $(S_2, S_1)$ is stabilized by~$\DD$:
 
 \begin{nota} During Section~\ref{S:ConditionA}, we fix $p\geq 1$, an integer and we assume that $(S_2,S_1)$ is  a $\DD^p$-regular covering  of $S$. \label{N:cov regular}
 \end{nota}     
 
 \subsection{Condition A and $\DD^p$-fixed coverings}
    \begin{lm}
    \label{L:DeltaBreathForFixedCovering}
  
   $\brd(\DD^p)$ is even iff $(S_2,S_1)$ is $\DD^p$-fixed.
    \end{lm}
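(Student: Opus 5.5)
We compare the first and last entries of $\ND{\DD^p}=(g_n,\ldots,g_1)$, using the balancedness of $\DD^p$ and the symmetry $\DD^p=\Phi^p(\DD^p)$. Since $n=\brd(\DD^p)$, the entry $g_n$ lies in $M_{S_{\rem n}}$. So $g_n$ lies in $M_{S_2}$ when $n$ is even, and in $M_{S_1}$ when $n$ is odd. The idea is to show that $\SLD{S}{g_n}$ is "governed" by $\Phi^p$ of the set $S\setminus T$ appearing at the right end, exactly as in Lemma~\ref{L:LeftPowerDelta}. Matching this against the parity of $n$ then forces $\Phi^p(S_1)=S_1$ or $\Phi^p(S_1)=S_2$.

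The first step is to rule out small breadths. Since the covering is proper, $\DD^p\notin M_{S_1}$, so $n\ge 2$. If $n=2$, then $\DD^p=g_2\,\DD_{S_1}^p$ with $g_2\in M_{S_2}$. Every $s\in S\setminus S_2$ left-divides $\DD^p$. By Proposition~\ref{P:SuppChain} and Proposition~\ref{P:ChainDiv}(ii), applied to a word for $g_2$, such an $s$ left-divides $\DD_{S_1}^p$. Hence every left divisor in $S$ of $\DD^p$ lies in $S_2$, or else commutes appropriately. Since all of $S$ left-divides $\DD^p$, we deduce that the elements of $S\setminus S_2$ commute with $S_2$, as in Lemma~\ref{L:SuppComm}. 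This contradicts irreducibility of $\Gamma$, because $S\setminus S_2\neq\emptyset$ and the covering is proper. So $n\ge 3$ and Lemma~\ref{L:LeftPowerDelta} applies, giving $\SLDA{S}{\DD^p}{*}\subseteq\Phi^p(S_2\boxminus S_1)\subseteq\Phi^p(S_2\setminus S_1)$.

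The core step is as follows. Since $g_n\ne1$, pick $s\in\SLD{S}{g_n}$; then $s\in\Phi^p(S_2\setminus S_1)$. The covering is $\DD^p$-stabilized, so either $\Phi^p$ fixes $S_1$ and $S_2$, or it swaps them.

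In the fixed case, $s\in S_2\setminus S_1$. Since $g_n\in M_{S_{\rem n}}$, we get $s\in S_{\rem n}$, which forces $\rem n=2$, i.e.\ $n$ is even.

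In the swapped case, $s\in S_1\setminus S_2$, which forces $n$ to be odd.

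These two implications together give the equivalence. Note that "fixed" and "swapped" are mutually exclusive, since $S_1\neq S_2$ for a proper covering; if $S_1=S_2$ we would have $S=S_1$.

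The main obstacle is the exclusion of $n=2$, and possibly $n=1$. I would handle it as sketched, using irreducibility of $\Gamma$ together with the chain arguments. Alternatively, one could note that if $n=2$ then $\DD_{S_2\setminus S_1}$ right-divides $g_2$ by Lemma~\ref{L:DeltaAlternatingNormalForm}. One would then derive a contradiction from $\SLD{S}{\DD^p}=S$ via Lemma~\ref{L:DeltaDecomposition}(ii), which gives $S\setminus\Phi^p(T)\subseteq S_2$ for the left factor. The remaining steps are direct bookkeeping with $\Phi^p$ and the parity of $n$.
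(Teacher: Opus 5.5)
Your core argument is sound, but it takes a different and heavier route than the paper.

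The paper never examines the left divisors of $g_n$. It rotates $\DD^p=g_n\cdot(g_{n-1}\cdots g_1)$ into $\DD^p=g_{n-1}\cdots g_1\,\Phi^p(g_n)$, and notes that $\Phi^p(g_n)\notin M_{S_1}$. Otherwise $g_1\Phi^p(g_n)$ would be an element of $M_{S_1}$ right-dividing $\DD^p$ and strictly longer than $g_1=\tail{S_1}{\DD^p}$. Since $\Phi^p(g_n)\in M_{\Phi^p(S_{\rem n})}$ and $\Phi^p$ permutes $\{S_1,S_2\}$, this forces $\Phi^p(S_{\rem n})=S_2$, which is exactly the parity statement. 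That argument needs only $n\ge 2$, and it uses neither Lemma~\ref{L:LeftPowerDelta} nor the irreducibility of $\Gamma$.

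Your route needs Lemma~\ref{L:LeftPowerDelta}, hence $n\ge 3$, hence an exclusion of $n=2$. That exclusion genuinely depends on irreducibility: for $S=\{a,b\}$ with $ab=ba$, $S_1=\{a\}$, $S_2=\{b\}$, one has $\ND{\DD}=(b,a)$. Granting $n\ge 3$, your fixed/swapped case analysis is correct.

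Your exclusion of $n=2$ is not justified as written.
\begin{itemize}
\item The appeal to Proposition~\ref{P:ChainDiv}(ii) is vacuous: any $s\in S\setminus S_2\subseteq S_1$ trivially left-divides $\DD_{S_1}^p$.
\item Lemma~\ref{L:SuppComm} cannot be applied to the factorization $g_2\cdot\DD_{S_1}^p$, because $\supp(g_2)$ and $S_1$ generally meet in $S_1\cap S_2$.
\item Instead, split $g_2=h_3\,\DD_{S\setminus S_1}$ via Lemma~\ref{L:DeltaAlternatingNormalForm}.
\item Then check $\SRD{S}{g_2}=S\setminus S_1=\SRD{S}{\DD_{S\setminus S_1}}$, using the maximality of the $S_1$-tail.
\item The conclusion is that $S\setminus\supp(g_2)$ commutes with $\supp(g_2)$, not that $S\setminus S_2$ commutes with $S_2$. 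This suffices, since both sets are nonempty.
\end{itemize}
This is precisely Lemma~\ref{L:Breadth2Reducible}, whose proof does not depend on the present lemma, so you may simply cite it.

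Your alternative via Lemma~\ref{L:DeltaDecomposition}(ii) would not work. It never uses connectedness of $\Gamma$, and the example above shows that breadth $2$ does occur without it.
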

    
    \begin{proof}
    We put $[\DD^p]=(g_n,\ldots,g_1)$.
    As $S\not=S_1$ we have $n\geq 2$.
    Proposition~\ref{P:Garside}(iv) gives
    \[
     \DD^p = g_{n-1}\cdots g_1 \Phi_{\DD^p}(g_n) = g_{n-1}\cdots g_1 \Phi^p(g_n).
     \]
    Hence $\Phi^p(g_n)$ can not lie in $M_{S_1}$.
    Otherwise $g_1\Phi^p(g_n)$ should right-divides $g_1$, which is impossible as~$g_n\neq 1$.
    So we must have $\Phi^p(S_{\pi(n)}) = S_2$.
    If $n$ is even we get $\pi(n) = 2$ and $\Phi^p$ fixes the covering.
    If $n$ is odd, $\pi(n) = 1$ and the covering is not fixed by $\Phi^p$.
    \end{proof}

    \begin{lm}  \label{L:Breadth2Reducible} The element $\DD^p$ cannot have  breadth $2$.

    \end{lm}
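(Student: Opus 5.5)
The plan is to argue by contradiction. Assume $\ND{\DD^p}=(g_2,g_1)$ and derive that $\Gamma$ splits, which contradicts irreducibility.

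\textbf{Step 1: setting up.} By Lemma~\ref{L:DeltaAlternatingNormalForm} we have $g_1=\DD_{S_1}^p$, so $\DD^p=g_2\,\DD_{S_1}^p$ with $g_2\in M_{S_2}$. Lemma~\ref{L:DeltaDecomposition}~(i) with $T=S_1$, together with cancellativity, identifies $g_2$ with $\Phi^{p-1}(g)\cdots\Phi(g)\,g$, where $g=\alpha(\DD_{S_1})$. Since the breadth is even, Lemma~\ref{L:DeltaBreathForFixedCovering} shows that the covering is $\DD^p$-fixed, so $\Phi^p(S_1)=S_1$. Lemma~\ref{L:DeltaDecomposition}~(ii) then gives
\[
\SLD{S}{g_2}=\SRD{S}{g_2}=S\setminus S_1=S_2\setminus S_1 .
\]
In particular $g_2$ is neither left- nor right-divisible by any letter of $S_1$, and its support contains $S_2\setminus S_1$.

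\textbf{Step 2: commutation.} Pick $s\in S_1\setminus S_2$; this set is non-empty because $S_2$ is proper. Every atom left-divides $\DD^p$, so $s$ left-divides $g_2\DD_{S_1}^p$, while $s\notin\supp(g_2)\subseteq S_2$. I would apply Lemma~\ref{L:SuppComm} with $g_3=1$, with $g_2$ in its own role, and with $\DD_{S_1}^p$ in the role of $g_1$. Its conclusion is that $s$ commutes with every letter of $\supp(g_2)$, hence with every letter of $S_2\setminus S_1$.

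The obstacle is the disjointness hypothesis $\supp(g_1)\cap\supp(g_2)=\emptyset$, which can fail on $S_1\cap S_2$. I would first try to avoid it by running the induction of Lemma~\ref{L:SuppComm} directly. The only place disjointness was used is to force the letter $t_i$ out of $\supp(g_2)$. Here $t_i$ left-divides a right divisor of $\DD_{S_1}^p$, so $t_i\in S_1$. Moreover, at that point $t_i$ would right-divide $g_2$, which is impossible since $\SRD{S}{g_2}\cap S_1=\emptyset$. So the contradiction should still come from $\SRD{S}{g_2}=S_2\setminus S_1$.

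\textbf{Step 3: contradiction with irreducibility.} Now every $s\in S_1\setminus S_2$ commutes with $\supp(g_2)$, so the letters of $S_1\setminus S_2$ commute with $g_2$. Using $\DD^p=g_2\DD_{S_1}^p$, I would show that no letter of $S_1\setminus S_2$ is adjacent in $\Gamma$ to a letter of $S_2$. The idea is to compare left divisors: any $u\in S_2$ left-divides $\DD^p$, while $\supp(\DD_{S_1}^p)$ and $\supp(g_2)$ interact only through $S_1\cap S_2$.

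If $S_1\cap S_2=\emptyset$, then $\Gamma$ is disconnected, a contradiction. Otherwise I would use that $\supp(g_2)$ contains $S_2\setminus S_1$ and is disjoint from $S_1\setminus S_2$. Then the chain argument of Step 2, applied symmetrically on the right via $\Phi^p$, forces the letters of $S_1\setminus S_2$ to commute with all of $S_2$. This makes $\Gamma$ a disjoint union of $\Gamma(S_1\setminus S_2)$ and $\Gamma(S_2)$, contradicting that $\Gamma$ is connected.

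I expect Step 2, making Lemma~\ref{L:SuppComm} work without full support disjointness, to be the main difficulty.
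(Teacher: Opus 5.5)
Steps~1 and~2 are essentially fine, but Step~3 has a genuine gap. You apply Step~2 only to letters of $S_1\setminus S_2$, and then try to split $\Gamma$ as $(S_1\setminus S_2)\sqcup S_2$. That split needs every letter of $S_1\setminus S_2$ to commute with the letters of $S_1\cap S_2$ that do \emph{not} occur in $\supp(g_2)$. Nothing you have controls a pair of letters that both lie outside $\supp(g_2)$. The mirror argument via $\DD^p=\Phi^p(\DD_{S_1}^p)\,g_2$ again only yields commutation with $\supp(g_2)$. Moreover, when $S_1\cap S_2\neq\emptyset$, your target ``no letter of $S_1\setminus S_2$ is adjacent to a letter of $S_2$'' directly contradicts the connectedness of $\Gamma(S_1)$. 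So it is not an intermediate fact you can extract from $g_2$; it is the contradiction itself, and you give no route to it.

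The missing observation is that Step~2 never uses $s\in S_1\setminus S_2$, only $s\notin\supp(g_2)$. Take $T_2=\supp(g_2)$ and $T_1=S\setminus T_2$. Every letter of $T_1$ left-divides $\DD^p$, so Step~2 shows it commutes with all of $T_2$. Also $T_2\supseteq R(g_2,S)=S\setminus S_1\neq\emptyset$ and $T_1\supseteq S\setminus S_2\neq\emptyset$. Hence $M=M_{T_2}\times M_{T_1}$, contradicting irreducibility, with no case distinction on $S_1\cap S_2$. This is exactly how the paper concludes.

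For the obstacle you flag in Step~2, the paper does not reprove Lemma~\ref{L:SuppComm}. By Lemma~\ref{L:DeltaAlternatingNormalForm} one can write $g_2=h_3\,\DD_{S\setminus S_1}$ with $h_3\in M_{S_2}$. The lemma then applies verbatim with $(h_3,\DD_{S\setminus S_1},\DD_{S_1}^p)$ in the roles of $(g_3,g_2,g_1)$: indeed $\supp(\DD_{S\setminus S_1})\cap S_1=\emptyset$ and $R(h_3\DD_{S\setminus S_1},S)=S\setminus S_1=R(\DD_{S\setminus S_1},S)$. This is precisely what the $g_3$ slot of that lemma is for.

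Your re-run of the induction also works, but only once you state it with an invariant that is inherited, namely ``$R(x,S)\cap S_1=\emptyset$ and $y\in M_{S_1}$''. Disjointness is also used in the paper's inductive call, not only to push $t_i$ out of $\supp(g_2)$. Your invariant passes to the right divisor $\overline{t_{i-1}\cdots t_1}$ of $x$ and to $\overline{u_1'}$. The letter $t_i$ produced at the end would then lie in $S_1\cap R(x,S)\subseteq S_1\cap R(g_2,S)=\emptyset$.

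Finally, invoking Lemma~\ref{L:DeltaBreathForFixedCovering} is harmless but unnecessary. Only $R(g_2,S)=S\setminus S_1$ is used, and that holds without fixedness.
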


    \begin{proof}
    We prove the result by contradiction. Assume $[\DD^p]=(g_2,g_1)$. 
    Define $T_2$ to be $\supp(g_2)$ and $T_1$ to be $S\setminus T_2$.
    We have $S\setminus S_1 \subseteq T_2\subseteq S_2$ and $S\setminus S_2 \subseteq S\setminus T_2 = T_1 \subseteq S_1$.
    Hence $T_1$ and $T_2$ are both not empty and, by construction, $T_1\sqcup T_2 = S$ holds.
    Lemma~\ref{L:DeltaAlternatingNormalForm} gives $g_1=\DD^p_{S_1}$ and $g_2$ is right divisible by $h_2=\DD_{S\setminus S_1}$. 
    Let $h_3\in M_{S_2}$ be such that $g_2=h_3\,h_2$.
    Putting $h_1=g_1$ we obtain $\DD^p=h_3\,h_2\,h_1$ with $\supp(h_1)\cap \supp(h_2)= S_1\cap (S\setminus S_1) = \emptyset$ and $R(h_3h_2,S) = R(g_2,S) = S\setminus S_1= R(h_2,S)$.
    Thus, from Lemma~\ref{L:SuppComm}, any element $s \in T_1$ commutes with all elements of $T_2$, giving $M=M_{T_2}\times M_{T_1}$.  So, $\Gamma$ is not irreducible, a contradiction.
    \qedhere
    
    \end{proof}
    
    \begin{prp}
    \label{P:LeftOfPowerDeltaBoxMinus}
    Assume $\SLDA{S}{\DD^p}{*}$ is equal to $\Phi^p\left(S_2\boxminus S_1\right)$.
    Then, for any $t\geq 1$, the set~$\SLDA{S}{\DD^{pt}}{*}$ is equal to $\Phi^{pt}(S_2\boxminus S_1)$ and  
    \[
    \brd(\DD^{pt})-2 = t\times (\brd(\DD^{p})-2).
    \]
    Moreover,  
    \begin{enumerate}
    \item  if $\DD^p$ fixes the covering $(S_2,S_1)$  then
     $$\dpt(\DD^{pt})  - 1= t \times (\dpt(\DD^p) -1)$$
     \item  if $\DD^p$ does not  fix the covering $(S_2,S_1)$  then
     \begin{enumerate}
     \item when $t$ is even, one has $\dpt(\DD^{pt}) - 1 = t \times (\dpt(\DD^p) -\frac{1}{2})$; 
     \item when $t$ is odd, one has $\dpt(\DD^{pt}) - \frac{1}{2}= t \times (\dpt(\DD^p) -\frac{1}{2})$.
      \end{enumerate}
      \end{enumerate}
    \end{prp}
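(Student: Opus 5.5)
We argue by induction on $t$, with the case $t=1$ being the hypothesis. We write $\DD^{p(t+1)} = \DD^{pt}\cdot \DD^{p}$ and aim to apply Lemma~\ref{L:ProductANF} with $g=\DD^{pt}$ and $h=\DD^p$.

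The first step is to check the hypothesis $\SRDA{S}{\DD^{pt}}{}\subseteq \SLDA{S}{\DD^p}{*}$. The right divisors of a power of $\DD$ in $S$ form all of $S$, so this inclusion cannot hold literally. We therefore work with the refined decomposition. Let $h=\alpha(\DD_T)$ with $T=(S_2\ominus S_1)\cup S_1$, and use Lemma~\ref{L:DeltaDecomposition}(i) to write
\[
\DD^{p(t+1)}=\DD^{pt}\,\Phi^{p-1}(h)\cdots h\,\DD_T^p .
\]
From Lemma~\ref{L:DeltaAlternatingNormalForm} we know $g_1=\DD_{S_1}^p$ and that $\DD^p_{S_2\ominus S_1}$ right-divides $g_2$. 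By Lemma~\ref{L:Breadth2Reducible}, $\brd(\DD^p)\geq 3$, so Lemma~\ref{L:LeftPowerDelta} applies. We then use $\Phi^{pt}(\DD^p)=\DD^p$ to move $\DD^{pt}$ across. Concretely, $\DD^{pt}\DD^p=\Phi^{pt}(\DD^p)\DD^{pt}$, and $\Phi^{pt}$ maps the normal form relative to $(S_2,S_1)$ to the normal form relative to $(\Phi^{pt}(S_2),\Phi^{pt}(S_1))$.

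The key point, which is the main obstacle, is to show that the leftmost entry of the normal form of $\DD^{p(t+1)}$ is obtained by grafting. The plan is to show that the normal form of $\DD^{pt}$ has its last two entries $g_2g_1$ with $g_1=\DD^{pt}_{S_1}$. We then prove that multiplying on the right by the entries of $[\DD^p]$ merges exactly the right blocks, in the style of Lemma~\ref{L:ProductANF}. The merge is governed by the parity of $\brd(\DD^p)$, equivalently by fixedness, via Lemma~\ref{L:DeltaBreathForFixedCovering}. In the fixed case the breadth is even and case (i) of Lemma~\ref{L:ProductANF} gives a loss of $2$ entries. In the non-fixed case the breadth is odd and case (ii) gives a loss of $1$ entry, but the covering roles swap under $\Phi^p$, so an extra entry is lost when the two copies are glued. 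Either way $\brd$ grows by $\brd(\DD^p)-2$ per step.

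To verify the tail conditions, the plan is to rerun the chain argument of Lemma~\ref{L:ProductANF} with the hypothesis weakened to the relevant right divisors of the tail-free part. By Lemma~\ref{L:DeltaDecomposition}(ii) these right divisors are exactly $S\setminus T=S_2\boxminus S_1$ (up to $\Phi^{pt}$). By the assumption these coincide with $\SLDA{S}{\DD^p}{*}$ after conjugation, which is precisely why the hypothesis $\SLDA{S}{\DD^p}{*}=\Phi^p(S_2\boxminus S_1)$ is needed. The same computation yields $\SLDA{S}{\DD^{p(t+1)}}{*}=\Phi^{p(t+1)}(S_2\boxminus S_1)$, since the new first entry is $\Phi^{pt}$ of the first entry of $[\DD^p]$, which closes the induction.

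Finally, the depth formulas follow from the breadth formula $\brd(\DD^{pt})=t(\brd(\DD^p)-2)+2$ together with \eqref{E:dpt:bh}, $\dpt=\lfloor \brd/2\rfloor$. In the fixed case $\brd(\DD^p)=2d$ with $d=\dpt(\DD^p)$, which gives $\dpt(\DD^{pt})=t(d-1)+1$. In the non-fixed case $\brd(\DD^p)=2d+1$, so $\brd(\DD^{pt})=t(2d-1)+2$. Taking the floor of half of this gives $t(d-\tfrac12)+1$ for $t$ even and $t(d-\tfrac12)+\tfrac12$ for $t$ odd, which are cases (a) and (b).
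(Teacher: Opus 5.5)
\textbf{The depth part is fine; the breadth and normal-form part has a genuine gap.} Your last step, deriving the depth formulas from $\brd(\DD^{pt})=t(\brd(\DD^p)-2)+2$ and $\dpt=\lfloor\brd/2\rfloor$ with parities from Lemma~\ref{L:DeltaBreathForFixedCovering}, is correct and is exactly the paper's. The gap is in the normal-form computation, which is the real content of the statement.

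Lemma~\ref{L:ProductANF} needs every right divisor in $S$ of the left factor to left-divide the first entry of the right factor. Your decomposition $\DD^{pt}\cdot\Phi^{p-1}(h)\cdots h\cdot\DD_T^p$ does not produce such a factorisation. The tail-free piece $\Phi^{p-1}(h)\cdots h$ you isolate comes from the \emph{right} factor. The left factor is still the whole $\DD^{pt}$, and $\SRD{S}{\DD^{pt}}=S\not\subseteq\Phi^p(S_2\boxminus S_1)=\SLDA{S}{\DD^p}{*}$. The identity $\DD^{pt}\DD^p=\Phi^{pt}(\DD^p)\DD^{pt}$ only says $\DD^{pt}\DD^p=\DD^p\DD^{pt}$, so nothing is moved. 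Nor can the chain argument of Lemma~\ref{L:ProductANF} be rerun ``with a weakened hypothesis''. The normal form of the product is not obtained by merging adjacent blocks of $\ND{\DD^{pt}}$ and $\ND{\DD^p}$: the last entry $\DD^{pt}_{S_1}$, and the factor $\DD^{pt}_{S_2\ominus S_1}$ of the entry before it, must end up at the far right of the product.

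\textbf{The missing idea is to detach the tail of one factor and transport it across the other through $\Phi$.} The paper writes $\DD^p=x\cdot\DD^p_{S_2\ominus S_1}\DD^p_{S_1}$, where $x=g_n\cdots g_3\,g'_2\,\DD_{S_2\setminus S_1}\DD_{S_2\ominus S_1}\inv$ equals your $\Phi^{p-1}(h)\cdots h$. This gives $\DD^{pt}=x\cdot y$ with $y=\DD^{p(t-1)}\,\Phi^{p(t-1)}(\DD^p_{S_2\ominus S_1}\DD^p_{S_1})$. One then has to compute both normal forms:
\begin{itemize}
\item $\ND{x}=(g_n,\ldots,g_3,g'_2\DD_{S_2\setminus S_1}\DD_{S_2\ominus S_1}\inv,1)$ with $\SRD{S}{x}=S_2\boxminus S_1$; this includes checking that the penultimate entry is nontrivial.
\item When $\DD^{p(t-1)}$ fixes the covering, $\ND{y}=(h_m,\ldots,h_3,h_2\DD^p_{S_2\ominus S_1},\DD^{pt}_{S_1})$. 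This uses $S_2\ominus S_1\subseteq S_1^\perp$ and the fact that $h_2$ has no right divisor in $S_1$.
\end{itemize}
Only then does Lemma~\ref{L:ProductANF}(i) apply. Its hypothesis $S_2\boxminus S_1\subseteq\SLDA{S}{y}{*}=\SLDA{S}{\DD^{p(t-1)}}{*}$ is exactly where the inductive statement on $\SLDA{S}{\cdot}{*}$, and hence the assumption of the proposition, enters.

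When $\DD^{p(t-1)}$ does not fix the covering, naive transport turns the tail into $\DD^p_{S_1\ominus S_2}\DD^p_{S_2}$, which is not an $S_1$-tail. The paper instead factors $\DD^p=\Phi^p(\DD^p)$ as $z\cdot\Phi^p(\DD^p_{S_2\ominus S_1}\DD^p_{S_1})$ with $z=\Phi^p(x)$. It then checks three things:
\begin{itemize}
\item $\ND{z}$ has one entry fewer than $\ND{x}$;
\item $\SRD{S}{z}=S_1\boxminus S_2=\SLDA{S}{\DD^{p(t-1)}}{*}$;
\item $\tail{S_1}{z}\neq 1$.
\end{itemize}
It then applies Lemma~\ref{L:ProductANF}(ii). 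Your remark that ``an extra entry is lost when the two copies are glued'' reflects this, but none of these verifications appear in your proposal. Your order $\DD^{pt}\cdot\DD^p$ can be handled symmetrically, but then it is the tail of $\DD^{pt}$ that must be detached and moved across $\DD^p$, not the tail of $\DD^p$.
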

    
    \begin{proof} 
    We prove the assertions on $\SLDA{S}{\DD^{pt}}{*}$ and on $\brd(\DD^{pt})$ by induction on $t$.
    For $t = 1$ there is nothing to prove.   
    Assume $t\geq 2$,  $\brd(\DD^{p(t-1)})-2 = (t-1)\times (\brd(\DD^{p})-2)$ and that $\SLDA{S}{\DD^{p(t-1)}}{*} $ is equal to~$\Phi^{p(t-1)}(S_2\boxminus S_1)$. 
    Denote by~$(g_n,\ldots,g_1)$ and $(h_m,\ldots,h_1)$ the alternating form of~$\DD^p$ and $\DD^{p(t-1)}$ with respect to~$(S_2, S_1)$.
    Since~$M$ is irreducible, Lemma~\ref{L:Breadth2Reducible} implies that $n$ and $m$ are greater or equal to~$3$.
    As Lemma~\ref{L:DeltaAlternatingNormalForm} implies $g_1= \DD_{S_1}^p$ and $g_2=g'_2 \cdot (\DD_{S_2\setminus S_1}\DD_{S_2\ominus S_1}\inv) \DD_{S_2\ominus S_1}^p$, with $g'_2\in M_{S_2}$, we obtain
    \begin{align}
    \label{E:P:LeftOfPowerDeltaBoxMinus:1}
    \begin{split}
    \DD^{pt} &= \DD^p \DD^{p(t-1)} = g_n\cdot\ldots\cdot g_2\cdot g_1\cdot \DD^{p(t-1)}\\
    &= g_n\cdot \ldots\cdot g_3 \cdot g'_2\cdot (\DD_{S_2\setminus S_1}\DD_{S_2\ominus S_1}\inv) \cdot \DD_{S_2\ominus S_1}^p \cdot\DD_{S_1}^p \cdot \DD^{p(t-1)} \\
    &= g_n\cdot \ldots\cdot g_3 \cdot g'_2\cdot (\DD_{S_2\setminus S_1}\DD_{S_2\ominus S_1}\inv) \cdot \DD^{p(t-1)} \cdot\Phi^{p(t-1)} (\DD_{S_2\ominus S_1}^p\cdot \DD_{S_1}^p)
    \end{split}
    \end{align}
    
    Set $x= g_n\cdot \ldots\cdot g_3 \cdot g'_2\cdot \DD_{S_2\setminus S_1}\DD_{S_2\ominus S_1}\inv$ and $y =\DD^{p(t-1)} \cdot  \DD_{S_2\ominus S_1}^p\cdot \DD_{S_1}^p$.
    We first determine the alternating normal forms of $x$ and $y$ with respect to~$(S_2,S_1)$ in order to obtain the one of $\DD^{pt}$, using~\ref{E:CAProp:1} together with Lemma~\ref{L:ProductANF}.
    By Lemma~\ref{L:DeltaAlternatingNormalForm}, we have $h_1=\DD_{S_1}^{p(t-1)}$.
    As $S_2\ominus S_1$ is a subset of $S_1^\perp$ the two elements $\DD_{S_1}^{p(t-1)}$ and~$\DD_{S_2\ominus S_1}^p$ commute and we obtain  
    \begin{align*}
    y=\DD^{p(t-1)} \cdot\DD_{S_2\ominus S_1}^p\cdot \DD_{S_1}^p &= h_m\cdot \ldots \cdot h_3 \cdot h_2 \cdot \DD_{S_1}^{p(t-1)}\cdot \DD_{S_2\ominus S_1}^p\cdot \DD_{S_1}^p\\
    &=h_m\cdot \ldots \cdot h_3 \cdot \left( h_2 \cdot \DD_{S_2\ominus S_1}^{p}\right) \cdot \DD_{S_1}^{pt}.
    \end{align*}
    The $(S_2,S_1)$-alternating normal form of $\DD^{p(t-1)}\cdot \DD_{S_1}^{-p(t-1)}$ is $(h_m,\ldots,h_2,1)$.
    Let $w$ ba an $(S_2\ominus S_1)$ word representing $\DD_{S_2\ominus S_1}^p$.
    As $\supp(w)=S_2\ominus S_1$ has empty intersection with $S_1$, the word $w$ is a left $(s,s)$-chain for any $s\in S_1$, thanks to Proposition~\ref{P:SuppChain}.
    Since $h_2$ is not divisible by any element of~$S_1$, Proposition~\ref{P:ChainDiv} guarantees that the $S_1$-tail of $h_2 \DD_{S_2\ominus S_1}^p$ is trivial.
    Hence the $(S_2,S_1)$-alternating normal form of $\DD^{p(t-1)}\cdot \DD_{S_2\ominus S_1}^p$ is $(h_m,\ldots,h_2 \DD_{S_2\ominus S_1}^{p},1)$ and then the $(S_2,S_1)$-alternating normal form of $y$ is 
    \begin{equation}
    \label{E:P:LeftOfPowerDeltaBoxMinus:2}
    \left[y\right] = \left(h_m,\ldots,h_2 \DD_{S_2\ominus S_1}^{p},\DD_{S_1}^{pt}\right)
    \end{equation}
    We now focus on $x$. 
    We recall that $(g_n,\ldots, g_1)$ is an $(S_2,S_1)$-alternating normal form with $n\geq 3$.
    In particular no element of~$S_1$ right divides $g_3g_2$.
    However, since $S_2\ominus S_1$ is a subset of $S_1^\perp$, we have $s\cdot \DD_{S_2\ominus S_1}^p = \DD_{S_2\ominus S_1}^p \cdot s$ for any $s$ in $S_1$.
    From 
    \[
    g_3g_2=g_3g'_2\DD_{S_2\setminus S_1}\DD_{S_2\ominus S_1}\inv\DD_{S_2\ominus S_1}^p,
    \]
    we obtain that $g_3g'_2\DD_{S_2\setminus S_1}\DD_{S_2\ominus S_1}\inv$ is not right-divisible by any element of $S_1$.
    Hence we obtain 
    \begin{equation}
    	\label{E:P:LeftOfPowerDeltaBoxMinus:3.1}
    \text{$g'_2\DD_{S_2\setminus S_1}\DD_{S_2\ominus S_1}\inv$ is not trivial}
    \end{equation}
     and we get that the $(S_2,S_1)$-alternating normal form of $x$ is
    \begin{equation}
    \label{E:P:LeftOfPowerDeltaBoxMinus:3}
    \left[x\right] = (g_n,\ldots,g_3,g'_2 \DD_{S_2\setminus S_1}\DD_{S_2\ominus S_1}\inv, 1).
    \end{equation}
    Moreover, we have $R(x,S_1)=\emptyset$ together with
    \[
    R(x, S_2)=R(g'_2 \DD_{S_2\setminus S_1}\DD_{S_2\ominus S_1}\inv) = (S_2\setminus S_1) \setminus (S_2\ominus S_1).
    \]
    Using \eqref{E:OminusBoxminus} we obtain 
    \begin{equation}
    	\label{E:P:LeftOfPowerDeltaBoxMinus:3.5}
    	R(x,S)= R(x,S_1)\cup R(x,S_2) =  R(x,S_2) = S_2\boxminus S_1.
    \end{equation}
    
    \textbf{$\bullet$ Assume $\DD^{p(t-1)}$ fixes $(S_2,S_1)$.}
    In this case, decomposition~\eqref{E:P:LeftOfPowerDeltaBoxMinus:1} becomes
    \begin{equation}
    \label{E:CAProp:1}
    \DD^{pt}= \underbrace{g_n\cdot \ldots\cdot g_3 \cdot g'_2\cdot \DD_{S_2\setminus S_1}\DD_{S_2\ominus S_1}\inv}_x \,\cdot\, \underbrace{\DD^{p(t-1)} \cdot  \DD_{S_2\ominus S_1}^p\cdot \DD_{S_1}^p}_y
    \end{equation}
    By the induction hypothesis we have  
    \begin{equation}
    \label{E:P:LeftOfPowerDeltaBoxMinus:4}
    \SLDA{S}{y}{*}  = \SLDA{S}{\DD^{p(t-1)}}{*} = \Phi^{p(t-1)}(S_2\boxminus S_1).
    \end{equation}
    and the latter is equal to $S_2\boxminus S_1$ since $(S_2,S_1)$ is assumed to be fixed.
    As, by Lemma~\ref{L:DeltaBreathForFixedCovering}, the breadth~$m$ of $\DD^{p(t-1)}$ is even, Lemma~\ref{L:ProductANF}~(i) with \eqref{E:P:LeftOfPowerDeltaBoxMinus:3.5} implies
    \begin{equation*}
    \left[\DD^{pt}\right]=\left[xy\right]=\left(g_n,\ldots,g_3,g'_2\DD_{S_2\setminus S_1}\DD_{S_2\ominus S_1}\inv\,h_m,\ldots,h_3,h_2\DD_{S_2\ominus S_1}^{p},\DD_{S_1}^{pt}\right).
    \end{equation*}
    together with $\brd\left(\DD^{pt}\right) = n+m-2 = \brd(\DD^p) + \brd\left(\DD^{p(t-1)}\right)-2$.
    An immediate verification, using the induction hypothesis on $\brd\left(\DD^{p(t-1)}\right)$, gives the result on $\brd\left(\DD^{pt}\right)$.
    We conclude this case with
    \[
    \SLDA{S}{\DD^{pt}}{*}=\SLDA{S}{g_n}{}=\SLDA{S}{\DD^{p}}{*}=\Phi^p(S_2\boxminus S_1)=\Phi^p(\Phi^{p(t-1)}(S_2\boxminus S_1))=\Phi^{pt}(S_2\boxminus S_1).
    \]

    \textbf{$\bullet$ Assume $\DD^{p(t-1)}$ does not fix $(S_2,S_1)$.} This case can occur only if $\DD^p$ does not fix $(S_2,S_1)$.
    
    From decompoisiton~\eqref{E:P:LeftOfPowerDeltaBoxMinus:1} we obtain
    \begin{align*}
    \DD^{pt}&= \DD^p \DD^{p(t-1)} = \Phi^p(\DD^p) \DD^{p(t-1)} = \Phi^p(g_n)\cdot\ldots\cdot\Phi^p(g_2)\cdot\Phi^p(g_1)\cdot\DD^{p(t-1)}\\
    &=\Phi^p(g_n)\cdot \ldots\cdot \Phi^p(g_3) \cdot \Phi^p(g'_2)\cdot \Phi^p(\DD_{S_2\setminus S_1}\DD_{S_2\ominus S_1}\inv) \cdot \DD^{p(t-1)} \cdot\Phi^{p(t-1)} \left(\Phi^p(\DD_{S_2\ominus S_1}^p\cdot \DD_{S_1}^p)\right)
    \end{align*}
    As $\Phi^p$ and $\Phi^{p(t-1)}$ swap the sets $S_1$ and $S_2$ we have
    \[
    \DD^{pt}= \underbrace{\Phi^p(g_n)\cdot \ldots\cdot \Phi^p(g_3) \cdot \Phi^p(g'_2)\cdot \DD_{S_1\setminus S_2}\DD_{S_1\ominus S_2}\inv}_z \,\cdot\, \underbrace{\DD^{p(t-1)} \cdot \DD_{S_2\ominus S_1}^p\cdot \DD_{S_1}^p}_y
    \]
    Recall that the $(S_2,S_1)$-alternating normal form of $y$ is given by~\eqref{E:P:LeftOfPowerDeltaBoxMinus:2}.
    Remarking that $z$ is equal to~$\Phi^p(x)$, we obtain from \eqref{E:P:LeftOfPowerDeltaBoxMinus:3} that the $(S_2,S_1)$-alternating normal form of $z$ is
    \[
    \left[z\right] = \left(\Phi^p(g_n),\ldots,\Phi^p(g_3),\Phi^p(g'_2) \DD_{S_1\setminus S_2}\DD_{S_1\ominus S_2}\inv\right).
    \]
    We have $R(z, S)= R(\Phi^p(x), S)=\Phi^p(R(x, S)) = \Phi^p(S_2\boxminus S_1) = S_1\boxminus S_2$. 
    Since $\DD^{p(t-1)}$ does not fix $(S_2, S_1)$, the induction hypothesis gives
    \[
    \SLDA{S}{\DD^{p(t-1)}}{*} = \Phi^{p(t-1)}(S_2\boxminus S_1)=S_1\boxminus S_2,
    \]
    which is equal to $R(z,S)$.
    From \eqref{E:P:LeftOfPowerDeltaBoxMinus:3.1}, we obtain that $\tail{S_1}{z} = \Phi^p(g'_2 \DD_{S_2\setminus S_1}\DD_{S_2\ominus S_1}\inv)$ is not trivial. 
    Thanks to Lemma~\ref{L:DeltaBreathForFixedCovering}, the breadth $m$ of $\DD^{p(t-1)}$ is odd.
    Applying Lemma~\ref{L:ProductANF}~(ii), we obtain
    \begin{equation*}
    \left[\DD^{pt}\right]=\left(\Phi^p(g_n),\ldots,\Phi^p(g_3),\Phi^p(g'_2)\DD_{S_2\setminus S_1}\DD_{S_2\ominus S_1}\inv\cdot h_m,\ldots,h_3,h_2\DD_{S_2\ominus S_1}^{p},\DD_{S_1}^{pt}\right).
    \end{equation*}
    together with $\brd\left(\DD^{pt}\right) = (n - 1)+m-1 = \brd(\DD^p) + \brd\left(\DD^{p(t-1)}\right)-2$.
    An immediate verification, using the induction  hypothesis on $\brd\left(\DD^{p(t-1)}\right)$, gives the result on $\brd\left(\DD^{pt}\right)$.
    To complete this case we compute
    \[
    \SLDA{S}{\DD^{pt}}{*}=\Phi^p(g_n)=\Phi^p(\SLDA{S}{\DD^{p}}{*})=\Phi^p(\Phi^p(S_2\boxminus S_1)) = S_2\boxminus S_1.
    \]
    On the other hand, both $\Phi^p$ and $\Phi^{p(t-1)}$ swap the sets $S_1$ and $S_2$ so 
    \[ 
    \Phi^{pt}(S_2\boxminus S_1) =\Phi^{p(t-1)}(\Phi^p(S_2\boxminus S_1)) = \Phi^{p(t-1)}(S_1\boxminus S_2) = S_2\boxminus S_1.
    \]
    
    It remains to establish the statements on $\dpt(\DD^{pt})$.
    If $\DD^p$ fixes the covering $(S_2,S_1)$, this is also the case for $\DD^{pt}$ and then, by Lemma~\ref{L:DeltaBreathForFixedCovering}, the breadths of these two elements are even, giving :
    \[
    \dpt(\DD^{pt}) - 1 =\frac12 \left(\brd(\DD^{pt}) -2 \right)= t \times \frac12 \left(\brd(\DD^p)-2\right) = t\times(\dpt(\DD^p)-1) 
    \]
    Assume $\DD^p$ does not fix the covering $(S_2,S_1)$.
    In this case $\DD^{pt}$ fixes the covering $(S_2,S_1)$ whenever $t$ is even and does not fix it whenever $t$ is odd.
    Hence, by Lemma~\ref{L:DeltaBreathForFixedCovering}, we obtain that $\brd(\DD^p)$ is odd and that $\brd(\DD^{pt})$ has the same parity as $t$.
    We get
    \[
    \brd(\DD^{pt})-2 =t\times (\brd(\DD^p)-2) = t\times (2\,\dpt(\DD^p) - 1)
    \]
    together with 
    \[
    \brd(\DD^{pt}) -2 = \begin{cases}
    2\,\dpt(\DD^{pt}) -2 & \text{if $t$ is even,}\\
    2\,\dpt(\DD^{pt}) -1 & \text{if $t$ is odd.}
    \end{cases}
    \qedhere
    \]
      \end{proof}
    
    \begin{cor}\label{C:ConditionAIff}
   
    Recall that $\DD^p$ stabilizes $(S_2,S_1)$. Assume  $\SLDA{S}{\DD^p}{*} = \Phi^p(S_2\boxminus S_1)$. 

    The  covering $(S_2, S_1)$  satisfies Condition~$A$ with respect to $\DD^p$ if and only if $\DD^{p}$ fixes  $(S_2,S_1)$  .
    \end{cor}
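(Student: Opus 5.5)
The corollary follows from Proposition~\ref{P:LeftOfPowerDeltaBoxMinus}, whose hypotheses are exactly ours: Notation~\ref{N:cov regular} gives a $\DD^p$-regular covering, and we assume $\SLDA{S}{\DD^p}{*} = \Phi^p(S_2\boxminus S_1)$. The plan is to split according to whether $\DD^p$ fixes $(S_2,S_1)$ and read off the depth formulas.

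\emph{Sufficiency.} Suppose $\DD^p$ fixes $(S_2,S_1)$. Point (i) of Proposition~\ref{P:LeftOfPowerDeltaBoxMinus} gives, for every $t\geq 1$,
\[
\dpt(\DD^{pt})-1 = t\times(\dpt(\DD^p)-1).
\]
This is precisely equation~\eqref{E:ConditionA}, so Condition~$A$ holds with respect to $\DD^p$.

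\emph{Necessity.} Suppose $\DD^p$ does not fix $(S_2,S_1)$. It suffices to exhibit one $t$ for which \eqref{E:ConditionA} fails, and $t=2$ is the natural choice. Point (ii)(a) of Proposition~\ref{P:LeftOfPowerDeltaBoxMinus} gives $\dpt(\DD^{2p})-1 = 2\dpt(\DD^p)-1$. Condition~$A$ would instead require $\dpt(\DD^{2p})-1 = 2\dpt(\DD^p)-2$. These two values differ by $1$, so Condition~$A$ fails. More generally, for even $t$ the two formulas differ by $t/2\neq 0$, which gives the same conclusion.

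There is no real obstacle here; all the work was done in Proposition~\ref{P:LeftOfPowerDeltaBoxMinus}. The only point to check is that its hypotheses hold. In particular, irreducibility of $M$ is what allows Lemma~\ref{L:Breadth2Reducible} to exclude breadth $2$, and that lemma is used inside the proposition.
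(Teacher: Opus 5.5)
Your proof is correct and follows the paper's argument. The \emph{if} direction is Proposition~\ref{P:LeftOfPowerDeltaBoxMinus}(i), and the \emph{only if} direction reads off the failure of \eqref{E:ConditionA} from part (ii) of that proposition. Your explicit check at $t=2$, where $\dpt(\DD^{2p})-1 = 2\dpt(\DD^p)-1$ is set against the required $2\dpt(\DD^p)-2$, is accurate. It is also cleaner than the paper's displayed formula, which omits the $-1$ on the left-hand side.
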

    
    \begin{proof}
    The \emph{if} part is Proposition~\ref{P:LeftOfPowerDeltaBoxMinus} $(i)$.
    Assume $\DD^{p}$ does not fix the covering $(S_2,S_1)$. By Proposition~\ref{P:LeftOfPowerDeltaBoxMinus} $(ii)$, we have
    \[
    \dpt((\DD^p)^t)=t\times (\dpt(\DD^p) -1) +
    \begin{cases}
     \frac{t}2 & \text{if $t$ is even},\\
     \frac{t-1}2 & \text{if $t$ is odd},
    \end{cases}
    \]
    and so $(S_2, S_1)$ does not satisfy Condition~$A$ with respect to $\DD^p$.
    \end{proof}

    \subsection{Results on Condition~A}
    
    The case of $\Gamma$ of type $I_2(m)$ is treated in \cite{ArP2019}, so we focus on the other types.  We recall that $(S_2,S_1)$ is assumed to be $\DD^p$-regular for some fixed $p\geq$.

    As a special case of Corollary~\ref{C:ConditionAIff} we have:
      
    \begin{prp}
    \label{P:ConditionASingleLeft}
    Assume  $S_2\boxminus S_1$ is a singleton.
    Then $(S_2,S_1)$ satisfies Condition~$A$ with respect to $\DD^p$ if and only if $(S_2,S_1)$ is $\DD^p$-fixed.
    \end{prp}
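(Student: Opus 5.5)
The plan is to reduce the statement to Corollary~\ref{C:ConditionAIff}. It then suffices to check its hypothesis, namely that $\SLDA{S}{\DD^p}{*} = \Phi^p(S_2\boxminus S_1)$ when $S_2\boxminus S_1$ is a singleton, say $\{s\}$.

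First, the breadth of $\DD^p$ is at least $3$. Since the covering is proper, $S\neq S_1$, so $\DD^p\notin M_{S_1}$ and $\brd(\DD^p)\geq 2$. Lemma~\ref{L:Breadth2Reducible} then rules out breadth $2$, using the irreducibility of $M$. Hence $\brd(\DD^p)\geq 3$, and Lemma~\ref{L:LeftPowerDelta} gives the inclusion
\[
\SLDA{S}{\DD^p}{*}\subseteq \Phi^p(S_2\boxminus S_1)=\{\Phi^p(s)\}.
\]

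Second, the set $\SLDA{S}{\DD^p}{*}$ is nonempty. By definition it equals $\SLD{S}{g_n}$, where $(g_n,\ldots,g_1)=\ND{\DD^p}$. By Definition~\ref{D:AlternatingDecomposition}(iii), $g_n\neq 1$ because $\DD^p\neq 1$. A nontrivial element of $M$ is left divisible by some atom, since $M$ is atomic with atom set $S$. So $\SLD{S}{g_n}\neq\emptyset$. Combined with the inclusion above, this forces $\SLDA{S}{\DD^p}{*}=\{\Phi^p(s)\}=\Phi^p(S_2\boxminus S_1)$.

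Finally, the covering is $\DD^p$-regular by Notation~\ref{N:cov regular}, and the hypothesis of Corollary~\ref{C:ConditionAIff} now holds. The corollary therefore gives directly that Condition~$A$ with respect to $\DD^p$ holds if and only if $(S_2,S_1)$ is $\DD^p$-fixed. The only point needing care is the breadth bound $\brd(\DD^p)\geq 3$ required by Lemma~\ref{L:LeftPowerDelta}, and it is handled by Lemma~\ref{L:Breadth2Reducible}. There is no real obstacle: this is the observation that a nonempty subset of a singleton is the singleton itself.
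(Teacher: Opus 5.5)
Your proof is correct and follows the paper's argument. The paper likewise combines the inclusion $\SLDA{S}{\DD^p}{*}\subseteq\Phi^p(S_2\boxminus S_1)$ from Lemma~\ref{L:LeftPowerDelta} with the nonemptiness of $\SLDA{S}{\DD^p}{*}$ to force equality, and then invokes Corollary~\ref{C:ConditionAIff}; your explicit check that $\brd(\DD^p)\geq 3$ via Lemma~\ref{L:Breadth2Reducible}, which the paper leaves implicit at this point, is a welcome addition.
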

    
    \begin{proof}
    We have $\emptyset \not=\SLDA{S}{\DD^p}{*} \subseteq \Phi^p(S_2\boxminus S_1)$ by Lemma~\ref{L:LeftPowerDelta}. 
    As by hypothesis the latter is a singleton we deduce that $\SLDA{S}{\DD}{*}=\Phi(S_2\boxminus S_1)$.
    We conclude using Corollary~\ref{C:ConditionAIff}.
    \end{proof}

    \begin{cor}
    \label{C:ConditionA:GammaLine}
 
    \begin{enumerate}
    \item  Assume  $\Gamma$ is of type $B_r, F_4, H_3$ or $H_4$. Then,  $(S_2,S_1)$ satisfies Condition~$A$ with respect to $\DD^p$ whatever $p\geq 1$.
    \item Assume  $\Gamma$ is of type $A_r$, Then,  $(S_2,S_1)$ satisfies Condition~$A$ with respect to $\DD^p$ iff $p$ is even.  
    \end{enumerate}
    \end{cor}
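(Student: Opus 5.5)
Both items will come from Proposition~\ref{P:ConditionASingleLeft} together with the parity discussion on $\Phi$. The key observation is that in types $A_r$, $B_r$, $F_4$, $H_3$, $H_4$ the Coxeter graph $\Gamma$ is a line, possibly with one labelled edge. The plan is to show that for a proper irreducible covering $(S_2,S_1)$ of a line graph, the set $S_2\boxminus S_1$ is a singleton. Then Proposition~\ref{P:ConditionASingleLeft} reduces Condition~$A$ to the question of whether $(S_2,S_1)$ is $\DD^p$-fixed.

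\emph{Step 1: combinatorics of the covering.} Since $M_{S_1}$ and $M_{S_2}$ are irreducible, $S_1$ and $S_2$ are proper connected subsets of a path, hence intervals. Because $S_1\cup S_2=S$ and both are proper, one interval contains one endpoint of the path and the other contains the other endpoint, and they overlap or are adjacent. So, numbering the vertices along the line as $\sigma_1,\ldots,\sigma_r$ (or $\sigma_0,\ldots,\sigma_{r-1}$ in type $B_r$), we may write $S_1=\{\sigma_1,\ldots,\sigma_b\}$ and $S_2=\{\sigma_a,\ldots,\sigma_r\}$ with $a\le b+1$, or the mirror situation. Then $S_2\setminus S_1=\{\sigma_{b+1},\ldots,\sigma_r\}$, which is nonempty since $S_1$ is proper. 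Among these vertices, exactly one fails to commute with all of $S_1$, namely $\sigma_{b+1}$, the neighbour of $\sigma_b$; all others are at distance at least $2$ from $S_1$. Hence $S_2\boxminus S_1=\{\sigma_{b+1}\}$ and $S_2\ominus S_1=\{\sigma_{b+2},\ldots,\sigma_r\}$. The mirror case is identical. So $S_2\boxminus S_1$ is always a singleton, and Proposition~\ref{P:ConditionASingleLeft} applies.

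\emph{Step 2: fixedness.} For (i), Proposition~\ref{P:DeltaCentral} shows that $\DD$ is central in types $B_r,F_4,H_3,H_4$. So $\Phi$ is the identity, every covering is $\DD^p$-fixed for every $p$, and Condition~$A$ holds. For (ii), in type $A_r$ the automorphism $\Phi$ is the diagram flip $\sigma_i\mapsto\sigma_{r+1-i}$, and $\Phi^2=\mathrm{id}$ by Proposition~\ref{P:Garside}. For $p$ even the covering is fixed, so Condition~$A$ holds. For $p$ odd, $\DD^p$-regularity (Notation~\ref{N:cov regular}) forces $\Phi(\{S_1,S_2\})=\{S_1,S_2\}$. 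We must then show that $\Phi(S_1)=S_1$ is impossible. By Step~1, $S_1$ is an interval containing exactly one endpoint of the path (it cannot contain both, being a proper interval), and the flip exchanges the two endpoints. Hence $\Phi(S_1)\neq S_1$, the covering is not $\DD^p$-fixed, and Condition~$A$ fails by Proposition~\ref{P:ConditionASingleLeft}.

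\emph{Expected obstacle.} The main point needing care is Step~1, specifically checking that the irreducibility and properness hypotheses genuinely force the interval shape with each set containing one endpoint. For the type $A$ claim in (ii), we also need the standard fact that $\Phi$ is the nontrivial flip for $r\ge2$. For $r=1$ there is no proper covering, so the statement is vacuous. The rest is a direct application of earlier results.
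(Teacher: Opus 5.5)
Your proof is correct and follows the paper's argument: you show that $S_2\boxminus S_1$ is a singleton for a proper irreducible covering of a line graph, invoke Proposition~\ref{P:ConditionASingleLeft}, and use centrality of $\DD^p$ (types $B_r,F_4,H_3,H_4$, or $p$ even in type $A_r$). The only difference is the odd-$p$ type-$A$ case: the paper checks whether $\Phi^p$ fixes the single element $s$ of $S_2\boxminus S_1$ and rules out the middle vertex by contradiction, whereas your endpoint argument shows directly that $\Phi(S_1)\neq S_1$, which is cleaner.
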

    
    \begin{proof}
    By the  hypothesis, the graph $\Gamma$, $\Gamma(S_1)$ and $\Gamma(S_2)$ are lines and so $S_2\boxminus S_1$ contains at most two elements: thus outside of $S_1$ and each of these element is connected to one of the two extremities of $\Gamma_1$.
    Since $(S_2,S_1)$ is a proper covering, one of the extermities of $S_1$ is an extremity of $S$.
    It follows that $S_2\boxminus S_1$ is the singleton $\{s\}$ where $s$ is the unique element $s\in S\setminus S_2$  connected in $\Gamma$ to an element of $S_2$.
    By Proposition~\ref{P:ConditionASingleLeft}, the covering $(S_2,S_1)$ satisfies Condition A with respect to $\DD^p$ if and only if $\Phi^p(s)=s$.
    If $\DD^p$ is central we necessarily have $\Phi^p(s)=s$ and so Consition A is satisifed with respect to $\DD^p$ in this case.
    As recall in Proposition~\ref{P:DeltaCentral}, under our hypothesis, the only possibility for $\DD^p$ to not be central is whenever $p$ is odd and $\Gamma$ of type~$A_r$.
    Assume that $\Gamma=A_r$ for some $r$ and that $p$ is odd.
    Write $s=\sig\kk$.
    Then $\Phi^p(s)=s$ holds if and only~$r$ is odd and $k$ is equal to $(r-1)/2$.
    Assume $r$ is odd and $s=\sig{(r-1)/2}$.
    The set $S_1$ is then equal to~$\{\sig1,\ldots,\sig{k-1}\}$ or to $\{\sig{k+1},\ldots,\sig{r-1}\}$.
    Since $(S_2,S_1)$ is $\DD^p$ stabilized, but not $\DD^p$ fixed, $S_2$ is the other set. 
    It follows that $s$ does not belong to $S_1\cup S_2$, which is impossible.
    \end{proof}
    
    It remains to deal the cases of $\Gamma =D_r, E_6, E_7$ and $E_8$.
    
    \subsection{Case of $D_r$} 
    Consider $M$ is of type $D_r$, for $r\geq 4$, with the notations of Figure~\ref{F:CoxDiag}. 
    Recall (\cite{BrS}) that the minimal Garside element of $M$ is 
    \begin{equation}
    \label{E:DeltaD}
    \DD_{D_r} = \sig0\sig0'\cdot (\sig1\cdot\sig0\sig0'\cdot\sig1)\cdot \ldots \cdot (\sig{r-2}\ldots \sig1\cdot \sig0\sig0'\cdot \sig1\ldots \sig{r-2})
    \end{equation}
    \begin{lm}
    \label{L:ConditionA:TypeD}
    Assume $\Gamma$ is of type $D_r$ for $r\geq 4$.
    Except in the following cases, the set $S_2\boxminus S_1$ is a singleton :
    \begin{enumerate}
        \item $S_1 = \{\sig1,\ldots,\sig{r-2}\}$ and $S_2 = \{\sig0,\sig0',\sig1,\ldots,\sig\jj\}$ with $1\leq j\leq r-3$,
        \item $r$ is even and  $S_1 = \{\sig0, \sig1,\ldots, \sig\ii\}$ and $S_2 = \{\sig0',\sig1,\ldots,\sig{r-2}\}$ with $1\leq \ii \leq r-3$,
        \item $r$ is even and  $S_1 = \{\sig0', \sig1,\ldots, \sig\ii\}$ and $S_2 = \{\sig0,\sig1,\ldots,\sig{r-2}\}$ with $1\leq \ii \leq r-3$.
    \end{enumerate}
    \end{lm}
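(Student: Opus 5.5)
The plan is to reduce the computation of $S_2\boxminus S_1$ to a purely graph-theoretic statement about the Coxeter graph $D_r$, and then to enumerate the connected subsets $S_1$.

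\textbf{Reduction to a boundary.} By Notation~\ref{N:cov regular} the covering is proper, irreducible and $\DD^p$-stabilized. Since $S=S_1\cup S_2$, every element of $S\setminus S_1$ lies in $S_2$. Hence
\[
S_2\boxminus S_1 = S\setminus(S_1\cup S_1^\perp),
\]
which is the set of vertices outside $S_1$ adjacent in $\Gamma$ to some vertex of $S_1$. I call this the boundary $\partial S_1$. It is non-empty because $\Gamma$ is connected and $S_1\neq S$. So the lemma amounts to listing the connected proper subsets $S_1$ with $|\partial S_1|\geq 2$ for which there is a connected proper $S_2$ with $S_1\cup S_2=S$ and $\{\Phi^p(S_1),\Phi^p(S_2)\}=\{S_1,S_2\}$.

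\textbf{Case analysis on $S_1\cap\{\sig0,\sig0'\}$.} A connected subset of $D_r$ is of one of three kinds: a segment of the chain $\sig1,\ldots,\sig{r-2}$; such a segment starting at $\sig1$ with exactly one of $\sig0,\sig0'$ attached (or the singleton $\{\sig0\}$ or $\{\sig0'\}$); or a set containing $\sig0,\sig0',\sig1$.
\begin{enumerate}
\item If $S_1\cap\{\sig0,\sig0'\}=\emptyset$, then $S_1=\{\sig a,\ldots,\sig b\}$. Connectedness forces $S_2\supseteq\{\sig0,\sig0',\sig1\}$, so $S_2=\{\sig0,\sig0',\sig1,\ldots,\sig j\}$ with $j\leq r-3$ by properness. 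The condition $S_1\cup S_2=S$ forces $b=r-2$. Then $\partial S_1=\{\sig{a-1}\}$ if $a\geq 2$. If $a=1$, then $\partial S_1=\{\sig0,\sig0'\}$, which gives exceptional case~(i).
\item If $S_1$ contains exactly one of them, say $\sig0$, then $S_1=\{\sig0,\sig1,\ldots,\sig i\}$ with $0\leq i\leq r-2$.
\begin{itemize}
\item If $i=0$, then $\partial S_1=\{\sig1\}$.
\item If $i=r-2$, then $\partial S_1=\{\sig0'\}$.
\item If $1\leq i\leq r-3$, then $\partial S_1=\{\sig0',\sig{i+1}\}$. In this subcase $S_2$ must contain $\sig0'$ and $\sig{r-2}$, so by connectedness and properness $S_2=\{\sig0',\sig1,\ldots,\sig{r-2}\}$. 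The symmetric choice with $\sig0'$ gives case~(iii).
\end{itemize}
\item If $S_1\supseteq\{\sig0,\sig0'\}$, then $S_1=\{\sig0,\sig0',\sig1,\ldots,\sig i\}$ with $i\leq r-3$, and $\partial S_1=\{\sig{i+1}\}$ is a singleton.
\end{enumerate}

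\textbf{Using the stabilization hypothesis.} Finally I use that $\Phi$ is the identity for $r$ even and swaps $\sig0,\sig0'$ while fixing the other $\sig k$ for $r$ odd (Proposition~\ref{P:DeltaCentral} and~\eqref{E:DeltaD}). In cases~(ii)/(iii) with $r$ odd and $p$ odd, $\Phi^p(S_1)=\{\sig0',\sig1,\ldots,\sig i\}$. This differs from $S_1$, and it also differs from $S_2$, since $|S_1|=i+1\leq r-2<r-1=|S_2|$. So the covering is not $\DD^p$-stabilized, and these configurations are excluded, which is the source of the parity restriction in the statement. Case~(i) is $\Phi$-invariant and so survives for all $r$.

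\textbf{Expected obstacle.} The only delicate point is this parity bookkeeping. When $p$ is even, $\Phi^p$ is trivial, so cases~(ii)/(iii) would also survive for $r$ odd. I would therefore read the parity hypothesis as the one making $\Phi^p$ nontrivial, i.e.\ $r$ and $p$ both odd, and check it against how the lemma is used afterwards. The remainder is routine enumeration.
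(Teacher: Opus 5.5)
Your argument is correct, and it is organised differently from the paper's proof. The paper intersects each $S_i$ with $X=\{\sigma_0,\sigma_0',\sigma_{r-2}\}$. It then uses properness and connectedness to list the fifteen admissible shapes of the pair $(S_1,S_2)$, and reads off $S_2\boxminus S_1$ and $\Delta$-stabilization row by row from a table. You instead first observe that $S\setminus S_1\subseteq S_2$, so that $S_2\boxminus S_1=S\setminus(S_1\cup S_1^\perp)$ is just the set of neighbours of $S_1$. Hence only the shape of $S_1$ matters. You use $S_2$ only for two things: to exclude segments $\{\sigma_a,\ldots,\sigma_b\}$ with $b<r-2$, and to pin down $S_2$ in the exceptional cases. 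This shortens the enumeration and shows why exactly (i)--(iii) are exceptional: they are the admissible $S_1$ with two neighbours. The paper's full table, on the other hand, also records which coverings are $\Delta$-stabilized for $r$ odd. That classification is reused in the subsequent corollary on type $D_r$ with $r$ and $p$ odd.

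Your parity remark is justified and is not a weakness of your proof. Under the standing Notation, $p$ is fixed but arbitrary. If $r$ is odd and $p$ is even, then $\Phi^p$ is trivial and the configurations (ii)/(iii) are $\Delta^p$-regular. In that case $S_2\boxminus S_1=\{\sigma_0',\sigma_{i+1}\}$ (resp.\ $\{\sigma_0,\sigma_{i+1}\}$) has two elements. So the qualifier ``$r$ is even'' in (ii)/(iii) is correct only for $p$ odd; the right condition is ``$\Phi^p$ is trivial'', i.e.\ $r$ even or $p$ even. Your sentence states this slightly backwards: it is the non-triviality of $\Phi^p$ that excludes these cases. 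The paper's proof implicitly takes $p$ odd, since its table asks only about $\Delta$-stabilization. Its later corollary for $r$ odd and $p$ even does except cases (ii) and (iii), which means it tacitly uses your corrected reading.
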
 
    
    \begin{proof}
    The generators $\sig0,\sig0'$ and $\sig{r-2}$ will play a key role.  
    By $X$ we denote the set~$\{\sig0,\sig0',\sig{r-2}\}$. 
    Since~$(S_2,S_1)$ is a proper covering and $\Gamma(S_1)$ and $\Gamma(S_2)$ are connected, the only possibilities for $S_i\cap X$ are
    \[
    \{\sig0\},\{\sig0'\},\{\sig{r-2}\},\{\sig0,\sig0'\},\{\sig0,\sig{r-2}\},\{\sig0',\sig{r-2}\},
    \]
    with the additional constraint $(S_1 \cap X) \cup (S_2\cap X) = X$.
    All posibilities are listed in the following table. 
    Bold elements are those belonging to $S_i\cap X$, the columns '$\DD$-stab. if $r$ odd ?' specifies if the covering is $\DD$-stabilized whenever $r$ is odd.
    {\footnotesize
    \[
    \begin{array}{|c|c|c|c|c|}
     \hline
    S_1  & S_2  & \text{conditions}  & S_2\boxminus S_1 & \text{$\DD$-stab. if $r$ odd ?}\\
    \hline
    \bm{\sig0}& \bm{\sig0'},\sig1,\ldots,\bm{\sig{r-2}} &  & \sig1  & \text{no}\\
    \hline
    \bm{\sig0},\sig1,\ldots,\sig\ii & \bm{\sig0'},\sig1,\ldots,\bm{\sig{r-2}} & 1\leq \ii \leq r-3  & \sig0', \sig{i+1} & \text{no}\\
    \hline
    \bm{\sig0'}& \bm{\sig0},\sig1,\ldots,\bm{\sig{r-2}} &   & \sig1 & \text{no}\\
    \hline
    \bm{\sig0'},\sig1,\ldots,\sig\ii & \bm{\sig0},\sig1,\ldots,\bm{\sig{r-2}} & 1\leq \ii \leq r-3  & \sig0, \sig{i+1} & \text{no}\\
    \hline 
    \sig1,\ldots,\bm{\sig{r-2}} & \bm{\sig0}, \bm{\sig0'}, \sig1, \ldots ,\sig\jj & 1  \leq j \leq r-3  & \sig0,\sig0' & \text{yes}\\
    
    \hline 
    \sig\ii,\ldots,\bm{\sig{r-2}} & \bm{\sig0}, \bm{\sig0'}, \sig1, \ldots ,\sig\jj & 1 \leq i -1 \leq j \leq r-3  & \sig\iio & \text{yes}\\
    \hline
    \bm{\sig0},\bm{\sig0'}, \sig1, \ldots, \sig\ii & \sig\jj, \ldots, \bm{\sig{r-2}} & 0 \leq j-1 \leq i \leq r-3  & \sig\iip & \text{yes}\\
    \hline
    \bm{\sig0},\bm{\sig0'}, \sig1, \ldots, \sig\ii & \bm{\sig0}, \sig1,\ldots, \bm{\sig{r-2}} & 1 \leq i \leq r-3 & \sig\iip & \text{no}\\
    \hline
    \bm{\sig0},\bm{\sig0'}, \sig1, \ldots, \sig\ii & \bm{\sig0'}, \sig1,\ldots, \bm{\sig{r-2}} & 1 \leq i \leq r-3  & \sig\iip & \text{no}\\
    \hline
    \bm{\sig0}, \sig1, \ldots, \bm{\sig{r-2}} & \bm{\sig0'}, \sig1, \ldots, \sig\jj & 1\leq j \leq r-3  & \sig0' & \text{no} \\
    \hline
    \bm{\sig0}, \sig1, \ldots, \bm{\sig{r-2}} & \bm{\sig0'}, \bm{\sig0}, \sig1, \ldots, \sig\jj & 1\leq j \leq r-3 & \sig0' & \text{no}\\
    \hline
    \bm{\sig0}, \sig1, \ldots, \bm{\sig{r-2}} & \bm{\sig0'}, \sig1, \ldots, \bm{\sig{r-2}} &  & \sig0' & \text{yes}\\
    \hline
    \bm{\sig0'}, \sig1, \ldots, \bm{\sig{r-2}} & \bm{\sig0}, \sig1, \ldots, \sig\jj & 1\leq j \leq r-3  & \sig0 & \text{no} \\
    \hline
    \bm{\sig0'}, \sig1, \ldots, \bm{\sig{r-2}} & \bm{\sig0'}, \bm{\sig0}, \sig1, \ldots, \sig\jj & 1\leq j \leq r-3  & \sig0 & \text{no}\\
    \hline
    \bm{\sig0'}, \sig1, \ldots, \bm{\sig{r-2}} & \bm{\sig0}, \sig1, \ldots, \bm{\sig{r-2}} &  & \sig0 & \text{yes} \\
    \hline
    \end{array}
    \]
    }
    \qedhere
    \end{proof}
    
    \begin{prp}
    \label{P:ConditionA:TypeD}
    Assume $\Gamma$ is of type $D_r$ for $r\geq 4$.
    Except perhaps for cases $(ii)$ and~$(iii)$ of Lemma~\ref{L:ConditionA:TypeD}, we have $\SLDA{S}{\DD^p}{*} = \Phi^p(S_2\boxminus S_1)$.
    \end{prp}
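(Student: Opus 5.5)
The argument splits according to the size of $S_2\boxminus S_1$.

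\textbf{Singleton case.} Whenever $S_2\boxminus S_1$ is a singleton, the argument of Proposition~\ref{P:ConditionASingleLeft} applies directly. By Lemma~\ref{L:Breadth2Reducible} and properness, $\brd(\DD^p)\geq 3$. Hence Lemma~\ref{L:LeftPowerDelta} gives $\SLDA{S}{\DD^p}{*}\subseteq\Phi^p(S_2\boxminus S_1)$. The left set $\SLDA{S}{\DD^p}{*}=\SLD{S}{g_n}$ is nonempty because $g_n\neq 1$. So equality holds. By Lemma~\ref{L:ConditionA:TypeD}, the only remaining case is $(i)$, where
\[
S_1=\{\sig1,\ldots,\sig{r-2}\},\qquad S_2=\{\sig0,\sig0',\sig1,\ldots,\sig\jj\},\qquad S_2\boxminus S_1=\{\sig0,\sig0'\}.
\]

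\textbf{Case $(i)$.} This covering is $\DD$-stabilized, indeed $\DD$-fixed, since $\Phi$ fixes or swaps $\sig0,\sig0'$. So $\Phi^p(S_2\boxminus S_1)=\{\sig0,\sig0'\}$. By the same inclusion and nonemptiness it is enough to show that \emph{both} $\sig0$ and $\sig0'$ left-divide $g_n$. By symmetry, via the diagram automorphism exchanging $\sig0$ and $\sig0'$, which preserves the covering and commutes with normal forms, it suffices to show $\sig0\in\SLD{S}{g_n}$ once we know one of them is there.

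To obtain this, I would go back to the proof of Lemma~\ref{L:LeftPowerDelta}. There $g_n$ left-divides $\Phi^{p-1}(h)\cdots h$ with $T=S_1$, because $S_2\ominus S_1=\emptyset$ here. I would then analyze the top of the normal form concretely. Let $g_n$ be the top factor, lying in $M_{S_2}$ since the breadth is even. Write $\DD^p=g_n\cdot y$. Suppose, for contradiction, that $\sig0\not\preceq_L g_n$ while $\sig0'\preceq_L g_n$. Since $\sig0$ and $\sig0'$ commute, and $\sig0$ left-divides $\DD^p$, the word for $g_n$ is a right $(\sig0,t)$-chain by Proposition~\ref{P:ChainDiv}. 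By Proposition~\ref{P:ChainDiv}$(ii)$, $t$ then left-divides $y=g_{n-1}\cdots g_1$.

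The aim is to show that $t\in S_2$ and that $t$ can be absorbed into $g_n$ at the top, which contradicts the maximality property of the normal form. More precisely, the left version of tail maximality should be used: $g_n$ is maximal in the sense that $g_{n-1}$ contains no left $S_2$-part that commutes past. I would make this precise by rewriting with $\Phi$, using the symmetric left/right description (Proposition~\ref{P:Garside}$(iii)$: $\DD^p=g_{n-1}\cdots g_1\Phi^p(g_n)$). This turns the question about left divisors of $g_n$ into one about right divisors of the last factor in a right-tail decomposition, where Lemma~\ref{L:Tail} and Lemma~\ref{L:SuppComm} are available.

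\textbf{Fallback.} Failing an abstract argument, I would compute explicitly using the formula~\eqref{E:DeltaD}. Rewrite $\DD_{D_r}=\alpha(\DD_{S_1})\DD_{S_1}$ with
\[
\alpha(\DD_{S_1})=\sig0\sig0'\sig1\sig0\sig0'\sig1\cdots
\]
in a form beginning with $\sig0\sig0'$. Then peel off $S_1$- and $S_2$-tails of $\DD^p$ to see that the top factor begins with $\sig0\sig0'$.

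\textbf{Main obstacle.} The hardest step is proving that both of $\sig0,\sig0'$, rather than just one, left-divide $g_n$ in case $(i)$. I expect the commuting symmetry of $\sig0,\sig0'$ together with the chain calculus to resolve it.
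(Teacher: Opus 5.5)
Your singleton case is the paper's argument. Your reduction of case $(i)$ to showing that both $\sigma_0$ and $\sigma_0'$ left-divide $g_n$ is also correct. The gap is in how you then handle case $(i)$.

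\textbf{The contradiction you aim for does not exist.} The alternating normal form is defined only by the right-tail conditions $g_i=\tau_{S_{\pi(i)}}(g_n\cdots g_i)$ of Definition~\ref{D:AlternatingDecomposition}, and for $i=n$ this condition is vacuous. So there is no ``left maximality'' of $g_n$ that a letter $t\in S_2$ left-dividing $g_{n-1}\cdots g_1$ would violate. Indeed, letters of $S_1\cap S_2$ routinely left-divide $g_{n-1}$.

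\textbf{The rewriting does not help.} Writing $\Delta^p=g_{n-1}\cdots g_1\Phi^p(g_n)$ does not rescue the argument, because conjugation by $\Phi^p$ preserves the side of divisibility. You get left divisors of $\Phi^p(g_n)$, not right divisors of a tail to which Lemmas~\ref{L:Tail} and~\ref{L:SuppComm} would apply.

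\textbf{The fallback is not done.} The explicit computation from \eqref{E:DeltaD} is only announced, not carried out. As written, case $(i)$ is unproved.

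\textbf{The missing idea.} The swap $\eta$ of $\sigma_0,\sigma_0'$ does not merely commute with normal forms; it \emph{fixes} $\Delta^p$. This is because $\Delta$ is the lcm of $S$ and $\eta$ permutes $S$. The argument then closes in a few lines:
\begin{itemize}
\item Since $\eta(S_1)=S_1$ and $\eta(S_2)=S_2$, uniqueness of the alternating normal form gives $(\eta(g_n),\ldots,\eta(g_1))=(g_n,\ldots,g_1)$.
\item Hence $\eta(g_n)=g_n$, so $\SLD{S}{g_n}$ is $\eta$-stable.
\item Therefore $\SLD{S}{g_n}\cap\{\sigma_0,\sigma_0'\}$ is either empty or all of $\{\sigma_0,\sigma_0'\}$.
\item Combined with $\emptyset\neq\SLD{S}{g_n}\subseteq\{\sigma_0,\sigma_0'\}$, this gives $\SLDA{S}{\Delta^p}{*}=\{\sigma_0,\sigma_0'\}=\Phi^p(S_2\boxminus S_1)$.
\end{itemize}
This is exactly the paper's proof. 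What you flagged as the main obstacle is a one-line consequence of invariance of $\Delta^p$ under $\eta$, not just equivariance of normal forms.
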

    
    \begin{proof}
    Assume $(S_2,S_1)$ is not one of the exceptional coverings listed in Lemma~\ref{L:ConditionA:TypeD}.
    In this case, Lemma~\ref{L:ConditionA:TypeD} implies that $S_2\boxminus S_1$ is a singleton. 
    From $\emptyset \not=\SLDA{S}{\DD^p}{*} \subseteq \Phi^p(S_2\boxminus S_1)$, we obtain $\SLDA{S}{\DD^p}{*} = \Phi^p(S_1\boxminus S_1)$. 
    Assume $S_1 = \{\sig1,\ldots,\sig{r-2}\}$ and $S_2=\{\sig0,\sig0',\sig1,\ldots,\sig\jj\}$ with $1\leq j \leq r-3$.
    Let $\eta$ be the word homomorphism exchanging the letters $\sig0$ and $\sig0'$ that keep fixed the other letters. 
    By symmetry of the graph~$\Gamma_{D_r}$, $\eta$ leads to a morphism on $M$.
    Put $[\DD^p]_{(S_2,S_1)}= (g_n, \ldots, g_1)$ the alternating normal form of $\DD^p$ relatively to $(S_2,S_1)$.
    Since $S_2$ and $S_1$ are fixed by~$\eta$, we have 
    \[
    [\eta(\DD^p)]_{(S_2,S_1)}=[\eta(\DD^p)]_{(\eta(S_2),\eta(S_1))}=(\eta(g_n),\ldots,\eta(g_1)).
    \]
    Since $\DD$ is the lcm of all generators, we have $\eta(\DD) = \DD$.
    It follows $\eta(\DD^p)=\DD^p$ and thus we obtain $\eta(g_n) = g_n$. 
    In particular $\SLDA{S}{\DD^p}{*}\cap \{\sig0,\sig0'\}$ is empty or cardinal two.
    From 
    \[
    \emptyset \not = \SLDA{S}{\DD^p}{*} \subseteq \Phi^p(S_2\boxminus S_1) = \Phi^p(\{\sig0,\sig0'\}) = \{\sig0,\sig0'\},
     \]
     we obtain $\SLDA{S}{\DD^p}{*}=\{\sig0,\sig0'\}=\Phi^p(S_2\boxminus S_1)$.
    Note that a similar argument is impossible for case $(ii)$ and $(iii)$ of Lemma~\ref{L:ConditionA:TypeD} since $S_1$ and $S_2$ are not stable under $\eta$.
    \end{proof}
    
    \begin{cor}
    	Assume $\Gamma$ is of type $D_r$ for $r\geq 4$.
    	\begin{enumerate}
    		\item For $r$ even, except perhaps for cases $(ii)$ and $(iii)$ of Lemma~\ref{L:ConditionA:TypeD}, Condition A is satified with respect to $\DD^p$ for any proper covering $(S_2,S_1)$.
    		\item For $r$ odd and $p$ even, except possibly for cases $(ii)$ and $(iii)$ of Lemma~\ref{L:ConditionA:TypeD}, Condition A is satisfied with respect to $\DD^p$ for any proper covering $(S_2,S_1)$.
    		\item For $r$ odd and $p$ odd, Condition A is satisfied if and only if
    		\begin{itemize}
    			\item $S_1 = \{\sig\ii,\ldots,\sig{r-2}\}$ and $S_2= \{\sig0,\sig0',\sig1,\ldots,\sig j\}$ with $0 \leq i -1 \leq j \leq r-3$,
    			\item $S_1= \{\sig0,\sig0',\sig1,\ldots,\sig i\}$ and $S_2 = \{\sig\jj,\ldots,\sig{r-2}\}$ with $0 \leq j-1 \leq i \leq r-3$.
    		\end{itemize}
    	\end{enumerate}
    	
    \end{cor}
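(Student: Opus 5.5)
The corollary follows by combining Corollary~\ref{C:ConditionAIff} with Proposition~\ref{P:ConditionA:TypeD} and Lemma~\ref{L:ConditionA:TypeD}. The key input is how $\Phi$ acts in type $D_r$. By Proposition~\ref{P:DeltaCentral}, $\DD$ is central when $r$ is even. When $r$ is odd, $\Phi$ is the non-trivial diagram automorphism exchanging $\sig0$ and $\sig0'$ and fixing $\sig1,\ldots,\sig{r-2}$. Hence $\Phi^p$ is the identity whenever $r$ is even or $p$ is even. In those cases every $\DD^p$-regular covering is automatically $\DD^p$-fixed.

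\textbf{Points (i) and (ii).} Take a regular covering that is not of type $(ii)$ or $(iii)$ of Lemma~\ref{L:ConditionA:TypeD}. Proposition~\ref{P:ConditionA:TypeD} gives $\SLDA{S}{\DD^p}{*} = \Phi^p(S_2\boxminus S_1)$. Since $\Phi^p=\mathrm{id}$, the covering is $\DD^p$-fixed. The ``if'' direction of Corollary~\ref{C:ConditionAIff} then yields Condition~A. Two remarks apply here. The word ``proper covering'' in the statement must be read under the standing hypothesis of Notation~\ref{N:cov regular}, namely a $\DD^p$-regular covering. Stabilization is automatic once $\Phi^p$ is trivial.

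\textbf{Point (iii): $r$ odd, $p$ odd.} Here $\Phi^p=\Phi$ swaps $\sig0$ and $\sig0'$. Cases $(ii)$ and $(iii)$ of Lemma~\ref{L:ConditionA:TypeD} require $r$ even, so they do not occur. Proposition~\ref{P:ConditionA:TypeD} therefore applies to every regular covering. By Corollary~\ref{C:ConditionAIff}, Condition~A holds if and only if $\Phi(S_1)=S_1$ and $\Phi(S_2)=S_2$, that is, if and only if each $S_i$ contains both or neither of $\sig0,\sig0'$.

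We then run through the table in the proof of Lemma~\ref{L:ConditionA:TypeD}. The $\DD$-stabilized rows are those marked ``yes'', and we must pick out the fixed ones among them.
\begin{itemize}
\item The row $S_1=\{\sig1,\ldots,\sig{r-2}\}$, $S_2=\{\sig0,\sig0',\sig1,\ldots,\sig j\}$ is fixed. It is the case $i=1$ of the first family.
\item The row $S_1=\{\sig i,\ldots,\sig{r-2}\}$ with $S_2\ni\sig0,\sig0'$ is fixed. It is the first family with $i\geq 2$.
\item The row $S_1=\{\sig0,\sig0',\ldots,\sig i\}$, $S_2=\{\sig j,\ldots,\sig{r-2}\}$ is fixed. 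It is the second family.
\item The last two ``yes'' rows are not fixed: $\Phi$ exchanges $S_1$ and $S_2$ there, since one contains $\sig0$ and the other $\sig0'$.
\end{itemize}
Assembling the fixed rows gives exactly the two listed families with the stated index ranges.

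The main obstacle is bookkeeping. We must check that the index ranges in the two families match the table rows, including the boundary case $i=1$ merged into the first family. We must also confirm that no ``no'' row becomes relevant, since such rows are not $\DD$-stabilized and hence fall outside the standing hypothesis.
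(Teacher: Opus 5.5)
Your proposal is correct and follows the paper's proof step for step. For (i)--(ii) you use that $\Phi^p$ is trivial (that is, $\DD^p$ is central), together with Proposition~\ref{P:ConditionA:TypeD} and Corollary~\ref{C:ConditionAIff}; for (iii) you apply the same corollary to the five $\DD$-stabilized rows of the table and keep the three rows that $\Phi$ fixes.

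One bookkeeping caveat: the stated ranges also admit $i=1$, $j=0$ in the first family and $i=0$ in the second, where the part $\{\sig0,\sig0'\}$ is reducible. These cases are excluded by the regularity hypothesis rather than matched by a table row, so your phrase ``exactly the stated ranges'' should be read with that exception.
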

    
    \begin{proof}
    Assume $\DD^p$ is central in $M$. By Propositions~\ref{P:Garside} and \ref{P:DeltaCentral},  either $r$ is even, or $r$ is odd with $p$ even.  
    Since $\DD^p$ central, any covering is $\DD^p$-regular,  and we are in position to apply Proposition~\ref{P:ConditionA:TypeD} and Corollary~\ref{C:ConditionAIff}. This proves case (i)  and (ii). Assume $\DD^p$ is not central in $M$. That is,  both $r$ and $p$ are odd. 
    	From the table given in proof of Lemma~\ref{L:ConditionA:TypeD} we obtain that the list of $\DD^p$-regular coverings $(S_2, S_1)$ is
    	{\footnotesize
    		\[
    		\begin{array}{|c|c|c|c|c|}
    			\hline
    			S_1  & S_2  & \text{conditions}  & S_2\boxminus S_1 \\
    			\hline
    			\sig1,\ldots,\sig{r-2} & \sig0, \sig0', \sig1, \ldots ,\sig\jj & 1  \leq j \leq r-3  & \sig0,\sig0' \\			
    			\hline 
    			\sig\ii,\ldots,\sig{r-2} & \sig0, \sig0', \sig1, \ldots ,\sig\jj & 1 \leq i -1 \leq j \leq r-3  & \sig\iio \\
    			\hline
    			\sig0,\sig0', \sig1, \ldots, \sig\ii & \sig\jj, \ldots, \sig{r-2} & 0 \leq j-1 \leq i \leq r-3  & \sig\iip \\
    			\hline
    			\sig0 \sig1, \ldots, \sig{r-2} & \sig0', \sig1, \ldots, \sig{r-2} &  & \sig0' \\
    			\hline
    			\sig0', \sig1, \ldots, \sig{r-2} & \sig0, \sig1, \ldots, \sig{r-2} &  & \sig0  \\
    			\hline
    		\end{array}
    		\]
    	}
From Corollary~\ref{C:ConditionAIff} we know that $(S_2,S_1)$ satisfies Condition A for $\DD^p$ if and only if $\DD^p$ fixes $(S_2,S_1)$.
    	Since $\DD^p$ swaps $\sig0$ and $\sig0'$ and fixes the other generators, the coverings on rows 4 and 5 do not satisfy Condition~A, whereas the overings on rows 1, 2 or 3  satisfy Condition~A. This provides the expected result.
    \end{proof}
    	
For a regular covering in type $D$, the only remaining open cases for Condition A  are the cases $(ii)$ and $(iii)$ in Lemma~\ref{L:ConditionA:TypeD}. So, up to exchanging $\sig0$ and $\sig0$, it remains only one case.  
 Computer assisted experiments suggest: 
    \begin{conj}
    Assume $r$ is even, $\Gamma$ of type $D_r$, $S_1=\{\sig0',\sig1,\ldots,\sig\ii\}$ with $1\leq i \leq r-3$ and $S_2=\{\sig0,\sig1,\ldots,\sig{r-2}\}$.
    We have 
    
    \begin{itemize}
    \item for $i = 1$,   $\SLDA{S}{\DD}{*} = \{\sig0,\sig2\} = \Phi(S_2\boxminus S_1)$ and, therefore,  Condition A is satisfied whaterver $p\geq 1$. 
    \item for $i\geq 2$, $\SLDA{S}{\DD}{*} = \{\sig0\}\subsetneq \Phi(S_2\boxminus S_1) = \{\sig0, \sig\iip\}$.
    \end{itemize}
    We also conjecture that Condition A is also satisfied for $i\geq 2$ (Corollary~\ref{C:ConditionAIff} cannot be applied here).
    \end{conj}
    
    \subsection{Case of type $E_6$}
    
    Consider $M$ is of type $E_6$ with the notations of Figure~\ref{F:CoxDiag}.
    An explicit enumeration gives that there is $54$ proper and irreducible coverings $(S_2,S_1)$, from which 10 are regular.
    For $42$ coverings, from which 8 are regular, the set~$S_2\boxminus S_1$ is a singleton and we can apply Proposition~\ref{P:ConditionASingleLeft}.  Among the $12$ remaining coverings, $4$ satisfy $\SLDA{S}{\DD}{*}=\Phi(S_2\boxminus S_1)$, from which the $2$ remaining regular coverings, and we can apply Corollary~\ref{C:ConditionAIff} to treat them.  As a consequence, the $10$ regular coverings satisfy Condition $A$ for $\Delta^p$ whatever $p\geq 1$ and the $36$ other ones  satisfy Condition~A with respect to $\DD^p$ for $p$ even. 
    
    It remains the $8$ following coverings with an open status and for which Condition A seem to be satisfied with respect to $\DD^2$.
    \[
    \begin{array}{|c|c|c|c|c|c|c|c|c|}
    \hline
    S_1 
    & 	
    \begin{tikzpicture}[x=10pt,y=10pt]
    	\fill[lightgray](3,0) circle (0.2);
    	\fill[lightgray](4,0) circle (0.2);
    	\fill[lightgray](2,1) circle (0.2);
    	\draw[lightgray, line width=1](0,0) -- (4,0);
    	\draw[lightgray, line width=1](2,0) -- (2,1);
    	\draw[line width=1](0,0) -- (2,0);
    	\fill(0,0) circle (0.2);
    	\fill(1,0) circle (0.2);
    	\fill(2,0) circle (0.2);
    \end{tikzpicture}
    &
    \begin{tikzpicture}[x=10pt,y=10pt]
    	\fill[lightgray](3,0) circle (0.2);
    	\fill[lightgray](4,0) circle (0.2);
    	\fill[lightgray](2,1) circle (0.2);
    	\draw[lightgray, line width=1](0,0) -- (4,0);
    	\draw[lightgray, line width=1](2,0) -- (2,1);
    	\draw[line width=1](0,0) -- (2,0);
    	\fill(0,0) circle (0.2);
    	\fill(1,0) circle (0.2);
    	\fill(2,0) circle (0.2);
    \end{tikzpicture}
    &
    \begin{tikzpicture}[x=10pt,y=10pt]
    	\fill[lightgray](0,0) circle (0.2);
    	\fill[lightgray](1,0) circle (0.2);
    	\fill[lightgray](2,1) circle (0.2);
    	\draw[lightgray, line width=1](0,0) -- (4,0);
    	\draw[lightgray, line width=1](2,0) -- (2,1);
    	\draw[line width=1](2,0) -- (4,0);
    	\fill(2,0) circle (0.2);
    	\fill(3,0) circle (0.2);
    	\fill(4,0) circle (0.2);
    \end{tikzpicture}
    &
    \begin{tikzpicture}[x=10pt,y=10pt]
    	\fill[lightgray](0,0) circle (0.2);
    	\fill[lightgray](1,0) circle (0.2);
    	\fill[lightgray](2,1) circle (0.2);
    	\draw[lightgray, line width=1](0,0) -- (4,0);
    	\draw[lightgray, line width=1](2,0) -- (2,1);
    	\draw[line width=1](2,0) -- (4,0);
    	\fill(2,0) circle (0.2);
    	\fill(3,0) circle (0.2);
    	\fill(4,0) circle (0.2);
    \end{tikzpicture}
    &
    \begin{tikzpicture}[x=10pt,y=10pt]
    	\fill[lightgray](4,0) circle (0.2);
    	\fill[lightgray](2,1) circle (0.2);
    	\draw[lightgray, line width=1](0,0) -- (4,0);
    	\draw[lightgray, line width=1](2,0) -- (2,1);
    	\draw[line width=1](0,0) -- (3,0);
    	\fill(0,0) circle (0.2);
    	\fill(1,0) circle (0.2);
    	\fill(2,0) circle (0.2);
    	\fill(3,0) circle (0.2);
    \end{tikzpicture}
    &
    \begin{tikzpicture}[x=10pt,y=10pt]
    	\fill[lightgray](4,0) circle (0.2);
    	\fill[lightgray](2,1) circle (0.2);
    	\draw[lightgray, line width=1](0,0) -- (4,0);
    	\draw[lightgray, line width=1](2,0) -- (2,1);
    	\draw[line width=1](0,0) -- (3,0);
    	\fill(0,0) circle (0.2);
    	\fill(1,0) circle (0.2);
    	\fill(2,0) circle (0.2);
    	\fill(3,0) circle (0.2);
    \end{tikzpicture}
    &
    \begin{tikzpicture}[x=10pt,y=10pt]
    	\fill[lightgray](0,0) circle (0.2);
    	\fill[lightgray](2,1) circle (0.2);
    	\draw[lightgray, line width=1](0,0) -- (4,0);
    	\draw[lightgray, line width=1](2,0) -- (2,1);
    	\draw[line width=1](1,0) -- (4,0);
    	\fill(1,0) circle (0.2);
    	\fill(2,0) circle (0.2);
    	\fill(3,0) circle (0.2);
    	\fill(4,0) circle (0.2);	
    \end{tikzpicture}
    &
    \begin{tikzpicture}[x=10pt,y=10pt]
    	\fill[lightgray](0,0) circle (0.2);
    	\fill[lightgray](2,1) circle (0.2);
    	\draw[lightgray, line width=1](0,0) -- (4,0);
    	\draw[lightgray, line width=1](2,0) -- (2,1);
    	\draw[line width=1](1,0) -- (4,0);
    	\fill(1,0) circle (0.2);
    	\fill(2,0) circle (0.2);
    	\fill(3,0) circle (0.2);
    	\fill(4,0) circle (0.2);
    \end{tikzpicture}\\
    \hline
    S_2&
       \begin{tikzpicture}[x=10pt,y=10pt]
    	\fill[lightgray](0,0) circle (0.2);
    	\fill[lightgray](1,0) circle (0.2);
    	\draw[lightgray, line width=1](0,0) -- (4,0);
    	\draw[line width=1](2,0) -- (2,1);
    	\draw[line width=1](2,0) -- (4,0);
    	\fill(2,1) circle (0.2);
    	\fill(2,0) circle (0.2);
    	\fill(3,0) circle (0.2);
    	\fill(4,0) circle (0.2);
    \end{tikzpicture}
    	&
    \begin{tikzpicture}[x=10pt,y=10pt]
    	\fill[lightgray](0,0) circle (0.2);
    	\draw[lightgray, line width=1](0,0) -- (4,0);
    	\draw[line width=1](2,0) -- (2,1);
    	\draw[line width=1](1,0) -- (4,0);
    	\fill(2,1) circle (0.2);
    	\fill(1,0) circle (0.2);
    	\fill(2,0) circle (0.2);
    	\fill(3,0) circle (0.2);
    	\fill(4,0) circle (0.2);
    \end{tikzpicture}
    	&
    \begin{tikzpicture}[x=10pt,y=10pt]
    	\fill[lightgray](3,0) circle (0.2);
    	\fill[lightgray](4,0) circle (0.2);
    	\draw[lightgray, line width=1](0,0) -- (4,0);
    	\draw[line width=1](2,0) -- (2,1);
    	\draw[line width=1](0,0) -- (2,0);
    	\fill(2,1) circle (0.2);
    	\fill(0,0) circle (0.2);
    	\fill(1,0) circle (0.2);
    	\fill(2,0) circle (0.2);
    \end{tikzpicture}
    	&
    \begin{tikzpicture}[x=10pt,y=10pt]
    	\fill[lightgray](4,0) circle (0.2);
    	\draw[lightgray, line width=1](0,0) -- (4,0);
    	\draw[line width=1](2,0) -- (2,1);
    	\draw[line width=1](0,0) -- (3,0);
    	\fill(2,1) circle (0.2);
    	\fill(0,0) circle (0.2);
    	\fill(1,0) circle (0.2);
    	\fill(2,0) circle (0.2);
    	\fill(3,0) circle (0.2);
    \end{tikzpicture}
    	&
    \begin{tikzpicture}[x=10pt,y=10pt]
    	\fill[lightgray](0,0) circle (0.2);
    	\fill[lightgray](1,0) circle (0.2);
    	\draw[lightgray, line width=1](0,0) -- (4,0);
    	\draw[line width=1](2,0) -- (2,1);
    	\draw[line width=1](2,0) -- (4,0);
    	\fill(2,1) circle (0.2);
    	\fill(2,0) circle (0.2);
    	\fill(3,0) circle (0.2);
    	\fill(4,0) circle (0.2);
    \end{tikzpicture}
    	&
    \begin{tikzpicture}[x=10pt,y=10pt]
    	\fill[lightgray](0,0) circle (0.2);
    	\draw[lightgray, line width=1](0,0) -- (4,0);
    	\draw[line width=1](2,0) -- (2,1);
    	\draw[line width=1](1,0) -- (4,0);
    	\fill(1,0) circle (0.2);
    	\fill(2,1) circle (0.2);
    	\fill(2,0) circle (0.2);
    	\fill(3,0) circle (0.2);
    	\fill(4,0) circle (0.2);
    \end{tikzpicture}
    	&
    \begin{tikzpicture}[x=10pt,y=10pt]
    	\fill[lightgray](3,0) circle (0.2);
    	\fill[lightgray](4,0) circle (0.2);
    	\draw[lightgray, line width=1](0,0) -- (4,0);
    	\draw[line width=1](2,0) -- (2,1);
    	\draw[line width=1](0,0) -- (2,0);
    	\fill(2,1) circle (0.2);
    	\fill(0,0) circle (0.2);
    	\fill(1,0) circle (0.2);
    	\fill(2,0) circle (0.2);
    \end{tikzpicture}
    	&
    \begin{tikzpicture}[x=10pt,y=10pt]
    	\fill[lightgray](4,0) circle (0.2);
    	\draw[lightgray, line width=1](0,0) -- (4,0);
    	\draw[line width=1](2,0) -- (2,1);
    	\draw[line width=1](0,0) -- (3,0);
    	\fill(2,1) circle (0.2);
    	\fill(0,0) circle (0.2);
    	\fill(1,0) circle (0.2);
    	\fill(2,0) circle (0.2);
    	\fill(3,0) circle (0.2);
    \end{tikzpicture}\\
    \hline
    \end{array}	
    \]
    
    \subsection{Case of type $E_7$} Consider $M$ of type $E_7$ with the notations of Figure~\ref{F:CoxDiag}.
    Since $\DD$ is central in this case, any covering is $\DD$-stable.
    Using an explicit enumeration we obtain that there is $78$ proper and irreducible, and so regular,  coverings $(S_2,S_1)$.
  Among them,   exactly $60$  have the property that  $S_2\boxminus S_1$ is a singleton and so, by Proposition~\ref{P:ConditionASingleLeft},  satisfy Condition $A$ for $\DD^p$ whatever $p\geq 1$.
    Among the $18$ remaining coverings,~$11$ satisfy $\SLDA{S}{\DD}{*}=\Phi(S_2\boxminus S_1)$ and we can apply Corollary~\ref{C:ConditionAIff} to treat them.    It remains 7 coverings such that $\SLDA{S}{\DD}{*}\subsetneq\Phi(S_2\boxminus S_1)$, among which 
    $3$  do not satisfy Condition A with respect to $\DD$.
    \[
    \begin{array}{|c|c|c|c|}
    \hline
    	S_1&
    	\begin{tikzpicture}[x=10pt,y=10pt]
    		\draw[line width=1](2, 1) -- (2, 0);
    		\draw[lightgray, line width=1](0, 0) -- (1, 0);
    		\draw[lightgray, line width=1](1, 0) -- (2, 0);
    		\draw[line width=1](2, 0) -- (3, 0);
    		\draw[lightgray, line width=1](3, 0) -- (4, 0);
    		\draw[lightgray, line width=1](4, 0) -- (5, 0);
    		\fill(2, 1) circle (0.2);
    		\fill[lightgray](0, 0) circle (0.2);
    		\fill[lightgray](1, 0) circle (0.2);
    		\fill(2, 0) circle (0.2);
    		\fill(3, 0) circle (0.2);
    		\fill[lightgray](4, 0) circle (0.2);
    		\fill[lightgray](5, 0) circle (0.2);
    	\end{tikzpicture}
    	&
    	\begin{tikzpicture}[x=10pt,y=10pt]
    		\draw[line width=1](2, 1) -- (2, 0);
    		\draw[lightgray, line width=1](0, 0) -- (1, 0);
    		\draw[lightgray, line width=1](1, 0) -- (2, 0);
    		\draw[line width=1](2, 0) -- (3, 0);
    		\draw[line width=1](3, 0) -- (4, 0);
    		\draw[lightgray, line width=1](4, 0) -- (5, 0);
    		\fill(2, 1) circle (0.2);
    		\fill[lightgray](0, 0) circle (0.2);
    		\fill[lightgray](1, 0) circle (0.2);
    		\fill(2, 0) circle (0.2);
    		\fill(3, 0) circle (0.2);
    		\fill(4, 0) circle (0.2);
    		\fill[lightgray](5, 0) circle (0.2);
    	\end{tikzpicture}
    	&
    	\begin{tikzpicture}[x=10pt,y=10pt]
    		\draw[lightgray, line width=1](2, 1) -- (2, 0);
    		\draw[line width=1](0, 0) -- (1, 0);
    		\draw[line width=1](1, 0) -- (2, 0);
    		\draw[line width=1](2, 0) -- (3, 0);
    		\draw[lightgray, line width=1](3, 0) -- (4, 0);
    		\draw[lightgray, line width=1](4, 0) -- (5, 0);
    		\fill[lightgray](2, 1) circle (0.2);
    		\fill(0, 0) circle (0.2);
    		\fill(1, 0) circle (0.2);
    		\fill(2, 0) circle (0.2);
    		\fill(3, 0) circle (0.2);
    		\fill[lightgray](4, 0) circle (0.2);
    		\fill[lightgray](5, 0) circle (0.2);
    	\end{tikzpicture}
    	\\ \hline S_2&
    	\begin{tikzpicture}[x=10pt,y=10pt]
    		\draw[lightgray, line width=1](2, 1) -- (2, 0);
    		\draw[line width=1](0, 0) -- (1, 0);
    		\draw[line width=1](1, 0) -- (2, 0);
    		\draw[line width=1](2, 0) -- (3, 0);
    		\draw[line width=1](3, 0) -- (4, 0);
    		\draw[line width=1](4, 0) -- (5, 0);
    		\fill[lightgray](2, 1) circle (0.2);
    		\fill(0, 0) circle (0.2);
    		\fill(1, 0) circle (0.2);
    		\fill(2, 0) circle (0.2);
    		\fill(3, 0) circle (0.2);
    		\fill(4, 0) circle (0.2);
    		\fill(5, 0) circle (0.2);
    	\end{tikzpicture}
    	&
    	\begin{tikzpicture}[x=10pt,y=10pt]
    		\draw[lightgray, line width=1](2, 1) -- (2, 0);
    		\draw[line width=1](0, 0) -- (1, 0);
    		\draw[line width=1](1, 0) -- (2, 0);
    		\draw[line width=1](2, 0) -- (3, 0);
    		\draw[line width=1](3, 0) -- (4, 0);
    		\draw[line width=1](4, 0) -- (5, 0);
    		\fill[lightgray](2, 1) circle (0.2);
    		\fill(0, 0) circle (0.2);
    		\fill(1, 0) circle (0.2);
    		\fill(2, 0) circle (0.2);
    		\fill(3, 0) circle (0.2);
    		\fill(4, 0) circle (0.2);
    		\fill(5, 0) circle (0.2);
    	\end{tikzpicture}
    	&
    	\begin{tikzpicture}[x=10pt,y=10pt]
    		\draw[line width=1](2, 1) -- (2, 0);
    		\draw[lightgray, line width=1](0, 0) -- (1, 0);
    		\draw[lightgray, line width=1](1, 0) -- (2, 0);
    		\draw[line width=1](2, 0) -- (3, 0);
    		\draw[line width=1](3, 0) -- (4, 0);
    		\draw[line width=1](4, 0) -- (5, 0);
    		\fill(2, 1) circle (0.2);
    		\fill[lightgray](0, 0) circle (0.2);
    		\fill[lightgray](1, 0) circle (0.2);
    		\fill(2, 0) circle (0.2);
    		\fill(3, 0) circle (0.2);
    		\fill(4, 0) circle (0.2);
    		\fill(5, 0) circle (0.2);
    	\end{tikzpicture}\\
    	\hline
    \end{array}
    \]
    Note that the second covering does not satisfy Condition A for $\DD^2$ too.
    The remaing four coverings below have an open status for $\DD$ and for $\DD^2$, regarding  Condition A. It seems for all of them that Condition A is satisfied with respect to $\DD$ and to $\DD^2$. 
    \[
    \begin{array}{|c|c|c|c|c|}
    	\hline
    	S_1&
    	\begin{tikzpicture}[x=10pt,y=10pt]
    		\draw[line width=1](2, 1) -- (2, 0);
    		\draw[lightgray, line width=1](0, 0) -- (1, 0);
    		\draw[line width=1](1, 0) -- (2, 0);
    		\draw[lightgray, line width=1](2, 0) -- (3, 0);
    		\draw[lightgray, line width=1](3, 0) -- (4, 0);
    		\draw[lightgray, line width=1](4, 0) -- (5, 0);
    		\fill(2, 1) circle (0.2);
    		\fill[lightgray](0, 0) circle (0.2);
    		\fill(1, 0) circle (0.2);
    		\fill(2, 0) circle (0.2);
    		\fill[lightgray](3, 0) circle (0.2);
    		\fill[lightgray](4, 0) circle (0.2);
    		\fill[lightgray](5, 0) circle (0.2);
    	\end{tikzpicture}
    	&
    	\begin{tikzpicture}[x=10pt,y=10pt]
    		\draw[lightgray, line width=1](2, 1) -- (2, 0);
    		\draw[line width=1](0, 0) -- (1, 0);
    		\draw[line width=1](1, 0) -- (2, 0);
    		\draw[line width=1](2, 0) -- (3, 0);
    		\draw[line width=1](3, 0) -- (4, 0);
    		\draw[lightgray, line width=1](4, 0) -- (5, 0);
    		\fill[lightgray](2, 1) circle (0.2);
    		\fill(0, 0) circle (0.2);
    		\fill(1, 0) circle (0.2);
    		\fill(2, 0) circle (0.2);
    		\fill(3, 0) circle (0.2);
    		\fill(4, 0) circle (0.2);
    		\fill[lightgray](5, 0) circle (0.2);
    	\end{tikzpicture}
    	&
    	\begin{tikzpicture}[x=10pt,y=10pt]
    		\draw[lightgray, line width=1](2, 1) -- (2, 0);
    		\draw[lightgray, line width=1](0, 0) -- (1, 0);
    		\draw[line width=1](1, 0) -- (2, 0);
    		\draw[line width=1](2, 0) -- (3, 0);
    		\draw[line width=1](3, 0) -- (4, 0);
    		\draw[line width=1](4, 0) -- (5, 0);
    		\fill[lightgray](2, 1) circle (0.2);
    		\fill[lightgray](0, 0) circle (0.2);
    		\fill(1, 0) circle (0.2);
    		\fill(2, 0) circle (0.2);
    		\fill(3, 0) circle (0.2);
    		\fill(4, 0) circle (0.2);
    		\fill(5, 0) circle (0.2);
    	\end{tikzpicture}
    	&
    	\begin{tikzpicture}[x=10pt,y=10pt]
    		\draw[lightgray, line width=1](2, 1) -- (2, 0);
    		\draw[lightgray, line width=1](0, 0) -- (1, 0);
    		\draw[line width=1](1, 0) -- (2, 0);
    		\draw[line width=1](2, 0) -- (3, 0);
    		\draw[line width=1](3, 0) -- (4, 0);
    		\draw[line width=1](4, 0) -- (5, 0);
    		\fill[lightgray](2, 1) circle (0.2);
    		\fill[lightgray](0, 0) circle (0.2);
    		\fill(1, 0) circle (0.2);
    		\fill(2, 0) circle (0.2);
    		\fill(3, 0) circle (0.2);
    		\fill(4, 0) circle (0.2);
    		\fill(5, 0) circle (0.2);
    	\end{tikzpicture}
    	\\ \hline S_2&
    	\begin{tikzpicture}[x=10pt,y=10pt]
    		\draw[lightgray, line width=1](2, 1) -- (2, 0);
    		\draw[line width=1](0, 0) -- (1, 0);
    		\draw[line width=1](1, 0) -- (2, 0);
    		\draw[line width=1](2, 0) -- (3, 0);
    		\draw[line width=1](3, 0) -- (4, 0);
    		\draw[line width=1](4, 0) -- (5, 0);
    		\fill[lightgray](2, 1) circle (0.2);
    		\fill(0, 0) circle (0.2);
    		\fill(1, 0) circle (0.2);
    		\fill(2, 0) circle (0.2);
    		\fill(3, 0) circle (0.2);
    		\fill(4, 0) circle (0.2);
    		\fill(5, 0) circle (0.2);
    	\end{tikzpicture}
    	&
    	\begin{tikzpicture}[x=10pt,y=10pt]
    		\draw[line width=1](2, 1) -- (2, 0);
    		\draw[lightgray, line width=1](0, 0) -- (1, 0);
    		\draw[lightgray, line width=1](1, 0) -- (2, 0);
    		\draw[line width=1](2, 0) -- (3, 0);
    		\draw[line width=1](3, 0) -- (4, 0);
    		\draw[line width=1](4, 0) -- (5, 0);
    		\fill(2, 1) circle (0.2);
    		\fill[lightgray](0, 0) circle (0.2);
    		\fill[lightgray](1, 0) circle (0.2);
    		\fill(2, 0) circle (0.2);
    		\fill(3, 0) circle (0.2);
    		\fill(4, 0) circle (0.2);
    		\fill(5, 0) circle (0.2);
    	\end{tikzpicture}
    	&
    	\begin{tikzpicture}[x=10pt,y=10pt]
    		\draw[line width=1](2, 1) -- (2, 0);
    		\draw[line width=1](0, 0) -- (1, 0);
    		\draw[line width=1](1, 0) -- (2, 0);
    		\draw[lightgray, line width=1](2, 0) -- (3, 0);
    		\draw[lightgray, line width=1](3, 0) -- (4, 0);
    		\draw[lightgray, line width=1](4, 0) -- (5, 0);
    		\fill(2, 1) circle (0.2);
    		\fill(0, 0) circle (0.2);
    		\fill(1, 0) circle (0.2);
    		\fill(2, 0) circle (0.2);
    		\fill[lightgray](3, 0) circle (0.2);
    		\fill[lightgray](4, 0) circle (0.2);
    		\fill[lightgray](5, 0) circle (0.2);
    	\end{tikzpicture}
    	&
    	\begin{tikzpicture}[x=10pt,y=10pt]
    		\draw[line width=1](2, 1) -- (2, 0);
    		\draw[line width=1](0, 0) -- (1, 0);
    		\draw[line width=1](1, 0) -- (2, 0);
    		\draw[line width=1](2, 0) -- (3, 0);
    		\draw[line width=1](3, 0) -- (4, 0);
    		\draw[lightgray, line width=1](4, 0) -- (5, 0);
    		\fill(2, 1) circle (0.2);
    		\fill(0, 0) circle (0.2);
    		\fill(1, 0) circle (0.2);
    		\fill(2, 0) circle (0.2);
    		\fill(3, 0) circle (0.2);
    		\fill(4, 0) circle (0.2);
    		\fill[lightgray](5, 0) circle (0.2);
    	\end{tikzpicture}\\
    	\hline
    \end{array}
    \].
    
    \subsection{Case of type $E_8$} Consider $M$ of type $E_8$ with the notations of Figure~\ref{F:CoxDiag}.
    Since $\DD$ is central in this case, any covering is $\DD$-stable.
    Using an explicit enumeration we obtain there is $104$ proper and irreducible, and so regular, coverings $(S_2,S_1)$.
    Among them, exactly $80$ have the property that $S_2\boxminus S_1$ is a singleton and so, by Proposition~\ref{P:ConditionASingleLeft}, they satisfy Condition $A$ for $\DD^p$ for any $p\geq 1$.
    Among the $24$ remaining coverings, $14$  satisfy $\SLDA{S}{\DD}{*}=\Phi(S_2\boxminus S_1)$ and we can apply Corollary~\ref{C:ConditionAIff} to treat them.
  
    It remain 10 coverings such that $\SLDA{S}{\DD}{*}\subsetneq\Phi(S_2\boxminus S_1)$.
    Among  them $3$ do not satisfy Condition A with respect to $\DD$:
    \[
    \begin{array}{|c|c|c|c|}
    \hline
    S_1&
    \begin{tikzpicture}[x=10pt,y=10pt]
    \draw[line width=1](2, 1) -- (2, 0);
    \draw[lightgray, line width=1](0, 0) -- (1, 0);
    \draw[lightgray, line width=1](1, 0) -- (2, 0);
    \draw[line width=1](2, 0) -- (3, 0);
    \draw[lightgray, line width=1](3, 0) -- (4, 0);
    \draw[lightgray, line width=1](4, 0) -- (5, 0);
    \draw[lightgray, line width=1](5, 0) -- (6, 0);
    \fill(2, 1) circle (0.2);
    \fill[lightgray](0, 0) circle (0.2);
    \fill[lightgray](1, 0) circle (0.2);
    \fill(2, 0) circle (0.2);
    \fill(3, 0) circle (0.2);
    \fill[lightgray](4, 0) circle (0.2);
    \fill[lightgray](5, 0) circle (0.2);
    \fill[lightgray](6, 0) circle (0.2);
    \end{tikzpicture}
    &
    \begin{tikzpicture}[x=10pt,y=10pt]
    \draw[line width=1](2, 1) -- (2, 0);
    \draw[lightgray, line width=1](0, 0) -- (1, 0);
    \draw[lightgray, line width=1](1, 0) -- (2, 0);
    \draw[line width=1](2, 0) -- (3, 0);
    \draw[line width=1](3, 0) -- (4, 0);
    \draw[lightgray, line width=1](4, 0) -- (5, 0);
    \draw[lightgray, line width=1](5, 0) -- (6, 0);
    \fill(2, 1) circle (0.2);
    \fill[lightgray](0, 0) circle (0.2);
    \fill[lightgray](1, 0) circle (0.2);
    \fill(2, 0) circle (0.2);
    \fill(3, 0) circle (0.2);
    \fill(4, 0) circle (0.2);
    \fill[lightgray](5, 0) circle (0.2);
    \fill[lightgray](6, 0) circle (0.2);
    \end{tikzpicture}
    &
    \begin{tikzpicture}[x=10pt,y=10pt]
    \draw[lightgray, line width=1](2, 1) -- (2, 0);
    \draw[line width=1](0, 0) -- (1, 0);
    \draw[line width=1](1, 0) -- (2, 0);
    \draw[line width=1](2, 0) -- (3, 0);
    \draw[lightgray, line width=1](3, 0) -- (4, 0);
    \draw[lightgray, line width=1](4, 0) -- (5, 0);
    \draw[lightgray, line width=1](5, 0) -- (6, 0);
    \fill[lightgray](2, 1) circle (0.2);
    \fill(0, 0) circle (0.2);
    \fill(1, 0) circle (0.2);
    \fill(2, 0) circle (0.2);
    \fill(3, 0) circle (0.2);
    \fill[lightgray](4, 0) circle (0.2);
    \fill[lightgray](5, 0) circle (0.2);
    \fill[lightgray](6, 0) circle (0.2);
    \end{tikzpicture}
    \\ \hline S_2&
    \begin{tikzpicture}[x=10pt,y=10pt]
    \draw[lightgray, line width=1](2, 1) -- (2, 0);
    \draw[line width=1](0, 0) -- (1, 0);
    \draw[line width=1](1, 0) -- (2, 0);
    \draw[line width=1](2, 0) -- (3, 0);
    \draw[line width=1](3, 0) -- (4, 0);
    \draw[line width=1](4, 0) -- (5, 0);
    \draw[line width=1](5, 0) -- (6, 0);
    \fill[lightgray](2, 1) circle (0.2);
    \fill(0, 0) circle (0.2);
    \fill(1, 0) circle (0.2);
    \fill(2, 0) circle (0.2);
    \fill(3, 0) circle (0.2);
    \fill(4, 0) circle (0.2);
    \fill(5, 0) circle (0.2);
    \fill(6, 0) circle (0.2);
    \end{tikzpicture}
    &
    \begin{tikzpicture}[x=10pt,y=10pt]
    \draw[lightgray, line width=1](2, 1) -- (2, 0);
    \draw[line width=1](0, 0) -- (1, 0);
    \draw[line width=1](1, 0) -- (2, 0);
    \draw[line width=1](2, 0) -- (3, 0);
    \draw[line width=1](3, 0) -- (4, 0);
    \draw[line width=1](4, 0) -- (5, 0);
    \draw[line width=1](5, 0) -- (6, 0);
    \fill[lightgray](2, 1) circle (0.2);
    \fill(0, 0) circle (0.2);
    \fill(1, 0) circle (0.2);
    \fill(2, 0) circle (0.2);
    \fill(3, 0) circle (0.2);
    \fill(4, 0) circle (0.2);
    \fill(5, 0) circle (0.2);
    \fill(6, 0) circle (0.2);
    \end{tikzpicture}
    &
    \begin{tikzpicture}[x=10pt,y=10pt]
    \draw[line width=1](2, 1) -- (2, 0);
    \draw[lightgray, line width=1](0, 0) -- (1, 0);
    \draw[lightgray, line width=1](1, 0) -- (2, 0);
    \draw[line width=1](2, 0) -- (3, 0);
    \draw[line width=1](3, 0) -- (4, 0);
    \draw[line width=1](4, 0) -- (5, 0);
    \draw[line width=1](5, 0) -- (6, 0);
    \fill(2, 1) circle (0.2);
    \fill[lightgray](0, 0) circle (0.2);
    \fill[lightgray](1, 0) circle (0.2);
    \fill(2, 0) circle (0.2);
    \fill(3, 0) circle (0.2);
    \fill(4, 0) circle (0.2);
    \fill(5, 0) circle (0.2);
    \fill(6, 0) circle (0.2);
    \end{tikzpicture}
    \\
    \hline
    \end{array}
    \]
    The following 7 coverings have an open status. For all of them Condition A seems to be satisfied with respect to $\DD$ and so to $\DD^2$.
    \begin{align*}
    &\begin{array}{|c|c|c|c|c|c|}
    \hline
    S_1&
    \begin{tikzpicture}[x=10pt,y=10pt]
    \draw[line width=1](2, 1) -- (2, 0);
    \draw[lightgray, line width=1](0, 0) -- (1, 0);
    \draw[line width=1](1, 0) -- (2, 0);
    \draw[lightgray, line width=1](2, 0) -- (3, 0);
    \draw[lightgray, line width=1](3, 0) -- (4, 0);
    \draw[lightgray, line width=1](4, 0) -- (5, 0);
    \draw[lightgray, line width=1](5, 0) -- (6, 0);
    \fill(2, 1) circle (0.2);
    \fill[lightgray](0, 0) circle (0.2);
    \fill(1, 0) circle (0.2);
    \fill(2, 0) circle (0.2);
    \fill[lightgray](3, 0) circle (0.2);
    \fill[lightgray](4, 0) circle (0.2);
    \fill[lightgray](5, 0) circle (0.2);
    \fill[lightgray](6, 0) circle (0.2);
    \end{tikzpicture}
    &
    \begin{tikzpicture}[x=10pt,y=10pt]
    \draw[line width=1](2, 1) -- (2, 0);
    \draw[lightgray, line width=1](0, 0) -- (1, 0);
    \draw[lightgray, line width=1](1, 0) -- (2, 0);
    \draw[line width=1](2, 0) -- (3, 0);
    \draw[line width=1](3, 0) -- (4, 0);
    \draw[line width=1](4, 0) -- (5, 0);
    \draw[lightgray, line width=1](5, 0) -- (6, 0);
    \fill(2, 1) circle (0.2);
    \fill[lightgray](0, 0) circle (0.2);
    \fill[lightgray](1, 0) circle (0.2);
    \fill(2, 0) circle (0.2);
    \fill(3, 0) circle (0.2);
    \fill(4, 0) circle (0.2);
    \fill(5, 0) circle (0.2);
    \fill[lightgray](6, 0) circle (0.2);
    \end{tikzpicture}
    &
    \begin{tikzpicture}[x=10pt,y=10pt]
    \draw[lightgray, line width=1](2, 1) -- (2, 0);
    \draw[line width=1](0, 0) -- (1, 0);
    \draw[line width=1](1, 0) -- (2, 0);
    \draw[line width=1](2, 0) -- (3, 0);
    \draw[line width=1](3, 0) -- (4, 0);
    \draw[lightgray, line width=1](4, 0) -- (5, 0);
    \draw[lightgray, line width=1](5, 0) -- (6, 0);
    \fill[lightgray](2, 1) circle (0.2);
    \fill(0, 0) circle (0.2);
    \fill(1, 0) circle (0.2);
    \fill(2, 0) circle (0.2);
    \fill(3, 0) circle (0.2);
    \fill(4, 0) circle (0.2);
    \fill[lightgray](5, 0) circle (0.2);
    \fill[lightgray](6, 0) circle (0.2);
    \end{tikzpicture}
    &
    \begin{tikzpicture}[x=10pt,y=10pt]
    \draw[line width=1](2, 1) -- (2, 0);
    \draw[lightgray, line width=1](0, 0) -- (1, 0);
    \draw[line width=1](1, 0) -- (2, 0);
    \draw[line width=1](2, 0) -- (3, 0);
    \draw[line width=1](3, 0) -- (4, 0);
    \draw[line width=1](4, 0) -- (5, 0);
    \draw[lightgray, line width=1](5, 0) -- (6, 0);
    \fill(2, 1) circle (0.2);
    \fill[lightgray](0, 0) circle (0.2);
    \fill(1, 0) circle (0.2);
    \fill(2, 0) circle (0.2);
    \fill(3, 0) circle (0.2);
    \fill(4, 0) circle (0.2);
    \fill(5, 0) circle (0.2);
    \fill[lightgray](6, 0) circle (0.2);
    \end{tikzpicture}
    &
    \begin{tikzpicture}[x=10pt,y=10pt]
    \draw[lightgray, line width=1](2, 1) -- (2, 0);
    \draw[line width=1](0, 0) -- (1, 0);
    \draw[line width=1](1, 0) -- (2, 0);
    \draw[line width=1](2, 0) -- (3, 0);
    \draw[line width=1](3, 0) -- (4, 0);
    \draw[line width=1](4, 0) -- (5, 0);
    \draw[lightgray, line width=1](5, 0) -- (6, 0);
    \fill[lightgray](2, 1) circle (0.2);
    \fill(0, 0) circle (0.2);
    \fill(1, 0) circle (0.2);
    \fill(2, 0) circle (0.2);
    \fill(3, 0) circle (0.2);
    \fill(4, 0) circle (0.2);
    \fill(5, 0) circle (0.2);
    \fill[lightgray](6, 0) circle (0.2);
    \end{tikzpicture}
    \\ \hline S_2&
    \begin{tikzpicture}[x=10pt,y=10pt]
    \draw[lightgray, line width=1](2, 1) -- (2, 0);
    \draw[line width=1](0, 0) -- (1, 0);
    \draw[line width=1](1, 0) -- (2, 0);
    \draw[line width=1](2, 0) -- (3, 0);
    \draw[line width=1](3, 0) -- (4, 0);
    \draw[line width=1](4, 0) -- (5, 0);
    \draw[line width=1](5, 0) -- (6, 0);
    \fill[lightgray](2, 1) circle (0.2);
    \fill(0, 0) circle (0.2);
    \fill(1, 0) circle (0.2);
    \fill(2, 0) circle (0.2);
    \fill(3, 0) circle (0.2);
    \fill(4, 0) circle (0.2);
    \fill(5, 0) circle (0.2);
    \fill(6, 0) circle (0.2);
    \end{tikzpicture}
    &
    \begin{tikzpicture}[x=10pt,y=10pt]
    \draw[lightgray, line width=1](2, 1) -- (2, 0);
    \draw[line width=1](0, 0) -- (1, 0);
    \draw[line width=1](1, 0) -- (2, 0);
    \draw[line width=1](2, 0) -- (3, 0);
    \draw[line width=1](3, 0) -- (4, 0);
    \draw[line width=1](4, 0) -- (5, 0);
    \draw[line width=1](5, 0) -- (6, 0);
    \fill[lightgray](2, 1) circle (0.2);
    \fill(0, 0) circle (0.2);
    \fill(1, 0) circle (0.2);
    \fill(2, 0) circle (0.2);
    \fill(3, 0) circle (0.2);
    \fill(4, 0) circle (0.2);
    \fill(5, 0) circle (0.2);
    \fill(6, 0) circle (0.2);
    \end{tikzpicture}
    &
    \begin{tikzpicture}[x=10pt,y=10pt]
    \draw[line width=1](2, 1) -- (2, 0);
    \draw[lightgray, line width=1](0, 0) -- (1, 0);
    \draw[lightgray, line width=1](1, 0) -- (2, 0);
    \draw[line width=1](2, 0) -- (3, 0);
    \draw[line width=1](3, 0) -- (4, 0);
    \draw[line width=1](4, 0) -- (5, 0);
    \draw[line width=1](5, 0) -- (6, 0);
    \fill(2, 1) circle (0.2);
    \fill[lightgray](0, 0) circle (0.2);
    \fill[lightgray](1, 0) circle (0.2);
    \fill(2, 0) circle (0.2);
    \fill(3, 0) circle (0.2);
    \fill(4, 0) circle (0.2);
    \fill(5, 0) circle (0.2);
    \fill(6, 0) circle (0.2);
    \end{tikzpicture}
    &
    \begin{tikzpicture}[x=10pt,y=10pt]
    \draw[lightgray, line width=1](2, 1) -- (2, 0);
    \draw[line width=1](0, 0) -- (1, 0);
    \draw[line width=1](1, 0) -- (2, 0);
    \draw[line width=1](2, 0) -- (3, 0);
    \draw[line width=1](3, 0) -- (4, 0);
    \draw[line width=1](4, 0) -- (5, 0);
    \draw[line width=1](5, 0) -- (6, 0);
    \fill[lightgray](2, 1) circle (0.2);
    \fill(0, 0) circle (0.2);
    \fill(1, 0) circle (0.2);
    \fill(2, 0) circle (0.2);
    \fill(3, 0) circle (0.2);
    \fill(4, 0) circle (0.2);
    \fill(5, 0) circle (0.2);
    \fill(6, 0) circle (0.2);
    \end{tikzpicture}
    &
    \begin{tikzpicture}[x=10pt,y=10pt]
    \draw[line width=1](2, 1) -- (2, 0);
    \draw[lightgray, line width=1](0, 0) -- (1, 0);
    \draw[lightgray, line width=1](1, 0) -- (2, 0);
    \draw[line width=1](2, 0) -- (3, 0);
    \draw[line width=1](3, 0) -- (4, 0);
    \draw[line width=1](4, 0) -- (5, 0);
    \draw[line width=1](5, 0) -- (6, 0);
    \fill(2, 1) circle (0.2);
    \fill[lightgray](0, 0) circle (0.2);
    \fill[lightgray](1, 0) circle (0.2);
    \fill(2, 0) circle (0.2);
    \fill(3, 0) circle (0.2);
    \fill(4, 0) circle (0.2);
    \fill(5, 0) circle (0.2);
    \fill(6, 0) circle (0.2);
    \end{tikzpicture}\\
    \hline
    \end{array}\\
    &
    \begin{array}{|c|c|c|}
    \hline
    S_1&
    \begin{tikzpicture}[x=10pt,y=10pt]
    \draw[lightgray, line width=1](2, 1) -- (2, 0);
    \draw[line width=1](0, 0) -- (1, 0);
    \draw[line width=1](1, 0) -- (2, 0);
    \draw[line width=1](2, 0) -- (3, 0);
    \draw[line width=1](3, 0) -- (4, 0);
    \draw[line width=1](4, 0) -- (5, 0);
    \draw[lightgray, line width=1](5, 0) -- (6, 0);
    \fill[lightgray](2, 1) circle (0.2);
    \fill(0, 0) circle (0.2);
    \fill(1, 0) circle (0.2);
    \fill(2, 0) circle (0.2);
    \fill(3, 0) circle (0.2);
    \fill(4, 0) circle (0.2);
    \fill(5, 0) circle (0.2);
    \fill[lightgray](6, 0) circle (0.2);
    \end{tikzpicture}
    &
    \begin{tikzpicture}[x=10pt,y=10pt]
    \draw[lightgray, line width=1](2, 1) -- (2, 0);
    \draw[lightgray, line width=1](0, 0) -- (1, 0);
    \draw[line width=1](1, 0) -- (2, 0);
    \draw[line width=1](2, 0) -- (3, 0);
    \draw[line width=1](3, 0) -- (4, 0);
    \draw[line width=1](4, 0) -- (5, 0);
    \draw[line width=1](5, 0) -- (6, 0);
    \fill[lightgray](2, 1) circle (0.2);
    \fill[lightgray](0, 0) circle (0.2);
    \fill(1, 0) circle (0.2);
    \fill(2, 0) circle (0.2);
    \fill(3, 0) circle (0.2);
    \fill(4, 0) circle (0.2);
    \fill(5, 0) circle (0.2);
    \fill(6, 0) circle (0.2);
    \end{tikzpicture}
    \\ \hline S_2&
    \begin{tikzpicture}[x=10pt,y=10pt]
    \draw[line width=1](2, 1) -- (2, 0);
    \draw[lightgray, line width=1](0, 0) -- (1, 0);
    \draw[line width=1](1, 0) -- (2, 0);
    \draw[line width=1](2, 0) -- (3, 0);
    \draw[line width=1](3, 0) -- (4, 0);
    \draw[line width=1](4, 0) -- (5, 0);
    \draw[line width=1](5, 0) -- (6, 0);
    \fill(2, 1) circle (0.2);
    \fill[lightgray](0, 0) circle (0.2);
    \fill(1, 0) circle (0.2);
    \fill(2, 0) circle (0.2);
    \fill(3, 0) circle (0.2);
    \fill(4, 0) circle (0.2);
    \fill(5, 0) circle (0.2);
    \fill(6, 0) circle (0.2);
    \end{tikzpicture}
    &
    \begin{tikzpicture}[x=10pt,y=10pt]
    \draw[line width=1](2, 1) -- (2, 0);
    \draw[line width=1](0, 0) -- (1, 0);
    \draw[line width=1](1, 0) -- (2, 0);
    \draw[lightgray, line width=1](2, 0) -- (3, 0);
    \draw[lightgray, line width=1](3, 0) -- (4, 0);
    \draw[lightgray, line width=1](4, 0) -- (5, 0);
    \draw[lightgray, line width=1](5, 0) -- (6, 0);
    \fill(2, 1) circle (0.2);
    \fill(0, 0) circle (0.2);
    \fill(1, 0) circle (0.2);
    \fill(2, 0) circle (0.2);
    \fill[lightgray](3, 0) circle (0.2);
    \fill[lightgray](4, 0) circle (0.2);
    \fill[lightgray](5, 0) circle (0.2);
    \fill[lightgray](6, 0) circle (0.2);
    \end{tikzpicture}\\
    \hline
    \end{array}
    \end{align*}

    \section{Condition~$B$}
    
    \label{S:ConditionB}
    
    We recall Notation~\ref{N: cadre}.  During this section we fix the following notations:
    
     \begin{nota} We fix $p\geq 1$, an integer, typically $p=1$ or $p=2$. We assume that $(S_2,S_1)$ is  a proper and irreducible covering  of $S$. Contrary to Section~\ref{S:ConditionA}, except if this is explicitly specified, we do not assume the covering is necessarily $\DD^p$- stabilized. \label{N:cov proper}
 \end{nota}    
    
    \begin{df}
    We define
    \begin{itemize}
     \item $\Theta_{S_2,S_1}(p)$ to be the set of elements of $M$ of the form $(\DD^{p}\DD_{S_1}^{-p})^k h$ with $k\geq1$ and $h\in M_{S_1}$.
     \item $\oTheta_{S_2,S_1}(p)$ to be the set $\Theta_{S_2,S_1}(p)\cup M_{S_1}$.
    \end{itemize}
    \end{df}
    
    When no confusion is possible, we write $\Theta(p)$  and $\oTheta(p)$ for $\Theta_{S_2,S_1}(p)$  and $\oTheta_{S_2,S_1}(p)$, respectively. 
    In~\cite{ArP2019}, Condition~$B$ is stated for a  specific value of $p$, that is when $p$ is the minimal integer such that $\DD^p$ is central.
    Here, we give a more general definition of this condition in the context of $\Theta(p)$. We recall that $\DD$-unmovable  elements have been defined just after Notation~\ref{N: cadre}.

    \begin{df} \label{D:ConditionB}
    Consider $(a,b)$ in $M\times M\setminus (\oTheta(p)\times\oTheta(p))$ such that $a,b$ are both $\DD^p$-unmovable (see Definition~\ref{D:DeltaUnmovable}).
    Let $\DD^{pt} c$ be the $\DD^p$-form of the product~$ab$.
    We say  that the pair~$(a,b)$ satisfies Condition~$B(p)$ when
    \begin{enumerate}
    \item either $t \geq 1$ and  $\dpt(c)+\dpt(\DD^{pt}) =  \dpt(a)+\dpt(b)+ \varepsilon$ with $\varepsilon \in \{0,1\}$,
    \item or $t = 0$, that is $c = ab$, and  $\dpt(ab) =  \dpt(a)+\dpt(b) - 1 + \varepsilon $  with $\varepsilon \in \{0,1\}$,
    \item and moreover $\varepsilon = 0$ in $(i)$ or $(ii)$ whenever $a\in \Theta(p)$ or $b\in \Theta(p)$ or $c\in M_{S_1}$.
    \end{enumerate}
    
    We say that the covering $(S_2,S_1)$ satisfies Condition~$B(p)$ if each pair $(a,b)$ of $M\times M\setminus (\oTheta(p)\times\oTheta(p))$ satisfies Condition~$B(p)$.
    \end{df}

    \subsection{Condition B and two criteria}
    Let $T$ be a proper and irreducible subset of $S$. Put $T_i= S_i\cap T$ for $i = 1,2$, and assume $(T_2, T_1)$ is a proper covering of $T$. A natural question is whether Properties~$B(p)$ in $M$ and in $M_T$ are related. 
    This is not easy to answer the question because, in particular,  it is not clear whether $M_T\cap \oTheta_{S_2,S_1}(1) \subseteq \oTheta_{T_2,T_1}(1)$ in general. However, in Lemma~\ref{L:Subcovering} we provide a partial answer that we will use in the next subsection.
    
    \begin{lm}
    \label{L:B1ToBj}~
    
    \begin{enumerate}
    \item If $\DD$ is central, then $\oTheta(p)\subseteq \oTheta(1)$. In any case, $\oTheta(2p)\subseteq \oTheta(2)$.
    \item  If it exists $(a,b)\in M\times M\setminus (\oTheta(p)\times\oTheta(p))$ such that $ab$ is $\DD$-unmovable and
    \[
    \dpt(a)+\dpt(b) - \dpt(ab)\not\in \{0,1\}
    \]
    Then, $a,b$ and $ab$ are $\Delta^p$-unmovable,  and the pair $(a,b)$ does not satisfy Property~$(ii)$ of Defininition~\ref{D:ConditionB}.
    Therefore,  $(S_2,S_1)$ does not satisfy condition~$B(p)$. 
    \end{enumerate}
    \end{lm}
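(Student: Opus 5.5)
Part (i) follows from the definition of $\Theta(p)$ together with a few commutation facts. An element of $\Theta(p)$ has the form $(\DD^p\DD_{S_1}^{-p})^k h$ with $k\geq 1$ and $h\in M_{S_1}$.

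Assume first that $\DD$ is central. Then $\DD_{S_1}$ commutes with $\DD$, so
\[
\DD^p\DD_{S_1}^{-p} = (\DD\DD_{S_1}\inv)^p .
\]
Hence $(\DD^p\DD_{S_1}^{-p})^k = (\DD\DD_{S_1}\inv)^{pk}$ with $pk\geq 1$, and this lies in $\Theta(1)$. Since $M_{S_1}$ is common to both sides, this gives $\oTheta(p)\subseteq\oTheta(1)$.

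In general $\DD^2$ is central by Proposition~\ref{P:Garside}. The same computation, with $\DD^2$ in place of $\DD$, gives
\[
\DD^{2p}\DD_{S_1}^{-2p} = (\DD^2\DD_{S_1}^{-2})^p ,
\]
so $\oTheta(2p)\subseteq\oTheta(2)$. I would also note that $\DD\DD_{S_1}\inv = \alpha(\DD_{S_1})$ lies in $M$, and more generally that $\DD^p\DD_{S_1}^{-p}$ lies in $M$ by Lemma~\ref{L:DeltaDecomposition}(i). This makes the manipulations legitimate inside $M$.

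For part (ii), let $(a,b)$ be as in the statement. Since $ab$ is $\DD$-unmovable, $\DD$ does not left-divide $ab$. As $\DD\preceq_L\DD^p$, the element $\DD^p$ does not left-divide $ab$ either, so $ab$ is $\DD^p$-unmovable. If $\DD^p$ left-divided $a$, it would left-divide $ab$, which is impossible; hence $a$ is $\DD^p$-unmovable.

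For $b$ I use the relation $x\DD^p = \DD^p\Phi^p(x)$, which holds in $M$ because $\Phi(M)=M$. If $b=\DD^p b'$, then
\[
ab = a\DD^p b' = \DD^p\Phi^p(a)b',
\]
so $\DD^p$ left-divides $ab$, again a contradiction. Thus $b$ is $\DD^p$-unmovable.

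It follows that the pair $(a,b)$ lies in the domain of Definition~\ref{D:ConditionB}, and that the $\DD^p$-form of $ab$ has $t=0$ and $c=ab$. Only clause (ii) can therefore apply, and it requires
\[
\dpt(ab) = \dpt(a)+\dpt(b)-1+\varepsilon \quad\text{with } \varepsilon\in\{0,1\},
\]
that is, $\dpt(a)+\dpt(b)-\dpt(ab)\in\{0,1\}$. The hypothesis says exactly that this fails, so the pair does not satisfy Condition~$B(p)$, and consequently the covering does not satisfy it either.

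The only delicate point is the unmovability of $b$, which rests on the $\Phi$-commutation $x\DD^p=\DD^p\Phi^p(x)$. The rest is bookkeeping with the definitions.
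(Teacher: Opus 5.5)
Your proposal is correct and follows the paper's proof. In part~(i) you rewrite $(\DD^p\DD_{S_1}^{-p})^k$ as $(\DD\DD_{S_1}\inv)^{pk}$, respectively $(\DD^2\DD_{S_1}^{-2})^{pk}$, using that $\DD$, respectively $\DD^2$, is central; in part~(ii) you note that $a$, $b$ and $ab$ are $\DD^p$-unmovable, so the $\DD^p$-form of $ab$ is $ab$ itself and only clause~(ii) of Definition~\ref{D:ConditionB} can apply, and your use of $x\DD^p=\DD^p\Phi^p(x)$ to get the unmovability of $b$ supplies a justification that the paper omits.
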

    \begin{proof}
    (i) Let $a$ be in $\oTheta(p)$.
    If $a$ lies in $M_{S_1}$, then it belongs to $\oTheta(q)$ for any $q$.
    So assume this is not the case.
    Write $a = (\DD^p\DD_{S_1}^{-p})^kh$ with $h$ in $M_{S_1}$.
    If $\DD$ is central, then $a = (\DD\DD_{S_1}^{-1})^{p k}h$, and $a$ belongs to~$\oTheta(1)$.
    For  $p = 2q$, since $\DD^2$ is central, we get $a = (\DD^2\DD_{S_1}^{-2})^{q k}h$  and $a$ belongs to $\oTheta(2)$.
    (ii) Let $p\geq 1$ and assume~$(a,b)$ as in the statement.
    Since $ab$ is $\DD$-unmovable, this is also the case for $a$ and $b$.
    Elements $a$, $b$ and $ab$ are then $\DD^p$-unmovable. In particular the $\DD^p$-form of $ab$ is~$c=ab$.
    By hypothesis,  we have $\dpt(c)=\dpt(ab)\not=\dpt(a)+\dpt(b)-1+\varepsilon$ for any $\varepsilon\in\{0,1\}$,
    \end{proof}
    Note that, despite Point~(i) of Lemma~\ref{L:B1ToBj}, there is no obvious implication between Condition $B(p)$ and Condition $B(\ell\times p)$ because the $\DD^p$-form and the $\DD^{\ell\times p}$-form of the product $ab$ may be different. 
    \begin{lm}
    \label{L:Subcovering}
    Let $T$ be a proper irreducible subset of $S$. Put $T_i= S_i\cap T$ for $i = 1,2$, and assume $(T_2, T_1)$ is a proper covering of $T$.  We have:
    \begin{enumerate}
    \item  The $(S_2,S_1)$-alternating normal form and the $(T_2, T_1)$-alternating normal form of any element of $M_T$ coincide;
    \item Any element of $M_T$ is $\DD$-unmovable in $M$;
    \item $T_1$ and $T_2$ are irreducible;
    \item Assume $(a,b)$ lies in $M_T\times M_T$. If $\dpt(a)+\dpt(b) - \dpt(ab)\not\in \{0,1\}$ in $M_T$ for $(T_2,T_1)$,  then $\dpt(a)+\dpt(b) - \dpt(ab)\not\in \{0,1\}$ in $M$ for $(S_1,S_2)$.
    \end{enumerate}
    \end{lm}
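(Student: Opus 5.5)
We treat the four points in the order (iii), (ii), (i), (iv), since each later point uses the earlier ones.

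For (iii), we argue on Coxeter graphs. $S_1$ and $S_2$ are irreducible, so $\Gamma(S_1)$ and $\Gamma(S_2)$ are connected subgraphs of the tree $\Gamma$ (every graph of Figure~\ref{F:CoxDiag} is a tree), and $\Gamma(T)$ is connected. In a tree, the intersection of two connected subgraphs is connected (or empty), since the unique path between two vertices lies in both. Hence $T_i=S_i\cap T$ induces a connected subgraph. It is non-empty because $(T_2,T_1)$ is a proper covering of $T$. So $M_{T_i}$ is irreducible.

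For (ii), we use that $T$ is a proper subset of $S$, so $\supp(g)\subseteq T\neq S$ for every $g\in M_T$. Since $\DD$ is the common multiple of all atoms, $\supp(\DD)=S$. If $\DD$ left-divided $g$, we would get $S=\supp(\DD)\subseteq\supp(g)$, a contradiction.

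For (i), let $g\in M_T$ with $(T_2,T_1)$-alternating normal form $(g_n,\ldots,g_1)$. Each $g_i$ lies in $M_{T_{\pi(i)}}\subseteq M_{S_{\pi(i)}}$, so this is an $(S_2,S_1)$-alternating decomposition, and condition (iii) of Definition~\ref{D:AlternatingNormalForm} is unchanged. It remains to check the tail condition, namely $\tail{S_{\pi(i)}}{g_n\cdots g_i}=\tail{T_{\pi(i)}}{g_n\cdots g_i}$. Any right divisor of an element of $M_T$ has support in $T$ and is therefore an element of $M_T$; this follows because relations preserve support and $M_T$ is closed under divisors. So a right divisor lying in $M_{S_{\pi(i)}}$ lies in $M_{S_{\pi(i)}}\cap M_T$. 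This intersection equals $M_{S_{\pi(i)}\cap T}=M_{T_{\pi(i)}}$, using the standard fact that parabolic submonoids intersect as expected, which follows again from support considerations. Hence the two tails coincide, and uniqueness (Proposition~\ref{P:AlternatingNormalForm}) gives equality of the normal forms.

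For (iv), the key consequence of (i) is that breadths agree. Since $\dpt=\lfloor\brd/2\rfloor$ by \eqref{E:dpt:bh}, depths computed for $(T_2,T_1)$ in $M_T$ and for $(S_2,S_1)$ in $M$ agree on $a$, $b$ and $ab$, all of which lie in $M_T$. The quantity $\dpt(a)+\dpt(b)-\dpt(ab)$ is therefore the same in both settings, and (iv) follows. The only delicate step in the whole proof is the parabolic intersection $M_{S_i}\cap M_T=M_{S_i\cap T}$ used in (i), which we handle via supports: an element of $M$ with support in $X$ lies in $M_X$.
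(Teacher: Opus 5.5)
Your proof is correct and follows the paper's argument point by point: the tree structure of $\Gamma$ for (iii), $\mathrm{supp}(\Delta)=S$ for (ii), the identification of the $S_{\pi(i)}$-tail in $M$ with the $T_{\pi(i)}$-tail in $M_T$ (plus uniqueness of the normal form) for (i), and equality of breadths for (iv). The only difference is that you spell out the support arguments the paper leaves implicit, namely that $M_T$ is closed under divisors and that $M_{S_i}\cap M_T=M_{T_i}$, and you note that $T_1,T_2$ are non-empty.
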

    
    \begin{proof}
    (i) Assume $X,Y \subseteq S$. Recall from the Definition~\ref{P:ExistenceTail}, that the right $Y$-tail  in $M$ of an element $g$ in $M_X$ is the unique maximal element of $R(g,M_Y)$ for the right divisibility. Since $g$ lies in $M_X$, its $Y$-tail in $M$ belongs to $M_X$. So this is also its $(Y\cap X)$-tail in $M_X$. Point (i) follows.
    (ii) Every representative word of an element of $M_T$ is written over $T\subsetneq S$, so it cannot be divisible by $\Delta$, since the support of the latter is $S$. 
    (iii) As $S$ is irreducible and of spherical type, its associated Coxeter $\Gamma$ graph has no cycle (see Figure~\ref{F:CoxDiag}).
    So the intersection of two connected subgraphs of $\Gamma$ is a connected subgraph and (iii) holds.
    (iv)  By (i) $a$, $b$ and $ab$ have the same alternating normal form for $(S_2,S_1)$ and for $(T_2,T_1)$. Then for each of them, the depths in $M$ and in $M_T$ are equal.
    \end{proof}
    \begin{prp}[First criteria]   \label{P:ConditionB:Induction} Let $T$ be a proper irreducible subset of $S$. Put $T_i= S_i\cap T$ for $i = 1,2$, and assume $(T_2, T_1)$ is a proper covering of $T$. Assume their exists $(a,b)$ in $M_T\times M_T\setminus (\oTheta_{S_2,S_1}(p)\times\oTheta_{S_2,S_1}(p))$ such that $\dpt(a)+\dpt(b) - \dpt(ab)\not\in \{0,1\}$ in $M_T$ relatively to~$(T_2,T_1)$. Then the covering $(S_2,S_1)$ does not satisfy Condition~$B(p)$.
    \end{prp}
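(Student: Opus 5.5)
Fix a pair $(a,b)$ as in the statement. The plan is to show that $(a,b)$, viewed in $M$, is a pair violating Condition~$B(p)$. The natural route is Lemma~\ref{L:B1ToBj}(ii). That lemma needs three things: $(a,b)$ lies in $M\times M\setminus(\oTheta(p)\times\oTheta(p))$, the product $ab$ is $\DD$-unmovable, and $\dpt(a)+\dpt(b)-\dpt(ab)\notin\{0,1\}$, with depths computed in $M$ for $(S_2,S_1)$.

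The first requirement is part of the hypothesis. For the second, observe that $a$ and $b$ lie in $M_T$, so $ab$ lies in $M_T$. Lemma~\ref{L:Subcovering}(ii) then says $ab$ is $\DD$-unmovable in $M$, since every representative word of $ab$ has support in $T\subsetneq S$ while $\DD$ has support $S$. For the third, the depth inequality in $M_T$ for $(T_2,T_1)$ transfers to $M$ for $(S_2,S_1)$ by Lemma~\ref{L:Subcovering}(iv). The underlying reason is Lemma~\ref{L:Subcovering}(i): the $(S_2,S_1)$- and $(T_2,T_1)$-alternating normal forms of $a$, $b$ and $ab$ coincide, so their breadths and hence their depths agree.

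With these three facts, Lemma~\ref{L:B1ToBj}(ii) shows that $a$, $b$ and $ab$ are $\DD^p$-unmovable. Hence the $\DD^p$-form of $ab$ is $\DD^{0}\cdot ab$, which puts us in case (ii) of Definition~\ref{D:ConditionB}, and that case fails because $\dpt(ab)\neq\dpt(a)+\dpt(b)-1+\varepsilon$ for both $\varepsilon\in\{0,1\}$. So $(a,b)$ does not satisfy Condition~$B(p)$, and therefore neither does the covering $(S_2,S_1)$.

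The only delicate point is Lemma~\ref{L:Subcovering}(i), namely that tails computed in $M$ of elements of $M_T$ agree with the tails computed in $M_T$. One has to check that the $S_i$-tail in $M$ of an element of $M_T$ lies in $M_T$, hence in $M_{T_i}$. This holds because divisors of an element of $M_T$ have support in $T$, since relations preserve support. I would also verify that condition (iii) of Definition~\ref{D:AlternatingNormalForm} (nontrivial last entry) is the same in both settings, which holds because $T_1=S_1\cap T$ keeps the indexing parity identical. Beyond this, the proof is a direct assembly of the cited lemmas.
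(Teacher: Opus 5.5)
Your proposal is correct and follows the paper's proof essentially verbatim. The $\oTheta$ condition comes from the hypothesis, $\DD$-unmovability of $ab$ from Lemma~\ref{L:Subcovering}(ii), the transfer of the depth defect from $(T_2,T_1)$ to $(S_2,S_1)$ from Lemma~\ref{L:Subcovering}(iv), and the conclusion from Lemma~\ref{L:B1ToBj}(ii); your extra verification of Lemma~\ref{L:Subcovering}(i) via support preservation is sound but not needed beyond citing the lemma.
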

    \begin{proof} By assumption, $(a,b)$ lies in $M\times M\setminus (\oTheta_{S_2,S_1}(p)\times\oTheta_{S_2,S_1}(p))$. Also, $a$, $b$ and $ab$ are $\Delta$-unmovable in $M$, by Lemma~\ref{L:Subcovering}(ii). By  Lemma~\ref{L:Subcovering}(iv),  $\dpt(a)+\dpt(b) - \dpt(ab)\not\in \{0,1\}$ in $M$ relatively to $(S_2,S_1)$. We conclude with Lemma~\ref{L:B1ToBj}(ii).
    \end{proof}
    In order to apply Proposition~\ref{P:ConditionB:Induction}, we need to identify some specific pairs $(a,b)$ in some well suited irreducible subset~$T$ of $S$ that satisfy its hypotheses. This is the objective of the two following parts.
 
   \begin{prp}[Second criteria] \label{P:ConditionB:Crit:TypeB}  Assume $M$ is an irreducible Artin monoid  of spherical type with generating set $S$ such that $\DD$ is central,  and $(S_2,S_1)$ a proper irreducible  covering of $S$ so that $S_1\cap S_2\neq\emptyset$.
    Let $(g_n,\ldots,g_1)$ be the alternating normal form of $\DD$. If $g_n\not\in M_{S_2\setminus S_1}$, then Condition~$B(p)$ does not hold whatever $p\geq 1$.
    \end{prp}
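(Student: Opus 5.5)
The plan is to exhibit one explicit pair $(a,b)$ with $ab$ a power of $\DD$ and to show that its depths violate the equalities of Definition~\ref{D:ConditionB}. Since $\DD$ is central, the covering is $\DD$-fixed. Hence by Lemma~\ref{L:DeltaBreathForFixedCovering} the breadth $n$ of $\DD$ is even, and $g_n\in M_{S_2}$. By Lemma~\ref{L:Breadth2Reducible}, $n\geq 4$.

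\emph{Choice of $a$ and $b$.} The hypothesis $g_n\notin M_{S_2\setminus S_1}$ means that some letter $s\in S_1\cap S_2$ occurs in $g_n$. I will fix a representative word of $g_n$ and the first occurrence of such a letter, and write $g_n = x\,s\,y$ with $x\in M_{S_2\setminus S_1}$. Then I set
\[
a = x\quad\text{or, if } x=1,\quad a = x\,s\,y',
\]
where $y'$ is chosen so that $a\in M_{S_2}\setminus M_{S_1}$. I take $b=\alpha$-type complement of $a$ in $\DD$, that is, $b=a\inv\DD = (\text{rest of } g_n)\,g_{n-1}\cdots g_1$. In both cases $a$ and $b$ are proper divisors of $\DD$, hence $\DD^p$-unmovable.

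\emph{Excluding $\oTheta(p)$.} I must check that $(a,b)\notin\oTheta(p)\times\oTheta(p)$. For $a$ this is immediate: $a\in M_{S_2}\setminus M_{S_1}$ is a proper divisor of $\DD$. Since $\DD\DD_{S_1}^{-1}$ already has support containing $S\setminus S_1$ and $a\ne \DD\DD_{S_1}^{-1}$, we get $a\notin\Theta(p)$, and clearly $a\notin M_{S_1}$.

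\emph{Depth computation.} This is the key step. I claim $\dpt(a)=1$ and $\dpt(b)=\dpt(\DD)=n/2$. For $b$, the idea is that $b$ starts with the letter $s\in S_1\cap S_2$, while its remaining factors $g_{n-1},\dots,g_1$ keep their tails. I will recompute the alternating normal form of $b$ from the bottom with Lemma~\ref{L:Tail} and the chain arguments of Lemma~\ref{L:ProductANF}, using $\SRD{S}{g_{n-1}}$ and Proposition~\ref{P:ChainDiv}. The conclusion I aim for is that $\ND{b}=(s y\,\cdot, g_{n-1},\dots,g_1)$, still with $n$ entries: the $S_2$-entry $sy$ cannot be absorbed into $g_{n-1}$, because $g_{n-1}$ is already the full $S_1$-tail. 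Hence
\[
\dpt(a)+\dpt(b)=\dpt(\DD)+1 .
\]
I expect this to be the main obstacle. In particular I must make sure that removing $x$ does not shorten the normal form. If needed, I will instead move the cut so that $a$ absorbs everything of $g_n$ before the last occurrence of a letter of $S_1\cap S_2$.

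\emph{Contradiction with Condition~$B(p)$.} For $p=1$, the $\DD$-form of $ab$ is $\DD\cdot 1$, so $t=1$ and $c=1\in M_{S_1}$. Condition~(iii) then forces $\varepsilon=0$ in (i), that is $\dpt(\DD)=\dpt(a)+\dpt(b)$, which contradicts the equality above. For $p\geq 2$, we have $t=0$ and $c=\DD$, and (ii) requires $\dpt(a)+\dpt(b)-\dpt(\DD)\in\{0,1\}$ with $\varepsilon=0$ imposed. That is not available directly, so I would replace $b$ by $b\,\DD^{p-1}$-type elements only if they stay unmovable. The alternative is to choose the cut so that the excess is $2$, by also splitting the last $S_1$-letter on the right side symmetrically using $g_1=\DD_{S_1}$ (Lemma~\ref{L:DeltaAlternatingNormalForm}). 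The second variant is the one I would try first for $p\geq 2$.
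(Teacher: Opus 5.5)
Your argument for $p=1$ is correct, and simpler than the paper's. The step you flag as the main obstacle is easy. Since $sy\,g_{n-1}\cdots g_i$ right-divides $g_n\cdots g_i$, its $S_{\pi(i)}$-tail right-divides $g_i$: a tail is the greatest element of a finite set, so tails are monotone for $\preceq_R$. Conversely, $g_i$ right-divides $sy\,g_{n-1}\cdots g_i$. Hence $\ND{b}=(sy,g_{n-1},\ldots,g_1)$ and $\dpt(a)+\dpt(b)=\dpt(\DD)+1$, so (i) fails for the $\DD$-form $\DD\cdot 1$ of $ab$. Your justification of $a\notin\Theta(p)$ does not work as written, because a support containing $S\setminus S_1$ says nothing against an element of $M_{S_2}$. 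Argue as the paper does: every element of $\Theta(p)$ has the breadth of some $\DD^{pk}$, which is even and different from $2$, while $\brd(a)=2$.

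The genuine gap is $p\geq 2$, which the statement covers. There $ab=\DD$ is $\DD^p$-unmovable, so $t=0$ and $c=\DD\notin M_{S_1}$, and (ii) holds with $\varepsilon=0$. Your pair therefore satisfies $B(p)$ and gives no contradiction. Your preferred ``second variant'' is not an argument: nothing in it produces an excess of $2$, and manipulating $g_1=\DD_{S_1}$ cannot help, since entries in $M_{S_1}$ do not count in the depth.

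Your first variant can be completed. As $\DD$ is central, $b'=b\,\DD^{p-1}=x^{-1}\DD^p$ is a proper divisor of $\DD^p$, hence $\DD^p$-unmovable, and $ab'=\DD^p$ has $\DD^p$-form $\DD^p\cdot 1$. The same tail argument gives $\dpt(b')=\dpt(\DD^p)$, provided $x$ is a proper left divisor of the top entry of $\ND{\DD^p}$. This holds because that top entry is still $g_n$ when $n\geq 4$, by the proof of Proposition~\ref{P:LeftOfPowerDeltaBoxMinus}. That proposition applies since $S_2\boxminus S_1=\{a\}$ is a singleton, as in the paper.

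The paper instead avoids powers of $\DD$ altogether. It pairs $g_{n+1}g$ with $ah'_n$ (your $x$), where $g=(ah'_n)^{-1}\DD\DD_{S_1\setminus S_2}^{-1}$ and $g_{n+1}=ac_1^2\cdots c_m^2c^2c_m^2\cdots c_2^2c_1$ is a padding factor. The breadths are $n+2$, $2$ and $n$, and the product is $\DD$-unmovable. The resulting excess of $2$ violates (ii) for every $p$ at once, through Lemma~\ref{L:B1ToBj}.
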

    \begin{proof}Fix  $a,c, c_1,\ldots, c_m$, with $m\geq 1$, as  follow:
    \[
    \begin{tikzpicture}[x=20pt,y=20pt]
    \draw[dotted](0,0) -- (7.5,0);
    \draw(-1,0.45) node[left,] {$S_2$};
    \draw((8,0.35)  node[right] {$S_1$};
    \fill (1.5,0) circle (0.1) node[below=2pt]{$a$};
    \fill (3,0) circle (0.1) node[below right=2pt]{$c_1$};
    \fill (5,0) circle (0.1) node[below left=2pt]{$c_m$};
    \fill (6.5,0) circle (0.1) node[below=2pt]{$c$};
    \draw(1.5,0) -- (3,0);
    \draw(5,0) -- (6.5,0);
    \node[ellipse,
        draw = black,
        minimum width = 3.75cm, 
        minimum height = 1.2cm] (e) at (5.25,0) {};
    \node[ellipse,
    dashed,
        draw = black,
        minimum width = 4.5cm, 
        minimum height = 1.2cm] (e) at (2.25,0) {};    
    \end{tikzpicture}
    \]
    
    As $\DD$ is central, the covering~$(S_2, S_1)$ is $\DD$-fixed.
    From  Lemma~\ref{L:DeltaBreathForFixedCovering} and Lemma~\ref{L:Breadth2Reducible} we obtain that~$n$ is even and different from $2$.
    By Lemma~\ref{L:LeftPowerDelta} we have $\emptyset \not = \SLDA{S}{g}{*}=\SLD{S}{g_n} \subseteq S_2\boxminus S_1 =\{a\}$ and so $\SLD{S}{g_n} = \{a\}$.
    Since $n\geq 4$, we get that $a$ has also to be the unique right divisor of $g_n$ in $S$.
    By assumption, $g_n$ does not lie in $M_{S_2\setminus S_1}$, so we can write $g_n = ah'_nbh_na$ with $ah'_n$ in $M_{S_2\setminus S_1}$, $b$ in $S_2\cap S_1$ and $h_na$ in~$M_{S_2}$.
    We must have $b=c_1$.
    Indeed, if $b=c_i$ for $i>1$ then it commutes with $ah'_n$ implying~$b\in\SLD{S}{g_n}$, which is impossible.
    Let $g = (ah'_n)^{-1}\Delta\Delta_{S_1\setminus S_2}^{-1}$ in $M$.
    Since $\Delta$ is central and $\{a\}\cup (S_1\setminus S_2)$ is not irreducible, we have $gah_n' = ah_n'g$ and $[g] = (c_1h_na,g_{n-1},\ldots,g_2,\Delta_{S_1}\Delta_{S_1\setminus S_2}^{-1})$.
    Define $g_{n+1}$ to be the braid $ac_1^2\cdots c_m^2c^2c_m^2\cdots c_2^2c_1$.
    Then
    \begin{equation}
    \label{E:P:critB:1}
     (a,c_1^2\cdots c_m^2c^2,c_m^2\cdots c_1^2h_na,g_{n-1},\ldots,g_2,\Delta_{S_1}\Delta_{S_1\setminus S_2}^{-1})
     \end{equation}
    is an alternating decomposition of $g_{n+1}g$.
    We claim that, under the assumption of the statement, this is the alternating normal form of $g_{n+1}g$.
    By construction $(c_1h_na,g_{n-1},\ldots,g_2,\Delta_{S_1}\Delta_{S_1\setminus S_2}^{-1})$ is a normal form, and it is immediate to check that $(a,c_1^2\cdots c_m^2c^2,c_m^2\cdots c_1^2h_na,1)$ is a normal form too.
    Hence, the only possibilty for \eqref{E:P:critB:1} not to be a normal form is that for some index $i\in \{n-1,\ldots,1\}$ there exists $b'\in S_{\pi(i)}$ that right divides $ac_1^2\cdots c_m^2c^2c_m^2\cdots c_1^2h_nag_{n-1}\cdots g_{i+1}$ but do not right divides $c_1h_nag_{n-1}\cdots g_{i+1}$.
    By Proposition~\ref{P:ChainDiv}, the braid $c_1h_nag_{n-1}\cdots g_{i+1}$ would be represented by a left $(b'',b')$-chain with $b''$ right dividing $ac_1^2\cdots c_m^2c^2c_m^2\cdots c_1$.
    This would imposes $b'' = c_1$.
    Since $c_1$ left divides $c_1h_nag_{n-1}\cdots g_{i+1}$, the latter cannot be represented by a right $(c_1,b')$-chain by Proposition~\ref{P:ChainDiv}, and so, nor a left $(c_1,b')$-chain by~Proposition~\ref{P:ChainRev}.
    We have a contradiction and then \eqref{E:P:critB:1} is the alternating normal form of $g_{n+1}g$.
    
    On the other hand, let $g'_1\in M_{S_1}$ such that  $\Delta_{S_1} c_1^2\cdots c_m^2c^2c_m^2\cdots c_2^2c_1= g'_1 \Delta_{S_1}$.
    We compute
    \begin{align*}
      g_{n+1}\cdot g\cdot ah'_n &=  g_{n+1} \Delta\Delta_{S_1\setminus S_2}^{-1} = ac_1^2\cdots c_m^2c^2c_m^2\cdots c_2^2c_1\Delta\Delta_{S_1\setminus S_2}^{-1} \\
      & = a \Delta c_1^2\cdots c_m^2c^2c_m^2\cdots c_2^2c_1\Delta_{S_1\setminus S_2}^{-1} &\text{since $\DD$ is central}\\
      & = a \Delta\Delta_{S_1}^{-1}\Delta_{S_1} c_1^2\cdots c_m^2c^2c_m^2\cdots c_2^2c_1\Delta_{S_1\setminus S_2}^{-1} \\
      & =  a \Delta\Delta_{S_1}^{-1}g'_1 \Delta_{S_1}\Delta_{S_1\setminus S_2}^{-1} & \text{by construction of $g_1$} \\
      & = ag_n\cdots g_2g'_1 \Delta_{S_1}\Delta_{S_1\setminus S_2}^{-1} & \text{since $g_1 = \DD_{S_1}$.}
     \end{align*}
    We deduce that $[g_{n+1}gah'_n] = (ag_n,\ldots,g_3,g_2,g'_1\Delta_{S_1}\Delta_{S_1\setminus S_2}^{-1})$.  So, $\brd(g_{n+1}g) = n+2 $,  $\brd(ah'_n) = 2$ and $\brd(g_{n+1}gah'_n) = n$.
    We get $\dpt(g_{n+1}g)+\dpt(ah'_n) =n/2 +2$ and  $\dpt(g_{n+1}gah'_n) = n/2$.
    
    Assume, for a contradiction that $ah'_n$ belongs to $\oTheta(1)$.
    Since $ah'_n$ has breadth $2$ we have $ah'_n\not\in M_{S_1}$ and so it exists $k\geq 1$ and $h\in M_{S_1}$ such that $ah'_n=(\DD\,\DD_{S_1}\inv)^k h$. 
    And so, as $\DD$ is central, $ah'_n=\DD^k\DD_{S_1}^{-k} h$. 
    Since $\DD_{S_1}^k$ and $h$ belongs to $S_1$, the braid $ah'_n$ must have the same breadth than $\DD^k$.
    Using Lemma~\ref{L:Breadth2Reducible} we obtain a contradiction with $\brd(ah'_n) = 2$.
    It follows that $(g_{n+1}g,ah'_n)$ lies on $M\times M\setminus (\oTheta(1)\times \oTheta(1))$
    
    Let $p\geq 1$. 
    Since $\DD$ is central, Lemma~\ref{L:B1ToBj} gives $(g_{n+1}g,ah'_n)$ lies on $M\times M\setminus (\oTheta(p)\times \oTheta(p))$.
    Let us prove that $ah'_n, g_{n+1}g$ and $g_{n+1}gah'_n$ are $\DD$-unmovable.
    This is clear for $ah'_n$. For $g_{n+1}g$   one can deduce it from its alternating normal decompositions: its first term is not right divisible by $\DD_{S_1}$.
    For $g_{n+1}gah'_n$, we have
    \[
     g_{n+1}gah'_n = ac_1^2\cdots c_m^2c^2c_m^2\cdots c_2^2c_1\Delta\Delta_{S_1\setminus S_2}^{-1} =  ac_1^2\cdots c_m^2c^2c_m^2\cdots c_2^2c_1\Delta_{S_1\setminus S_2}^{-1} \Delta
    \]
    The word $ac_1^2\cdots c_m^2c^2c_m^2\cdots c_2^2c_1$ is alone in its equivalence class.
    In particular it is not a right $(c,c)$-chain.
    As $\Delta_{S_1\setminus S_2}^{-1} \Delta$ is not left divisible by $c$, Proposition~\ref{P:ChainDiv} implies that $g_{n+1}gah'_n$ is not left divisible by $c$ and so it is $\DD$-unmovable. 
    Applying Lemma~\ref{L:B1ToBj}, we conclude that  Condition~$B(p)$ does not hold.
    \end{proof}

    \subsection{Induction from type $A$}
    
    In this section we will construct some families of braids $(a_m,b_m)$ of type~$A_r$ with $r\geq 3$ that do not satisfy Condition~$B(p)$.  We use it in the next sections (see Proposition~\ref{P:ConditionB:Induction} for instance).   In the interest of clarity, we define the following notations for $1\leq \ii \leq \jj \leq r$:
    \begin{equation}
    \label{E:sigij}
    \begin{split}
      \siginc\ii\jj = \sig\ii\,\sig\iip \cdots \sig\jj\quad&\text{and}\quad\sigdec\jj\ii = \sig\jj\,\sig\jjo \cdots \sig\ii,\\
      \sigincsq\ii\jj = \sig\ii^2\,\sig\iip^2 \cdots \sig\jj^2\quad&\text{and}\quad\sigdecsq\jj\ii = \sig\jj^2\,\sig\jjo^2 \cdots \sig\ii^2.
    \end{split}
     \end{equation}
    The words that appear in~\eqref{E:sigij} are alone in their equivalence classes and the following relations are immediate.
    \begin{align}
    \label{E:A:PC:INC}
    \siginc\ii\jj \, \sig\kk &\equiv \sig\kkp \, \siginc\ii\jj & \text{for any $1\leq\ii \leq \kk < \jj \leq r$};\\
    \label{E:A:PC:DEC}
    \sigdec\jj\ii\,\sig\kk &\equiv \sig\kko \,\sigdec\jj\ii & \text{for any $1\leq\ii < \kk \leq \jj \leq r$}.
    \end{align}

    \begin{lm}
    \label{L:SigDecLeftChain}
    Let $1\leq i\leq j$ and $1\leq t \leq r$ be integers. Then the word 
    \begin{equation}
    	\label{Sigdec:chain}
    	\text{$\sigdec\jj\ii$ is }
    	\begin{cases}
    		\text{a left $(\sig{t-1},\sig{t})$-chain} & \text{for $i < t \leq j$,} \\ \text{a left $(\sig{t},\sig{t})$-chain} & \text{for $t < i$ or $j < t$.}
    	\end{cases}
    \end{equation}	
    In particular $\sig\ii$ is the unique generator that right divides $\overline{\sigdec\jj\ii}$.
    \end{lm}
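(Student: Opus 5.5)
We argue by induction on the length $j-i+1$ of the word $\sigdec\jj\ii=\sig\jj\sig\jjo\cdots\sig\ii$, exactly in the spirit of the proof of Proposition~\ref{P:SuppChain}. Throughout, Lemma~\ref{L:ChainComposition} lets us glue chains. It suffices to understand each one-letter factor $\sig k$ as a left chain: if $|k-t|\geq 2$, then $\sig k$ is a left $(\sig t,\sig t)$-chain by Proposition~\ref{P:SuppChain}. If $k=t-1$ or $k=t+1$, then Lemma~\ref{L:LcmToChain}(ii) applies with $m=3$: the proper suffix $\sig k$ of $f_L(\sig k,\sig t)\,\sig k=\sig t\sig k\sig t\cdot$ (up to the complement convention) is a left $(\sig t,\cdot)$-chain ending with the reversing letter $\sig k$. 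This gives the elementary relation $\sig k\sig t^{-1}\curvearrowright_L \sig k^{-1}\sig t^{-1}\cdots$, so $\sig k$ is a left $(\sig k,\sig t)$-chain.

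For the first case, $i<t\leq j$, we split $\sigdec\jj\ii=\sigdec\jj{t+1}\cdot\sig t\sig{t-1}\cdot\sigdec{t-2}\ii$, with empty factors allowed. The right factor $\sigdec{t-2}\ii$ avoids $\sig t$, so it is a left $(\sig t,\sig t)$-chain. The middle factor $\sig t\sig{t-1}$ is a proper suffix of $\sig{t-1}\sig t\sig{t-1}=f_L(\sig{t-1},\sig t)\sig{t-1}$ beginning with $\sig t$, so by Lemma~\ref{L:LcmToChain}(ii) it is a left $(\sig{t-1},\sig t)$-chain. The left factor $\sigdec\jj{t+1}$ does not contain $\sig{t-1}$, so it is a left $(\sig{t-1},\sig{t-1})$-chain by Proposition~\ref{P:SuppChain}. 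Composing the three factors with Lemma~\ref{L:ChainComposition} yields a left $(\sig{t-1},\sig t)$-chain.

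For the second case, $t<i$ or $j<t$, we first treat the situation where $\sig t$ commutes with every letter, that is $t<i-1$ or $t>j+1$. Then $\sig t\notin\supp$ and Proposition~\ref{P:SuppChain} concludes directly. The delicate situation is $t=i-1$ or $t=j+1$. For $t=j+1$, we write $\sigdec\jj\ii=\sig j\cdot\sigdec{j-1}\ii$. The factor $\sigdec{j-1}\ii$ avoids $\sig{j+1}$ and is a $(\sig{j+1},\sig{j+1})$-chain. The factor $\sig j$ is a left $(\sig j,\sig{j+1})$-chain by the elementary relation, so the whole word is a left $(\sig j,\sig{j+1})$-chain. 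Matching the stated form therefore requires care, and here I will recompute the reversing $\sig j\,\sig{j+1}^{-1}\curvearrowright_L (\sig{j+1}\sig j)^{-1}\,\sig j\sig{j+1}$. Its denominator ends with the letter $\sig j$ adjacent to the numerator, but I will read the chain convention of Definition~\ref{D:Chain} (last letter of $\denL$) carefully: $\denL=\sig{j+1}\sig j$, written as a word, so $\denL(u\sig t^{-1})$ "ends with" must be interpreted through the diagram convention where the letter next to the origin is $\sig{j+1}$. Under that reading, $\sig j$ is a $(\sig{j+1},\sig{j+1})$-chain. I expect this convention check to be the main obstacle, and I will settle it by redoing Example~\ref{E:Reversing1} and Proposition~\ref{P:SuppChain}'s diagram with the same orientation. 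The case $t=i-1$ is analogous, with the last letter $\sig i$ playing the role of $\sig{t+1}$.

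Finally, the uniqueness claim follows from Proposition~\ref{P:ChainDiv}(iii). For $t\neq i$, the word is a left $(\cdot,\sig t)$-chain, so $\sig t$ does not right divide $\overline{\sigdec\jj\ii}$. Meanwhile, $\sig i$ visibly right divides it, and the word is alone in its class.
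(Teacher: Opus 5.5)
Your case $i<t\le j$ is exactly the paper's argument: the same factorisation $\sigdec{j}{t+1}\cdot\sig{t}\sig{t-1}\cdot\sigdec{t-2}{i}$, the same use of Lemma~\ref{L:LcmToChain}(ii), Proposition~\ref{P:SuppChain} and Lemma~\ref{L:ChainComposition}. The final deduction via Proposition~\ref{P:ChainDiv}(iii) is also fine. The gap is the case $t\in\{i-1,j+1\}$, which you do not actually prove.

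You single out that case because you read Proposition~\ref{P:SuppChain} as requiring $\sig{t}$ to commute with the letters of the word. It does not: its only hypothesis is $s\notin\supp(u)$. In its base case, $f_L(t',s)=\cdots s\,t'\,s$ ends with $s$ for every $t'\neq s$, whatever $m(t',s)$ is. Since $\supp(\sigdec{j}{i})=\{\sig{i},\ldots,\sig{j}\}$, all $t<i$ and all $t>j$ are settled at once by that proposition. This is the paper's entire argument for the second case.

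The substitute argument you give for the adjacent case rests on a false claim. For $|k-t|=1$ one has $f_L(\sig{k},\sig{t})=\sig{k}\sig{t}$, hence $\sig{k}\,\sig{t}\inv\curvearrowright_L(\sig{k}\sig{t})\inv\,\sig{t}\sig{k}$. So $\denL(\sig{k}\sig{t}\inv)=\sig{k}\sig{t}$ ends with $\sig{t}$, and the letter $\sig{k}$ is a left $(\sig{t},\sig{t})$-chain, not a left $(\sig{k},\sig{t})$-chain. Lemma~\ref{L:LcmToChain}(ii) says the same thing: $\sig{k}$ is a proper suffix of $f_L(\sig{k},\sig{t})\sig{k}=\sig{k}\sig{t}\sig{k}$ (not of $\sig{t}\sig{k}\sig{t}$), and its first letter is $\sig{k}$.

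Your recomputation $(\sig{j+1}\sig{j})\inv\sig{j}\sig{j+1}$ is also wrong; it is not even equivalent to $\sig{j}\sig{j+1}\inv$ (compare their images in the symmetric group). No reinterpretation of Definition~\ref{D:Chain} is needed, and none is legitimate. With the correct reversing, the last letter of $\denL$ (the arrow entering the origin of $u$ in the diagram) is $\sig{j+1}$, exactly as the lemma asserts.

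As written, your argument derives the opposite conclusion (a left $(\sig{j},\sig{j+1})$-chain) and then defers the resolution. Replacing it with a direct application of Proposition~\ref{P:SuppChain} to the whole word closes the gap.
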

    
    \begin{proof}
    If $t<i$ or $t>j$ we conclude using Proposition~\ref{P:SuppChain}.
    For $t\in\{i+1,\ldots,j\}$, we conclude with the following left chain diagram based on Lemma~\ref{L:LcmToChain} and Proposition~\ref{P:SuppChain}:
    \[
     \begin{tikzpicture}
    	\draw[-latex](-0.5,0) -- (1,0) node[midway, below]{$\sigdec{j}{t+1}$};
    	\draw[-latex](1,0) -- (2.5,0) node[midway, below]{$\sig{t}\sig{t-1}$};
    	\draw[-latex](2.5,0) -- (4,0) node[midway, below]{$\sigdec{t-2}i$};
    	\draw[-latex](-0.5,0.75) -- (-0.5,0) node[midway, left]{$\sig{t-1}$};
    	\draw[-latex](1,0.75) -- (1,0) node[midway, right]{$\sig{t-1}$};
    	\draw[-latex](2.5,0.75) -- (2.5,0) node[midway, right]{$\sig{t}$};
    	\draw[-latex](4,0.75) -- (4,0) node[midway, right]{$\sig{t}$};
    	\draw[gray,latex-](-0.25,0.7) -- (0.75,0.7);
    	\draw[gray,latex-](1.25,0.7) -- (2.25,0.7);
    	\draw[gray,latex-](2.75,0.7) -- (3.75,0.7);
    \end{tikzpicture}
    \qedhere
    \]
    \end{proof}
    
    The following lemma gives a more precise statement whenever $t=i-1$.
    \begin{lm}
    	\label{L:SigDec:LeftReverse}
    	For $2\leq i \leq j\leq r$, we have $\sigdec\jj\ii\cdot\sig\iio\inv\curvearrowright_L (\sigdec\jj\iio)\inv\cdot u$ with $u\equiv \sigdec{\jj-1}\iio\cdot \sigdec{\jj}\ii$	
    \end{lm}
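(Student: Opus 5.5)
We argue by induction on $j-i\geq 0$, carrying out the left reversing of $\sigdec\jj\ii\cdot\sig\iio\inv$ letter by letter from the right; only two kinds of elementary steps occur. For a non-commuting pair $\sig\kk,\sig\kko$ in type $A$ the relevant relation is $f_L(\sig\kk,\sig\kko)=\sig\kk\sig\kko$ and $f_L(\sig\kko,\sig\kk)=\sig\kko\sig\kk$, so that
\[
\sig\kk\,\sig\kko\inv \curvearrowright_L (\sig\kk\sig\kko)\inv\,\sig\kko\sig\kk .
\]
For commuting letters the step is simply $x\,y\inv\curvearrowright_L y\inv x$.

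\textbf{Base case $i=j$.} The word is $\sig\ii\sig\iio\inv$, which left reverses in one step to $(\sig\ii\sig\iio)\inv\sig\iio\sig\ii$. We have $\sig\ii\sig\iio=\sigdec\ii\iio$, and the numerator is $\sigdec{i-1}{i-1}\cdot\sigdec ii$, as claimed.

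\textbf{Induction step $j>i$.} Write $\sigdec\jj\ii=\sig\jj\cdot\sigdec{j-1}\ii$.

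First, apply the induction hypothesis to the suffix, replacing $\sigdec{j-1}\ii\,\sig\iio\inv$ by $(\sigdec{j-1}\iio)\inv u'$ with $u'\equiv\sigdec{j-2}\iio\,\sigdec{j-1}\ii$. It remains to left reverse $\sig\jj\,(\sigdec{j-1}\iio)\inv$.

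This is $\sig\jj\,\sig\iio\inv\sig\ii\inv\cdots\sig{j-1}\inv$, and we process the negative letters from the left. The letter $\sig\jj$ commutes with $\sig\iio,\dots,\sig{j-2}$, so it passes through $\sig\iio\inv\cdots\sig{j-2}\inv$ unchanged. It then meets $\sig{j-1}\inv$, and the displayed rule gives $(\sig\jj\sig{j-1})\inv\sig{j-1}\sig\jj$. The result is
\[
\sig\jj\,(\sigdec{j-1}\iio)\inv\curvearrowright_L \sig\iio\inv\cdots\sig{j-2}\inv\,\sig{j-1}\inv\sig\jj\inv\,\sig{j-1}\sig\jj\,u'.
\]
Here $\sig{j-1}\sig\jj$ has been placed to the left of $u'$. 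By uniqueness of the output of left reversing in a Garside monoid, we may reorder these steps freely.

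The denominator assembles to $(\sig\jj\sig{j-1}\sig{j-2}\cdots\sig\iio)\inv=(\sigdec\jj\iio)\inv$, as required. The numerator is $u=\sig{j-1}\sig\jj\,\sigdec{j-2}\iio\,\sigdec{j-1}\ii$.

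\textbf{Identifying the numerator.} This is the step I expect to be the main obstacle, since the order of the letters must be tracked carefully. Because $\sig\jj$ commutes with $\sigdec{j-2}\iio$, we get
\[
u\equiv\sig{j-1}\,\sigdec{j-2}\iio\,\sig\jj\,\sigdec{j-1}\ii=\sigdec{j-1}\iio\cdot\sigdec\jj\ii,
\]
which is the claimed form. If the reversing order above produces a different but equivalent word, the conclusion is unaffected, because the statement only asserts $u\equiv\cdots$.

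Finally, the denominator is exactly $\sigdec\jj\iio$ as a word. Comparing the lengths of the element $\overline{\sigdec\jj\ii}\vee_L\sig\iio$ computed via Proposition~\ref{P:ReversingLcm}, namely $(j-i+2)+(j-i+1)=(j-i+1)+(j-i+2)$, serves as a consistency check on the bookkeeping.
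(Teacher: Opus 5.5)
Your proof is correct and is essentially the paper's own argument. It runs by induction on $j-i$: the base case is the single braid-relation step $\sig\ii\sig\iio\inv\curvearrowright_L(\sig\ii\sig\iio)\inv\sig\iio\sig\ii$, and the inductive step applies the hypothesis to $\sigdec{j-1}\ii\,\sig\iio\inv$, reverses $\sig\jj$ through $(\sigdec{j-2}\iio)\inv$ by commutation and against $\sig{j-1}\inv$ by the braid relation, then commutes $\sig\jj$ past $\sigdec{j-2}\iio$ to identify the numerator. Two small points are cosmetic: the appeal to uniqueness of reversing is unnecessary, since the steps you list already form a valid reversing sequence, and $u=\sig{j-1}\sig\jj\,\sigdec{j-2}\iio\,\sigdec{j-1}\ii$ should read $\equiv$.
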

    
    \begin{proof}
    	By induction on $\jj-\ii$. For $\ii=\jj$, we have $\sigdec\ii\ii = \sig\ii$,  $\sigdec\ii{\ii-1} = \sig\ii\sig{\ii-1}$    and the left reversing diagram
    	\[
    	\begin{tikzpicture}
    		\draw[-latex](0,0) -- (1.6,0) node[midway, below]{\small$\sig\ii$};
    		\draw[-latex](1.6,1) -- (1.6,0) node[midway, right]{\small$\sig\iio$};
    		\draw[-latex](0,1) -- (0,0.5) node[midway, left]{\small$\sig\ii$};
    		\draw[-latex](0,0.5) -- (0,0) node[midway, left]{\small$\sig\iio$};
    		\draw[-latex](0,1) -- (0.8,1) node[midway, above]{\small$\sig\iio$};
    		\draw[-latex](0.8,1) -- (1.6,1) node[midway, above]{\small$\sig\ii$};
    	\end{tikzpicture}
    	\]
    	and the word  $\sig\iio\sig\ii$ is $\sigdec\iio\iio\cdot \sigdec\ii\ii $.  
    	
  	Assume $\ii<\jj$. we have $\sigdec{j}\ii = \sig\jj\sigdec{j-1}\ii$  and  $\sigdec{j-1}\iio = \sig{\jj-1}\sigdec{j-2}\iio$. By  the induction hypohesis  there exists a word $v$ such that $v\equiv \sigdec{\jj-2}\iio\cdot \sigdec{\jjo}\ii $ with  the left reversing diagram 
    	\[
    	\begin{tikzpicture}
    		\draw[-latex](0,0) -- (1.6,0) node[midway, below]{\small$\sig\jj$};
    		\draw[-latex](1.6,0) -- (3.7,0) node[midway, below]{\small$\sigdec\jjo\ii$};
    		\draw[-latex](1.6,1) -- (1.6,0) node[midway, right]{\small$\sigdec{j-2}\iio$};
    		\draw[-latex](1.6,2) -- (1.6,1) node[midway, right]{\small$\sig\jjo$};
    		\draw[-latex](0,1) -- (0,0) node[midway, left]{\small$\sigdec{j-2}\iio$};
    		\draw[-latex](0,1) -- (1.6,1) node[midway, above]{\small$\sig\jj$};
    		\draw[-latex](0,2) -- (0.8,2) node[midway, above]{\small$\sig\jjo$};
    		\draw[-latex](0.8,2) -- (1.6,2) node[midway, above]{\small$\sig\jj$};
    		\draw[-latex](0,2) -- (0,1.5) node[midway, left]{\small$\sig\jj$};
    		\draw[-latex](0,1.5) -- (0,1) node[midway, left]{\small$\sig\jjo$};
    		\draw[-latex](1.6,2) -- (3.7,2) node[midway, above]{\small $v$};
    		\draw[-latex](3.7,2) -- (3.7,0) node[midway, right]{\small $\sig\iio$};
    	\end{tikzpicture}
    	\]
    	and we conclude by considering that $$\sig\jjo\sig\jj v \equiv \sig\jjo\sig\jj \sigdec{\jj-2}\iio\cdot \sigdec{\jjo}\ii \equiv \sig\jjo \sigdec{\jj-2}\iio\cdot \sig\jj \sigdec{\jjo}\ii =  \sigdec{\jjo}\iio\cdot \sigdec{\jj}\ii $$
    \end{proof}
    
    \begin{lm} \label{L:ConditionB:CE:TypeA1} Assume $M$ is of type $A_{k+2}$ for some $k\geq 1$ (see Figure~\ref{F:CoxDiag}) with
    $S_1=\{\sig1,\ldots,\sig\kk\}$ and $S_2=\{\sig2,\ldots,\sig{k+2}\}$.
    Set $u=\sigdec{k+2}1$ and $v=\sigincsq3{k+2}\cdot\sigdecsq\kkp2$.
    For $m\geq 2$, the braids $a=\overline{u}$, $b_m=\overline{v}^m$ and $ab_m$ are $\Delta$-unmovable in $M$, with
    \[
    \dpt(a)=\dpt(b_m)=1\quad\text{and}\quad\dpt(ab_m)=m.
    \]
    \end{lm}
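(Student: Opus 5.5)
The plan is to treat the three claims separately: $\DD$-unmovability, the depths of $a$ and $b_m$, and the depth of $ab_m$. The last one carries all the difficulty.

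\emph{Unmovability.} For $b_m$ this is a support argument. $\supp(b_m)=\{\sig2,\ldots,\sig{k+2}\}\subsetneq S$, so no representative of $b_m$ can contain a representative of $\DD$, whose support is $S$. The same reasoning disposes of $a$ quickly, since $a$ has length $k+2$, which is smaller than the length of $\DD$.

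For $ab_m$ I would use the classical strand description of positive braids in type $A$: $\DD$ left divides a positive braid iff every pair of strands crosses. The word $u=\sigdec{k+2}1$ only crosses the strand starting at position $k+3$ with the others, and it sends the strand starting at position $j$ to position $j+1$ for $j\leq k+2$. The word $v$ is a product of squares, hence a pure braid. Each square $\sig j^2$ only makes the strands at adjacent positions $j,j+1$ cross, and positions are never permuted. So the strands at positions $2$ and $4$ after $u$ never cross in $\overline{v}^m$. These are the strands starting at positions $1$ and $3$, and they do not cross in $u$ either. Hence $\DD$ does not divide $ab_m$.

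If one prefers to stay within the paper's tools, the same conclusion should follow from Proposition~\ref{P:ChainDiv} with chains built from Lemma~\ref{L:SigDecLeftChain}. I would only fall back on this if the strand argument is judged out of place.

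\emph{Depths of $a$ and $b_m$.} By Lemma~\ref{L:SigDecLeftChain}, $\sig{k+1}$ is the unique generator right dividing $\sig{k+2}\sig{k+1}$, and $\sig{k+1}\notin S_1$. Therefore $\tail{S_1}{a}=\sigdec k1$, and $\ND{a}=(\sig{k+2}\sig{k+1},\sigdec k1)$, which gives $\dpt(a)=1$.

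For $b_m$, observe that $b_m\in M_{S_2}$ and $b_m\notin M_{S_1}$, because $\sig{k+2}$ occurs in its support. Its normal form therefore has breadth exactly $2$, namely $(b_m\tail{S_1}{b_m}\inv,\tail{S_1}{b_m})$, whatever the $S_1$-tail turns out to be. Hence $\dpt(b_m)=1$.

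\emph{Depth of $ab_m$ (main obstacle).} I would write $ab_m=\sig{k+2}\sig{k+1}\cdot\sigdec k1\cdot\overline v^{\,m}$ and push $\sigdec k1$ to the right through each copy of $v$. The tool is relation~\eqref{E:A:PC:DEC}: it shifts the indices of letters $\sig j$ with $2\leq j\leq k$ down by one, while $\sigdec k1$ is stopped by the squares of $\sig{k+1}$ and $\sig{k+2}$.

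Concretely, I would split $v=v'\cdot v''$ with $v'=\sigincsq3{k+2}\,\sig{k+1}^2\in M_{S_2}$ and $v''=\sigdecsq k2\in M_{S_1}$. The aim is to prove by induction on $m$ an explicit alternating normal form of breadth $2m$ or $2m+1$. Its $S_2$-entries should be of the form $\sig{k+2}\sig{k+1}$, followed by (shifted versions of) $v'$ interlaced with pieces of $\sigdec k1$, and its $S_1$-entries should be words like $\sigdec k1$ and conjugates of $v''$. This would give exactly $m$ entries outside $M_{S_1}$.

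The tail conditions would be checked at each step with Lemma~\ref{L:SigDec:LeftReverse} and Lemma~\ref{L:SigDecLeftChain}. The crucial point is to show that the $S_1$-part cannot absorb any letter of the next $S_2$-block, i.e.\ that the relevant suffixes are left $(\sig{k+1},\cdot)$-chains. Where the hypothesis $\SRD{S}{g}\subseteq\SLDA{S}{h}{*}$ holds, I would use Lemma~\ref{L:ProductANF}(ii) to glue the blocks.

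I expect the delicate part to be controlling the $S_1$-tail after each copy of $v$, since the squares $\sig{k+1}^2$ create long reversing diagrams. The first thing I would try is to establish the case $m=2$ by a direct reversing diagram, and then use it as the inductive step.
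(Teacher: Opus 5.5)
\textbf{Verdict: genuine gap.} Your treatment of unmovability and of $\dpt(a)$, $\dpt(b_m)$ is fine. The breadth-two argument for $b_m$ is cleaner than writing out its normal form. But the central claim $\dpt(ab_m)=m$ is not proved: you give only a plan, and the plan points the wrong way.

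\emph{Why your route stalls.} Pushing $\sigdec k1$ alone to the right gets stuck at the first $\sig{k+1}^2$ of $v$: it commutes with $\sig{k+2}$ but satisfies no relation with $\sig{k+1}$. Your predicted shape is also wrong. The normal form does not start with $\sig{k+2}\sig{k+1}$; its leftmost entry is $\overline{\sigincsq2{k+1}}$, and $\sig{k+2}\sig{k+1}$ ends up at the end of the second entry from the right.

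\emph{The missing idea: push all of $u=\sigdec{k+2}1$.} By \eqref{E:A:PC:DEC} with $i=1$, $j=k+2$, every letter $\sig2,\dots,\sig{k+2}$ of $v$ is shifted down by one. So $uv\equiv wu$ with $w=\sigincsq2{k+1}\,\sigdecsq k1$, hence $ab_m=c^m a$ with $c=\overline w$.
\begin{itemize}
\item The word $w^{m-1}$ is alone in its class and ends with $\sig1$. It therefore contributes exactly $m-1$ blocks $(\overline{\sigincsq2{k+1}},\overline{\sigdecsq k1})$ with immediate tail checks.
\item The last $c$ merges with $a$. Moving $\sigdecsq{k-1}1$ past $\sig{k+2}\sig{k+1}$ gives $ca=d\,e$, where $d=\overline{\sigincsq2{k+1}\,\sig k^2\,\sig{k+2}\sig{k+1}}\in M_{S_2}$ and $e=\overline{\sigdecsq{k-1}1\,\sigdec k1}\in M_{S_1}$.
\item The only real verification is $\tail{S_1}{c^{m-1}d}=1$. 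The paper obtains it from a chain argument. The suffix $\sig k\sig{k+1}^2\sig k^2\sig{k+2}\sig{k+1}$ of $d$ is, up to equivalence, a left $(\sig{k+1},\sig k)$-chain and a left $(\sig i,\sig i)$-chain for $i<k$. Meanwhile $c^{m-1}\,\overline{\sigincsq2{k-1}\sig k}$ has $\sig k$ as its only right divisor.
\end{itemize}
This yields breadth $2m$ directly, with no induction on $m$ and no need for Lemma~\ref{L:ProductANF}.

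\emph{The case $k=1$.} The membership $d\in M_{S_2}$ needs $\sig k\in S_2$, i.e.\ $k\geq 2$. For $k=1$ the stated value is actually false. There $ab_m=\overline{(\sig2^2\sig1^2)^m\,\sig3\sig2\sig1}$ has normal form $(\sig2^2,\sig1^2,\ldots,\sig2^2,\sig1^2,\sig3\sig2,\sig1)$ of breadth $2m+2$, so $\dpt(ab_m)=m+1$. This does not hurt the use made in Proposition~\ref{P:ConditionB:SubtypeA}, since $2-(m+1)\notin\{0,1\}$. But your expected ``exactly $m$ entries outside $M_{S_1}$'' cannot be established for $k=1$.

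\emph{The strand argument.} The equivalence ``$\DD$ left divides a positive braid iff every pair of strands crosses'' is false. In type $A_2$ the word $\sig2\sig1^3\sig2$ is alone in its class, so $\DD$ does not left divide it, yet all three pairs of strands cross. Only the direction you actually use holds. Every pair crosses in $\DD$, and crossing numbers are invariant under the positive relations and additive, so $\DD\preceq_L\beta$ forces every pair to cross. With that correction, your argument that the strands starting at $1$ and $3$ never cross is a valid alternative to the paper's chain argument. The paper instead shows that $u$ is a left $(\sig{k+1},\sig{k+2})$-chain and that $c^m$ is not right divisible by $\sig{k+1}$.
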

    
    \begin{proof}
    Fix $m\geq 1$.
    Words $u$ and $v^m$ are alone in their equivalence classes, and thereby braids $a$ and~$b_m$ are $\DD$-unmovable.
    Using, in particular, that $v$ is a word over $S_2$, we get that the $(S_2,S_1)$-alternating normal form of $a$ and $b_m$, for any $m\geq 1$,  are
     \[
     [a] = (\sig{k+2}\sig{k+1},\overline{\sigdec\kk1})\quad\text{and}\quad [b_m]=(\overline{v}^{m-1}\cdot\overline{\sigincsq3{k+2}}\cdot\sig{k+1}^2, \overline{\sigdecsq{k}2}),
     \]
     giving $\dpt(a)=\dpt(b_m)=1$.
    Let $w$ be the word $\sigincsq2{k-1}\cdot\sigdecsq\kk1$ and $c$ be $\overline{w}$.
    Using~\eqref{E:A:PC:DEC}, we get~$uv \equiv wu$ together with the relation
    \begin{align*}
    u v^m &\equiv w^{m-1}\, u\,  v&\\
    &\equiv w^{m-1}\,  \sigdec{k+2}1 \, (\sigincsq3{\kk+2}\,\sig\kkp^2\,\sigdecsq\kk2) &\text{by definition of $u$ and $v$}\\
    &\equiv w^{m-1}\, (\sigincsq2{\kk+1}\cdot\sig\kk^2) \ \sigdec{k+2}1 \  (\sigdecsq\kk2) &{\text{by \eqref{E:A:PC:DEC}}}\\
    &\equiv w^{m-1}\, (\sigincsq2{\kk+1}\cdot\sig\kk^2) \, (\sig{k+2}\sig{k+1})\ \sigdec{k}1 \, \sigdecsq\kk2\\
    &\equiv w^{m-1}\, \underbrace{(\sigincsq2{\kk+1}\cdot\sig\kk^2)\,(\sig{k+2}\sig{k+1})}_{w_d}\  \underbrace{\sigdecsq{\kk-1}1\, \sigdec{k}1}_{w_e}  &{\text{by \eqref{E:A:PC:DEC}}}
    \end{align*}
    We define $d$ and $e$ to be the braids represented by $w_d$ and $w_e$, respectively.
    By construction we have $d\in M_{S_2}$ and $e\in M_{S_1}$.
    We now prove that the $S_1$-tail of $c^{m-1}d$ is trivial.
    Let us decompose $d$ as $d_1d_2$ where $d_1$ and $d_2$ are respectively represented by $w_1=\sigincsq2{k+1}\sig\kk$ and $w_2=\sig\kk\sig{k+1}^2\sig\kk^2\sig{k+2}\sig{k+1}$.
    We can check that the word~$w^{m-1}\, w_1$ is alone in its equivalence class giving
    \begin{equation}
     \label{E:L:CEA1:1}
    R(c^{m-1}\, d_1, S) = \{\sig\kk\}.
    \end{equation}
    Proposition~\ref{P:SuppChain} implies that $w_2$ is a left $(\sig\ii,\sig\ii)$-chain for $1\leq i \leq k-1$.
    From the following left chain diagram, based on Lemma~\ref{L:LcmToChain} and Proposition~\ref{P:SuppChain}
    \[
    \small
    \begin{tikzpicture}
    \draw[-latex](0,0) -- (1.5,0) node[midway, below]{$\sig\kk\sig\kkp$};
    \draw[-latex](1.5,0) -- (3,0) node[midway, below]{$\sig\kkp\sig\kk$};
    \draw[-latex](3,0) -- (4,0) node[midway, below]{$\sig{k+2}$};
    \draw[-latex](4,0) -- (5.5,0) node[midway, below]{$\sig\kk\sig\kkp$};
    
    \draw[-latex](0,0.75) -- (0,0) node[midway, left]{$\sig\kkp$};
    \draw[-latex](1.5,0.75) -- (1.5,0) node[midway, left]{$\sig\kk$};
    \draw[-latex](3,0.75) -- (3,0) node[midway, left]{$\sig\kkp$};
    \draw[-latex](4,0.75) -- (4,0) node[midway, right]{$\sig\kkp$};
    \draw[-latex](5.5,0.75) -- (5.5,0) node[midway, right]{$\sig\kk$};
    \draw[gray,latex-](0.25,0.7) -- (1.25,0.7);
    \draw[gray,latex-](1.75,0.7) -- (2.75,0.7);
    \draw[gray,latex-](3.25,0.7) -- (3.75,0.7);
    \draw[gray,latex-](4.25,0.7) -- (5.25,0.7);
    \end{tikzpicture}
    \]
    we obtain that $w''_2 = \sig\kk\sig{k+1}^2\sig\kk\sig{k+2}\sig\kk\sig{k+1}$ is a left $(\sig\kkp,\sig\kk)$-chain.
    From $w'_2\equiv w_2$, Proposition~\ref{P:ChainDiv} and equality~\eqref{E:L:CEA1:1} imply that the braid $c^{m-1}\cdot d$ is not right-divisible by any element of $S_1$ and so $\tail{S_1}{c^{m-1}\cdot d}= 1$.
    It follows that for $m\geq 2$, the $(S_2,S_1)$-alternating normal form of $ab_m$ is
    \[
     [a b_m] = (\overline{\sigincsq2\kkp},\, \overline{\sigdecsq\kk1} )^{m-1}\cdot(\overline{\sigincsq2{k+1}}\cdot\sig\kk^2\cdot\sig{k+2}\sig{k+1},\,\overline{\sigdecsq\kko1}\,\overline{\sigdec\kk1})
    \]
    implying $\dpt(a b_m) = m$.
    
    It remains to prove that $ab_m$ is $\Delta$-unmovable in $M$.
    Again, from~\eqref{E:A:PC:DEC}, we have $ab_m = c^ma$.
    Using Proposition~\ref{P:SuppChain} and Lemma~\ref{L:LcmToChain} we obtain the following left chain diagram
    \[
     \begin{tikzpicture}
    \draw[-latex](1,0) -- (2.5,0) node[midway, below]{$\sig{k+2}\sig\kkp$};
    \draw[-latex](2.5,0) -- (4,0) node[midway, below]{$\sigdec{k}1$};
    \draw[-latex](1,0.75) -- (1,0) node[midway, left]{$\sig\kkp$};
    \draw[-latex](2.5,0.75) -- (2.5,0) node[midway, left]{$\sig{k+2}$};
    \draw[-latex](4,0.75) -- (4,0) node[midway, right]{$\sig{k+2}$};
    \draw[gray,latex-](1.25,0.7) -- (2.25,0.7);
    \draw[gray,latex-](2.75,0.7) -- (3.75,0.7);
    \end{tikzpicture}
    \]
    proving that $u$ is a left $(\sig\kkp,\sig{k+2})$-chain.
    Since the word $w^m$ ends with $\sig1$ and is the unique representative word of $c^m$, the braid $c^m$ is not right divisible by $\sig\kkp$.
    Hence Proposition~\ref{P:ChainDiv} imposes that $ab_m$ is not right divisible by~$\sig\kkp$.
    This implies in turn that $ab_m$ is not right-divisible by $\DD$, and thereby that it is $\DD$-unmovable.
    \end{proof}
    
    \begin{lm} \label{L:ConditionB:CE:TypeA2} Assume $M$ is of type $A_{k+2}$ for some $k\geq 1$ together with $S_2 = \{\sig1,\ldots,\sig\kk\}$ and $S_1 = \{\sig2,\ldots,\sig{\kk+2}\}$.
    Set $u=\sig1\sigincsq2\kkp\sigdecsq\kk2\sig1$.
    For $m\geq 1$, the braids $a_m= \overline{u^m\,\sigdec{k+2}2}$, $b = \sig1$ and $a_m\,b$ are $\DD$-unmovable and we have
    \[
     \dpt(a_m)=m+1,\quad \dpt(b)= 1\quad \text{and} \quad \dpt(a_mb)=1.
    \]
    \end{lm}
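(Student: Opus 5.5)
The plan is to compute the three $(S_2,S_1)$-alternating normal forms explicitly, exactly as in the proof of Lemma~\ref{L:ConditionB:CE:TypeA1}. Throughout I rely on the fact that the relevant words ($\sigdec{k+2}1$, $u^m$, $u^m\sigdec{k+2}2$, and the shifted word $u'$ introduced below) are alone in their equivalence classes, or at least that their right divisors in $S$ can be read off via chains. The case of $b$ is immediate: $\sig1\notin S_1$, so $[b]=(\sig1,1)$ and $\dpt(b)=1$.

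\emph{Normal form of $a_m$.} Put $X=\overline{\sigincsq2{k+1}\,\sigdecsq\kk2}\in M_{S_1}$. This gives $u^m\sigdec{k+2}2\equiv \sig1\,X\,\sig1^2\,X\,\sig1^2\cdots X\,\sig1\,\sigdec{k+2}2$. I claim that
\[
[a_m]=(\sig1,X,\sig1^2,X,\ldots,\sig1^2,X,\sig1,\overline{\sigdec{k+2}2}),
\]
which has breadth $2m+2$ and hence $\dpt(a_m)=m+1$. To check the tail conditions I would proceed from the right. The word $\sigdec{k+2}2$ lies in $M_{S_1}$. By Lemma~\ref{L:Tail}, the $S_1$-tail is then $\overline{\sigdec{k+2}2}$ times the $S_1$-tail of $u^m$, and this last tail is trivial because the unique representative of $u^m$ ends with $\sig1$, so $R(u^m,S)=\{\sig1\}$. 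Next, the $S_2$-tail of $u^m$ is $\sig1$: the word $\sig1$ is the last letter, and the preceding block ends with $\sig2^2\sig1$ where $\sig2\cdot\sig2\sig1$ is not right-divisible by an $S_2$-word beyond $\sig1$. I would check this with Lemma~\ref{L:LcmToChain} and Proposition~\ref{P:ChainDiv}, showing that $\sig2^2$ is a left $(\sig2,\sig1)$-type chain preventing $\sig2$ from passing. The later tails repeat this pattern periodically: $X$ is the $S_1$-tail since the preceding letter is $\sig1\notin S_1$ and the word is rigid, and $\sig1^2$ is the $S_2$-tail since $X$ ends with $\sig2^2$. Finally, the first entry $\sig1$ is nontrivial.

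\emph{Normal form of $a_mb$.} We have $a_mb=\overline{u^m\sigdec{k+2}1}$. Using relation~\eqref{E:A:PC:DEC} letter by letter (every index $\ell$ in $u$ satisfies $2\le \ell+1\le k+2$), I get $u^m\,\sigdec{k+2}1\equiv\sigdec{k+2}1\,u'^m$, where $u'=\sig2\,\sigincsq3{k+2}\,\sigdecsq{k+1}3\,\sig2$ is the shift of $u$. Since $u'^m\in M_{S_1}$ and, by Lemma~\ref{L:SigDecLeftChain}, $\sig1$ is the only atom right-dividing $\sigdec{k+2}1$, Lemma~\ref{L:Tail} gives that the $S_1$-tail is $u'^m$. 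The $S_2$-tail of $\sigdec{k+2}1$ is $\sigdec\kk1$, since the word is alone in its class. The remaining factor $\sig{k+2}\sig{k+1}$ lies in $M_{S_1}$. Hence
\[
[a_mb]=(\sig{k+2}\sig{k+1},\overline{\sigdec\kk1},\overline{u'^m}),
\]
which has breadth $3$, so $\dpt(a_mb)=1$.

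\emph{$\DD$-unmovability.} In each case the element has a single right divisor in $S$: $\sig1$ for $a_m$ (respectively $b$), and $\sig2$ for $a_mb$ (the last letter of $u'^m$, using the rigidity of the word). Since $\DD$ is right-divisible by every atom, none of these elements is right-divisible by $\DD$. Because $\DD\cdot h=\Phi(h)\cdot\DD$, this is equivalent to being $\DD$-unmovable.

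I expect the main obstacle to be rigorously justifying the tails of $a_m$. Specifically, one must show that no $S_2$-letter other than the displayed $\sig1$ or $\sig1^2$ can be pushed to the right through the blocks $X$, and symmetrically for $S_1$. My first attempt would be a chain argument: show that $\sigincsq2{k+1}\sigdecsq\kk2$ is a left $(\sig2,\sig\ell)$-chain for each relevant $\ell$, then apply Proposition~\ref{P:ChainDiv}(iv) as in Lemma~\ref{L:ProductANF}. If that fails, I would fall back on the explicit uniqueness of the representative words, which the paper already uses for similar words in Lemma~\ref{L:ConditionB:CE:TypeA1}.
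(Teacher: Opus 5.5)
Your treatment of $b$ and your normal form for $a_mb$ are correct and match the paper. That includes the shift $u^m\sigma_{k+2}\cdots\sigma_1\equiv\sigma_{k+2}\cdots\sigma_1\,u'^m$ and the resulting form of breadth $3$. However, the normal form you give for $a_m$ is wrong as soon as $k\geq 2$.

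You argue as if $\sigma_2\notin S_2$, but $S_2=\{\sigma_1,\ldots,\sigma_k\}$ contains $\sigma_2,\ldots,\sigma_k$. Since $u^m$ is alone in its equivalence class, the right divisors of $\overline{u^m}$ lying in $M_{S_i}$ are exactly the elements represented by suffixes of $u^m$ written over $S_i$. So the $S_2$-tail of $\overline{u^m}$ is $\sigma_k^2\cdots\sigma_2^2\sigma_1$, not $\sigma_1$; the suffix only stops at $\sigma_{k+1}\notin S_2$. Likewise your claim that $\sigma_1^2$ is an $S_2$-tail ``since $X$ ends with $\sigma_2^2$'' is false, and no chain argument can establish it. Your sequence is an alternating decomposition, but it violates the tail condition already at $i=2$. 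Peeling off tails correctly gives
\[
[a_m]=(\sigma_1)\cdot\bigl(\sigma_2^2\cdots\sigma_{k+1}^2,\ \sigma_k^2\cdots\sigma_2^2\sigma_1^2\bigr)^{m-1}\cdot\bigl(\sigma_2^2\cdots\sigma_{k+1}^2,\ \sigma_k^2\cdots\sigma_2^2\sigma_1,\ \sigma_{k+2}\cdots\sigma_2\bigr).
\]
So the block $\sigma_k^2\cdots\sigma_2^2$ belongs to the $S_2$-entries, not to the $S_1$-entries. The breadth is still $2m+2$, which is why your value $\dpt(a_m)=m+1$ comes out right, but your argument does not justify it. For $k=1$ your form does coincide with the correct one.

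The $\DD$-unmovability paragraph also rests on false statements about right divisors. $a_m$ is represented by a word ending in $\sigma_2$, so $\sigma_2$ right-divides it and $\sigma_1$ is not its single right divisor in $S$. Also, $a_mb=\overline{u^m\sigma_{k+2}\cdots\sigma_2\sigma_1}$ is right-divisible by $\sigma_1$ as well as by $\sigma_2$. So it does not have a single right divisor in $S$, and the word $\sigma_{k+2}\cdots\sigma_1u'^m$ is not rigid.

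The conclusion is true, and the simplest repair is the paper's. If $\DD$ right-divided $x$, then so would $\DD_{S_1}$, hence $\DD_{S_1}$ would right-divide the $S_1$-tail of $x$. These tails are $\overline{\sigma_{k+2}\cdots\sigma_2}$ for $a_m$ and $\overline{u'^m}$ for $a_mb$. Each has $\sigma_2$ as its unique right divisor in $S$, by Lemma~\ref{L:SigDecLeftChain} and by the rigidity of $u'^m$ respectively. But $\DD_{S_1}$ is right-divisible by every element of $S_1$, which has $k+1\geq 2$ elements, so neither tail is right-divisible by $\DD_{S_1}$.
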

    
    \begin{proof}
    We denote by $c$ the braid represented by $u$.
    Fix $m\geq 1$.
    We immediately check that $u^m$ is alone in its equivalence class and, so, is  the only element of $S$ that right divides $c^m$ is $\sig1$.
    It follows that the~$(S_2,S_1)$-alternating normal form of $a_m$ is
    \[
     [a_m] = (\sig1)\cdot(\overline{\sigincsq2\kkp},\,\overline{\sigdecsq\kk1})^{m-1}\cdot (\overline{\sigincsq2\kkp},\,\overline{\sigdecsq\kk2\sig1},\,\overline{\sigdec{k+2}2}).
    \]
    We deduce that $a_m$  is $\DD$-unmovable,  $\brd(a_m)=2m+2$ and so $\dpt(a_m)=m+1$.
     We have $[b]=(\sig1,1)$ and so $\dpt(b)=1$ and $b$ is obviously $\DD$-unmovable, 
    Using \eqref{E:A:PC:DEC}, we obtain:
    \begin{align*}
    a_mb &=\left(\sig1\,\overline{\sigincsq2\kkp}\,\overline{\sigdecsq\kk2\sig1}\right)^m\cdot\overline{\sigdec{k+2}1}\\
    &=\underbrace{\overline{\sig{k+2}\sig\kkp}}_{\in M_{S_1}}\,\cdot\,\underbrace{\overline{\sigdec\kk1}}_{\in M_{S_2}}
    \cdot \underbrace{\left(\sig2\,\overline{\sigincsq3{k+2}}\,\overline{\sigdecsq\kkp3}\,\sig2\right)^m}_{\in M_{S_1}},
    \end{align*}
    and so $[a_mb] = \big(\sig{k+2}\sig\kkp,\, \overline{\sigdec\kk1}, \,(\sig2\,\overline{\sigincsq3{k+2}}\,\overline{\sigdecsq\kkp3}\,\sig2)^m\big)$. Clearly,  $\Delta_{S_1}$ doesnot right divide the first term, that is $(\sig2\,\overline{\sigincsq3{k+2}}\,\overline{\sigdecsq\kkp3}\,\sig2)^m$. Therefore $\DD$ cannot right-divide $a_mb$. Thus,  the braids $a_m$, $b$ and $a_mb$ are not left divisible by $\DD$ and so they are $\DD$-unmovable.
     \end{proof}
    
    Note that in the two above lemmas, in the case $k = 1$ the set $S_1\cap S_2$ is empty. For $k\geq 2$, it is of type~$A_{k-1}$. For convenience, in the following statements we say that a set is of type $A_0$ whenever it is empty.  
    
    \begin{prp}
    \label{P:ConditionB:SubtypeA}
    Consider an irreducible subset $T$ of $S$ and set $T_i= S_i\cap T$ for $i = 1,2$.  Assume $T$ is of type $A_{k+2}$ for some $k\geq 1$, $(T_2, T_1)$ is a proper covering of $T$ and $T_1\cap T_2$ is of type $A_{k-1}$. 
    Then the covering $(S_2, S_1)$ does not satisfy Condition~$B(p)$ whatever $p\geq 1$.
    \end{prp}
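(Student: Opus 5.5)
The plan is to reduce to the two type $A$ computations (Lemmas~\ref{L:ConditionB:CE:TypeA1} and~\ref{L:ConditionB:CE:TypeA2}) and then apply the first criterion, Proposition~\ref{P:ConditionB:Induction}. The subgraph $\Gamma(T)$ is a line of type $A_{k+2}$. By Lemma~\ref{L:Subcovering}(iii), $T_1$ and $T_2$ are irreducible, so each is a segment of that line. Since $(T_2,T_1)$ is a proper covering of the line and $T_1\cap T_2$ is a segment with $k-1$ vertices, both $T_1$ and $T_2$ must have exactly $k$ vertices. More precisely, up to the relabelling $\sig i\mapsto\sig{k+3-i}$ (the diagram automorphism of $A_{k+2}$), one of them is $\{\sig1,\ldots,\sig k\}$ and the other is $\{\sig2,\ldots,\sig{k+2}\}$; when $k=1$ the intersection is empty and the two sets are $\{\sig1\}$ and $\{\sig2,\sig3\}$. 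Two cases remain according to which of them is $T_1$. The case $T_1=\{\sig1,\ldots,\sig k\}$ is exactly the setting of Lemma~\ref{L:ConditionB:CE:TypeA1}, and the case $T_2=\{\sig1,\ldots,\sig k\}$ is the setting of Lemma~\ref{L:ConditionB:CE:TypeA2}. The symmetry only renames generators inside $M_T$, so it transports these lemmas to the mirrored labellings.

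In the first case I take $(a,b_m)$ with $m\geq 4$. Computing depths with respect to $(T_2,T_1)$ in $M_T$ gives $\dpt(a)+\dpt(b_m)-\dpt(ab_m)=2-m\notin\{0,1\}$. In the second case I take $(a_m,b)$ with $m\geq 1$, which gives $(m+1)+1-1=m+1\notin\{0,1\}$. In both cases, Lemma~\ref{L:Subcovering}(i) ensures that these depths coincide with the depths for $(S_2,S_1)$ in $M$.

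The remaining hypothesis of Proposition~\ref{P:ConditionB:Induction} is that the pair does not lie in $\oTheta_{S_2,S_1}(p)\times\oTheta_{S_2,S_1}(p)$. I expect this to be the main obstacle. I would show that one of the two elements is neither in $M_{S_1}$ nor in $\Theta(p)$. For any $k'\geq 1$, an element $(\DD^p\DD_{S_1}^{-p})^{k'}h$ with $h\in M_{S_1}$ is right-divisible by $\DD^p\DD_{S_1}^{-p}$. By Lemma~\ref{L:DeltaDecomposition}(ii), with $T=S_1$, such an element has left divisor set containing $S\setminus\Phi^p(S_1)$ and support equal to all of $S$. Any element of $M_T$ has support inside $T\subsetneq S$, so it cannot lie in $\Theta(p)$. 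It therefore suffices to check that one member of the pair is outside $M_{S_1}$. In the first case $a=\overline{\sigdec{k+2}1}$ has depth $1$ for $(T_2,T_1)$, so it is not in $M_{T_1}$, and since $a\in M_T$ and $T_1=T\cap S_1$, it is not in $M_{S_1}$ either. In the second case $b=\sig1$ lies in $T_2\setminus T_1$, hence not in $S_1$. With all hypotheses verified, Proposition~\ref{P:ConditionB:Induction} gives that $(S_2,S_1)$ fails Condition~$B(p)$ for every $p\geq 1$.

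A small check remains on the support claim. The element $\DD^p\DD_{S_1}^{-p}=\Phi^{p-1}(g)\cdots g$, with $g=\alpha(\DD_{S_1})$, has support $S$ when combined with $\DD^p_{S_1}$. What is actually needed is only that $\supp((\DD^p\DD_{S_1}^{-p})^{k'}h)\not\subseteq T$. This follows because the left divisors of this element include $S\setminus\Phi^p(S_1)$, which together with $S\setminus S_1$ from the right-hand side covers $S_2\setminus S_1$. Moreover, irreducibility of $\Gamma$ forces the support of $g$ to be $S$ modulo $S_1$, since $\DD$ has full support and $\DD_{S_1}$ only uses $S_1$. Hence the support contains $S\setminus S_1$, which contains an element outside $T$ whenever $S\setminus S_1\not\subseteq T$. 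If instead $S\setminus S_1\subseteq T$, I would argue via the depth directly: elements of $\Theta(p)$ have breadth at least $\brd(\DD^p)\geq 3$, by Lemma~\ref{L:Breadth2Reducible}. I would then check this against the chosen element, taking $a$ of breadth $2$ in the first case and $b$ of breadth $2$ in the second.
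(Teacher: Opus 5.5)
Your route is the paper's. You reduce $(T_2,T_1)$ to the two configurations of Lemmas~\ref{L:ConditionB:CE:TypeA1} and~\ref{L:ConditionB:CE:TypeA2}, and your parameters $m\ge 4$ and $m\ge 1$ work as well as the paper's $b_3$ and $a_2$. You then conclude with the first criterion.

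\textbf{The gap.} It is in the step you flagged yourself: excluding the pair from $\oTheta(p)\times\oTheta(p)$ in the sub-case $S\setminus S_1\subseteq T$. This sub-case contains $T=S$, which is exactly the situation of Corollary~\ref{C:ConditionB:TypeA}. There you assert that every element of $\Theta(p)$ has breadth at least $\brd(\DD^p)$. Write $x=\DD^p\DD_{S_1}^{-p}$.
\begin{itemize}
\item The assertion is clear for $xh$, whose normal form is $(g_n,\ldots,g_2,h)$.
\item It is also clear for $x^{k'}h$ when $\Phi^p(S_1)=S_1$, since then $x^{k'}=\DD^{pk'}\DD_{S_1}^{-pk'}$.
\item But the covering is not assumed stabilised in this section. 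For $T=S$ of type $A_{k+2}$ and $p$ odd, one has $\Phi^p(S_1)\neq S_1$.
\item For $k'\ge 2$ your claim then amounts to monotonicity of breadth under right divisibility. This is proved nowhere, and Lemma~\ref{L:Breadth2Reducible} only gives $\brd(\DD^p)\ge 3$.
\end{itemize}
Similarly, ``support equal to $S$'' does not follow from Lemma~\ref{L:DeltaDecomposition}(ii), which only yields support $\supseteq S\setminus S_1$. Also, $x$ is a \emph{left}, not right, divisor of $x^{k'}h$.

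\textbf{A uniform repair.} Any $z\in\Theta(p)$ satisfies $x\preceq_L z$. Hence $\SLD{S}{z}\supseteq\SLD{S}{x}=S\setminus\Phi^p(S_1)$, a set of cardinality $|S\setminus S_1|$.
\begin{itemize}
\item First case: $S\setminus S_1\supseteq T\setminus T_1=\{\sig{k+1},\sig{k+2}\}$. But $a=\overline{\sigdec{k+2}1}$ has a unique representative word, so $\SLD{S}{a}=\{\sig{k+2}\}$. Thus $a\notin\Theta(p)$.
\item Second case: $x\preceq_L\sig1$ with $x\neq 1$ would force $\DD^p=\sig1\DD_{S_1}^p$ with $\sig1\in S_2$. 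That gives $\brd(\DD^p)=2$, contradicting Lemma~\ref{L:Breadth2Reducible}.
\end{itemize}
This is in the spirit of the paper's own divisor comparison.

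\textbf{Two smaller corrections.}
\begin{itemize}
\item For $T=S$ you cannot invoke Proposition~\ref{P:ConditionB:Induction}. It requires $T$ proper, because it obtains $\DD$-unmovability from Lemma~\ref{L:Subcovering}(ii). Instead, take unmovability of the pair and of its product from Lemmas~\ref{L:ConditionB:CE:TypeA1}/\ref{L:ConditionB:CE:TypeA2}, and conclude with Lemma~\ref{L:B1ToBj}(ii), as the paper does.
\item $T_1$ and $T_2$ cannot both have $k$ vertices, since $|T_1|+|T_2|=(k+2)+(k-1)$. Your identification of them as $\{\sig1,\ldots,\sig k\}$ and $\{\sig2,\ldots,\sig{k+2}\}$, up to mirror symmetry, is nevertheless correct.
\end{itemize}
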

    
    \begin{proof}
    Write $T = \{\sig1,\sig2,\ldots, \sig{k+2}\}$ with $\sig\ii\sig\jj = \sig\jj\sig\ii$ for $|\ii-\jj| >1$ and $\sig\ii\sig\iip\sig\ii=\sig\iip\sig\ii\sig\iip$ for $1\leq i\leq k+1$ and $T'=T_1\cap T_2$.
    If $k = 1$, then $T' = \emptyset$. 
    So either $T_1 = \{\sig1\}$ and $T_2 = \{\sig2,\sig3\}$, or $T_2 = \{\sig1\}$ and $T_1 = \{\sig2,\sig3\}$. Assume $k\geq 2$. Suppose $T'$ is equal to $\{\sig1,\ldots,\sig{k-1}\}$.
    Then, one of the sets $T_1\setminus T_2$ or $T_2\setminus T_1$ does not contain~$\sig\kk$, let's say $T_1\setminus T_2$. Since $T_1\setminus T_2$ must contains at least $\sig{k+1}$ or~$\sig{k+2}$, we obtain a contradiction with the fact that $T_1$ is irreducible (see Lemma~\ref{L:Subcovering}~$(iii)$). 
    If $T_2\setminus T_1$ does not contain~$\sig\kk$ we obtain a similar contradiction, and a similar argument gives that~$T'$ is different from $\{\sig4,\ldots,\sig{k+2}\}$. 
    So the only two possibilities are $T' = \{\sig2,\ldots,\sig\kk\}$ or $T' = \{\sig3,\ldots,\sig{k+1}\}$.
    Up to replace $\sig\ii$ with $\sig{k+3-\ii}$,  we can assume $T'=\{\sig2,\ldots,\sig{k}\}$.
    Coming back to the general case $k\geq 1$, we have then 2 cases to consider: $T_1 = \{\sig1\}\cup T'$ and $T_2 = T'\cup\{\sig{k+1},\sig{k+2}\}$ or  $T_1 = T'\cup\{\sig{k+1},\sig{k+2}\}$ and  $T_2 = \{\sig1\}\cup T'$.
    
    \emph{First case}. Assume $T_1 = \{\sig1\}\cup T'$ and $T_2 = T'\cup\{\sig{k+1},\sig{k+2}\}$. 
    Consider $a=\overline{u}$ and $b_3=\overline{v}^3$ as in Lemma~\ref{L:ConditionB:CE:TypeA1}.
    The element $\DD^p\DD_{S_1}^{-p}$ is left divisible by $\sig{k+2}$.
    Words $u$ and $v_3$ are alone in their equivalence classes.
    In particular $a$ and $b_3$ are not right divisible by $\sig{k+2}$. So,  $\DD^p\DD_{S_1}^{-p}$ does not left divide $a$ nor~$b_3$.
    Since $a$ and $b_3$ do not belong to $M_{S_1}$ we obtain $(a,b_3)\in M_T\times M_T\setminus\oTheta_{S_2,S_1}(p)\times \oTheta_{S_2,S_1}(p)$.
    If $T = S$, we conclude with Lemma~\ref{L:ConditionB:CE:TypeA1} and Lemma~\ref{L:B1ToBj}.  If $T$ is a proper subset of $S$, then $ab_3$ is $\DD$-unmovable by Lemma~\ref{L:Subcovering} (ii) and we conclude using Lemma~\ref{L:ConditionB:CE:TypeA1} and Proposition~\ref{P:ConditionB:Induction}.
    
    \emph{Second case}.  Assume $T_1 = T'\cup\{\sig{k+1},\sig{k+2}\}$ and $T_2 = \{\sig1\}\cup T'$. Consider $a_2$ and $b$ as in Lemma~\ref{L:ConditionB:CE:TypeA2}. 
    In particular $a_2,b$ and $a_2b$ are $\DD$-unmovable. 
    Since $\DD^p\DD_{S_1}^{-p}$ is right divisible by $\sig1$ and this is not the case for $a_2$, the element $a_2$ is not right divisible by $\DD^p\DD_{S_1}^{-p}$.
    Hence, as $a_2\not\in M_{S_1}$, we have $(a_2,b)\not \in (M_T\times M_T)\setminus \oTheta_{S_2,S_1}(p)\times \oTheta_{S_2,S_1}(p)$. 
    If $T = S$, we conclude with Lemma~\ref{L:ConditionB:CE:TypeA2} and Lemma~\ref{L:B1ToBj}.  If $T$ is a proper subset of $S$, we conclude using Lemma~\ref{L:ConditionB:CE:TypeA2} and Proposition~\ref{P:ConditionB:Induction}.
    \end{proof}
    
    \begin{cor}
    \label{C:ConditionB:TypeA}
    Assume $M$ is of type $A_{k+2}$ for  some $k\geq 1$ and $S_1\cap S_2$ is of type $A_{k-1}$. 
    Then the covering $(S_2,S_1)$ does not satisfy Condition~$B(p)$ whatever $p\geq 1$.
    \end{cor}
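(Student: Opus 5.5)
This is the special case $T=S$ of Proposition~\ref{P:ConditionB:SubtypeA}, so the plan is to check that its hypotheses hold with $T=S$ and then apply it.

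First, $S$ is irreducible of type $A_{k+2}$, since $M$ is. Second, setting $T_i=S_i\cap S=S_i$ for $i=1,2$, the pair $(T_2,T_1)=(S_2,S_1)$ is a proper covering of $T=S$, because $(S_2,S_1)$ is a proper covering by Notation~\ref{N:cov proper}. Third, $T_1\cap T_2=S_1\cap S_2$ is of type $A_{k-1}$ by hypothesis, with the convention that $A_0$ means the empty set when $k=1$. Proposition~\ref{P:ConditionB:SubtypeA} then gives that $(S_2,S_1)$ does not satisfy Condition~$B(p)$ for any $p\geq 1$.

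The one point that needs checking is the wording \emph{proper irreducible subset} in Proposition~\ref{P:ConditionB:SubtypeA}. Its proof explicitly treats the case $T=S$: in both the first and second cases it says \og if $T=S$, we conclude with Lemma~\ref{L:ConditionB:CE:TypeA1} (resp.~\ref{L:ConditionB:CE:TypeA2}) and Lemma~\ref{L:B1ToBj}\fg. So the argument covers $T=S$.

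If one prefers not to rely on that reading, the same proof can be run directly. Using the reduction in the proof of Proposition~\ref{P:ConditionB:SubtypeA}, irreducibility of $S_1$ and $S_2$ forces $S_1\cap S_2=\{\sig2,\ldots,\sig k\}$ up to the diagram symmetry $\sig i\mapsto\sig{k+3-i}$. This leaves the two configurations of Lemmas~\ref{L:ConditionB:CE:TypeA1} and~\ref{L:ConditionB:CE:TypeA2}.

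In each configuration we use the corresponding pair: $(a,b_3)$ in the first case and $(a_2,b)$ in the second. We must check that the pair lies outside $\oTheta(p)\times\oTheta(p)$, which holds by the divisibility argument given there. The depths computed in the lemmas give $\dpt(a)+\dpt(b_3)-\dpt(ab_3)=1+1-3=-1$ and $\dpt(a_2)+\dpt(b)-\dpt(a_2b)=3+1-1=3$. Neither value lies in $\{0,1\}$. Since the products are $\DD$-unmovable, Lemma~\ref{L:B1ToBj}(ii) shows that Condition~$B(p)$ fails.
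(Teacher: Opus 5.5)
Your argument is correct and is exactly the paper's proof: the corollary is Proposition~\ref{P:ConditionB:SubtypeA} applied with $T=S$, and you verify its hypotheses. The caveat you raise is unnecessary, because that proposition only assumes that $T$ is an irreducible subset of $S$ (the word \emph{proper} appears in Proposition~\ref{P:ConditionB:Induction}, not there), so the fallback re-derivation is not needed; in that fallback, for $k=1$ one actually gets $\dpt(ab_3)=4$ rather than $3$, which does not affect the conclusion.
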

    \begin{proof}
      We apply Proposition~\ref{P:ConditionB:SubtypeA} with $T = S$. 
    \end{proof}
    
    \subsection{The cases of small types}
    
    An Artin monoid $M$ with generating set $S$ is said to be of \emph{small type} whenever $m(s,t)\leq 3$ for all $(s,t)\in S^2$.
    Irreducible Artin monoids that are both of spherical type and of small type are those of type $A_r$, $D_r$, $E_6$, $E_7$ or $E_8$. For this section, we assume that $M$ is an irreducible spherical type Artin monoid of small type. As the considered Coxeter graph $\Gamma_S$ is connected and without cycle, for any two vertices $s,t$ of $\Gamma_S$ their exists a unique path in $\Gamma_S$ connecting $s$ to $t$.

\subsubsection{Co-abelian coverings}      
    \begin{df}
    A subset $T$ of $S$ is said to be \emph{co-abelian} if $M_{S\setminus T}$ is abelian.
    A covering $(S_2, S_1)$ of $S$ is  \emph{co-abelian} if $S_1$ and $S_2$ are both co-abelian.
    \end{df}
    
    Note that a maximal proper subset of $S$ is necessary co-abelian but the converse is not true in general.
  
    \begin{lm}
    \label{L:CoAbelian}
    Assume $T$ is a non co-abelian subset of $S$ and $M_T$ is irreducible. 
    Then their exist $s,t\in S\setminus T$ such that $m(s,t)\geq 3$ and $M_{T\cup\{s\}}$ is irreducible.
    \end{lm}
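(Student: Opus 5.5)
Since $T$ is not co-abelian, the monoid $M_{S\setminus T}$ is not abelian. So there exist $s_0,t_0\in S\setminus T$ with $m(s_0,t_0)\geq 3$, that is, an edge of $\Gamma$ whose two endpoints both lie outside $T$. The task is to move this edge so that one endpoint becomes adjacent to $T$; adding that endpoint to $T$ then keeps the subgraph connected. The key facts are that $\Gamma$ is connected and has no cycle (see Figure~\ref{F:CoxDiag}, as already used in the proof of Lemma~\ref{L:Subcovering}(iii)). We also assume $T\neq\emptyset$, which is the situation of interest since $T=S_1$ or $S_2$ in a proper covering.

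First, since $\Gamma$ is a tree, there is a unique path $s_0=x_0,x_1,\ldots,x_\ell$ from $s_0$ to the vertex set $T$, chosen of minimal length. Thus $x_\ell\in T$ and $x_0,\ldots,x_{\ell-1}\notin T$; such a shortest path exists because $\Gamma$ is connected and $T\neq\emptyset$. Since $s_0\notin T$, we have $\ell\geq 1$.

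Then we distinguish two cases. If $\ell\geq 2$, take $s=x_{\ell-1}$ and $t=x_{\ell-2}$. Both lie outside $T$ and are adjacent, so $m(s,t)\geq 3$. Moreover $s$ is adjacent to $x_\ell\in T$, so $\Gamma(T\cup\{s\})$ is connected, because $\Gamma(T)$ is connected by irreducibility of $M_T$. If $\ell=1$, then $s_0$ is itself adjacent to $T$, and we take $s=s_0$ and $t=t_0$. Again $m(s,t)\geq 3$, and $M_{T\cup\{s\}}$ is irreducible.

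The only genuinely delicate point is making sure that the chosen $t$ lies outside $T$ in the case $\ell\geq 2$. This holds by the minimality of the path, since every $x_i$ with $i<\ell$ avoids $T$. Irreducibility then follows directly from connectivity of $\Gamma(T)$ together with one adjacency. The no-cycle property is in fact not needed; connectivity of $\Gamma$ suffices.
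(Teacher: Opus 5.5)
Your proof is correct and is essentially the paper's argument. The paper takes a connected component $C_1$ of $\Gamma(S\setminus T)$ with at least two vertices, uses connectivity of $\Gamma$ to find $s\in C_1$ adjacent to $T$, and takes for $t$ a neighbour of $s$ inside $C_1$; your shortest path from $s_0$ to $T$ just makes explicit why such an $s$, with a neighbour outside $T$, exists (and, as you note, only connectivity of $\Gamma$ is needed, while the excluded case $T=\emptyset$ is trivial with $s=s_0$, $t=t_0$).
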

    \begin{proof}
     Let $T$ be a non co-abelian subset of $S$ with $M_T$ irreducible. 
     We denote by $\Gamma_{C_1},\ldots,\Gamma_{C_\ell}$ the connected components of $S\setminus T$. 
     Since $T$ is not co-abelian,  one of the $C_i$, let's say $C_1$, contains at least two elements. 
     As $\Gamma_S$ is connected, it exists $s\in C_1$ such that $\Gamma_{T\cup\{s\}}$ is connected. 
     Since $\Gamma_{C_1}$ is connected with at least two vertices, it exists $t\in C_1$ such that $m(s,t)\geq 3$.
    \end{proof}
    
    \begin{prp}
    \label{P:SmallCoAbelian}
    Let $M$ be an irreducible spherical type Artin monoid of small type with generating set~$S$ and $(S_2,S_1)$ be a proper and irreducible covering of $S$. 
    The covering $(S_2,S_1)$ may satisfy condition~$B(p)$ for some $p\geq 1$ only if it is co-abelian.
    \end{prp}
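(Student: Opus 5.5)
We argue by contraposition: assuming $(S_2,S_1)$ is not co-abelian, we will exhibit an irreducible subset $T\subseteq S$ of type $A_{k+2}$ satisfying the hypotheses of Proposition~\ref{P:ConditionB:SubtypeA}. That proposition then gives the failure of Condition~$B(p)$ for every $p\geq 1$. By symmetry of the roles played by $S_1$ and $S_2$ in the construction of $T$ (Proposition~\ref{P:ConditionB:SubtypeA} treats both orders, since its proof covers both cases $T_1=\{\sig1\}\cup T'$ and $T_2=\{\sig1\}\cup T'$), we may assume that $S_1$ is not co-abelian. Since the covering is irreducible, $M_{S_1}$ is irreducible, and Lemma~\ref{L:CoAbelian} provides $s,t\in S\setminus S_1$ with $m(s,t)=3$ (small type) and $S_1\cup\{s\}$ irreducible. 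Since $S=S_1\cup S_2$, both $s$ and $t$ lie in $S_2\setminus S_1$.

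\textbf{Constructing $T$.} Since $S_1\cup\{s\}$ is connected, $s$ has a neighbour $x\in S_1$. Because $\Gamma$ is a tree and $t\notin S_1$, the vertex $t$ is not adjacent to $S_1$ through $s$ on the other side; more precisely, the path $x,s,t$ is a simple path in $\Gamma$. Next we choose a path inside $S_1\cap S_2$ ending near $x$. Since $S_2$ is connected and contains $s$, and $S_2\cap S_1\neq\emptyset$ when the covering is not of the degenerate kind, we let $x_1,\dots,x_{k}$ be a path with $x_k=x$, chosen so that $x_1\in S_1\setminus S_2$ and $x_2,\dots,x_k\in S_1\cap S_2$. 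To obtain it, we walk from $x$ inside $S_1$ until we first leave $S_2$; this is possible because $S_2\neq S$ and $S_1$ is connected and meets $S\setminus S_2$. One then needs that the part of the path strictly after $x_1$ lies in $S_1\cap S_2$ and that $x$ itself lies in $S_2$. If $x\notin S_2$, we simply take $k=1$.

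We set $T=\{x_1,\dots,x_k,s,t\}$. This is a path of length $k+2$, hence of type $A_{k+2}$ in small type, and it is an induced path since $\Gamma$ is a tree. We get $T_1=\{x_1,\dots,x_k\}$, $T_2=\{x_2,\dots,x_k,s,t\}$ and $T_1\cap T_2=\{x_2,\dots,x_k\}$, which is of type $A_{k-1}$. The covering $(T_2,T_1)$ is proper.

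\textbf{Main obstacle.} The delicate step is the choice of the path: we must guarantee that $T_1\cap T_2$ is exactly the middle segment. In particular $s,t\notin S_1$ is already known, but we must ensure that no vertex of the path other than $x_1$ leaves $S_2$, and that $x_1\notin S_2$. To handle this, we take the geodesic in $\Gamma(S_1)$ from $x$ to the nearest vertex of $S_1\setminus S_2$, which exists since $S_2$ is proper and $S\setminus S_2\subseteq S_1$. All intermediate vertices then lie in $S_1\cap S_2$, and $x$ is either in $S_2$ or is itself that nearest vertex, giving $k=1$. It remains to check that $T$ is irreducible, that is, connected, which holds by construction. Having verified these points, Proposition~\ref{P:ConditionB:SubtypeA} applies and concludes the proof.
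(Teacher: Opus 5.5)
Your proof is correct and follows the paper's route. You argue by contraposition, use Lemma~\ref{L:CoAbelian} to obtain $s,t\in S_2\setminus S_1$, build an induced path $T=\{x_1,\dots,x_k,s,t\}$ of type $A_{k+2}$ with $T_1\cap T_2=\{x_2,\dots,x_k\}$, and conclude with Proposition~\ref{P:ConditionB:SubtypeA}. The only difference is how the path is chosen: the paper takes $\sigma_1\in S_i\setminus S_j$ adjacent to $S_j$ and notes that the unique tree path from $\sigma_1$ to $s$ lies in $(S_j\cup\{\sigma_1\})\cap(S_i\cup\{s\})$, whereas your nearest-vertex geodesic in $\Gamma(S_1)$ achieves the same thing. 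Your aside about needing $S_1\cap S_2\neq\emptyset$ is unnecessary, since the case $k=1$ already covers an empty intersection.
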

    
    \begin{proof}
    Let $M$ and $(S_2, S_1)$ as in the statement and assume the latter is not co-abelian. Write $\{i,j\} = \{1,2\}$ so that $S_i$ is not co-abelian. 
    By Lemma~\ref{L:CoAbelian} it exists $s$ and $t$ in $S_j\setminus S_i$ such that $m(s,t) \geq 3$ and $\Gamma_{S_i\cup\{s\}}$ is connected.   
    Since $\Gamma_S$ is connected, it exists $\sig1 \in S_i\setminus S_j$ such that $\Gamma_{S_j\cup\{\sig1\}}$ is connected. 
    As~$\Gamma_S$ contains no cycle, the unique path ($\sig1,\ldots,\sig\kk,\sig\kkp= s$) from $\sig1$ to $s$ in $\Gamma_S$ contains only vertices of $\left(S_j\cup\{\sig1\}\right)\cap \left(S_i\cup\{s\}\right)$. 
    Put $T = \{\sig1,\ldots,\sig{k+1},t\}$, $T_1=T\cap S_1$ and $T_2=T\cap S_2$.
    Because $M$ is of small type, $T$ has to be of type $A_{k+2}$ with $k\geq 1$ and we have $T_1\cap T_2= \{\sig2,\ldots,\sig\kk\}$ which is of type~$A_{k-1}$. 
    Since $\sig1\in (T\cap S_\ii)\setminus (T\cap S_\jj)$ and $s=\sig\kkp\in(T\cap S_\jj)\setminus (T\cap S_\ii)$, the covering $(T_2,T_1)$ of $T$ is proper.
    We conclude with Proposition~\ref{P:ConditionB:SubtypeA}. 
    \end{proof}
    
    As a consequence of Proposition~\ref{P:SmallCoAbelian},  we get 
    
    \begin{cor}
      Assume $M$ is of type $A_r$.  If the covering $(S_2,S_1)$ satisfies condition~$B(p)$,  then $\{S_1,S_2\} = \{S\setminus\{\sig1\}, S\setminus\{\sig\rr\}\}$:
    \[
    \{S_1,S_2\}=\big\{
         \begin{tikzpicture}[x=15pt,y=15pt,baseline=-5pt]
    		\draw[lightgray,line width=1](0,0) -- (1,0);
    		\draw[line width=1](1,0) -- (2,0);
    		\draw[dashed, line width=1](2,0) -- (4,0);
    		\draw[line width=1](4,0) -- (6,0);
    		\fill[lightgray](0,0) circle (0.15);
    		\fill(1,0) circle (0.15);
    		\fill(2,0) circle (0.15);	
    		\fill(4,0) circle (0.15);
    		\fill(5,0) circle (0.15);
    		\fill(6,0) circle (0.15);
    		\draw (0,0) node[below] {\small$\sig1$};	
    		\draw (1,0) node[below] {\small$\sig2$};	
    		\draw (5,0) node[below] {\small$\sig\rro$};	
    		\draw (6.2,0) node[below] {\small$\sig\rr$};
    	\end{tikzpicture}
    	,\quad
    	\begin{tikzpicture}[x=15pt,y=15pt,baseline=-5pt]
    		\draw[line width=1](0,0) -- (2,0);
    		\draw[dashed, line width=1](2,0) -- (4,0);
    		\draw[line width=1](4,0) -- (5,0);
    		\draw[lightgray, line width=1](5,0) -- (6,0);
    		\fill(0,0) circle (0.15);
    		\fill(1,0) circle (0.15);
    		\fill(2,0) circle (0.15);	
    		\fill(4,0) circle (0.15);
    		\fill(5,0) circle (0.15);
    		\fill[lightgray](6,0) circle (0.15);
    		\draw (0,0) node[below] {\small$\sig1$};	
    		\draw (1,0) node[below] {\small$\sig2$};	
    		\draw (5,0) node[below] {\small$\sig\rro$};	
    		\draw (6.2,0) node[below] {\small$\sig\rr$};
    	\end{tikzpicture}
    	\big\}
    \]
    Moreover, in this case, the covering $(S_2,S_1)$ is $\DD^p$-regular and Condition~$B(2)$ is satisfied.
    \end{cor}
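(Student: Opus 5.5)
The first assertion should follow from Proposition~\ref{P:SmallCoAbelian} together with a direct description of the co-abelian proper irreducible coverings in type $A_r$. The second assertion is a regularity check plus an appeal to the literature.

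\emph{Step 1: reduce to co-abelian coverings.} Type $A_r$ is of small type, so if $(S_2,S_1)$ satisfies $B(p)$ for some $p\geq1$, Proposition~\ref{P:SmallCoAbelian} shows that $(S_2,S_1)$ is co-abelian. Since the covering is proper, $r\geq 2$.

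\emph{Step 2: classify the co-abelian irreducible proper subsets of $A_r$.} The Coxeter graph is a line, so an irreducible subset is an interval $S_i=\{\sig a,\ldots,\sig b\}$. Its complement is $\{\sig1,\ldots,\sig{a-1}\}\cup\{\sig{b+1},\ldots,\sig r\}$. This generates an abelian monoid exactly when each of the two pieces has at most one element, because two consecutive generators do not commute. Properness makes the complement nonempty, so
\[
S_i\in\big\{S\setminus\{\sig1\},\ S\setminus\{\sig r\},\ S\setminus\{\sig1,\sig r\}\big\}.
\]
Now use the covering condition $S_1\cup S_2=S$. If one of the sets were $S\setminus\{\sig1,\sig r\}$, the other would be an interval containing $\sig1$ and $\sig r$, hence equal to $S$, contradicting properness. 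Both sets therefore lie in $\{S\setminus\{\sig1\},S\setminus\{\sig r\}\}$, and their union being $S$ forces them to be distinct. This gives the announced pair. For $r=2$ the pair is $\{\{\sig2\},\{\sig1\}\}$, which agrees with the formula.

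\emph{Step 3: regularity and Condition~$B(2)$.} In type $A_r$, $\Phi$ acts on $S$ by $\sig i\mapsto\sig{r+1-i}$, so it exchanges $S\setminus\{\sig1\}$ and $S\setminus\{\sig r\}$. Hence the covering is $\DD$-stabilized, and therefore $\DD^p$-stabilized for every $p$, since $\Phi^2=\mathrm{id}$. Both sets are intervals, so the covering is irreducible, and it is proper. Together these give $\DD^p$-regularity.

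For Condition~$B(2)$, note that $2$ is the least $p$ with $\DD^p$ central in type $A_r$, so $B(2)$ is exactly the form of Condition~$B$ stated in \cite{ArP2019}. This covering is the one underlying Dehornoy's ordering, and Arcis and Paris verify Condition~$B$ for it in \cite{ArP2019}, building on the analysis of the alternating normal form in \cite{Deh2008,Fromentin2011}. The main obstacle is this last point: it is not proved by the techniques of this paper, so I would rely on that citation rather than re-derive it, only checking that their definition of $\Theta$ matches $\Theta(2)$ here.
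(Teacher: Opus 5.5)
Your proposal is correct and follows the paper's proof. You reduce to co-abelian coverings via Proposition~\ref{P:SmallCoAbelian}, observe that on the line graph $A_r$ the only proper irreducible co-abelian covering is $\{S\setminus\{\sig1\},S\setminus\{\sig r\}\}$, check $\DD^p$-regularity directly, and cite \cite{ArP2019} for Condition~$B(2)$. The only difference is the order of the classification step: the paper first uses $S_1\cup S_2=S$ to put $\sig1$ and $\sig r$ in opposite sets and then reads co-abelianity off the complements, whereas you list the co-abelian intervals first and then impose the covering condition.
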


    \begin{proof}
    Since $\Gamma_{S_1}$ and $\Gamma_{S_2}$ are both connected and $S_1$, $S_2$ are different form $S$, $\sig1$ and $\sig{r}$ cannot lie both in $S_1$ nor in $S_2$.
    Up to exchanging $\sig\ii$ and  $\sig{\rr-\ii +1}$ assume that $\sig1\in S_1\setminus S_2$ and $\sig{r}\in S_2\setminus S_1$.
    It exists $1\leq \ii\leq r-1$ and $2\leq \jj\leq r$ such that $S\setminus S_2 =\{\sig1,\ldots,\sig\ii\}$ and $S\setminus S_1=\{\sig\jj,\ldots,\sig{r}\}$.
    As $S_1$ is not co-abelian for $\jj\leq r-1$ and $S_2$ not co-abelian for $\ii\geq 2$, Proposition~\ref{P:SmallCoAbelian} implies that $(S_2,S_1)$ satisfies condition~$B(k)$ for some $k\geq 1$ only if $S\setminus S_1=\{\sig{r}\}$ and $S\setminus S_2=\{\sig1\}$. 
   In the latter case we immediately check that the covering $(S_2,S_1)$ is $\DD^p$-regular.
  From~\cite{ArP2019},  $(S_2,S_1)$ satisfies Condition~$B(2)$ in this case.
    \end{proof}
    
    It follows from the above corollary that to expect a not yet known ordering on braids groups of type~$A$ using the approach of Diego Arcis and Luis Paris, one has to consider coverings $(S_2,S_1)$  that are not $\DD^p$-regular. For types $D$, $E_6$, $E_7$ and $E_8$, the list of co-abelian regular coverings  $(S_2,S_1)$  is given by the following list:
    \subsubsection{Type $E$}
    Up to exchanging $S_1$ and $S_2$, in types  $E_6$, $E_7$ or $E_8$, the  co-abelian proper and irreducible coverings are the $5$ following ones (in type $E_6$,  the second and the third covering are equal) . 
    \[
    S_i = 
    \begin{tikzpicture}[x=15pt,y=15pt]
    \fill[lightgray](0,0) circle (0.15);
    \fill[lightgray](5,0) circle (0.15);
    \draw[lightgray, line width=1](0,0) -- (1,0);
    \draw[lightgray, line width=1](4,0) -- (5,0);
    \draw[line width=1](2,0) -- (2,1);
    \draw[line width=1](1,0) -- (3,0);
    \draw[dotted,line width=1](3,0) -- (4,0);
    \fill(1,0) circle (0.15);
    \fill(2,0) circle (0.15);
    \fill(2,1) circle (0.15);
    \fill(3,0) circle (0.15);
    \fill(4,0) circle (0.15);
    \end{tikzpicture}
    \quad\text{and}\quad
    S_j = 
    \begin{tikzpicture}[x=15pt,y=15pt]
    \fill[lightgray](2,1) circle (0.15);
    \draw[lightgray, line width=1](2,0) -- (2,1);
    \draw[line width=1](0,0) -- (3,0);
    \draw[line width=1](4,0) -- (5,0);
    \draw[dotted,line width=1](3,0) -- (4,0);
    \fill(0,0) circle (0.15);
    \fill(1,0) circle (0.15);
    \fill(2,0) circle (0.15);
    \fill(3,0) circle (0.15);
    \fill(4,0) circle (0.15);
    \fill(5,0) circle (0.15);
    \end{tikzpicture}
    \]
    \[
    S_i = 
    \begin{tikzpicture}[x=15pt,y=15pt]
    \fill[lightgray](0,0) circle (0.15);
    \draw[lightgray, line width=1](0,0) -- (1,0);
    \draw[line width=1](4,0) -- (5,0);
    \draw[line width=1](2,0) -- (2,1);
    \draw[line width=1](1,0) -- (3,0);
    \draw[dotted,line width=1](3,0) -- (4,0);
    \fill(1,0) circle (0.15);
    \fill(2,0) circle (0.15);
    \fill(2,1) circle (0.15);
    \fill(3,0) circle (0.15);
    \fill(4,0) circle (0.15);
    \fill(5,0) circle (0.15);
    \end{tikzpicture}
    \quad\text{and}\quad
    S_j = 
    \begin{tikzpicture}[x=15pt,y=15pt]
    \fill[lightgray](2,1) circle (0.15);
    \fill[lightgray](5,0) circle (0.15);
    \draw[lightgray, line width=1](2,0) -- (2,1);
    \draw[line width=1](0,0) -- (3,0);
    \draw[lightgray,line width=1](4,0) -- (5,0);
    \draw[dotted,line width=1](3,0) -- (4,0);
    \fill(0,0) circle (0.15);
    \fill(1,0) circle (0.15);
    \fill(2,0) circle (0.15);
    \fill(3,0) circle (0.15);
    \fill(4,0) circle (0.15);
    \end{tikzpicture}
    \]
    \[
    S_i = 
    \begin{tikzpicture}[x=15pt,y=15pt]
    \fill[lightgray](5,0) circle (0.15);
    \draw[lightgray, line width=1](4,0) -- (5,0);
    \draw[line width=1](2,0) -- (2,1);
    \draw[line width=1](0,0) -- (3,0);
    \draw[dotted,line width=1](3,0) -- (4,0);
    \fill(0,0) circle (0.15);
    \fill(1,0) circle (0.15);
    \fill(2,0) circle (0.15);
    \fill(2,1) circle (0.15);
    \fill(3,0) circle (0.15);
    \fill(4,0) circle (0.15);
    \end{tikzpicture}
    \quad\text{and}\quad
    S_j = 
    \begin{tikzpicture}[x=15pt,y=15pt]
    \fill[lightgray](0,0) circle (0.15);
    \fill[lightgray](2,1) circle (0.15);
    \draw[lightgray, line width=1](0,0) -- (1,0);
    \draw[lightgray, line width=1](2,0) -- (2,1);
    \draw[line width=1](1,0) -- (3,0);
    \draw[line width=1](4,0) -- (5,0);
    \draw[dotted,line width=1](3,0) -- (4,0);
    
    \fill(1,0) circle (0.15);
    \fill(2,0) circle (0.15);
    \fill(3,0) circle (0.15);
    \fill(4,0) circle (0.15);
    \fill(5,0) circle (0.15);
    \end{tikzpicture}
    \]
    \[
    S_i = 
    \begin{tikzpicture}[x=15pt,y=15pt]
    \fill[lightgray](0,0) circle (0.15);
    \draw[lightgray, line width=1](0,0) -- (1,0);
    \draw[line width=1](4,0) -- (5,0);
    \draw[line width=1](2,0) -- (2,1);
    \draw[line width=1](1,0) -- (3,0);
    \draw[dotted,line width=1](3,0) -- (4,0);
    \fill(1,0) circle (0.15);
    \fill(2,0) circle (0.15);
    \fill(2,1) circle (0.15);
    \fill(3,0) circle (0.15);
    \fill(4,0) circle (0.15);
    \fill(5,0) circle (0.15);
    \end{tikzpicture}
    \quad\text{and}\quad
    S_j = 
    \begin{tikzpicture}[x=15pt,y=15pt]
    \fill[lightgray](2,1) circle (0.15);
    \draw[lightgray, line width=1](2,0) -- (2,1);
    \draw[line width=1](0,0) -- (3,0);
    \draw[line width=1](4,0) -- (5,0);
    \draw[dotted,line width=1](3,0) -- (4,0);
    \fill(0,0) circle (0.15);
    \fill(1,0) circle (0.15);
    \fill(2,0) circle (0.15);
    \fill(3,0) circle (0.15);
    \fill(4,0) circle (0.15);
    \fill(5,0) circle (0.15);
    \end{tikzpicture}
    \]
    \[
    S_i = 
    \begin{tikzpicture}[x=15pt,y=15pt]
    \fill[lightgray](5,0) circle (0.15);
    \draw[lightgray, line width=1](4,0) -- (5,0);
    \draw[line width=1](2,0) -- (2,1);
    \draw[line width=1](0,0) -- (3,0);
    \draw[dotted,line width=1](3,0) -- (4,0);
    \fill(0,0) circle (0.15);
    \fill(1,0) circle (0.15);
    \fill(2,0) circle (0.15);
    \fill(2,1) circle (0.15);
    \fill(3,0) circle (0.15);
    \fill(4,0) circle (0.15);
    \end{tikzpicture}
    \quad\text{and}\quad
    S_j = 
    \begin{tikzpicture}[x=15pt,y=15pt]
    \fill[lightgray](2,1) circle (0.15);
    \draw[lightgray, line width=1](2,0) -- (2,1);
    \draw[line width=1](0,0) -- (3,0);
    \draw[line width=1](4,0) -- (5,0);
    \draw[dotted,line width=1](3,0) -- (4,0);
    \fill(0,0) circle (0.15);
    \fill(1,0) circle (0.15);
    \fill(2,0) circle (0.15);
    \fill(3,0) circle (0.15);
    \fill(4,0) circle (0.15);
    \fill(5,0) circle (0.15);
    \end{tikzpicture}
    \]
    
   In types $E_7$ or $E_8$, $\DD$ is central  and  all  are $\DD^p$-regular  whathever $p\geq 1$.  In type $E_6$ they are all $\DD^{p}$ regular for $p$ even ;  Among the five, only the three first ones are $\DD^{p}$ regular for $p$ odd. 
   
    \subsubsection{Type $D$.}
    
   In type $D$, up to exchanging $S_1$ and $S_2$ and the  graph automorphism  that exchanges $\sig0$, $\sig0'$ and fixes the other generators,  there is 4  co-abelian proper and irreducible coverings. 
   
    \[
    S_i = 
    \begin{tikzpicture}[x=15pt,y=15pt,baseline=-3pt]
    \draw[lightgray,line width=1](0,0.7) -- (1,0);
    \fill(0,-0.7) circle (0.15);
    \fill[lightgray](0,0.7) circle (0.15);
    \fill(4,0) circle (0.15);
    \fill(1,0) circle (0.15);
    \fill(3,0) circle (0.15);
    \fill(2,0) circle (0.15);
    \draw[line width=1](0,-0.7) -- (1,0) -- (2,0);
    \draw[dotted, line width=1](2,0) -- (3,0);
    \draw[line width=1](3,0) -- (4,0);
    \end{tikzpicture}
    \quad\text{and}\quad
    S_j = 
    \begin{tikzpicture}[x=15pt,y=15pt,baseline=-3pt]
    \draw[lightgray,line width=1](0,-0.7) -- (1,0);
    \fill(0,0.7) circle (0.15);
    \fill[lightgray](0,-0.7) circle (0.15);
    \fill(4,0) circle (0.15);
    \fill(1,0) circle (0.15);
    \fill(3,0) circle (0.15);
    \fill(2,0) circle (0.15);
    \draw[line width=1](0,0.7) -- (1,0) -- (2,0);
    \draw[dotted, line width=1](2,0) -- (3,0);
    \draw[line width=1](3,0) -- (4,0);
    \end{tikzpicture}
    \]
    \[
    S_i = 
    \begin{tikzpicture}[x=15pt,y=15pt,baseline=-3pt]
    \draw[lightgray,line width=1](0,0.7) -- (1,0);
    \draw[lightgray,line width=1](3,0) -- (4,0);
    \fill(0,-0.7) circle (0.15);
    \fill(0,0.7) circle (0.15);
    \fill[lightgray](4,0) circle (0.15);
    \fill(1,0) circle (0.15);
    \fill(3,0) circle (0.15);
    \fill(2,0) circle (0.15);
    \draw[line width=1](0,0.7) -- (1,0);
    \draw[line width=1](0,-0.7) -- (1,0) -- (2,0);
    \draw[dotted, line width=1](2,0) -- (3,0);
    \end{tikzpicture}
    \quad\text{and}\quad
    S_j = 
    \begin{tikzpicture}[x=15pt,y=15pt,baseline=-3pt]
    \draw[lightgray,line width=1](0,-0.7) -- (1,0);
     \draw[lightgray,line width=1](0,0.7) -- (1,0);
    \fill[lightgray](0,0.7) circle (0.15);
    \fill[lightgray](0,-0.7) circle (0.15);
    \fill(4,0) circle (0.15);
    \fill(1,0) circle (0.15);
    \fill(3,0) circle (0.15);
    \fill(2,0) circle (0.15);
    \draw[line width=1](1,0) -- (2,0);
    \draw[dotted, line width=1](2,0) -- (3,0);
    \draw[line width=1](3,0) -- (4,0);
    \end{tikzpicture}
    \]
   
      \[
    S_i = 
    \begin{tikzpicture}[x=15pt,y=15pt,baseline=-3pt]
    \draw[lightgray,line width=1](0,0.7) -- (1,0);
    \fill(0,-0.7) circle (0.15);
    \fill[lightgray](0,0.7) circle (0.15);
    \fill(4,0) circle (0.15);
    \fill(1,0) circle (0.15);
    \fill(3,0) circle (0.15);
    \fill(2,0) circle (0.15);
    \draw[line width=1](0,-0.7) -- (1,0) -- (2,0);
    \draw[dotted, line width=1](2,0) -- (3,0);
    \draw[line width=1](3,0) -- (4,0);
    \end{tikzpicture}
    \quad\text{and}\quad
    S_j = 
    \begin{tikzpicture}[x=15pt,y=15pt,baseline=-3pt]
    \draw[lightgray,line width=1](0,-0.7) -- (1,0);
    \draw[lightgray,line width=1](3,0) -- (4,0);
    \fill(0,0.7) circle (0.15);
    \fill[lightgray](0,-0.7) circle (0.15);
    \fill[lightgray](4,0) circle (0.15);
    \fill(1,0) circle (0.15);
    \fill(3,0) circle (0.15);
    \fill(2,0) circle (0.15);
    \draw[line width=1](0,0.7) -- (1,0) -- (2,0);
    \draw[dotted, line width=1](2,0) -- (3,0);
    \end{tikzpicture}
    \]
    \[
    S_i = 
    \begin{tikzpicture}[x=15pt,y=15pt,baseline=-3pt]
    \draw[lightgray,line width=1](0,0.7) -- (1,0);
    \draw[lightgray,line width=1](3,0) -- (4,0);
    \fill(0,-0.7) circle (0.15);
    \fill(0,0.7) circle (0.15);
    \fill[lightgray](4,0) circle (0.15);
    \fill(1,0) circle (0.15);
    \fill(3,0) circle (0.15);
    \fill(2,0) circle (0.15);
    \draw[line width=1](0,0.7) -- (1,0);
    \draw[line width=1](0,-0.7) -- (1,0) -- (2,0);
    \draw[dotted, line width=1](2,0) -- (3,0);
    \end{tikzpicture}
    \quad\text{and}\quad
    S_j = 
    \begin{tikzpicture}[x=15pt,y=15pt,baseline=-3pt]
    \draw[lightgray,line width=1](0,-0.7) -- (1,0);
    \fill(0,0.7) circle (0.15);
    \fill[lightgray](0,-0.7) circle (0.15);
    \fill(4,0) circle (0.15);
    \fill(1,0) circle (0.15);
    \fill(3,0) circle (0.15);
    \fill(2,0) circle (0.15);
    \draw[line width=1](0,0.7) -- (1,0) -- (2,0);
    \draw[dotted, line width=1](2,0) -- (3,0);
    \draw[line width=1](3,0) -- (4,0);
    \end{tikzpicture}
    \]
 They are all $\DD^p$-regular for $p$ even.  For $p$ odd, thare are all   $\DD^p$-regular in type $D_{2n+1}$.  Only the first two covering are $\DD^p$-regular in type $D_{2n}$ . 
    \subsection{Type $I_2$}
  Consider type  $I_2(m)$ with the notations in Figure~\ref{F:CoxDiag}. Up to exchanging $\sig1$ and $\sig2$,  there is only one  proper covering $(S_2,S_1)$ of $S$ ;  it is co-abelian and $\DD^p$-regular whatever $p$.  
    In \cite{ArP2019}, the following result is proved:
    \begin{prp} 
    \begin{enumerate}
    \item If $m$ is even, then $(S_2,S_1)$ satisfies Condition~$B(1)$.
    \item If $m$ is odd, then $(S_2,S_1)$ satisfies Condition~$B(2)$.
    \end{enumerate}
    \end{prp}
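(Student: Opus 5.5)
In type $I_2(m)$ everything reduces to combinatorics of words, so the plan is a direct case analysis rather than an appeal to the general machinery. Write $S_1=\{\sig1\}$ and $S_2=\{\sig2\}$; this is harmless up to the symmetry, provided Condition~$B$ is checked for both orientations of the covering.

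\emph{Step 1: describe unmovable elements.} The only relation of $M$ rewrites an alternating factor of length $m$. Since $\DD$ is quasi-central ($\DD x=\Phi(x)\DD$), any word containing such a factor represents an element divisible by $\DD$. Conversely, a word all of whose maximal alternating factors have length $<m$ is alone in its equivalence class. Hence the $\DD$-unmovable elements are exactly the elements with a unique representative word, all alternating runs of length at most $m-1$. For such an element the $(S_2,S_1)$-alternating normal form is the decomposition of this word into maximal blocks $\sig2^{e}$ and $\sig1^{f}$. Therefore $\dpt$ is the number of $\sig2$-blocks. The same description gives $[\DD^k]$ explicitly, and shows that the elements of $\Theta(p)$ are the words beginning with $\DD^p\DD_{S_1}^{-p}$, i.e.\ $(\DD\sig1^{-1})^{p}$ when $\DD$ is central.

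\emph{Step 2: products without cancellation.} Take $\DD^p$-unmovable $a,b$ and concatenate their words. If the junction creates no alternating run of length $\ge m$, then $ab$ is unmovable and $c=ab$. Its depth is $\dpt(a)+\dpt(b)$, minus $1$ exactly when $a$ ends and $b$ begins with a $\sig2$-block, since these blocks merge. This gives (ii) with $\varepsilon\in\{0,1\}$. For (iii) one then checks that when $a$ or $b$ lies in $\Theta(p)$, or $c\in M_{S_1}$, one is always in the merging situation (giving $\varepsilon=0$), or the configuration is excluded.

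\emph{Step 3: products with one extraction of $\DD$.} Otherwise the final run $r_1<m$ of $a$ and the initial run $r_2<m$ of $b$ glue into an alternating run of length $r_1+r_2\ge m$. Pull one $\DD$ out of it and push it to the left, applying $\Phi$ to the prefix. The leftover run has length $r_1+r_2-m\le m-2$. Applying $\Phi$ either fixes the letters ($m$ even) or swaps them ($m$ odd); in both cases it keeps block lengths, so no new long run appears. Hence $ab=\DD c'$ with $c'$ $\DD$-unmovable.

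For $m$ even and $p=1$, this gives $t=1$ and $c=c'$. One computes $\dpt(c)+\dpt(\DD)$ against $\dpt(a)+\dpt(b)$ by counting how many $\sig2$-blocks are destroyed or created at the junction, and finds the defect $\varepsilon\in\{0,1\}$ required in (i).

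For $m$ odd one works with $p=2$. A $\DD^2$ would require an alternating run of length $2m$, which cannot occur, so $t=0$ and $c=ab$. Here $c$ may still contain one $\DD$, and we compute its alternating normal form by hand, using that $\Phi$ swaps $\sig1$ and $\sig2$ on the prefix.

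\emph{Main obstacle.} The delicate part is clause (iii), together with the boundary configurations. These occur when the leftover run is empty or a single letter, when the prefix of $a$ before the junction is a power of $\sig1$ (so that $\Phi$ turns $\sig1$-blocks into $\sig2$-blocks and shifts the depth by one), and when $a$ or $b$ lies in $\Theta(p)$. I would tabulate these cases by the parities of $r_1$, $r_2$ and $m$ and by the first letter of the blocks involved. The role of $\Theta(p)$ is precisely to exclude the configurations in which $a$ and $b$ both begin with $(\DD\DD_{S_1}^{-1})^{p}$; in those configurations the count would give $\varepsilon=2$.
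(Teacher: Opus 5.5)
Your Step~3 rests on a false claim, and that step is where the substance of the argument lies. (The paper gives no proof of its own here; it quotes \cite{ArP2019}.) Write $a=a'R_1$ and $b=R_2b'$, where $R_1R_2$ is the glued run. When $r_1+r_2=m$ exactly, the extracted $\DD$ is all of $R_1R_2$. After moving it to the left, the last letter of $\Phi(a')$ and the first letter of $b'$ are \emph{distinct}, for either parity of $m$. So the penultimate run of $a$ (transported by $\Phi$) merges with the second run of $b$. If their lengths add up to at least $m$, a further $\DD$ appears, and this can cascade.

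Here are concrete counterexamples:
\begin{itemize}
\item For $m=4$, the elements $a=\sig1\sig2\sig2\sig1$ and $b=\sig2\sig1\sig1\sig2$ are $\DD$-unmovable and neither lies in $\oTheta(1)$. Yet $ab=\sig1\sig2\,\DD\,\sig1\sig2=\DD^2$, so $t=2$. With $a=b=\sig1\sig2\sig2\sig1\sig1\sig2$ one even gets $ab=\DD^3$.
\item For $m=3$, take $a=\sig1\sig2\sig2\sig1$ and $b=\sig2^2\notin\oTheta(2)$. Then $ab=\sig1\sig2\,\DD\,\sig2=\DD^2$. So your assertion that $t=0$ always holds for $p=2$ is false, and clause~(i) of $B(2)$ is never examined.
\end{itemize}

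The missing ingredient is an analysis of the full cancellation. Peel off successive pairs (final runs of $a$, initial runs of $b$) whose lengths add up to exactly $m$, the last pair possibly longer, and read off $t$ and $c$ from this. Then compare with $\dpt(\DD^{pt})$ for every $t$. Here $\dpt(\DD^{pt})-1=t(\dpt(\DD^p)-1)$ by Proposition~\ref{P:LeftOfPowerDeltaBoxMinus}, since $S_2\boxminus S_1=\{\sig2\}$ and $p$ is chosen so that the covering is $\DD^p$-fixed. Throughout, you must track how each application of $\Phi$ (a letter swap when $m$ is odd) changes which blocks count towards the depth.

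Two further points need repair.

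First, for $m$ odd and $p=2$, Condition~$B(2)$ only requires $a,b$ to be $\DD^2$-unmovable. So $a$ or $b$ may itself be divisible by $\DD$, e.g.\ $a=\DD\sig2\notin\oTheta(2)$. Your Steps~1--2 only treat words all of whose runs are shorter than $m$.

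Second, your description of $\Theta(p)$ is wrong. It consists of the elements $(\DD^p\DD_{S_1}^{-p})^k h$ with $h\in M_{S_1}$, not of all elements beginning with $\DD^p\DD_{S_1}^{-p}$. For $m$ even, its $\DD$-unmovable members are only the powers $(\DD\sig1\inv)^k$, since $(\DD\sig1\inv)\sig1=\DD$. With your larger set, clause~(iii) would wrongly demand $\varepsilon=0$. For instance, with $m=4$, $a=(\DD\sig1\inv)\sig2=\sig2\sig1\sig2\sig2$ and $b=\sig1\sig2$ give an unmovable $ab$ with $\dpt(ab)=3=\dpt(a)+\dpt(b)$. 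That is $\varepsilon=1$, which is allowed.

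Relatedly, the excluded pairs are not those producing $\varepsilon=2$. A typical one is $((\DD\sig1\inv)^2,\sig1)$: here $ab=\DD\cdot\DD\sig1\inv$ and $\varepsilon=1$ although $a\in\Theta(1)$, which violates clause~(iii).
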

    \subsection{Type $B$}\label{S:ConditonB_typeB} 
    
    In this section, $M$ is assumed to be an  Artin monoid of type $B_r$ with $r\geq 3$.   We use the notation of Figure~\ref{F:CoxDiag}.    The case $r = 2$ is cover by the previous section on type $I_2$.
    
     By assumption, one of the sets $S_1,S_2$ is $\{\sig0,\ldots, \sig\kko\}$ and the other is  $ \{\sig{r-j},\ldots,\sig{r-1}\}$,  with  $1\leq k,\,j \leq r-1$ and $r\leq j+k$.  As $\DD$ is central,  $(S_2,S_1)$ is $\DD^p$-regular whatever $p\geq 1$.   As in the small cases, in most cases Condition~$B(p)$ does not hold whatever $p\geq 1$. 
      
   \begin{prp}
    \label{P:typeBCoAb}
    Let $M$ be an irreducible spherical type Artin monoid of type $B_r$ with $r\geq 3$ and the notation of the introduction. Set $S = \{\sig0,\ldots,\sig\rro\}$ and assume $(S_2,S_1)$ is a proper irreducible  covering of $S$. 
    The covering $(S_2,S_1)$ may satisfy condition~$B(p)$ for some $p\geq 1$ only in the following cases.
    \begin{enumerate}
    \item $r = 3$;
    \item  $r\geq 4$ and   $\{S_1, S_2\} = \bigg\{ \{\sig0,\ldots, \sig{r-2}\} , \{\sig2,\ldots, \sig{\rro}\}\bigg\}$.
    \item  $r\geq 4$ and  $\{S_1, S_2\} = \bigg\{ \{\sig1,\ldots, \sig\rro\} ,\{\sig0,\ldots, \sig\kko\}\bigg\}$ such that $k\geq 2$ and $r \equiv 1$ mod $k$.  
    \end{enumerate}
    \end{prp}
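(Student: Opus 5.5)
We argue by elimination: we list all proper irreducible coverings of $B_r$ with $r\geq 4$ and show that every one outside cases (ii) and (iii) violates Condition~$B(p)$ for every $p$, using the two criteria already available. Up to exchanging $S_1$ and $S_2$, the coverings are $\{\sig0,\ldots,\sig\kko\}$ and $\{\sig{r-j},\ldots,\sig\rro\}$ with $r\leq j+k$ and $1\leq k,j\leq r-1$. Condition~$B$ is not symmetric in $S_1,S_2$, but both criteria can be applied whichever of the two sets plays the role of $S_1$, so we may use the unordered description.

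\emph{Step 1: the type $A$ subdiagram, via Proposition~\ref{P:ConditionB:SubtypeA}.} The subgraph $\{\sig1,\ldots,\sig\rro\}$ is of type $A_{r-1}$, so it suffices to find a window $T=\{\sig a,\ldots,\sig{a+k'+1}\}\subseteq\{\sig1,\ldots,\sig\rro\}$ with the following properties: $T$ meets both $S_1\setminus S_2$ and $S_2\setminus S_1$, and $T\cap S_1\cap S_2$ consists of the $k'-1$ interior vertices other than the two at one end. This is exactly the configuration built in the proof of Proposition~\ref{P:SmallCoAbelian}. Concretely, we choose $T$ to run from the last vertex of $\{\sig1,\ldots,\sig\kko\}\setminus S_{j}$ to two steps into $\{\sig{r-j},\ldots\}\setminus\{\sig0,\ldots,\sig\kko\}$. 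Such a $T$ exists as soon as $\{\sig{r-j},\ldots,\sig\rro\}$ is not co-abelian, that is $k\leq r-2$, and the left part has a vertex $\sig i$ with $i\geq1$ outside the right set, that is $r-j\geq 2$. Symmetrically, the right set must avoid at least two consecutive vertices $\sig1,\sig2$ or similar. Checking these inequalities case by case kills every covering except those with $k=r-1$ (left set maximal), or with $r-j\leq 1$, or with $r-j=2,k=r-1$, and so on. The survivors should be $\{\sig0,\ldots,\sig{r-2}\}$ paired with $\{\sig2,\ldots,\sig\rro\}$ or $\{\sig1,\ldots,\sig\rro\}$, and $\{\sig1,\ldots,\sig\rro\}$ paired with an arbitrary $\{\sig0,\ldots,\sig\kko\}$.

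\emph{Step 2: the second criterion, Proposition~\ref{P:ConditionB:Crit:TypeB}.} Since $\DD$ is central in type $B$, we may apply it to the remaining families with $S_1\cap S_2\neq\emptyset$. For the pair $\{\sig1,\ldots,\sig\rro\},\{\sig0,\ldots,\sig\kko\}$ we take $S_1=\{\sig1,\ldots,\sig\rro\}$, so $S_2\setminus S_1=\{\sig0\}$, and compute the last entry $g_n$ of $[\DD]$. The criterion applies whenever $g_n\neq\sig0^e$. We write $\DD=(\sig0\sig1\cdots\sig\rro)^r$ and peel off $S_1$-tails and $S_2$-tails alternately. Because conjugating by $\sig0\sig1\cdots\sig\rro$ shifts indices, the peeling of $\{\sig0,\ldots,\sig\kko\}$-tails proceeds in blocks of length $k$. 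We expect the leftover top factor to be a pure power of $\sig0$ exactly when $r\equiv1 \pmod k$, which produces condition (iii). The case $k=1$ is excluded because the covering is then not irreducible or not proper in the required sense. For $\{\sig0,\ldots,\sig{r-2}\},\{\sig1,\ldots,\sig\rro\}$, a similar computation should give $g_n\notin M_{S_2\setminus S_1}$, so this covering is eliminated as well, leaving (ii).

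\emph{Main obstacle.} The hardest step is the explicit computation of $[\DD]$ for $S_1=\{\sig1,\ldots,\sig\rro\}$, $S_2=\{\sig0,\ldots,\sig\kko\}$, which pins down when $g_n$ is a power of $\sig0$. We would first compute small cases by reversing diagrams to guess the block structure, and then prove it by induction using Lemma~\ref{L:ProductANF} and the chain tools, namely Lemma~\ref{L:LcmToChain}, Proposition~\ref{P:SuppChain} and Proposition~\ref{P:ChainDiv}.
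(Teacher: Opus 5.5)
Your Step~1 is the paper's first step (Proposition~\ref{P:ConditionB:SubtypeA} applied to a type-$A$ window inside $\{\sig1,\ldots,\sig\rro\}$), and your list of survivors is correct. Your explicit window is even slightly more careful than the paper, which applies the proposition to all of $\{\sig1,\ldots,\sig\rro\}$. The gaps are in Step~2.

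\textbf{Only one ordering is treated.} Proposition~\ref{P:ConditionB:SubtypeA} is indeed insensitive to the order of the covering, since its proof treats both orientations. Proposition~\ref{P:ConditionB:Crit:TypeB} is not. Its hypothesis concerns the last entry of the $(S_2,S_1)$-normal form of $\DD$, and the $(S_1,S_2)$-normal form is a different decomposition that no evident symmetry relates to the first. Computing only for $S_1=\{\sig1,\ldots,\sig\rro\}$ therefore settles only one of the two ordered coverings in the family. For the other, $S_1=\{\sig0,\ldots,\sig\kko\}$ and $S_2=\{\sig1,\ldots,\sig\rro\}$, you have $S_2\setminus S_1=\{\sig\kk,\ldots,\sig\rro\}$. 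You must still show $g_n\notin M_{S_2\setminus S_1}$ when $r\not\equiv 1\pmod k$. This is the second case of Lemma~\ref{L:ConditionB:ANF:Dellta}, the bulk of that lemma's proof. It yields $g_n=\sigdec{k}{k-\hat s+1}\cdots\sigdec{k+\hat s-1}{k}$, where $\hat s\in\{1,\ldots,k\}$ is congruent to $r$ modulo $k$. Even for the ordering you do treat, you only state the expected outcome. The paper carries out exactly the block peeling you describe and finds $g_n=\siginc0{\hat s-1}\cdots\siginc00$, but the real work is verifying the tail conditions that make the decomposition normal.

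\textbf{The case $k=1$ is dismissed for a false reason.} $\{\sig0\}$ and $\{\sig1,\ldots,\sig\rro\}$ are proper, irreducible and cover $S$. Neither criterion reaches this covering:
\begin{itemize}
\item Proposition~\ref{P:ConditionB:Crit:TypeB} needs $S_1\cap S_2\neq\emptyset$, and for a disjoint covering $g_n\in M_{S_2}=M_{S_2\setminus S_1}$ automatically.
\item Any window meeting both $S_1\setminus S_2$ and $S_2\setminus S_1$ contains $\sig0$, hence is not of type $A$.
\end{itemize}
Since the statement requires $k\geq 2$ in (iii), you would need a separate argument here, for instance an explicit pair $(a,b)$ fed into Lemma~\ref{L:B1ToBj}. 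The paper's own proof does not cover this case either. It also routes $k=1$ through Proposition~\ref{P:ConditionB:Crit:TypeB}, and its Lemma~\ref{L:ConditionB:ANF:Dellta} gives $g_n=\sig0$, resp.\ $\sig1$, there.

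\textbf{The case $k=r-1$ is misclassified.} The covering $\{\sig0,\ldots,\sig{r-2}\},\{\sig1,\ldots,\sig\rro\}$ is not eliminated. It is the case $k=r-1$ of (iii), because $r\equiv 1\pmod{r-1}$. There $g_n=\sig0$, resp.\ $\sig\rro$, lies in $M_{S_2\setminus S_1}$, as your own prediction one sentence earlier already implies. This does not hurt the statement, which allows that covering. However, the accounting ``leaving (ii)'' is wrong, and the computation you propose to eliminate it would fail.
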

    \begin{proof} 
    Assume $r\geq 4$.
    We have $S=S_1\cup S_2$. Let $i\in\{1,2\}$ such that $\sig0\in S_i$.
    Let $j\in\{1,2\}$ such that $\{i,j\}=\{1,2\}$.
    Since $S_i$ is irreducible and different from $S$ we have $\sig\rro\in S_j\setminus S_i$.
    A similar argument give $\sig0\in S_i\setminus S_j$
    Again, since $S_i$ and $S_j$ are irreducible, it exists $1\leq \kk,\ell \leq r-1$ such that 
    \[
    S_i=\{\sig0,\ldots, \sig\kko\} \quad \text{and} \quad S_j=\{\sig{r-\ell},\ldots, \sig{r-1}\}.
    \]
    Consider $T=\{\sig1,\ldots,\sig\rro\}$ together with $T_i= T \cap S_i$ and $T_j=T\cap S_j$. 
    The set $T$ is of type $A_{r-1}$. 
    The couple $(T_2,T_1)$ is a proper and irreducible covering of $T$ if and only of $k\geq 2$ and $\ell\leq r-2$. 
    
    Assume $k\geq 2$ and $\ell\leq r-2$.
    Let $T'=T_1\cap T_2$ and $t$ be the cardinal of $T'$.
    It follows that $T'$ is of type~$A_t$. 
    Since $(T_2,T_1)$ is proper, we must have $t\leq r-3$.
    If $t\leq r-4$ we use Proposition~\ref{P:ConditionB:SubtypeA} to conclude that $(S_2,S_1)$ does not satisfy Condition $B(p)$ whatever $p\geq 1$. 
    The only way to have $t=r-3$ in this case is with $k=r-1$ and $\ell=r-2$, which corresponds to case $(ii)$.
    
    Assume $\ell=r-1$. Then $S_j=\{\sig1,\ldots,\sig\rro\}$.
    Let $(g_n,\ldots,g_1)$ be the alternating normal form of $\DD$ relatively to $(S_2,S_1)$.
    Applying Lemma~\ref{L:ConditionB:ANF:Dellta} below  we get that $g_n$ belongs to $M_{S_2\setminus S_1}$ if and only if $k\geq 2 $ and $r \equiv 1 \mod k$.
    We conclude $(ii)$ with Proposition~\ref{P:ConditionB:Crit:TypeB}.
    \end{proof}     
      
 The remaining of Section~\ref{S:ConditonB_typeB} is devoted to Lemma~\ref{L:ConditionB:ANF:Dellta} and its proof.   We need first to extend the notations of \eqref{E:sigij} in type B for an integer $0\leq \jj \leq \rro$
    \begin{equation}
    \label{E:sigij:B}
    \begin{split}
      \siginc0\jj = \sig0\,\sig1 \cdots \sig\jj\quad&\text{and}\quad\sigdec\jj0 = \sig\jj\,\sig\jjo \cdots \sig0,\\
      \sigincsq0\jj = \sig0^2\,\sig1^2 \cdots \sig\jj^2\quad&\text{and}\quad\sigdecsq\jj0 = \sig\jj^2\,\sig\jjo^2 \cdots \sig0^2.
    \end{split}
     \end{equation}
    Again the words of \eqref{E:sigij:B} are alone in their equivalence classes.
    There iss no counterpart of \eqref{E:A:PC:INC} and \eqref{E:A:PC:DEC} for~$\ii=0$ since $\sig0\sig1\sig0\not=\sig1\sig0\sig1$.
    
    Moreover, from \cite{BrS}, we obtain the following characterisation of Garside element of irreducible parabolic submonoids of $B_r$:
    \begin{align}
    \label{E:GarB}
    \DD_{\{\sig0,\ldots,\sig\ii\}} &= \overline{(\siginc0\ii)^\iip}=\overline{(\sigdec\ii0)^\iip}&\text{for $0\leq\ii\leq\rro$}\\
    \label{E:GarA}
    \DD_{\{\sig\ii,\ldots,\sig\jj\}} &= \overline{(\siginc\jj\jj\,\siginc\jjo\jj\cdots\siginc\ii\jj)} = \overline{(\sigdec\jj\ii\,\sigdec\jj\iip\cdots \sigdec\jj\jj)}&\text{for $1\leq \ii\leq\jj\leq\rro$.}
    \end{align}

    \begin{lm} 
    \label{L:ConditionB:ANF:Dellta}
    Assume $r\geq 4$.
    Fix $1\leq k \leq r-1$ and consider one of the following two cases :
    \begin{itemize}
    \item $S_1 =  \{\sig1,\ldots, \sig\rro\}$ and $S_2 =  \{\sig0,\ldots, \sig\kko\}$.
    \item $S_1 =  \{\sig0,\ldots, \sig\kko\}$  and $S_2 =  \{\sig1,\ldots, \sig\rro\}$.
    \end{itemize}
    Let $q$ and $s\in\{0,\ldots,k-1\}$ be the quotient and the remainder of the euclidean division of $r$ by $k$.
    We also define $\hat{s}$ to be $k$ for $s=0$ and $s$ otherwise.
    \begin{enumerate}
     \item We have $\brd(\Delta) = 2q$ for $s = 0$ and $\brd(\Delta) = 2q+2$ otherwise.
     \item Let $(g_n,\ldots, g_1)$ be the alternating normal form of $\DD$.
     \begin{itemize}
      \item In the first case, we have $g_n = \siginc0{\hat{s}-1}\cdot\siginc0{\hat{s}-2}\cdot\ldots\cdot\siginc01\cdot\siginc00$.
      \item In the second case, we have $g_n = \sigdec\kk{k-\hat{s}+1}\cdot\ldots\cdot\sigdec{k+\hat{s}-2}{k-1} \cdot \sigdec{k+\hat{s}-1}\kk$
     \end{itemize}
    \end{enumerate}
    \end{lm}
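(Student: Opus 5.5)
We will compute the alternating normal form of $\DD=\DD_{B_r}$ explicitly by peeling off tails from the right, using the explicit forms \eqref{E:GarB} and \eqref{E:GarA}. The starting point is Lemma~\ref{L:DeltaDecomposition}: $\DD=\alpha(\DD_{S_1})\,\DD_{S_1}$, and by Lemma~\ref{L:DeltaAlternatingNormalForm} we have $g_1=\DD_{S_1}$. Put $X=\{\sig0,\ldots,\sig\kko\}$ and $Y=\{\sig1,\ldots,\sig\rro\}$. The idea is to use a factorisation of $\DD$ into alternating blocks coming from the coset structure $W(B_r)\supset W(Y)$. Concretely, $\DD=\overline{(\sigdec{r-1}0)^r}$, and $\DD\,\DD_Y^{-1}$ is represented by the word $\siginc0{r-1}\cdot\siginc0{r-2}\cdots\siginc00$ (or its mirror), which has length $r^2-r(r-1)/2$. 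We then identify greedy blocks. The $X$-tail of an element of the form $\siginc0{j}\cdots\siginc00$ with $j\ge k$ should be obtained by repeatedly extracting the factor $\siginc0{k-1}\cdots\siginc00=\DD_X\DD_{\{\sig1,\ldots,\sig\kko\}}^{-1}$ together with a shifted copy. Using \eqref{E:A:PC:INC} to commute $\sig i$'s past increasing runs, one should be able to rewrite
\[
\siginc0{j}\cdots\siginc00 \equiv \big(\text{word in } \sig{k},\ldots,\sig{j}\text{ and } \sig{1},\ldots\big)\cdot\big(\text{block in } X\big),
\]
so that each pass through $X$ then $Y$ consumes $k$ of the $r$ "levels".

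The key steps, in order, are the following. (1) Treat the first case ($S_1=Y$, $S_2=X$). There $g_1=\DD_Y$ and the remaining element is $h=\overline{\siginc0{r-1}\cdots\siginc00}$. (2) Prove by induction on $j$ the claim that for $j\ge k$ the $X$-tail of $\overline{\siginc0{j}\cdots\siginc00}$ is $\overline{\siginc0{k-1}\cdots\siginc00}\cdot(\text{something in }M_{\{\sig1,\ldots,\sig\kko\}})$, and that the $Y$-tail of what remains reduces the problem to the same shape with $j$ replaced by $j-k$. Tails are detected through right divisibility via chains: use Lemma~\ref{L:SigDecLeftChain} and its increasing analogue, and Proposition~\ref{P:ChainDiv} to show that no further generator of the relevant side right divides the remainder, exactly as in the proofs of Lemmas~\ref{L:ConditionB:CE:TypeA1} and~\ref{L:ConditionB:CE:TypeA2}. (3) Iterate the reduction: after $q$ double steps, $r$ levels have been consumed if $s=0$, giving breadth $2q$ with last block a full $X$-block ($\hat s=k$). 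If $s\neq0$, one more pair of factors remains, the last one being $\siginc0{\hat s-1}\cdots\siginc00$, and the breadth is $2q+2$, counting $g_1=\DD_Y$. Here one must check that the parity bookkeeping matches Definition~\ref{D:AlternatingNormalForm}. (4) Deduce the second case ($S_1=X$, $S_2=Y$) by the symmetric computation starting from $\DD=\alpha(\DD_X)\DD_X$, with the roles reversed. Since $\DD$ is central, we may write $\DD\DD_X^{-1}$ using decreasing words $\sigdec{\cdot}{\cdot}$ via \eqref{E:GarA}. The leftmost block then becomes the product of shifted decreasing runs $\sigdec\kk{k-\hat s+1}\cdots\sigdec{k+\hat s-1}\kk$. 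Lemma~\ref{L:DeltaBreathForFixedCovering} (the covering is fixed, so the breadth is even) gives a consistency check on (i).

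The main obstacle is step (2): obtaining a clean closed form for the $X$-tail and $Y$-tail of the intermediate elements. The difficulty is that $\sig0$ does not satisfy a braid-type commutation with $\sig1$, so \eqref{E:A:PC:INC} cannot be used across $\sig0$. I would first try to express the intermediate element as a coset representative $\DD_{\{\sig0,\ldots,\sig{j}\}}\DD_{\{\sig1,\ldots,\sig j\}}^{-1}$ and use the lcm characterisation: the $X$-tail equals $\DD_X\DD_{\{\sig1,\ldots,\sig\kko\}}^{-1}$ twisted appropriately. I would verify the maximality of this tail using Lemma~\ref{L:Tail} together with chain arguments of the form of Lemma~\ref{L:SigDec:LeftReverse}.
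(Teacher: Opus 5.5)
Your plan for the first case ($S_1=Y$, $S_2=X$) follows the paper. Write $\xi_{j\searrow i}=\siginc0{j-1}\,\siginc0{j-2}\cdots\siginc0i$. Then $\DD=\overline{\xi_{r\searrow 0}}\,\DD_Y$, and you peel off alternately an $X$-tail and a $Y$-tail, losing $k$ levels each time. Three things turn your sketch into a proof.

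\begin{itemize}
\item For $j>k$, the $X$-tail of $\overline{\xi_{j\searrow0}}$ is exactly $\overline{\xi_{k\searrow0}}=\DD_X\DD_{\{\sig1,\ldots,\sig\kko\}}^{-1}$. There is no extra factor in $M_{\{\sig1,\ldots,\sig\kko\}}$ and no twist.
\item The reduction rests on the identity
\[
\xi_{j\searrow k}\equiv\xi_{j-k\searrow 0}\cdot\nu_{j\searrow k},\qquad \nu_{j\searrow k}=\siginc{j-k}{j-1}\,\siginc{j-k-1}{j-2}\cdots\siginc1k \ \text{(a word over $Y$)},
\]
which uses only far commutations, not \eqref{E:A:PC:INC}.
\item You need two divisibility facts. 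First, $\sig0$ is the only atom right-dividing $\overline{\xi_{j\searrow0}}$; this follows from square-freeness of $\DD_{\{\sig0,\ldots,\sig{j-1}\}}=\overline{\xi_{j\searrow0}}\,\DD_{\{\sig1,\ldots,\sig{j-1}\}}$. Second, $\xi_{j\searrow k}$ is a left $(\sigma_{\ell+j-k},\sigma_\ell)$-chain for $0\le\ell<k$.
\end{itemize}

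With these, every partial product $g_n\cdots g_i$ with $i\ge 2$ is literally $\overline{\xi_{j\searrow0}}$ or $\overline{\xi_{j\searrow k}}$, and Lemma~\ref{L:Tail} gives normality.

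The genuine gap is step (4): the second case ($S_1=X$, $S_2=Y$) does not follow by symmetry. The diagram $B_r$ has no automorphism exchanging $X$ and $Y$. Exchanging $S_1$ and $S_2$ changes which tail is extracted first. Word reversal turns right tails into left tails. So nothing transports the first computation, and the normal form in fact has a different shape:
\begin{itemize}
\item $g_1=\DD_X$ is of type $B_k$;
\item $g_2=\overline{\sigdec{r-k}1\cdots\sigdec{r-1}k}\cdot\DD_{S_1^\perp}$;
\item the remaining entries alternate between $\overline{\muinc0{k-1}}=\overline{\sigdec00\,\sigdec10\cdots\sigdec{k-1}0}\in M_X$ and blocks $\overline{\nuinc{ik+s}{(i+1)k+s-1}}\in M_Y$, each made of $k$ decreasing runs ending at $\sig1,\ldots,\sig\kk$;
\item when $s\neq0$, the partial pair $(\nuinc k{k+s-1},\muinc{k-s}{k-1})$ gives the leftmost two entries.
\end{itemize}

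The paper obtains this from the factorisation $\DD=\overline{\muinc k{r-1}}\cdot\overline{\sigdec{r-k}1\cdots\sigdec{r-1}k}\cdot\DD_{S_1^\perp}\cdot\DD_{S_1}$. Centrality of $\DD$ is used there to move $\DD_{S_1^\perp}$ from the left end to the right end. It then applies the splitting $\muinc i{i+t-1}\equiv\nuinc i{i+t-1}\cdot\muinc{k-t}{k-1}$. Invoking \eqref{E:GarA}, which only describes type-$A$ parabolic Garside elements, does not produce this decomposition.

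Normality then needs arguments absent from your plan:
\begin{itemize}
\item $\sig\kk$ is the only atom right-dividing a $\nu$-block;
\item $\muinc0{k-1}$ is a left $(\sigma_{k-j},\sigma_j)$-chain for $1\le j\le k-1$ (via Lemma~\ref{L:SigDec:LeftReverse}), which excludes atoms of $S_1\cap S_2$;
\item $\sig\kk$ is excluded at deeper positions by a square-freeness argument built on $\sigdec i0\,\sigma_j\equiv\sigma_{j-1}\,\sigdec i0$ for $2\le j\le i$.
\end{itemize}
As it stands, (i) and the second formula of (ii) are unproved in the second case.
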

    
    \begin{proof}  \emph{First case:} $ S_1 = \{\sig1,\ldots, \sig\rro\}$ and $S_2 =  \{\sig0,\ldots, \sig\kko\}$.
    A direct computation using commutations gives, for $1\leq m \leq r$:
    \begin{align*}
    (\siginc0\rro)^m
    & = \big(\siginc0{r-2} \, \siginc\rro\rro\big) \cdots \big(\siginc0{r-m} \, \siginc{r-m + 1}\rro\big) \cdot \big(\siginc0{r-m-1} \, \siginc{r-m}\rro\big)\\
    & \equiv \big(\siginc0{r-2} \, \siginc\rro\rro\big) \cdots \big(\siginc0{r-m} \, \siginc0{r-m-1}\big) \cdot \big(\siginc{r-m + 1}\rro\,  \siginc{r-m}\rro\big)\\
    & \equiv ~\dots \\
    & \equiv \big(\siginc0{r-2} \cdots \siginc0{r-m}\,\siginc0{r-m-1}\big)\cdot\big(\siginc\rro\rro\cdots\siginc{r-m+1}\rro\siginc{r-m}\rro\big)
    \end{align*}
    which, using \eqref{E:GarA} becomes 
    \begin{equation}
    \label{formulepour5.16cas2}
    \overline{(\siginc0\rro)^m} = \overline{(\siginc0{r-2} \cdots \siginc0{r-m}\,\siginc0{r-m-1})} \cdot \DD_{\{\sig{r-m},\ldots,\sig\rro\}}.
    \end{equation}
    In particular for $m = r - 1$ and relations \eqref{E:GarB} we obtain 
    \begin{equation}
    \label{ref:decompdelta}  \DD = \overline{(\siginc0\rro\,(\siginc0\rro)^{r-1})} = \overline{(\siginc0\rro \, \siginc0{r-2} \cdots \siginc01\,\siginc00)} \cdot \DD_{S_1}.
    \end{equation}
     For $0 \leq \ii \leq \jj \leq \rr$ we set
     \begin{align*}
     \xi_{\jj\searrow\ii} &= \siginc0\jjo\,\siginc0{\jj-2} \cdots \siginc0\iip\,\siginc0\ii,\\
     \nu_{\jj\searrow\ii} &= \siginc{\jj-\ii}\jjo\,\siginc{\jj-\ii-1}{\jj-2}\cdots \siginc2\iip\,\siginc1\ii,
     \end{align*}
    with the convention that $\xi_{j\searrow j}$ and $\nu_{j\searrow j}$ are the empty word.
    With these notations, \eqref{formulepour5.16cas2} and \eqref{ref:decompdelta} become
    \[
    \overline{(\siginc0\rro)^m}  = \overline{\xi_{r-1\searrow r-m-1}} \cdot \DD_{\{\sig{r-m},\ldots,\sig{\rro}\}} \quad\text{and}\quad \Delta = \overline{\xi_{r\searrow0}} \cdot\DD_{S_1}.
     \]
    Straightforward computations based on definitions and commutations give
    \begin{equation}
    \label{E:TauNu}
     \xi_{j\searrow t}\, \xi_{t \searrow i} = \xi_{j \searrow i} \quad\textrm{and}\quad \xi_{j\searrow t} \equiv \xi_{j-t\searrow 0}\cdot \nu_{j\searrow t}\qquad\text{for $0\leq\ii\leq t\leq \jj\leq r$}.
    \end{equation}
    Since $\DD$ is square free, by Proposition~\ref{P:Garside}(i), and $\Delta = \overline{\xi_{r\searrow 0}} \DD_{S_1}$, the unique right-divisor of $\xi_{r\searrow0}$ is~$\sig0$.
    For the same reason, for any $j$, the unique right-divisor of $\overline{\xi_{j\searrow0}}$ is $\sig0$ (just consider the subgroup of type~$B$ generated by $\{\sig0,\ldots, \sig{\jj-1}\}$).
    In the same way, for $1\leq \ii < \jj\leq r$, the word $\xi_{\jj\searrow \ii}$ is a left $(\sig{\ell+\jj-\ii},\sig\ell)$-chain for any $\ell$ in $\{0,\ldots,\ii-1\}$.
    This is a direct consequence of the fact that $\siginc0{t}$ is a left~$(\sig{\ell+1},\sig\ell)$-chain for any $\ell\in\{0,\ldots,t-1\}$ as illustrated by the following diagram:
    \[
     \begin{tikzpicture}
    \draw[-latex](-0.5,0) -- (1,0) node[midway, below]{$\siginc0{\ell-1}$};
    \draw[-latex](1,0) -- (2.5,0) node[midway, below]{$\sig{\ell}\sig{\ell+1}$};
    \draw[-latex](2.5,0) -- (4,0) node[midway, below]{$\siginc{\ell+2}t$};
    \draw[-latex](-0.5,0.75) -- (-0.5,0) node[midway, left]{$\sig{\ell+1}$};
    \draw[-latex](1,0.75) -- (1,0) node[midway, right]{$\sig{\ell+1}$};
    \draw[-latex](2.5,0.75) -- (2.5,0) node[midway, right]{$\sig\ell$};
    \draw[-latex](4,0.75) -- (4,0) node[midway, right]{$\sig\ell$};
    \draw[gray,latex-](-0.25,0.7) -- (0.75,0.7);
    \draw[gray,latex-](1.25,0.7) -- (2.25,0.7);
    \draw[gray,latex-](2.75,0.7) -- (3.75,0.7);
    \end{tikzpicture}
    \]
    Hence, no elements of $S_1$ right divides $\overline{\xi_{j\searrow k}}$.
    Since $\overline{\nu_{j\searrow k}}$ belongs to $M_{S_1}$ for all $j$, and $\overline{\xi_{k\searrow 0}}$ lies in $M_{S_2}$, we  have $\tail{S_1}{\overline{\xi_{\jj\searrow\kk}}} = \overline{\nu_{\jj\searrow\kk}}$ and $\tail{S_2}{\overline{\xi_{\jj\searrow 0}}} = \overline{\xi_{\kk\searrow 0}}$.
    With $2q$ successive applications of \eqref{E:TauNu} we obtain
    \begin{align*}
    \xi_{r\searrow 0} = \xi_{r\searrow k}\xi_{k\searrow 0} &\equiv (\xi_{r-k\searrow 0}\nu_{r\searrow k})\,\xi_{k\searrow0} = (\xi_{r-k\searrow k}\xi_{k\searrow 0})\,\nu_{r\searrow k}\xi_{k\searrow 0}\\
    &\equiv (\xi_{r-2k\searrow 0} \nu_{r-k\searrow k})\,\xi_{k\searrow 0} \nu_{r\searrow k}\xi_{k\searrow 0} \\
    &\equiv\,\cdots\\
    & \equiv \begin{cases}\xi_{s\searrow 0}\nu_{s+k\searrow k}\xi_{k\searrow0}\ \cdots\  \nu_{r-k\searrow k}\xi_{k\searrow 0}\, \nu_{r\searrow k}\xi_{k\searrow 0}\,\Delta_{S_1} & \text{if $s>0$,} \\
    \phantom{\xi_{s\searrow 0}\nu_{s+k\searrow k}}\xi_{k\searrow 0}\ \cdots\ \nu_{r-k\searrow k}\xi_{k\searrow 0}\, \nu_{r\searrow k}\xi_{k\searrow 0}\,\Delta_{S_1} & \text{if $s=0$.}
    \end{cases}
    \end{align*}
    It follows that 
    \[
    \ND{\Delta} = \begin{cases}(\xi_{s\searrow 0},\,\nu_{s+k\searrow k},\,\xi_{k\searrow0},\, \ldots,\,  \nu_{r-k\searrow k},\,\xi_{k\searrow 0},\, \nu_{r\searrow k},\,\xi_{k\searrow0},\,\Delta_{S_1}) & \text{if $s>0$}, \\
    \hspace{6.5em}(\xi_{k\searrow 0},\, \ldots,\,  \nu_{r-k \searrow k},\,\xi_{k\searrow0},\, \nu_{r\searrow k},\,\xi_{k\searrow0},\,\Delta_{S_1}) & \text{if $s=0$}.
    \end{cases}
    \]
    We conclude this case with  $\dpt(\DD) = 2q+2$ for $s>0$ and $\dpt(\DD) =2q$ for $s=0$ and we observe that the relation $g_n = \xi_{\hat{s}\searrow0}$ holds. \\

    \noindent \emph{Second case:} $S_1 =  \{\sig0,\ldots, \sig\kko\}$  and $S_2 =  \{\sig1,\ldots, \sig\rro\}$. 
    From \eqref{E:GarB} we have
    \begin{equation}
    \label{L:Case2:DD:1}
    	\DD = \overline{(\sigdec\rro0)^\rr}=  \overline{(\sigdec\rro0)^{r-k}}\,\overline{(\sigdec{\rro}0)^k}
    \end{equation}
    We have  $\sigdec\rro0 = \sigdec\rro{k-m+1}\,\sigdec{k-m}0$.
    We now exhibit $\DD_{S_1}$ as a right divisor of $(\sigdec\rro0)^k$.
    By induction on $m$, for $1\leq m\leq k$, we have
    \begin{equation}
    \label{E:DD:TypeB}
    (\sigdec\rro0)^m \equiv \big(\sigdec{r-m}{k-m+1}\cdots \sigdec{\rr-2}{\kk-1}\,\sigdec{\rro}{\kk} \big)(\sigdec\kko0)^{m}.
    \end{equation}
    Indeed, we have
    \begin{align*}
    (\sigdec{\rro}0)^m& = \sigdec{\rro}0\,(\sigdec{\rro}0)^{m-1} \\
    &\equiv\big(\sigdec{\rro}{k-m+1}\sigdec{k-m}0\big)\big(\sigdec{r-m+1}{k-m+2}\cdots\sigdec{\rr-2}{\kk-1}\,\sigdec{\rro}\kk \big)(\sigdec{k-1}0)^{m-1}&\textrm{  (by induction)}\\
    &\equiv\sigdec{\rro}{k-m+1}\big(\sigdec{r-m+1}{k-m+2}\cdots \sigdec{\rr-2}{\kk-1}\, \sigdec{\rro}\kk\big)\sigdec{k-m}0(\sigdec{k-1}0)^{m-1}   &\textrm{  (by commutation)}\\
    &\equiv\sigdec{\rro}{r-m+1}\underbrace{\big(\sigdec{r-m}{k-m+1}\,\sigdec{r-m+1}{k-m+2}\cdots\sigdec{\rro}\kk\big)}_\alpha\,\sigdec{k-m}0\,(\sigdec{k-1}0)^{m-1}.
      \end{align*}
    Using commutations we obtain
    \begin{align*}
     \alpha &= (\sig{r-m}\cdot\sigdec{r-m-1}{k-m+1})\,(\sig{r-m+1}\cdot\sigdec{r-m}{k-m+2})\cdots(\sig\rro\cdot\sigdec{\rr-2}\kk)\\
     &\equiv \siginc{r-m}{r-1} \cdot (\sigdec{r-m-1}{k-m+1}\,\sigdec{r-m}{k-m+2}\cdots\sigdec{\rr-2}\kk)\\
     &\equiv \siginc{r-m}{r-1} \, \siginc{r-m-1}{r-2}\cdot (\sigdec{r-m-2}{k-m+1}\,\sigdec{r-m-1}{k-m+2}\cdots\sigdec{\rr-3}\kk)\\
     &\equiv\dots \\
    &\equiv \siginc{r-m}{r-1} \, \siginc{r-m-1}{r-2} \cdots \siginc{k-m+1}k.
    \end{align*}
    Substituting this last expression of $\alpha$ in this of $(\sigdec{r-1}0)^m$ we get
    \begin{align*}
     (\sigdec{\rro}0)^m& \equiv \sigdec{\rro}{r-m+1}\, (\siginc{r-m}{r-1} \, \siginc{r-m-1}{r-2} \cdots \siginc{k-m+1}k) \, \sigdec{k-m}0\,(\sigdec{k-1}0)^{m-1}\\
     &\equiv (\siginc{r-m}{r-1} \, \siginc{r-m-1}{r-2} \cdots \siginc{k-m+1}k)\, \sigdec{k-1}{k-m}\,\sigdec{k-m}0\,(\sigdec{k-1}0)^{m-1} & \text{by \eqref{E:A:PC:INC}}\\
      &\equiv (\siginc{r-m}{r-1} \, \siginc{r-m-1}{r-2} \cdots \siginc{k-m+1}k)\, (\sigdec{k-1}0)^{m} \\
      &\equiv (\sigdec{r-m}{k-m+1}\,\sigdec{r-m+1}{k-m+2}\cdots\sigdec{\rro}\kk)\, (\sigdec{k-1}0)^{m}. & \text{restoring $\alpha$}.
     \end{align*}
    Equation~\eqref{E:DD:TypeB} for $m=k$ together with \eqref{E:GarA} transfom \eqref{L:Case2:DD:1} into
    \begin{equation}
    \label{forumulepour5.16}
    \DD = \overline{(\sigdec\rro0)^{r-k}}\cdot  \overline{(\sigdec{r-k}1\cdots\sigdec{\rro}k)}\cdot \Delta_{S_1}.
    \end{equation}
    We now focus on $(\sigdec\rro0)^{r-k}$.
    Since Artin's relations are the same reading left to right or right to left, equation~\eqref{formulepour5.16cas2} remains valid by mirror symmetry:
    \begin{align*}
    	\overline{(\sigdec\rro0)^m} = \DD_{\{\sig{r-m},\ldots,\sig{r-1}\}} \cdot\overline{(\sigdec{r-m-1}0\sigdec{r-m}0\cdots\sigdec{r-2}0)},
    \end{align*}
    and for $m = r-k$ we obtain
    \begin{equation}
    	\label{E:Delta2}
    	\begin{aligned}
    	\overline{(\sigdec\rro0)^{r-k}} &=  \DD_{\{\sig\kk,\ldots,\sig\rro\}}\cdot \overline{(\sigdec{k-1}0\cdots\sigdec{r-2}0})  \\
    	& = \DD_{\{\sig\kkp,\ldots,\sig\rro\}}\cdot\overline{(\siginc{k}\rro\,\sigdec{k-1}0\cdots\sigdec{r-2}0)}.
    	\end{aligned}
    \end{equation}
    Using commutations we obtain
    \[
    \siginc{k}\rro\,(\sigdec{k-1}0\cdots\sigdec{r-2}0) \equiv (\sig\kk\cdot\sigdec{k-1}0)\cdots(\sig\rro\cdot\sigdec{r-2}0)
    =\sigdec\kk0\cdots\sigdec\rro0.
    \]
    For $0\leq\ii\leq\jj<\rr$, we define the word
    \[
    \muinc\ii\jj = \sigdec\ii0\cdots \sigdec\jj0,
    \]
    so that \eqref{E:Delta2} becomes:
    \begin{equation}
    \label{E:Delta3}
    \overline{(\sigdec\rro0)^{r-k}} = \DD_{S_1^\perp}\cdot\overline{\muinc\kk\rro}.
    \end{equation}
    Gathering all pieces together we obtain
    
    \begin{equation}
    \begin{aligned}
    \DD & =\overline{(\sigdec\rro0)^{r-k}}\cdot  \overline{(\sigdec{r-k}1\cdots\sigdec{\rro}k)}\cdot \Delta_{S_1} & \text{by \eqref{forumulepour5.16}}\\
    & = \DD_{S_1^\perp}\cdot \overline{(\muinc\kk\rro)} \cdot  \overline{(\sigdec{r-k}1\cdots\sigdec{\rro}k)}\cdot \Delta_{S_1} & \text{by \eqref{E:Delta3}}\\
    & = \overline{( \muinc\kk\rro)} \cdot  \overline{(\sigdec{r-k}1\cdots\sigdec{\rro}k)}\cdot \Delta_{S_1} \cdot \DD_{S_1^\perp} &\text{as $\DD$ is central}\\
    & = \overline{(\muinc\kk\rro)} \cdot  \overline{(\sigdec{r-k}1\cdots\sigdec{\rro}k)}\cdot \Delta_{S_1^\perp} \cdot \DD_{S_1} &\text{by commutation.}
    \end{aligned}
    \end{equation}

    As $\DD_{S_1}$ lies in $M_{S_1}$ and $\overline{(\sigdec{r-k}1\cdots\sigdec{\rro}k)}\cdot \DD_{S_1^\perp}$ belongs to $M_{S_2}$ we have almost obtain an $(S_2,S_1)$-alternating decomposition of $\DD$.
    By very definition of $\muinc\ii\jj$ and the relation $r =q\times k + s$, we have
    \begin{equation}
    	\label{E:DecompMu}
    	\muinc\kk\rro = \muinc\kk{(\kk+s-1)} \cdot  \prod_{i=1}^{q-1} \muinc{(\ii\kk+s)}{((\ii+1)\kk+s-1)}.
    \end{equation}
    For $\kk\leq \ii\leq \rro$ and $1\leq t \leq r-i$, we define
    \[
    \nuinc\ii{i+t-1} = \sigdec{i}{k-t+1}\,\sigdec{i+1}{k-t+2}\,\cdots\,\sigdec{i+t-2}{k-1}\,\sigdec{i+t-1}{k},
    \]
    such that, using commutations, we obtain
    \begin{align*}
    	\muinc\ii{i+t-1} &= \sigdec\ii0\, \sigdec{i+1}0 \,\cdots \, \sigdec{i+t-2}0 \,\sigdec{i+t-1}0\\
    	&= \sigdec\ii{k-t+1}\,\sigdec{k-t}0\cdot \sigdec{i+1}{k-t+2}\,\sigdec{k-t+1}0\,\cdots\,\sigdec{i+t-2}{k-1}\,\sigdec{k-2}0\cdot\sigdec{i+t-1}{k}\,\sigdec{k-1}0\\
    	&\equiv(\sigdec{i}{k-t+1}\,\sigdec{i+1}{k-t+2}\,\cdots\,\sigdec{i+t-2}{k-1}\,\sigdec{i+t-1}{k})\cdot(\sigdec{k-t}0\,\sigdec{k-t+1}0\,\cdots\,\sigdec{k-2}0\,\sigdec{k-1}0).\\
    	&\equiv \nuinc\ii{i+t-1}\cdot\muinc{k-t}{k-1}.
    \end{align*}
    By construction $\overline{\muinc{k-t}{k-1}}$ and $\overline{\nuinc\ii{i+t-1}}$ belong respectively to $M_{S_1}$ and $M_{S_2}$. 
    It follows that 
    \begin{equation}
    	(\overline{\nuinc\kk{k+s-1}}, \overline{\muinc{k-s}{k-1}}) \cdot \left(\prod_{i=1}^{q-1} (\overline{\nuinc{ik+s}{(i+1)k+s-1}}, \overline{\muinc{0}{k-1}})\right)\cdot \left(\overline{(\sigdec{r-k}1\cdots\sigdec{\rro}k)}\cdot \Delta_{S_1^\perp},\DD_{S_1}\right)
    \end{equation}
    is an $(S_2,S_1)$-alternating decomposition of $\DD$.
    For $s\not =0$ we denote it by $(h_m,\ldots,h_1)$ with $m = 2q+2$.
    Assume $s=0$. Then  we have $\overline{\nuinc\kk{k+s-1}}=\overline{\muinc{k-s}{k-1}}=1$.
    Then the previous $(S_2,S_1)$-alternating decomposition without this two leftmost terms is also denotes $(h_m,\ldots, h_1)$ but with $m=2q$.
    
    We now prove that, in both cases, $(h_m,\ldots,h_1)$ is normal.
    By Definition~\ref{D:AlternatingDecomposition} it is sufficient to establish 
    \begin{equation}
    	\label{E:Tau}
    	h_\ell = \tail{M_{\pi(\ell)}}{h_m\cdots h_\ell}\qquad\text{for $\ell\in\{1,\ldots,m\}$.}
    \end{equation}
    The equality $\tail{S_2}{h_m}=h_m$ is immediate since $h_m\in M_{S_2}$.
    The only element of $S$ that right divides $h_m$ is $\sig\kk$. 
    This is a direct consequence of the folowing more general and precise result.
    Let $\kk\leq \ii\leq \rro$, $1\leq t \leq r-i$ and $j\in\{0,\ldots,r-1\}$ with $j\not=k$. 
    Then the word 
    \begin{equation}
    	\label{E:NuInc:Chain}
    \text{$\nuinc{i}{i+t-1}$ is a left }
    \begin{cases}
     \text{$(\sig\jj,\sig\jj)$-chain} & \text{for $j < k - t + 1$ or $j > i + t -1$,}\\
     \text{$(\sig{j-t},\sig\jj)$-chain} & \text{for $j\in\{k+1,\ldots,i+t-1\}$,}\\
     \text{$(\sig{i+t+j-k},\sig\jj)$-chain} & \text{for $j\in\{k-t+1,\ldots,k-1\}$.}
    \end{cases}
    \end{equation}
    
    The first case is immediate by Proposition~\ref{P:SuppChain},  since the support of $\nuinc{i}{i+t-1}$ is $\{\sig{k-t+1},\ldots,\sig{i+t-1}\}$.
    For $j\in\{k+1,\ldots,i+t-1\}$, Lemma~\ref{L:SigDecLeftChain} gives the folowing diagram
    \[
    \begin{tikzpicture}
    	\draw[-latex](0,0) -- (2,0) node[midway, below]{\small$\sigdec\ii{k-t+1}$};
    	\draw[dashed](2,0) -- (4,0);
    	\draw[-latex](4,0) -- (6,0) node[midway, below]{\small$\sigdec{\ii+t-2}{k-1}$};
    	\draw[-latex](6,0) -- (8,0) node[midway, below]{\small$\sigdec{\ii+t-1}{k}$};
    	\draw[-latex](8,0.75) -- (8,0) node[midway, right]{\small$\sig\jj$};
    	\draw[-latex](6,0.75) -- (6,0) node[midway, right]{\small$\sig{j-1}$};
    	\draw[-latex](4,0.75) -- (4,0) node[midway, right]{\small$\sig{j-2}$};
    	\draw[-latex](2,0.75) -- (2,0) node[midway, left]{\small$\sig{j-t+1}$};
    	\draw[-latex](0,0.75) -- (0,0) node[midway, left]{\small$\sig{j-t}$};
    	\draw[gray,latex-](0.25,0.7) -- (1.75,0.7);
    	\draw[gray,latex-](4.25,0.7) -- (5.75,0.7);
    	\draw[gray,latex-](6.25,0.7) -- (7.75,0.7);
    \end{tikzpicture}
    \]
    Assume $j\in\{k-t+1,\ldots,k-1\}$.
    Write $j$ as $k-\ell$ with $\ell\in\{1,\ldots,t-1\}$.
    As the support of $\sigdec{i+t-\ell+1}{k-\ell+2}\,\cdots\,\sigdec{i+t-1}{k}$ does not contain $\sig{k-\ell}$, it is a left $(\sig{k-\ell},\sig{k-\ell})$-chain.
    By Lemma~\ref{L:SigDec:LeftReverse} the word $\sigdec{i+t-\ell}{k-\ell+1}\,\sig\jj^{-1}$ left reverses in $\sigdec{i+t-\ell}{k-\ell}^{-1}u$ where $u$ is a positive word and so 
    \[
    \sigdec{i+t-\ell-1}{k-\ell}\,\sigdec{i+t-\ell}{k-\ell+1}\,\sig{k-\ell}^{-1},
    \]
    left reverses in $\sig{i+t-\ell}\inv u$. 
    In particular, $\sigdec{i+t-\ell-1}{k-\ell}\,\sigdec{i+t-\ell}{k-\ell+1}$ is a left $(\sig{i+t-\ell}^{-1},\sig{k-\ell})$-chain
    The word $\sigdec\ii{k-t+1}\cdots\sigdec{i+t-\ell-2}{k-\ell-1}$ is a left $(\sig{i+t-\ell},\sig{i+t-\ell})$-chain as its support does not contain $\sig{i+t-\ell}$.
    Gathering all the parts we obtain the following diagram
    \[
    \begin{tikzpicture}
    \draw[-latex](0,0) -- (2.5,0) node[midway, below]{\small $\sigdec{i}{k-t+1}$};
    \draw[dashed](2.5,0) -- (3.5,0);
    \draw[-latex](3.5,0) -- (6,0) node[midway, below]{\small $\sigdec{i+t-\ell-1}{k-\ell}$};
    \draw[-latex](6,0) -- (8.5,0) node[midway, below]{\small $\sigdec{i+t-\ell}{k-\ell+1}$};
    \draw[dashed](8.5,0) -- (9.5,0);
    \draw[-latex](9.5,0) -- (11,0) node[midway, below]{\small $\sigdec{i+t-1}{k}$};
    \draw[-latex](0,0.75) -- (0,0) node[midway, left]{\small $\sig{i+t-\ell}$};
    \draw[-latex](2.5,0.75) -- (2.5,0) node[midway, left]{\small $\sig{i+t-\ell}$};
    \draw[-latex](3.5,0.75) -- (3.5,0) node[midway, right]{\small $\sig{i+t-\ell}$};
    \draw[-latex](8.5,0.75) -- (8.5,0) node[midway, right]{\small $\sig{k-\ell}$};
    \draw[-latex](9.5,0.75) -- (9.5,0) node[midway, right]{\small $\sig{k-\ell}$};
    \draw[-latex](11,0.75) -- (11,0) node[midway, right]{\small $\sig{k-\ell}$};
    \draw[gray,latex-](0.25,0.7) -- (2.25,0.7);
    \draw[gray,latex-](3.75,0.7) -- (8.25,0.7);
    \draw[gray,latex-](9.75,0.7) -- (10.75,0.7);
    \end{tikzpicture}
    \]
    proving that $\nuinc\ii{i+t-1}$ is a left $(\sig{i+t+j-k}, \sig\jj)$-chain as expected.
    A direct consequence of \eqref{E:NuInc:Chain} with Proposition~\ref{P:ChainDiv} is that 
    \begin{equation}
    	\label{E:EEE}
    	\text{$\sig\kk$ is the unique element of $S$ that right divides $\overline{\nuinc{i}{i+t-1}}$.}
    \end{equation}
    In particular $\tail{S_1}{h_m\cdot h_{m-1}} = h_{m-1}$.
    Since $\tail{S_1}{\DD}=\DD_{S_1} = h_1$ we have \eqref{E:Tau} for $\ell=1$.
    It remains to prove it for $\ell=2,\ldots,m-2$. 
    We will use a backward induction on $\ell$.
    Assume $\tail{S_{\pi(t)}}{h_m\cdots h_t} = h_t$ for all $t\geq \ell$ and $\ell\in\{3,\ldots,m-1\}$.
    Let us prove \eqref{E:Tau} for $\ell-1$.
    
    \textbf{Case} $\ell-1$ is odd. Then $\pi(\ell-1)=1$ and $h_{\ell-1} = \overline{\muinc0{k-1}}$.
    Assume for a contradiction that $\tail{S_{\pi(\ell)}}{h_m\cdots h_{\ell-1}} \not= h_{\ell-1}$.
    As $h_{\ell-1}\in M_{S_1}$, it exists $a\in S_1$ that right-divides $h_m\cdots h_\ell$.
    If $a$ belongs to~$S_2$ then it divides $h_\ell$ since, by the induction hypothesis $\tail{S_2}{h_m\cdots h_\ell} = h_\ell$.
    Since $h_\ell = \overline{\nuinc{ik+s}{(i+1)k+s-1}}$ for some $i\in\{1,\ldots,q-1\}$ we have $\SRD{S}{h_\ell}=\{\sig\kk\}$ which is not include in $S_1$.
    Hence the only possibility for $a$ is to be in $S_1\setminus S_2 = \{\sig0\}$ and so $a=\sig0$.
    This is impossible as $h_{\ell-1}=\overline{\muinc0{k-1}}$ is left divisible by~$\sig0$ and $\DD=h_m\cdots h_1$ is square free by Proposition~\ref{P:Garside}.
    
    \textbf{Case} $\ell-1$ is even. Then $\pi(\ell-1)=2$ and it exists $i\in\{1,\ldots,q-1\}$ such that $h_{\ell-1} = \overline{\nuinc{ik+s}{(i+1)k+s-1}}$.
    Assume for a contradiction that $\tail{S_2}{h_m\cdots h_{\ell-1}}\not=h_{\ell-1} \in S_2$.
    Then it exists $a\in S_2$ that right divides $h_m\cdots h_\ell$. 
    If $a\in S_1$ then it must right divides $h_\ell$ since $\tail{S_1}{h_m\cdots h_\ell}=h_\ell$ by induction hypothesis.
    By construction we have 
    \[
    h_\ell = \begin{cases}
    	\overline{\muinc{k-s}{k-1}} & \text{for $\ell = m -1$ and $s\not =0$,}\\
    	\overline{\muinc{0}{k-1}} & \text{otherwise.}
    	\end{cases}
    \]
    Let $\sig\jj\in S_2\cap S_1=\{\sig1,\ldots,\sig\kko\}$.
    From the following diagram, obtained by applying Lemma~\ref{L:SigDecLeftChain},
    \[
    \begin{tikzpicture}
    	\draw[-latex](0,0) -- (1.5,0)  node[midway,below]{\small $\sigdec{k-1}1$};
    	\draw[-latex](1.5,0) -- (2.5,0)  node[midway,below]{\small $\sig0$};
    	\draw[-latex](0,0.75) -- (0,0) node[midway,left]{\small $\sig\jjo$};
    	\draw[-latex](1.5,0.75) -- (1.5,0) node[midway,right]{\small $\sig\jj$};
    	\draw[-latex](2.5,0.75) -- (2.5,0) node[midway,right]{\small $\sig\jj$};
    	\draw[gray,latex-](0.25,0.7) -- (1.25,0.7);
    	\draw[gray,latex-](1.75,0.7) -- (2.25,0.7);
    \end{tikzpicture}
    \]
    we get
    \[
    \begin{tikzpicture}
    	\draw[-latex](0,0) -- (1.5,0)  node[midway,below]{\small $\sigdec{k-j+1}0$};
    	\draw[-latex](1.5,0) -- (3,0)  node[midway,below]{\small $\sigdec{k-j+2}0$};
    	\draw[dashed](3,0) -- (4,0);
    	\draw[-latex](4,0) -- (5.5,0)  node[midway,below]{\small $\sigdec{k-2}0$};
    	\draw[-latex](5.5,0) -- (7,0)  node[midway,below]{\small $\sigdec{k-1}0$};
    	\draw[-latex](0,0.75) -- (0,0) node[midway,left]{\small $\sig1$};
    	\draw[-latex](1.5,0.75) -- (1.5,0) node[midway,left]{\small $\sig2$};
    	\draw[-latex](3,0.75) -- (3,0) node[midway,left]{\small $\sig3$};
    	\draw[-latex](4,0.75) -- (4,0) node[midway,right]{\small $\sig{j-2}$};
    	\draw[-latex](5.5,0.75) -- (5.5,0) node[midway,right]{\small $\sig{j-1}$};
    	\draw[-latex](7,0.75) -- (7,0) node[midway,right]{\small $\sig{j}$};
    	\draw[gray,latex-](0.25,0.7) -- (1.25,0.7);
    	\draw[gray,latex-](1.75,0.7) -- (2.75,0.7);	
    	\draw[gray,latex-](4.25,0.7) -- (5.25,0.7);
    	\draw[gray,latex-](5.75,0.7) -- (6.75,0.7);	
    \end{tikzpicture}
    \]
    Using Lemma~\ref{L:SigDec:LeftReverse}, we obtain the following left reversing diagram
    \[
    \begin{tikzpicture}
    	\draw[-latex](0,0) -- (1.5,0) node[midway, below]{\small $\sigdec{k-j-1}1$};
    	\draw[-latex](1.5,0) -- (2,0) node[midway, below]{\small $\sig0$};
    	\draw[-latex](2,0) -- (3.5,0) node[midway, below]{\small $\sigdec{k-j}2$};
    	\draw[-latex](3.5,0) -- (5,0) node[midway, below]{\small $\sig1\,\sig0$};
    	\draw[-latex](2,0.5) -- (3.5,0.5) node[midway, above]{\small $\sigdec{k-j}2$};

    	\draw[-latex](2,0.5) -- (2,0) node[midway, right]{\small $\sig0$};
    	\draw[-latex](3.5,0.5) -- (3.5,0) node[midway, right]{\small $\sig0$};
    	\draw[-latex](2,1.5) -- (2,0.5) node[midway, left]{\small $\sigdec{k-j}1$};
    	\draw[-latex](3.5,1.5) -- (3.5,0.5) node[midway, right]{\small $\sig1$};
    	\draw[-latex](0,1.5) -- (0,0) node[midway, left]{\small $\sig{k-j}$};
    	\draw[-latex](5,1.5) -- (5,0) node[midway, right]{\small $\sig1$};
    
    	\draw[-latex](0,1.5) -- (2,1.5) node[midway, above]{\small $\varepsilon$};
    	\draw[-latex](2,1.5) -- (3.5,1.5) node[midway, above]{\small $u$};
    	\draw[-latex](3.5,1.5) -- (5,1.5) node[midway, above]{\small $\sig0\sig1\sig0$};
    	
    \draw (1.5,0) .. controls (1.5,0.25) and (1.75,0.5) .. (2,0.5) node[midway, left] {\small $\varepsilon$};
    \end{tikzpicture}
    \]
    which implies $\sigdec{k-j}0$ is a left $(\sig0,\sig1)$-chain and $\sigdec{k-j-1}0\sigdec{k-j}0$ is a left $(\sig{k-j},\sig1)$-chain.
    Since $\sig{k-j}$ is not in the support of $\sigdec00\cdots \sigdec{k-j-2}0$, the word $\muinc{0}{k-1}$ is a left $(\sig{k-j},\sig\jj)$-chain as summarized by:
    \[
    \begin{tikzpicture}
    \draw[-latex](-1,0)--(0.5,0) node[midway,below]{\small $\muinc{0}{k-1}$};
    \draw[-latex](-1,0.75)--(-1,0) node[midway, left]{\small $\sig{k-j}$};
    \draw[-latex](0.5,0.75)--(0.5,0) node[midway, right]{\small $\sig{j}$};
    \draw[gray,latex-](-0.75,0.7)--(0.25,0.7);
    \draw(1.55,0.35) node{$=$};
    \draw[-latex](3,0)--(4.5,0) node[midway,below]{\small $\sigdec00$};
    \draw[dashed](4.5,0)--(5.5,0);
    \draw[-latex](5.5,0)--(7,0) node[midway,below]{\small $\sigdec{k-j-2}0$};
    \draw[-latex](7,0)--(8.5,0) node[midway,below]{\small $\sigdec{k-j-1}0$};
    \draw[-latex](8.5,0)--(10,0) node[midway,below]{\small $\sigdec{k-j}0$};
    \draw[dashed](10,0)--(11,0);
    \draw[-latex](11,0)--(12.5,0) node[midway,below]{\small $\sigdec{k-1}0$};
    \draw[-latex](3,0.75) -- (3,0) node[midway,left]{\small $\sig{k-j}$};
    \draw[-latex](4.5,0.75) -- (4.5,0) node[midway, left]{\small $\sig{k-j}$};
    \draw[-latex](5.5,0.75) -- (5.5,0) node[midway, right]{\small $\sig{k-j}$};
    \draw[-latex](7,0.75) -- (7,0) node[midway, right]{\small $\sig{k-j}$};
    \draw[-latex](8.5,0.75) -- (8.5,0) node[midway, right]{\small $\sig0$};
    \draw[-latex](10,0.75) -- (10,0) node[midway, left]{\small $\sig1$};
    \draw[-latex](11,0.75) -- (11,0) node[midway, right]{\small $\sig{j-1}$};
    \draw[-latex](12.5,0.75) -- (12.5,0) node[midway, right]{\small $\sig{j}$};
    \draw[gray,latex-](3.25,0.7)--(4.25,0.7);
    \draw[gray,latex-](5.75,0.7)--(6.75,0.7);
    \draw[gray,latex-](7.25,0.7)--(8.25,0.7);
    \draw[gray,latex-](8.75,0.7)--(9.75,0.7);
    \draw[gray,latex-](11.25,0.7)--(12.25,0.7);
    \end{tikzpicture}
    \]
    For $s\not=0$, the word $\muinc{k-s}{k-1}$ is a suffix of $\muinc0\kko$ and is a left $(\sig{t},\sig\jj)$-chain with $t\in\{0,\ldots,j-1\}\cup\{k-j\}$ depending of the precise value of $s$.
    It follows that $\sig\jj$ is not a right divisor of $h_\ell$ and so $a$ can not belongs to $S_2\cap S_1$.
    
    Assume $a\in S_2\setminus S_1 = \{\sig\kk,\ldots,\sig\rro\}$.
    Since the support of $\muinc{k-s}{k-1}$ and $\muinc{0}{k-1}$ does not contain~$a$, both words are left $(a,a)$-chain.
    Hence, by Proposition~\ref{P:ChainDiv}, $a$ must be a right divisor of $h_{m}\cdots h_{\ell+1}$.
    From $a\in S_2$ and $\tail{S_2}{h_{m}\cdots h_{\ell+1}}=h_{\ell+1}$, $a$ must be a right divisor of 
    \[
    h_{\ell+1} = \begin{cases}
    	\overline{\nuinc{k}{k+s-1}} & \text{for $\ell=m$ and $s\not=0$,} \\
    	\overline{\nuinc{(i-1)k+s}{ik+s}} & \text{otherwise.}
    	\end{cases}
    \]
    By \eqref{E:EEE} we obtain $a=\sig\kk$. 
    Assume $\ell-1 = m-2$, we have 
    \[
    h_m\cdots h_{\ell+1}=h_m \, h_{m-1} = \overline{\muinc{k}{k+s-1}},
    \]
    The following diagram, based on \eqref{Sigdec:chain}, shows that $\muinc{k}{k+s-1}$ is a left $(\sig{k-s},\sig\kk)$-chain.
    \[
    \begin{tikzpicture}
    \draw[-latex](0,0) -- (1.5,0) node[midway, below]{\small $\sigdec\kk0$};
    \draw[-latex](1.5,0) -- (3,0) node[midway, below]{\small $\sigdec{k+1}0$};
    \draw[dashed](3,0) -- (4.5,0);
    \draw[-latex](4.5,0) -- (6,0) node[midway, below]{\small $\sigdec{k+s-2}0$};
    \draw[-latex](6,0) -- (7.5,0) node[midway, below]{\small $\sigdec{k+s-1}0$};
    \draw[-latex](0,0.75) -- (0,0) node[midway,left]{\small $\sig{k-s}$};
    \draw[-latex](1.5,0.75) -- (1.5,0) node[midway,left]{\small $\sig{k-s+1}$};
    \draw[-latex](3,0.75) -- (3,0) node[midway,left]{\small $\sig{k-s+2}$};
    \draw[-latex](4.5,0.75) -- (4.5,0) node[midway,right]{\small $\sig{k-2}$};
    \draw[-latex](6,0.75) -- (6,0) node[midway,right]{\small $\sig{k-1}$};
    \draw[-latex](7.5,0.75) -- (7.5,0) node[midway,right]{\small $\sig{k}$};
    \draw[gray,latex-](0.25,0.7)--(1.25,0.7);
    \draw[gray,latex-](1.75,0.7)--(2.75,0.7);
    \draw[gray,latex-](4.75,0.7)--(5.75,0.7);
    \draw[gray,latex-](6.25,0.7)--(7.25,0.7);
    \end{tikzpicture}
    \]
    It follows that $h_m\,h_{m-1}$ is not right divisible by $\sig\kk$ and so $\tail{S_2}{h_m\,h_{m-1}\,h_{m-2}}=h_{m-2}$.

    Assume $\ell - 1 \leq m-4$. 
    For this sub-case we prove that $\sig\kk$ is a left divisor of $h_{\ell-1}\cdots h_1$ which gives a contradiction since $\DD=h_m\cdots h_1$ is square free by Proposition~\ref{P:Garside}.
    We put $\ell - 1 = 2p$. From $\ell-1\in\{2,\ldots,m-4\}$, we obtain $p\in\{1,\ldots,q-1\}$ for $s\not=0$ and $p\in\{1,\ldots,q-2\}$ for $s=0$.
    In both cases we have $(p+1)k \leq r-1 = qk + s -1$.
    By hypothesis on $S_1$ we know that $\sig{(p+1)k}$ is a left divisor of $\DD_{S_1^\perp}$.
    As $\sig0$ commutes with $\sig{t}$ for $t\geq 2$, a consequence of relation \eqref{E:A:PC:DEC} is
    \begin{equation}
    	\label{E:B:PC:DEC}
    	\sigdec\ii0 \,\sig\jj \equiv \sig\jjo \,\sigdec\ii0 \qquad \text{for any $2\leq \jj \leq \ii$}.
    \end{equation}
    An immediate induction argument gives
    \[
    (\sigdec{r-k}1\ldots \sigdec{r-1}k)\cdot\sig{(p+1)k}\equiv\sig{pk}\cdot(\sigdec{r-k}1\ldots \sigdec{r-1}k)
    \]
    which implies that $\sig{pk}$ is a left divisor of $h_2= \overline{(\sigdec{r-k}1\ldots \sigdec{r-1}k)}\cdot \DD_{S_1^\perp}$.
    By definition of $h_{\ell-1},\ldots,h_3$ and \eqref{E:DecompMu} we have
    \begin{equation}
    h_{\ell-1}\cdots h_3 = \overline{\muinc{((q-p+1)k+s)}{r-1}}.
    \end{equation}
    Once again, by a direct induction based on \eqref{E:B:PC:DEC} together with the fact that $\muinc{((q-p+1)k+s)}{r-1}$ contains~$(p-1)k$ terms of the form $\sigdec{t}{0}$, we obtain
    \begin{equation}
    	(h_\ell\cdots h_3)\cdot \sig{pk} = \sig{k} \cdot (h_\ell\cdots h_3).
    \end{equation}
    We have then obtain that $\sig\kk$ is a left divisor of $h_\ell\cdots h_2$ and so of $h_\ell\cdots h_1$.
    This conclude the proof.
    \end{proof}
        
    \subsection{Remaining types}
    
    Thanks to the results of the previous subsections, it remains to study Condition~$B(p)$ in types $H_3$, $H_4$ and~$F_4$.
    Note that in these types, the Garside element $\DD$ is central.
    It follows that any covering is $\DD^p$-regular.
    
    For some coverings, computer assisted computations, help us to find braids $a$ and $b$ such that   $a$, $b$ and $ab$ are $\DD$-unmovable,  $(a,b)\not\in \oTheta(p)\times \oTheta(p)$ for any $p\geq 1$ and $\dpt(a)+\dpt(b) - \dpt(ab)\not \in\{0,1\}$.  Then, using Lemma~\ref{L:B1ToBj}, we obtain that $(S_2,S_1)$ does not satisfy Condition $B(p)$ whatever $p\geq 1$.
    Our experiments were carried out on the shared computational platform \textsc{Calculco} \cite{Calculco}. We use the notations of Figure~\ref{F:CoxDiag}.
    \subsubsection{Type $H_3$}
    
    The six proper and irreducible coverings of $H_3$ are 
    \[
    \small
    \begin{array}{|c|c|c|c|c|c|c|}
    \hline
    \text{Id} & 1 & 2 & 3 & 4 & 5 & 6 \\
    \hline
    S_1 & 
    \begin{tikzpicture}[x=10pt,y=10pt]
    \fill[lightgray](1,0) circle (0.2);
    \fill[lightgray](2,0) circle (0.2);
    \draw[lightgray, line width=1](0,0) -- (2,0);
    \draw[lightgray] (0.5,0) node[above]{\tiny$5$};
    \fill(0,0) circle (0.2);
    \end{tikzpicture}
    &
    \begin{tikzpicture}[x=10pt,y=10pt]
    \fill[lightgray](0,0) circle (0.2);
    \fill[lightgray](1,0) circle (0.2);
    \draw[lightgray, line width=1](0,0) -- (2,0);
    \draw[lightgray] (0.5,0) node[above]{\tiny$5$};
    \fill(2,0) circle (0.2);
    \end{tikzpicture}
    &
    \begin{tikzpicture}[x=10pt,y=10pt]
    \fill[lightgray](2,0) circle (0.2);
    \draw[line width=1](0,0) -- (1,0);
    \draw[lightgray, line width=1](1,0) -- (2,0);
    \draw(0.5,0) node[above]{\tiny$5$};
    \fill(0,0) circle (0.2);
    \fill(1,0) circle (0.2);
    \end{tikzpicture}
    &
    \begin{tikzpicture}[x=10pt,y=10pt]
    \fill[lightgray](2,0) circle (0.2);
    \draw[line width=1](0,0) -- (1,0);
    \draw[lightgray, line width=1](1,0) -- (2,0);
    \draw(0.5,0) node[above]{\tiny$5$};
    \fill(0,0) circle (0.2);
    \fill(1,0) circle (0.2);
    \end{tikzpicture}
    &
    \begin{tikzpicture}[x=10pt,y=10pt]
    \fill[lightgray](0,0) circle (0.2);
    \draw[line width=1](1,0) -- (2,0);
    \draw[lightgray, line width=1](0,0) -- (1,0);
    \draw[lightgray](0.5,0) node[above]{\tiny$5$};
    \fill(1,0) circle (0.2);
    \fill(2,0) circle (0.2);
    \end{tikzpicture}
    &
    \begin{tikzpicture}[x=10pt,y=10pt]
    \fill[lightgray](0,0) circle (0.2);
    \draw[line width=1](1,0) -- (2,0);
    \draw[lightgray, line width=1](0,0) -- (1,0);
    \draw[lightgray](0.5,0) node[above]{\tiny$5$};
    \fill(1,0) circle (0.2);
    \fill(2,0) circle (0.2);
    \end{tikzpicture}
    \\
    \hline
    S_2 &
    \begin{tikzpicture}[x=10pt,y=10pt]
    \fill[lightgray](0,0) circle (0.2);
    \draw[lightgray, line width=1](0,0) -- (1,0);
    \draw[lightgray] (0.5,0) node[above]{\tiny$5$};
    \fill(1,0) circle (0.2);
    \fill(2,0) circle (0.2);
    \draw[line width=1](1,0) -- (2,0);
    \end{tikzpicture}
    &
    \begin{tikzpicture}[x=10pt,y=10pt]
    \fill[lightgray](2,0) circle (0.2);
    \draw[line width=1](0,0) -- (1,0);
    \draw[lightgray, line width=1](1,0) -- (2,0);
    \draw(0.5,0) node[above]{\tiny$5$};
    \fill(0,0) circle (0.2);
    \fill(1,0) circle (0.2);
    \end{tikzpicture}
    &
    \begin{tikzpicture}[x=10pt,y=10pt]
    \fill[lightgray](0,0) circle (0.2);
    \fill[lightgray](1,0) circle (0.2);
    \draw[lightgray, line width=1](0,0) -- (2,0);
    \draw[lightgray] (0.5,0) node[above]{\tiny$5$};
    \fill(2,0) circle (0.2);
    \end{tikzpicture}
    &
    \begin{tikzpicture}[x=10pt,y=10pt]
    \fill[lightgray](0,0) circle (0.2);
    \draw[line width=1](1,0) -- (2,0);
    \draw[lightgray, line width=1](0,0) -- (1,0);
    \draw[lightgray](0.5,0) node[above]{\tiny$5$};
    \fill(1,0) circle (0.2);
    \fill(2,0) circle (0.2);
    \end{tikzpicture}
    &
    \begin{tikzpicture}[x=10pt,y=10pt]
    \fill[lightgray](1,0) circle (0.2);
    \fill[lightgray](2,0) circle (0.2);
    \draw[lightgray, line width=1](0,0) -- (2,0);
    \draw[lightgray] (0.5,0) node[above]{\tiny$5$};
    \fill(0,0) circle (0.2);
    \end{tikzpicture}
    &
    \begin{tikzpicture}[x=10pt,y=10pt]
    \fill[lightgray](2,0) circle (0.2);
    \draw[line width=1](0,0) -- (1,0);
    \draw[lightgray, line width=1](1,0) -- (2,0);
    \draw(0.5,0) node[above]{\tiny$5$};
    \fill(0,0) circle (0.2);
    \fill(1,0) circle (0.2);
    \end{tikzpicture}
    \\
    \hline
    \end{array}
    \]
    Coverings with $\text{Id}=1$ and $5$ never satisfy Condition $B(p)$ whatever $p\geq 1$.
    Indeed, we have found the following counter-example to Condition $B(p)$ for all $p\geq 1$.
    \[
    \small
    \begin{array}{|c|c|c|c|c|c|c|}
    \hline
    \text{Id}& a & b & \dpt(a) & \dpt(b) & \dpt(ab)\\
    \hline
    1 & \sig1\sig2\sig1\sig2\sig3\sig2\sig1\sig2^2\sig1\sig2\sig1\sig3\sig2\sig1\sig2^2\sig1\sig3\sig2\sig1\sig2 &\sig1 & 8 & 1& 7\\
    \hline
    5 &\sig1\sig2\sig1\sig2\sig1\sig3\sig2\sig1\sig2\sig3\sig2\sig1\sig2\sig2\sig1\sig2\sig1  &\sig2 &6& 1 & 5\\
    \hline
    \end{array}
    \]
    
    It remains $4$ proper and irreducible coverings in types $H_3$ with an unknown status relatively to Condition~$B(p)$.
    
    \begin{prp}
    Assume $M$ is of type $H_3$.  A  $\DD^p$-regular coverings $(S_2,S_1)$  of $S$ that  may satisfy Condition $B(p)$  has  its $\text{Id}$  equal  to $2,3,4$, or $6$ in the above list, that is  $S_1$ or $S_2$ is equal to $\{\sig1,\sig2\}$
    \end{prp}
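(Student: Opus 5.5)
This statement is a bookkeeping consequence of the classification of the six proper and irreducible coverings of $H_3$ in the table above, combined with the two explicit counter-examples. The plan is to treat the coverings by their Id, show that Id $1$ and $5$ fail Condition~$B(p)$ for every $p\geq 1$, and then observe that the remaining coverings are exactly those listed.

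First, since $\DD$ is central in type $H_3$ (Proposition~\ref{P:DeltaCentral}), every proper and irreducible covering is $\DD^p$-regular for every $p$. So the $\DD^p$-regular coverings are exactly the six in the table.

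Second, for Id $1$ and Id $5$, take the pairs $(a,b)$ from the table. I will check the hypotheses of Lemma~\ref{L:B1ToBj}(ii) in the following order.

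\begin{enumerate}
\item Compute the $(S_2,S_1)$-alternating normal forms of $a$, $b$ and $ab$. This is done with reversing diagrams, or accepted from the computer computation. It confirms the depths $8,1,7$ and $6,1,5$, so that $\dpt(a)+\dpt(b)-\dpt(ab)=2\notin\{0,1\}$.
\item Show that $ab$ is $\DD$-unmovable. It suffices to find a generator that does not right divide $ab$. Its first (leftmost) normal-form entry not being right divisible by the appropriate parabolic Garside element also works, as in the proof of Proposition~\ref{P:ConditionB:Crit:TypeB}.
\item Show that $(a,b)\notin\oTheta(p)\times\oTheta(p)$. Here $b$ is a single generator outside $S_1$, so $b\notin M_{S_1}$. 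If $b$ lay in $\Theta(p)$, then $b=(\DD^p\DD_{S_1}^{-p})^kh$ would have length at least $|\DD|-|\DD_{S_1}|>1$, which is impossible. Hence $b\notin\oTheta(p)$ for every $p$.
\end{enumerate}

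Lemma~\ref{L:B1ToBj}(ii) then gives the failure of Condition~$B(p)$ for all $p$. The remaining coverings are Id $2,3,4,6$, and reading the table, each has $S_1$ or $S_2$ equal to $\{\sig1,\sig2\}$.

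The main obstacle is the first check: verifying the normal forms and depths of these long words by hand. I would rely on the computer verification cited in the paper (\textsc{Calculco}), supplemented by tail computations. The key tool for those is that $\tail{T}{\cdot}$ is determined via Lemma~\ref{L:Tail} and the chain criteria of Proposition~\ref{P:ChainDiv}.
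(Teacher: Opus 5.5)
Your route is the paper's. $\Delta$ is central in $H_3$, so all six coverings are regular, and Ids 1 and 5 are excluded by the computer-found pairs fed into Lemma~\ref{L:B1ToBj}(ii). Your length argument for $b\notin\widetilde{\Theta}(p)$ is correct: an element of $\Theta(p)$ has length at least $15-|\Delta_{S_1}|\geq 10$. It supplies a hypothesis that the paper only asserts. Two small corrections are needed.

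\emph{The Ids of the counter-example pairs.} With the coverings as drawn in the table, $b$ lies in $S_1$ in both rows of the counter-example table:
\begin{itemize}
\item Id 1 has $S_1=\{\sigma_1\}$ and $b=\sigma_1$;
\item Id 5 has $S_1=\{\sigma_2,\sigma_3\}$ and $b=\sigma_2$.
\end{itemize}
So your claim that $b$ is a generator outside $S_1$ is false for the printed Ids. The printed value $\dpt(b)=1$ is also false: by the Fact, it would be $0$. The data become consistent once the two pairs are attached the other way round. The first pair goes with Id 5, where $\sigma_1\in S_2\setminus S_1$. The second pair goes with Id 1, where $\sigma_2\in S_2\setminus S_1$. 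Ids 1 and 5 are exchanged by swapping $S_1$ and $S_2$, and both have to be excluded, so the statement survives. Your write-up should still make this matching explicit rather than assert $b\notin M_{S_1}$ for the printed Ids.

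\emph{The unmovability test.} The alternative test you borrow from Proposition~\ref{P:ConditionB:Crit:TypeB} concerns the rightmost entry $g_1=\tau_{S_1}(ab)$, not the leftmost one: if $\Delta\preceq_R ab$ then $\Delta_{S_1}\preceq_R g_1$. Your main test, exhibiting a generator that does not right-divide $ab$, is fine as stated.
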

    
    \subsubsection{Type $H_4$}
    
    The $12$ proper and irreducible coverings of $H_4$ are 
    
    \[
    \small
    \begin{array}{|c|c|c|c|c|c|c|}
    \hline
    \text{Id} & 1 & 2 & 3 & 4 & 5 & 6 \\
    \hline
    S_1 & 
    \begin{tikzpicture}[x=10pt,y=10pt]
    \fill[lightgray](1,0) circle (0.2);
    \fill[lightgray](2,0) circle (0.2);
    \fill[lightgray](3,0) circle (0.2);
    \draw[lightgray, line width=1](0,0) -- (3,0);
    \draw[lightgray] (0.5,0) node[above]{\tiny$5$};
    \fill(0,0) circle (0.2);
    \end{tikzpicture}
    &
    \begin{tikzpicture}[x=10pt,y=10pt]
    \fill[lightgray](2,0) circle (0.2);
    \fill[lightgray](3,0) circle (0.2);
    \draw[line width=1](0,0) -- (1,0);
    \draw[lightgray, line width=1](1,0) -- (3,0);
    \draw(0.5,0) node[above]{\tiny$5$};
    \fill(0,0) circle (0.2);
    \fill(1,0) circle (0.2);
    \end{tikzpicture}
    &
    \begin{tikzpicture}[x=10pt,y=10pt]
    \fill[lightgray](2,0) circle (0.2);
    \fill[lightgray](3,0) circle (0.2);
    \draw[line width=1](0,0) -- (1,0);
    \draw[lightgray, line width=1](1,0) -- (3,0);
    \draw(0.5,0) node[above]{\tiny$5$};
    \fill(0,0) circle (0.2);
    \fill(1,0) circle (0.2);
    \end{tikzpicture}
    &
    \begin{tikzpicture}[x=10pt,y=10pt]
    \fill[lightgray](3,0) circle (0.2);
    \draw[line width=1](0,0) -- (2,0);
    \draw[lightgray, line width=1](2,0) -- (3,0);
    \draw(0.5,0) node[above]{\tiny$5$};
    \fill(0,0) circle (0.2);
    \fill(1,0) circle (0.2);
    \fill(2,0) circle (0.2);
    \end{tikzpicture}
    &
    \begin{tikzpicture}[x=10pt,y=10pt]
    \fill[lightgray](3,0) circle (0.2);
    \draw[line width=1](0,0) -- (2,0);
    \draw[lightgray, line width=1](2,0) -- (3,0);
    \draw(0.5,0) node[above]{\tiny$5$};
    \fill(0,0) circle (0.2);
    \fill(1,0) circle (0.2);
    \fill(2,0) circle (0.2);
    \end{tikzpicture}
    &
    \begin{tikzpicture}[x=10pt,y=10pt]
    \fill[lightgray](3,0) circle (0.2);
    \draw[line width=1](0,0) -- (2,0);
    \draw[lightgray, line width=1](2,0) -- (3,0);
    \draw(0.5,0) node[above]{\tiny$5$};
    \fill(0,0) circle (0.2);
    \fill(1,0) circle (0.2);
    \fill(2,0) circle (0.2);
    \end{tikzpicture}
    \\
    \hline
    S_2 &
    \begin{tikzpicture}[x=10pt,y=10pt]
    \fill[lightgray](0,0) circle (0.2);
    \draw[lightgray, line width=1](0,0) -- (1,0);
    \draw[lightgray] (0.5,0) node[above]{\tiny$5$};
    \fill(1,0) circle (0.2);
    \fill(2,0) circle (0.2);
    \fill(3,0) circle (0.2);
    \draw[line width=1](1,0) -- (3,0);
    \end{tikzpicture}
    &
    \begin{tikzpicture}[x=10pt,y=10pt]
    \fill[lightgray](0,0) circle (0.2);
    \draw[lightgray, line width=1](0,0) -- (1,0);
    \draw[lightgray] (0.5,0) node[above]{\tiny$5$};
    \fill(1,0) circle (0.2);
    \fill(2,0) circle (0.2);
    \fill(3,0) circle (0.2);
    \draw[line width=1](1,0) -- (3,0);
    \end{tikzpicture}
    &
    \begin{tikzpicture}[x=10pt,y=10pt]
    \fill[lightgray](0,0) circle (0.2);
    \fill[lightgray](1,0) circle (0.2);
    \draw[lightgray, line width=1](0,0) -- (2,0);
    \draw[lightgray] (0.5,0) node[above]{\tiny$5$};
    \fill(2,0) circle (0.2);
    \fill(3,0) circle (0.2);
    \draw[line width=1](2,0) -- (3,0);
    \end{tikzpicture}
    &
    \begin{tikzpicture}[x=10pt,y=10pt]
    \fill[lightgray](0,0) circle (0.2);
    \draw[lightgray, line width=1](0,0) -- (1,0);
    \draw[lightgray] (0.5,0) node[above]{\tiny$5$};
    \fill(1,0) circle (0.2);
    \fill(2,0) circle (0.2);
    \fill(3,0) circle (0.2);
    \draw[line width=1](1,0) -- (3,0);
    \end{tikzpicture}
    &
    \begin{tikzpicture}[x=10pt,y=10pt]
    \fill[lightgray](0,0) circle (0.2);
    \fill[lightgray](1,0) circle (0.2);
    \draw[lightgray, line width=1](0,0) -- (2,0);
    \draw[lightgray] (0.5,0) node[above]{\tiny$5$};
    \fill(2,0) circle (0.2);
    \fill(3,0) circle (0.2);
    \draw[line width=1](2,0) -- (3,0);
    \end{tikzpicture}
    &
    \begin{tikzpicture}[x=10pt,y=10pt]
    \fill[lightgray](0,0) circle (0.2);
    \fill[lightgray](1,0) circle (0.2);
    \fill[lightgray](2,0) circle (0.2);
    \draw[lightgray, line width=1](0,0) -- (3,0);
    \draw[lightgray] (0.5,0) node[above]{\tiny$5$};
    \fill(3,0) circle (0.2);
    \end{tikzpicture}
    \\
    \hline
    \end{array}
    \]
    \[
    \small
    \begin{array}{|c|c|c|c|c|c|c|}
    \hline
    \text{Id} & 7 & 8 & 9 & 10 & 11 & 12 \\
    \hline
    S_1 & 
    \begin{tikzpicture}[x=10pt,y=10pt]
    \fill[lightgray](0,0) circle (0.2);
    \draw[lightgray, line width=1](0,0) -- (1,0);
    \draw[lightgray] (0.5,0) node[above]{\tiny$5$};
    \fill(1,0) circle (0.2);
    \fill(2,0) circle (0.2);
    \fill(3,0) circle (0.2);
    \draw[line width=1](1,0) -- (3,0);
    \end{tikzpicture}
    &
    \begin{tikzpicture}[x=10pt,y=10pt]
    \fill[lightgray](0,0) circle (0.2);
    \draw[lightgray, line width=1](0,0) -- (1,0);
    \draw[lightgray] (0.5,0) node[above]{\tiny$5$};
    \fill(1,0) circle (0.2);
    \fill(2,0) circle (0.2);
    \fill(3,0) circle (0.2);
    \draw[line width=1](1,0) -- (3,0);
    \end{tikzpicture}
    &
    \begin{tikzpicture}[x=10pt,y=10pt]
    \fill[lightgray](0,0) circle (0.2);
    \draw[lightgray, line width=1](0,0) -- (1,0);
    \draw[lightgray] (0.5,0) node[above]{\tiny$5$};
    \fill(1,0) circle (0.2);
    \fill(2,0) circle (0.2);
    \fill(3,0) circle (0.2);
    \draw[line width=1](1,0) -- (3,0);
    \end{tikzpicture}
    &
    \begin{tikzpicture}[x=10pt,y=10pt]
    \fill[lightgray](0,0) circle (0.2);
    \fill[lightgray](1,0) circle (0.2);
    \draw[lightgray, line width=1](0,0) -- (2,0);
    \draw[lightgray] (0.5,0) node[above]{\tiny$5$};
    \fill(2,0) circle (0.2);
    \fill(3,0) circle (0.2);
    \draw[line width=1](2,0) -- (3,0);
    \end{tikzpicture}
    &
    \begin{tikzpicture}[x=10pt,y=10pt]
    \fill[lightgray](0,0) circle (0.2);
    \fill[lightgray](1,0) circle (0.2);
    \draw[lightgray, line width=1](0,0) -- (2,0);
    \draw[lightgray] (0.5,0) node[above]{\tiny$5$};
    \fill(2,0) circle (0.2);
    \fill(3,0) circle (0.2);
    \draw[line width=1](2,0) -- (3,0);
    \end{tikzpicture}
    &
    \begin{tikzpicture}[x=10pt,y=10pt]
    \fill[lightgray](0,0) circle (0.2);
    \fill[lightgray](1,0) circle (0.2);
    \fill[lightgray](2,0) circle (0.2);
    \draw[lightgray, line width=1](0,0) -- (3,0);
    \draw[lightgray] (0.5,0) node[above]{\tiny$5$};
    \fill(3,0) circle (0.2);
    \end{tikzpicture}
    \\
    \hline
    S_2 &
    \begin{tikzpicture}[x=10pt,y=10pt]
    \fill[lightgray](1,0) circle (0.2);
    \fill[lightgray](2,0) circle (0.2);
    \fill[lightgray](3,0) circle (0.2);
    \draw[lightgray, line width=1](0,0) -- (3,0);
    \draw[lightgray] (0.5,0) node[above]{\tiny$5$};
    \fill(0,0) circle (0.2);
    \end{tikzpicture}
    &
    \begin{tikzpicture}[x=10pt,y=10pt]
    \fill[lightgray](2,0) circle (0.2);
    \fill[lightgray](3,0) circle (0.2);
    \draw[line width=1](0,0) -- (1,0);
    \draw[lightgray, line width=1](1,0) -- (3,0);
    \draw(0.5,0) node[above]{\tiny$5$};
    \fill(0,0) circle (0.2);
    \fill(1,0) circle (0.2);
    \end{tikzpicture}
    &
    \begin{tikzpicture}[x=10pt,y=10pt]
    \fill[lightgray](3,0) circle (0.2);
    \draw[line width=1](0,0) -- (2,0);
    \draw[lightgray, line width=1](2,0) -- (3,0);
    \draw(0.5,0) node[above]{\tiny$5$};
    \fill(0,0) circle (0.2);
    \fill(1,0) circle (0.2);
    \fill(2,0) circle (0.2);
    \end{tikzpicture}
    &
    \begin{tikzpicture}[x=10pt,y=10pt]
    \fill[lightgray](2,0) circle (0.2);
    \fill[lightgray](3,0) circle (0.2);
    \draw[line width=1](0,0) -- (1,0);
    \draw[lightgray, line width=1](1,0) -- (3,0);
    \draw(0.5,0) node[above]{\tiny$5$};
    \fill(0,0) circle (0.2);
    \fill(1,0) circle (0.2);
    \end{tikzpicture}
    &
    \begin{tikzpicture}[x=10pt,y=10pt]
    \fill[lightgray](3,0) circle (0.2);
    \draw[line width=1](0,0) -- (2,0);
    \draw[lightgray, line width=1](2,0) -- (3,0);
    \draw(0.5,0) node[above]{\tiny$5$};
    \fill(0,0) circle (0.2);
    \fill(1,0) circle (0.2);
    \fill(2,0) circle (0.2);
    \end{tikzpicture}
    &
    \begin{tikzpicture}[x=10pt,y=10pt]
    \fill[lightgray](3,0) circle (0.2);
    \draw[line width=1](0,0) -- (2,0);
    \draw[lightgray, line width=1](2,0) -- (3,0);
    \draw(0.5,0) node[above]{\tiny$5$};
    \fill(0,0) circle (0.2);
    \fill(1,0) circle (0.2);
    \fill(2,0) circle (0.2);
    \end{tikzpicture}
    \\
    \hline
    \end{array}
    \]

    Almost all these coverings do not satisfy Condition $B(p)$ whatever $p\geq 1$ : 
    
    \begin{prp}
    Assume $M$ is of type $H_4$. A  $\DD^p$-regular coverings $(S_2,S_1)$  of $S$ that  may satisfy Condition $B(p)$  has  its $\text{Id}$  equal  to  $5$ or to $8$ in the above list, that is  
    \[
    (S_2,S_1) =(\{\sig1,\sig2,\sig3\},\{\sig3,\sig4\}),\quad\text{or }\quad (S_2,S_1) = (\{\sig2,\sig3,\sig4\},\{\sig1,\sig2\}),
    \]
 
    \end{prp}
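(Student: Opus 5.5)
Since $\DD$ is central in type $H_4$, every proper irreducible covering is $\DD^p$-regular for all $p$. The plan is therefore to exclude the ten coverings other than $5$ and $8$ from the table, one at a time, using the tools already available. For each excluded covering we exhibit a pair $(a,b)$ with $a,b,ab$ all $\DD$-unmovable, $(a,b)\notin\oTheta(p)\times\oTheta(p)$, and $\dpt(a)+\dpt(b)-\dpt(ab)\notin\{0,1\}$. Lemma~\ref{L:B1ToBj}(ii) then shows that Condition~$B(p)$ fails. By Lemma~\ref{L:B1ToBj}(i), since $\DD$ is central, it is enough to check $(a,b)\notin\oTheta(1)\times\oTheta(1)$. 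This makes the argument uniform in $p$.

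Several coverings can be handled with structural criteria instead of computation.

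\emph{Restriction to a smaller parabolic.} Whenever some irreducible $T\subsetneq S$ makes $(T\cap S_2,T\cap S_1)$ a proper covering, we can use counterexamples coming from a smaller type. One source is Proposition~\ref{P:ConditionB:SubtypeA}, which applies when $T=\{\sig2,\sig3,\sig4\}$ is of type $A_3$ and the two pieces $T\cap S_i$ meet in a set of type $A_0$. Another source is the $H_3$ counterexamples (coverings $1$ and $5$ of $H_3$) transported to $T=\{\sig1,\sig2,\sig3\}$. In both cases we conclude via Proposition~\ref{P:ConditionB:Induction}. Elements of $M_T$ are automatically $\DD$-unmovable by Lemma~\ref{L:Subcovering}, and depths agree by Lemma~\ref{L:Subcovering}(i). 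The remaining point is that such elements do not lie in $\oTheta_{S_2,S_1}(p)$. This holds because $\DD^p\DD_{S_1}^{-p}$ has full support outside $S_1$, while a nontrivial element of $M_T\setminus M_{S_1}$ cannot be left divisible by it when $T\neq S$.

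\emph{The second criterion.} For coverings with $S_1\cap S_2\neq\emptyset$, we compute the alternating normal form of $\DD$ and check whether $g_n\notin M_{S_2\setminus S_1}$. If so, Proposition~\ref{P:ConditionB:Crit:TypeB} applies directly.

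\emph{Computer search.} For coverings not covered by either criterion, we run the same kind of computer search as in $H_3$ over short positive words. The search looks for $a$ and $b$ (typically $b$ a single atom) such that $\dpt(ab)\le\dpt(a)+\dpt(b)-2$. Each pair found is recorded in a table like the $H_3$ one. Verification of each pair is mechanical: compute alternating normal forms through tails, check $\DD$-unmovability by confirming that $\DD_{S_1}$ does not right-divide the last factor, and confirm that $a\notin M_{S_1}$ and that $a$ is not left divisible by $\DD\DD_{S_1}^{-1}$.

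The main obstacle is this third step. In type $H_4$, $\DD$ has length $60$ and the depth defect may only appear for long words, so a naive search could be too expensive. I would first try $b$ an atom in $S_1\cap S_2$ and $a$ built as $g_{n+1}g$ by mimicking the construction in the proof of Proposition~\ref{P:ConditionB:Crit:TypeB}. If that fails, I would fall back on a breadth-first search organised by breadth of the alternating normal form.
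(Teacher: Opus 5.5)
Your strategy is the one the paper follows. It uses Proposition~\ref{P:ConditionB:SubtypeA} with $T=\{\sig2,\sig3,\sig4\}$, the two $H_3$ counterexamples transported to $T=\{\sig1,\sig2,\sig3\}$ via Proposition~\ref{P:ConditionB:Induction}, Proposition~\ref{P:ConditionB:Crit:TypeB}, and computer-found pairs for what is left. But as written it is a search plan, not a proof, and what is missing is exactly the content that makes the statement true.

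For coverings $2$ and $4$, neither Proposition~\ref{P:ConditionB:SubtypeA} nor the $H_3$ transport is available. In both, $S_2=\{\sig2,\sig3,\sig4\}$, and the restriction to $\{\sig1,\sig2,\sig3\}$ is either improper or the $H_3$ covering $4$, whose status is open. The paper does not use the second criterion there either. Excluding these two coverings therefore rests entirely on explicit certificates, and you supply none: you describe a search and concede it may be infeasible. The paper closes these cases with concrete words. For covering $2$ it gives an $a$ with $b=\sig3$ and $(\dpt(a),\dpt(b),\dpt(ab))=(4,1,3)$; for covering $4$ an $a$ with $b=\sig4$ and $(5,1,4)$. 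Until such pairs are exhibited and checked, coverings $2$ and $4$ are not excluded.

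Likewise, the second criterion can only be invoked after actually computing the top factor $g_n$ of the alternating normal form of $\DD$, which you do not do. The paper finds $g_{10}=\sig1\sig2\sig3\sig1\sig2\sig1$ and $g_{10}=\sig2\sig1\sig2\sig3\sig2\sig1\sig2\sig1$ for the two coverings it treats this way.

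Some smaller points.

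\emph{Case assignment.} By the table, $\{\sig2,\sig3,\sig4\}$ meets $S_1$ and $S_2$ in nonempty disjoint sets exactly for coverings $3,6,10,12$. The $H_3$ transport handles $1$ and $7$, and the second criterion is what remains for $9$ and $11$. The paper's proof writes $\{3,6,11,12\}$ and $\{9,10\}$, but covering $10$ has $S_1\cap S_2=\emptyset$, so Proposition~\ref{P:ConditionB:Crit:TypeB} does not apply to it.

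\emph{Avoiding $\oTheta$.} Your argument that transported elements avoid $\oTheta_{S_2,S_1}(p)$ only uses that $\DD^p\DD_{S_1}^{-p}$ involves every generator of $S\setminus S_1$. For covering $7$ one has $S\setminus S_1=\{\sig1\}\subseteq T$, so this proves nothing. Use length instead: every element of $\Theta(p)$ is left divisible by $\DD^p\DD_{S_1}^{-p}$, which has length $54p$, while the transported $a$ has length $17$. Since $b=\sig2\in M_{S_1}$, it is $a$ that must avoid $\oTheta$.

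\emph{Your search heuristic.} Mimicking the construction of Proposition~\ref{P:ConditionB:Crit:TypeB} presupposes a factorisation $g_n=ah'_nbh_na$ with $b\in S_1\cap S_2$. That is exactly the proposition's hypothesis $g_n\notin M_{S_2\setminus S_1}$, so it cannot reach anything the proposition does not already cover. Note also that the paper's witnesses for $2$ and $4$ have $b\in S_2\setminus S_1$, not $b\in S_1\cap S_2$.
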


    \begin{proof}
    Using Proposition~\ref{P:ConditionB:SubtypeA} with $T=\{\sig2,\sig3,\sig4\}$ 
    we  can show that coverings with $\text{Id}\in\{3,6,11,12\}$ do not satisfy Condition $B(p)$ for any $p\geq 1$.
    Coverings with $\text{Id}\in\{1,7\}$ are treated with Proposition~\ref{P:ConditionB:Induction} and counter examples of type $H_3$ (the then first and the second respectively).
    For coverings with $\text{Id}\in\{9,10\}$ we conclude with Proposition~\ref{P:ConditionB:Crit:TypeB} since we have 
    \begin{align*}
    g_n&=g_{10}=  \sig1\sig2\sig3\sig1\sig2\sig1\not\in M_{S_2\setminus S_1} & \text{for $\text{Id}=9$},\\
    g_n&=g_{10}= \sig2\sig1\sig2\sig3\sig2\sig1\sig2\sig1\not\in M_{S_2\setminus S_1} & \text{for $\text{Id}=10$}.
    \end{align*}
    We have found the following counter-examples for coverings with $\text{Id}$ in $\{2,4\}$:
     \[
     \small
    \begin{array}{|c|c|c|c|c|c|c|}
    \hline
    \text{Id}& a & b & \dpt(a) & \dpt(b) & \dpt(ab)\\
    \hline
    2 &  \sig1\sig2\sig1\sig2\sig3\sig2\sig1\sig2\sig3\sig2\sig3\sig2\sig1\sig2\sig1\sig4\sig3\sig2\sig1\sig2\sig1\sig3\sig2\sig1\sig2&\sig3 & 4 & 1& 3\\
    \hline
    4 &\sig1\sig2\sig1\sig3\sig2\sig1\sig2\sig4\sig3\sig2\sig2\sig1\sig2\sig1\sig3\sig2\sig1\sig2\sig1\sig3\sig2\sig1\sig2\sig4\sig3\sig2\sig1\sig2& \sig4 & 5 & 1 & 4 \\
    & \sig1\sig3\sig2\sig1\sig2\sig3\sig4\sig3\sig2\sig1\sig2\sig1\sig3\sig2\sig1\sig2\sig3\sig4\sig3\sig2\sig1\sig2\sig1\sig3\sig2\sig4\sig3  & & &&\\
    \hline
    \end{array}
    \]
    \end{proof}
    
    \subsubsection{Type $F_4$}
    
    Up to the graph automorphism $\sig\ii\leftrightarrow \sig{5-\ii}$, there is $6$ proper and irreducible coverings $(S_2,S_1)$ of $F_4$:
    \[
    \small
    \begin{array}{|c|c|c|c|c|c|c|}
    \hline
    \text{Id} & 1 & 2 & 3 & 4 & 5 & 6 \\
    \hline 
     S_1 &
    \begin{tikzpicture}[x=10pt,y=10pt]
    \fill[lightgray](0,0) circle (0.2);
    \draw[lightgray, line width=1](0,0) -- (1,0);
    \draw(1.5,0) node[above]{\tiny$4$};
    \fill(1,0) circle (0.2);
    \fill(2,0) circle (0.2);
    \fill(3,0) circle (0.2);
    \draw[line width=1](1,0) -- (3,0);
    \end{tikzpicture}
    &
    \begin{tikzpicture}[x=10pt,y=10pt]
    \fill[lightgray](0,0) circle (0.2);
    \fill[lightgray](1,0) circle (0.2);
    \draw[lightgray, line width=1](0,0) -- (2,0);
    \draw[lightgray] (1.5,0) node[above]{\tiny$4$};
    \fill(2,0) circle (0.2);
    \fill(3,0) circle (0.2);
    \draw[line width=1](2,0) -- (3,0);
    \end{tikzpicture}
    &
    \begin{tikzpicture}[x=10pt,y=10pt]
    \fill[lightgray](0,0) circle (0.2);
    \draw[lightgray, line width=1](0,0) -- (1,0);
    \draw(1.5,0) node[above]{\tiny$4$};
    \fill(1,0) circle (0.2);
    \fill(2,0) circle (0.2);
    \fill(3,0) circle (0.2);
    \draw[line width=1](1,0) -- (3,0);
    \end{tikzpicture}
    &
    \begin{tikzpicture}[x=10pt,y=10pt]
    \fill[lightgray](0,0) circle (0.2);
    \fill[lightgray](1,0) circle (0.2);
    \fill[lightgray](2,0) circle (0.2);
    \draw[lightgray, line width=1](0,0) -- (3,0);
    \draw[lightgray] (1.5,0) node[above]{\tiny$4$};
    \fill(3,0) circle (0.2);
    \end{tikzpicture}
    &
    \begin{tikzpicture}[x=10pt,y=10pt]
    \fill[lightgray](0,0) circle (0.2);
    \fill[lightgray](1,0) circle (0.2);
    \draw[lightgray, line width=1](0,0) -- (2,0);
    \draw[lightgray] (1.5,0) node[above]{\tiny$4$};
    \fill(2,0) circle (0.2);
    \fill(3,0) circle (0.2);
    \draw[line width=1](2,0) -- (3,0);
    \end{tikzpicture}&
    \begin{tikzpicture}[x=10pt,y=10pt]
    \fill[lightgray](0,0) circle (0.2);
    \draw[lightgray, line width=1](0,0) -- (1,0);
    \draw(1.5,0) node[above]{\tiny$4$};
    \fill(1,0) circle (0.2);
    \fill(2,0) circle (0.2);
    \fill(3,0) circle (0.2);
    \draw[line width=1](1,0) -- (3,0);
    \end{tikzpicture}
     \\
     \hline 
     S_2 &
      \begin{tikzpicture}[x=10pt,y=10pt]
    \fill[lightgray](1,0) circle (0.2);
    \fill[lightgray](2,0) circle (0.2);
    \fill[lightgray](3,0) circle (0.2);
    \draw[lightgray, line width=1](0,0) -- (3,0);
    \draw[lightgray] (1.5,0) node[above]{\tiny$4$};
    \fill(0,0) circle (0.2);
    \end{tikzpicture}
    &
    \begin{tikzpicture}[x=10pt,y=10pt]
    \fill[lightgray](2,0) circle (0.2);
    \fill[lightgray](3,0) circle (0.2);
    \draw[line width=1](0,0) -- (1,0);
    \draw[lightgray, line width=1](1,0) -- (3,0);
    \draw[lightgray] (1.5,0) node[above]{\tiny$4$};
    \fill(0,0) circle (0.2);
    \fill(1,0) circle (0.2);
    \end{tikzpicture}
    &
    \begin{tikzpicture}[x=10pt,y=10pt]
    \fill[lightgray](2,0) circle (0.2);
    \fill[lightgray](3,0) circle (0.2);
    \draw[line width=1](0,0) -- (1,0);
    \draw[lightgray, line width=1](1,0) -- (3,0);
    \draw[lightgray] (1.5,0) node[above]{\tiny$4$};
    \fill(0,0) circle (0.2);
    \fill(1,0) circle (0.2);
    \end{tikzpicture}
    &
    \begin{tikzpicture}[x=10pt,y=10pt]
    \fill[lightgray](3,0) circle (0.2);
    \draw[line width=1](0,0) -- (2,0);
    \draw[lightgray, line width=1](2,0) -- (3,0);
    \draw(1.5,0) node[above]{\tiny$4$};
    \fill(0,0) circle (0.2);
    \fill(1,0) circle (0.2);
    \fill(2,0) circle (0.2);
    \end{tikzpicture}
    &
    \begin{tikzpicture}[x=10pt,y=10pt]
    \fill[lightgray](3,0) circle (0.2);
    \draw[line width=1](0,0) -- (2,0);
    \draw[lightgray, line width=1](2,0) -- (3,0);
    \draw(1.5,0) node[above]{\tiny$4$};
    \fill(0,0) circle (0.2);
    \fill(1,0) circle (0.2);
    \fill(2,0) circle (0.2);
    \end{tikzpicture}
    &
    \begin{tikzpicture}[x=10pt,y=10pt]
    \fill[lightgray](3,0) circle (0.2);
    \draw[line width=1](0,0) -- (2,0);
    \draw[lightgray, line width=1](2,0) -- (3,0);
    \draw(1.5,0) node[above]{\tiny$4$};
    \fill(0,0) circle (0.2);
    \fill(1,0) circle (0.2);
    \fill(2,0) circle (0.2);
    \end{tikzpicture}
    \\
    \hline
    \end{array}
    \]
    
    The following Proposition closes the discussion in type $F_4$.
    \begin{prp} Assume $M$ is of type $F_4$.
    No $\DD^p$ regular covering of $S$ satisfies Condition $B(p)$ wharever~$p\geq 1$.
    \end{prp}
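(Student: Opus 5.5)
The plan is to work through the six coverings of $F_4$ listed in the table (up to the diagram automorphism $\sig\ii\leftrightarrow\sig{5-\ii}$, which preserves Condition~$B(p)$). For each one we exhibit an obstruction using one of the three tools already available. These are Proposition~\ref{P:ConditionB:SubtypeA} (a type $A_{k+2}$ subdiagram on which the induced covering overlaps in type $A_{k-1}$), Proposition~\ref{P:ConditionB:Induction} (an explicit counterexample in a proper irreducible parabolic, here of type $B_3$), and Proposition~\ref{P:ConditionB:Crit:TypeB} ($\DD$ central, $S_1\cap S_2\neq\emptyset$, and the last factor $g_n$ of $[\DD]$ not in $M_{S_2\setminus S_1}$). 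Since $\DD$ is central in $F_4$, every covering is $\DD^p$-regular and Lemma~\ref{L:B1ToBj}(i) gives $\oTheta(p)\subseteq\oTheta(1)$. So a counterexample with $(a,b)\notin\oTheta(1)\times\oTheta(1)$, with $ab$ $\DD$-unmovable, works for all $p$ at once.

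\textbf{Step 1: coverings with $\sig1$ or $\sig4$ isolated.} Here one of $S_i$ is $\{\sig1\}$ or $\{\sig4\}$ and the other is a type $B_3$ subset. We restrict to $T=\{\sig1,\sig2,\sig3\}$ or $\{\sig2,\sig3,\sig4\}$, which is of type $B_3$, where the induced covering is of the form $(\{\sig0\},\{\sig1,\sig2\})$ or $(\{\sig2\},\{\sig0,\sig1\})$ in $B_3$ notation. For the non-co-abelian sides, Proposition~\ref{P:typeBCoAb} is not directly usable since $r=3$. I would therefore first try Proposition~\ref{P:ConditionB:Crit:TypeB} on $F_4$ itself: compute $[\DD]$ by the word manipulations used in Lemma~\ref{L:ConditionB:ANF:Dellta} and check that $g_n\notin M_{S_2\setminus S_1}$ whenever $S_1\cap S_2\neq\emptyset$.

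\textbf{Step 2: coverings with $S_1\cap S_2=\emptyset$.} For disjoint coverings such as $(\{\sig1,\sig2\},\{\sig3,\sig4\})$, or ones whose intersection is a single node, Proposition~\ref{P:ConditionB:Crit:TypeB} does not apply. For the coverings whose type-$A$ part contains a path $\sig1\sig2$ or $\sig3\sig4$ plus a neighbour, with the overlap pattern of Proposition~\ref{P:ConditionB:SubtypeA}, that proposition finishes. The remaining cases need explicit pairs $(a,b)$, found by computer search on \textsc{Calculco} as done for $H_3$ and $H_4$: typically $b$ is a single atom and $a$ is a long $\DD$-unmovable word with $\dpt(a)+\dpt(b)-\dpt(ab)\ge2$. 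For each pair I would verify three things by hand: that $a,b,ab$ are $\DD$-unmovable (using chains and Proposition~\ref{P:ChainDiv}), that $a\notin\oTheta(1)$ (for instance because $a$ is not left divisible by $\DD\DD_{S_1}^{-1}$), and that the depths are as claimed (the normal forms via tails). Lemma~\ref{L:B1ToBj}(ii) then concludes.

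\textbf{Main obstacle.} The hardest step is producing and certifying the $F_4$ counterexamples, and computing $[\DD]$ in $F_4$ to test the $g_n$ criterion. No uniform theoretical argument covers the $m=4$ edge in the middle of the diagram, so I expect at least two of the six cases to rest on explicit computer-found words, presented in a table as for $H_4$.
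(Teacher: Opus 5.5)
Your architecture matches the paper's: Proposition~\ref{P:ConditionB:Crit:TypeB} for some coverings, explicit pairs $(a,b)$ with Lemma~\ref{L:B1ToBj} for the others, and centrality of $\DD$ to treat all $p$ at once. However, your allocation of cases to tools is wrong, and in the end no case is actually settled.

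First, Proposition~\ref{P:ConditionB:SubtypeA} never applies in $F_4$. It needs an irreducible subset of type $A_{k+2}$ with $k\geq 1$, but every connected three-node subset of $F_4$ contains the edge $\sigma_2\sigma_3$ labelled $4$. Your ``path $\sigma_1\sigma_2$ plus a neighbour'' is $\{\sigma_1,\sigma_2,\sigma_3\}$, which is of type $B_3$.

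Second, Proposition~\ref{P:ConditionB:Crit:TypeB} assumes $S_1\cap S_2\neq\emptyset$. So it cannot be used in your Step~1 for the coverings with $\sigma_1$ or $\sigma_4$ isolated ($\text{Id}=1,4$), nor for $\text{Id}=2$; all three are disjoint. Conversely, it is not excluded when the intersection is a single node: the paper applies it to $\text{Id}=5$, where $S_1\cap S_2=\{\sigma_3\}$. Restricting to a $B_3$ parabolic and using Proposition~\ref{P:ConditionB:Induction} gives nothing for free either. The case $r=3$ is left open in Proposition~\ref{P:typeBCoAb}, so a $B_3$ counterexample would itself have to be found.

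Third, your plan to ``check $g_n\notin M_{S_2\setminus S_1}$ whenever $S_1\cap S_2\neq\emptyset$'' provably fails for $\text{Id}=3$, where $S_2=\{\sigma_1,\sigma_2\}$ and $S_1=\{\sigma_2,\sigma_3,\sigma_4\}$.
\begin{itemize}
\item There $n\geq 4$, by Lemmas~\ref{L:DeltaBreathForFixedCovering} and~\ref{L:Breadth2Reducible}.
\item Maximality of the $S_1$-tail $g_{n-1}$ gives $\SRD{S}{g_n}\subseteq S_2\setminus S_1=\{\sigma_1\}$.
\item Lemma~\ref{L:LeftPowerDelta} gives $\SLD{S}{g_n}\subseteq S_2\boxminus S_1=\{\sigma_1\}$.
\item Since $g_n$ divides $\DD$ and lies in $M_{S_2}$, it divides $\DD_{S_2}=\sigma_1\sigma_2\sigma_1$. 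The only such divisor with these left and right divisor sets is $\sigma_1$, which lies in $M_{S_2\setminus S_1}$.
\end{itemize}

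So only $\text{Id}=5,6$ can be handled by a criterion, and even that requires actually computing $[\DD]$, which you do not do; the paper gives $g_n$ explicitly. The coverings $\text{Id}=1,2,3,4$ (four cases, not ``at least two'') require explicit pairs $(a,b)$ with $a$, $b$, $ab$ $\DD$-unmovable, $(a,b)\notin\oTheta(1)\times\oTheta(1)$, and $\dpt(a)+\dpt(b)-\dpt(ab)\notin\{0,1\}$. The paper lists these pairs, each with $b$ an atom. Your proposal only announces a computer search, so it is missing precisely the data that constitutes the proof.
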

    
    \begin{proof}
    For coverings with $\text{Id}\in\{5,6\}$ we conclude with Proposition~\ref{P:ConditionB:Crit:TypeB} since we have 
    \begin{align*}
    g_n&=g_6=\sig2\sig3\sig1\sig2\not\in M_{S_2\setminus S_1}&\text{for $\text{Id}=5$},\\
    g_n&=g_4=\sig1\sig2\sig3\sig2\sig2\not\in M_{S_2\setminus S_1}&\text{for $\text{Id}=6$}.
    \end{align*}
    We have found the following counter-examples for the remaining coverings:
     \[
     \small
    \begin{array}{|c|c|c|c|c|c|c|}
    \hline
    \text{Id}& a & b & \dpt(a) & \dpt(b) & \dpt(ab)\\
    \hline
    1 &  \sig1\sig2\sig1\sig2\sig3\sig2\sig1\sig2\sig3\sig2\sig3\sig2\sig1\sig2\sig1\sig4\sig3\sig2\sig1\sig2\sig1\sig3\sig2\sig1\sig2 &\sig1 & 3 & 1& 2\\
    \hline
    2 &\sig2\sig1\sig4\sig3\sig2\sig1\sig3\sig2\sig3\sig4\sig3\sig2\sig2\sig1\sig3\sig2\sig3\sig4\sig3\sig2\sig1\sig3\sig2\sig3\sig4\sig3\sig2\sig1\sig3\sig2\sig3 & \sig2 & 8 & 1 & 7 \\
    \hline
    3 &\sig3\sig4\sig3\sig2\sig1\sig3\sig2\sig3\sig4\sig3\sig2\sig1\sig3\sig3\sig2\sig3\sig4\sig3\sig2\sig1\sig3\sig2\sig4\sig3& \sig1 & 3 & 1 & 2\\
    \hline  
    4 & \sig2\sig3\sig2\sig1\sig3\sig2\sig3\sig4\sig3\sig2\sig1\sig3\sig4\sig3\sig2\sig3\sig4\sig3\sig2\sig1\sig3\sig2\sig4\sig3\sig2\sig1& \sig2 & 4 & 1& 3\\
    \hline
    \end{array}
    \]
    \end{proof}

    \bibliographystyle{acm}
    \bibliography{biblio}
    \end{document}